\newcommand{\Ab}{\mathbf A}
\newcommand{\Fb}{\mathbf F}
\newcommand{\R}{\mathbb R}
\newcommand{\Z}{\mathbb Z}
\newcommand{\N}{\mathbb N}
\newcommand{\C}{\mathbb C}
\DeclareMathOperator{\E0}{E_{\rm g}}
\DeclareMathOperator{\RE}{Re}
\DeclareMathOperator{\IM}{Im}
\DeclareMathOperator{\curl}{curl}
\DeclareMathOperator{\Div}{div}
\DeclareMathOperator{\dist}{dist}
\DeclareMathOperator{\supp}{supp} 
\newtheorem{thm}{Theorem}[section]
\newtheorem{prop}[thm]{Proposition}
\newtheorem{lem}[thm]{Lemma}
\newtheorem{corol}[thm]{Corollary}
\newtheorem{theorem}[thm]{Theorem}
\newtheorem{definition}[thm]{Definition}
\newtheorem{lemma}[thm]{Lemma}
\newtheorem{ass}[thm]{Assumption}
\newtheorem{app}[thm]{Application}
\theoremstyle{remark}
\newtheorem{rem}[thm]{Remark}
\numberwithin{equation}{section}
\title[2D Ginzburg-Landau  functional]{Pinning with a variable magnetic field of the two dimensional Ginzburg-Landau model}
\author[K.Attar]{}
\author[]{K. Attar}
\begin{document}
\begin{abstract}
We study the Ginzburg-Landau  energy of a superconductor with a
variable magnetic field and a pinning term in a bounded smooth two
dimensional domain $\Omega$. Supposing that the Ginzburg-Landau
parameter and the intensity of the magnetic field are large and of the
same order, we determine an accurate asymptotic formula for the
minimizing energy. This asymptotic formula displays the influence of
the pinning term. Also, we discuss the existence of non-trivial
solutions and prove some asymptotics of the third critical field.
\end{abstract}
\maketitle
\section{Introduction}
We consider a bounded, open and simply connected set
$\Omega\subset\R^2$ with smooth boundary. We suppose that $\Omega$
models an inhomogeneous superconducting sample submitted to an
applied external magnetic field. The energy of the sample is given
by the so called pinned Ginzburg-Landau functional,
\begin{equation}\label{eq-2D-GLf}
\mathcal E_{\kappa,H,a,B_{0}}(\psi,\Ab)= \int_\Omega\left( |(\nabla-i\kappa
H\Ab)\psi|^2+\frac{\kappa^2}{2}(a(x,\kappa)-|\psi|^2)^2\right)\,dx
+\kappa^2H^2\int_{\Omega}|\curl\Ab-B_0|^2\,dx\,.
\end{equation}
Here $\kappa$ and $H$ are two positive parameters such that $\kappa$ describes the properties of the material, and $H$ measures the variation of the intensity of the applied magnetic field. The modulus $|\psi|^{2}$ of the wave function
(order parameter) $\psi\in H^1(\Omega;\C)$  measures the density of the superconducting electron Cooper pairs. The magnetic potential
$\Ab$ belongs to $H^1_{\Div}(\Omega)$ where
\begin{equation}\label{eq-2D-hs} H^1_{\Div}(\Omega)=\{\Ab=(\Ab_{1},\Ab_{2})\in
H^1(\Omega)^{2}~:~\Div \Ab=0~{\rm in}~\Omega \,,\,\Ab\cdot\nu=0~{\rm
on}\,
\partial\Omega \,\}\,,
\end{equation}
with  $\nu$ being the unit interior normal vector of
$\partial\Omega$.\\
 The function $\kappa H\curl\Ab$ gives the induced magnetic field.

When $\psi\equiv0$ and $(\psi,\Ab)$ is a minimizer or a critical point of the functional, we call this pair normal state. In our case it is easy to see normal minimizers (if any)  are necessarily in the form $(0,\Ab)$ with $\Ab$ in $H^1_{\Div} (\Omega)$  such that $\curl\Ab=B_{0}$. This solution is unique
and denoted by $\Fb$.  A natural question will be to determine under which condition this normal solution is a minimizer.

The function $B_{0}\in C^{\infty}(\overline{\Omega})$ is the intensity of the external magnetic field which is variable in our problem. Let
\begin{equation}\label{gamma}
\Gamma=\{x\in\overline{\Omega}: B_{0}(x)=0\}\,.
\end{equation}
We assume that either $\Gamma$ is empty or that $B_{0}$ satisfies :
\begin{equation}\label{B(x)}
\left\{
\begin{array}{ll}
|B_{0}| + |\nabla B_0 | >0&\mbox{ in } \overline{\Omega}\\
\nabla B_{0}\times\vec{n}\neq 0 &\mbox{ on } \Gamma\cap\partial\Omega\,.
\end{array}
\right.
\end{equation}
The assumption in \eqref{B(x)} implies that for any open set $\omega$ relatively compact in $\Omega$, $\Gamma\cap\omega$ is  either empty, or consists of a union of smooth curves. 

The energy $\mathcal E_{\kappa,H,a,B_{0}}$ considered here is slightly different from the classical Ginzburg-Landau energy in the sense that there is a varying term denoted by $a(x,\kappa)$ penalizing the variations of the order parameter $\psi$ and called the pinning term. This term arises also naturally in the microscopic derivation of the Ginzburg-Landau theory from BCS theory (see \cite{FHSS})
without any a priori assumption on the sign of $a$.\\

In this paper, we will assume that  the pining term $a$ satisfies:
\begin{ass}\label{assumption}
The function $a(x,\kappa)$ is real, defined on $\overline{\Omega}\times[\kappa_{0},+\infty)$, and satisfies for some $\kappa_0 >0$ the following assumptions:
\begin{enumerate}
 \item[$(A_{1})$] \begin{equation} \label{a1} \forall \kappa\geq \kappa_0\,,  a(\cdot,\kappa)\in C^{1}(\overline{\Omega})\,.\end{equation}
  \item[$(A_{2})$]
\begin{equation}\label{a2}
\sup_{x\in\overline{\Omega},\,\kappa\geq\kappa_{0}}| a(x,\kappa)| <+\infty \,.
\end{equation}
\item[$(A_{3})$]
\begin{equation}\label{a3}
\sup_{{x\in\overline{\Omega},\,\kappa \geq \kappa_{0}}} |\nabla_{x}\,a(x,\kappa)|< +\infty \,.
\end{equation}
\item[$(A_{4})$]
There exists a  positive constant $C_{1}$, such that,
\begin{equation}\label{a4}
 \forall \kappa\geq \kappa_0\,,\qquad \mathcal{L} \left(\partial\{a(x,\kappa)>0\}\right)\leq C_{1}\,\kappa^{\frac{1}{2}}\,,
\end{equation}
where $\mathcal{L}$  is the "length" of $\partial\{a(x,\kappa)>0\}$  in $\Omega$ in a sense  that will be explained in \eqref{defA4}.
 \end{enumerate}
\end{ass}
Let us introduce for later use,
\begin{equation}\label{def:L}
L(\kappa) = \sup_x  |\nabla_{x}\,a(x,\kappa)| \,,
\end{equation}
\begin{equation}\label{def:sup-a}
\overline{a}=\sup_{x\in\overline{\Omega},\,\kappa\geq\kappa_{0}}a(x,\kappa)
\end{equation}
and
\begin{equation}\label{def:inf-a}
\underline{a}=\inf_{x\in\overline{\Omega},\,\kappa\geq\kappa_{0}}a(x,\kappa).
\end{equation}
The assumption in  ($A_{3}$) gives a uniform control for any $\kappa$ of the oscillation of $a(.,\kappa)$ which will be made precise later by an assumption on $L(\kappa)$.
Notice that the normal state $(0,\Fb)$ is a critical point of the functional in \eqref{eq-2D-GLf}.
It is standard, starting from a minimizing sequence, to prove the existence of minimizers in $ H^1(\Omega;\C)\times
H^1_{\Div}(\Omega)$ of the functional $\mathcal E_{\kappa,H,a,B_{0}}$.  A minimizer $(\psi,\Ab)$ of \eqref{eq-2D-GLf} is a weak solution of
the Ginzburg-Landau equations,

\begin{equation}\label{eq-2D-GLeq}
\left\{
\begin{array}{llll}
-(\nabla-i\kappa H\Ab)^2\psi=\kappa^2\, (a(x,\kappa)-|\psi|^2)\, \psi&{\rm in}&
\Omega&(a)
\\
-\nabla^{\bot}\curl(\Ab-\Fb)=\displaystyle\frac1{\kappa
H}\IM(\overline{\psi}\,(\nabla-i\kappa
H\Ab)\psi) &{\rm in}&\Omega&(b)\\
\nu\cdot(\nabla-i\kappa H\Ab)\psi=0&{\rm
on}&\partial\Omega&(c)\\
\curl\Ab=\curl\Fb&{\rm on}&\partial\Omega&(d)\,.
\end{array}\right.
\end{equation}
Here, $\curl\Ab=\partial_{x_1}\Ab_{2}-\partial_{x_2}\Ab_{1}$ and
$\nabla^{\bot}\curl\Ab=(\partial_{x_2}(\curl\Ab),
-\partial_{x_1}(\curl\Ab)).$

Let us introduce the  magnetic Schr\"odinger operator in an open set $\widetilde{\Omega}$ in $\R^2$:
\begin{equation}\label{def:P}
P_{A,V}^{\widetilde{\Omega}}=-(\nabla-iA)^{2}+V(x)\,,
\end{equation}
where $A\in H^{1}_{\Div}(\widetilde{\Omega})$ and $V$ is a continuous function bounded from below.\\
The form domain of $P_{A,V}^{\widetilde{\Omega}}$ is
$$
\mathcal{V}(\widetilde{\Omega})=\{u\in L^{2}(\widetilde{\Omega})\,,\quad (\nabla-iA)u\in L^{2}(\widetilde{\Omega})\,,\quad(V+C)^{\frac{1}{2}}u\in L^{2}(\widetilde{\Omega})\}\,,
$$
and its  operator domain is  given by
$$
D(P_{A,V}^{\widetilde{\Omega}}):=\{u\in\mathcal{V}(\widetilde{\Omega})\,,\quad P_{A,V}^{\widetilde{\Omega}}u\in L^{2}(\widetilde{\Omega}),\quad \nu\cdot(\nabla-i A)u=0~{\rm on}~\partial\widetilde{\Omega}\}\,.
$$
Then, $\eqref{eq-2D-GLeq}_{a,c}$ reads  $$P_{A,V}^{\Omega}\,\psi=-\kappa^{2}\,|\psi|^{2}\psi\,,$$ with $A=\kappa H \Ab$, $\psi \in D(P_{A,V}^{\Omega})$ and $V=-\kappa^{2}\,a\,$.\\

There are many papers on the Ginzburg-Landau functional with a
pinning term, most of them study the influence of the pinning term
on the location of {\it vortices}, i.e. the zeros of the minimizing
order parameter. For the functional without a magnetic field (i.e.
$B_0=0$ in \eqref{eq-2D-GLf}), the influence of the pinning term is
studied in \cite{LM} and more recently in \cite{DSM} and the
references therein. The pinning term (i.e. the function $a$) in
\cite{LM} is a step function independent of $\kappa$; more
complicated $\kappa$-dependent periodic step functions are
considered in \cite{DSM}. The magnetic version of the functional in
\cite{LM} is studied  in \cite{A.K, K-cocv}.


In \cite{ASS},  Aftalion, Sandier and Serfaty considered a {\bf
smooth} and $\kappa$-dependent pinning term $a$ satisfying:
\begin{enumerate}
 \item[$(H_{1})$]
 $L(\kappa) \ll\kappa H.$
 \item[$(H_{2})$] There exist a continuous function $a(x)$,  a positive constant $a_{0}$ and, for all $\kappa\geq 0$, there exist two functions $
\sigma(\kappa)=\textit{o}\left(\left(\ln\left|\ln\frac{1}{\kappa}\right|\right)^{-\frac{1}{2}}\right)
$
and $ \beta(x,\kappa)\geq 0$ such that,
$$\displaystyle \min_{B(x,\sigma(\kappa))} \beta(x,\kappa)=0\,, \qquad a(x,\kappa)= a(x)+\beta(x,\kappa)\,,\qquad{\rm and}\qquad 0< a_{0}\leq a(x)\leq 1\,.$$
 \end{enumerate}
The study contains the case when $a(x,\kappa)=a(x)$  ($\beta=0$) but
also cases with a $\kappa$- control of the $x$-oscillation  of
$\beta(\cdot,\kappa)$ which could increase with $\kappa$. In the
scales of this paper, the results in \cite{ASS} are valid
 when the parameter
  $H$ is of  order $\frac{|\ln\kappa|}{\kappa}$ as $\kappa \longrightarrow +\infty$.

 Extending the discussion, the
 functional in \eqref{eq-2D-GLf} is close to models of Bose-Einstein
 condensates (see e.g. \cite{AfAB, AlBr}).

In this paper, we will analyze how the pinning term appears in
the asymptotics of the energy in the presence of a strong external
variable magnetic field (see Theorem~\ref{thm-2D-main} below). Also,
we discuss the influence of the pinning on the asymptotic expression
of the third critical field $H_{C_3}$ (see Theorems~\ref{thm:HC3}
and \ref{thm:HC3-vr}).

We focus on the regime of large values of $\kappa$, $\kappa\rightarrow+\infty$ and we study the ground state energy defined as follows,
\begin{equation}\label{eq-2D-gs}
\E0(\kappa,H,a,B_{0})=\inf\big\{ \mathcal
E_{\kappa,H,a,B_{0}}(\psi,\Ab)~:~(\psi,\Ab)\in H^1(\Omega;\C)\times
H^1_{\Div}(\Omega)\big\}\,.
\end{equation}
More precisely, we give an asymptotic estimate which is valid in the simultaneous limit $\kappa\longrightarrow+\infty$ and $H(\kappa)\longrightarrow+\infty$ with the constraint that $\frac{H(\kappa)}{\kappa}$ remains asymptotically of uniform size, that is satisfying
\begin{equation}\label{cond-H}
\lambda_{\min}\leq \frac{H(\kappa)}{\kappa}\leq\lambda_{\max}\qquad(\kappa\geq\kappa_{0})\,,
\end{equation}
where $\lambda_{\min},\,\lambda_{\rm max}$ are positive constants such that $\lambda_{\min}<\lambda_{\max}$.\\
The behavior of $\E0(\kappa,H,a,B_{0})$ involves a  function $\hat{f}:[0,+\infty)\longrightarrow[0,\frac{1}{2}]$ introduced in \cite[Theorem~2.1]{KA2}. The function $\hat{f}$ is increasing, continuous and $\hat{f}(b)=\frac{1}{2}$, for all $b\geq 1$.

\begin{thm}\label{thm-2D-main}
Suppose that Assumption~\ref{assumption} and \eqref{cond-H} hold,
and
\begin{equation}
L(\kappa)= \mathcal O(\kappa^{\frac{1}{2}})\qquad{\rm as}~\kappa \rightarrow +\infty\,.
\end{equation}
 The ground state energy in \eqref{eq-2D-gs} satisfies
\begin{multline}\label{eq-2D-thm}
 \E0(\kappa,H,a,B_{0})=\kappa^{2}\int_{\{a(x,\kappa)>0\}}a(x,\kappa)^{2}\,\hat{f}\left(\frac{H}{\kappa}\frac{|B_{0}(x)|}{a(x,\kappa)}\right)\,dx\\
 +\frac{\kappa^{2}}{2}\int_{\{a(x,\kappa)\leq 0\}}a(x,\kappa)^{2}\,dx+\textit{o}\left(\kappa^{2}\right)\,,\qquad{\rm as}~\kappa\longrightarrow+\infty\,.
\end{multline}
\end{thm}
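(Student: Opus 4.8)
The plan is to prove the asymptotic formula in \eqref{eq-2D-thm} by establishing matching upper and lower bounds for $\E0(\kappa,H,a,B_{0})$, using the local model function $\hat f$ from \cite{KA2} together with a partition of $\Omega$ into small squares on which both $a(\cdot,\kappa)$ and $B_0$ are nearly constant. Recall that $\hat f(b)$ is (up to normalization) the ground state energy per unit area of the Ginzburg--Landau functional on a large square with constant magnetic field $b$ and pinning constant $1$; by scaling, a square with constant magnetic intensity $B_0$, GL parameter $\kappa$, field strength $H$ and pinning constant $\alpha>0$ contributes, to leading order, $\kappa^2\alpha^2\hat f\!\left(\tfrac{H}{\kappa}\tfrac{|B_0|}{\alpha}\right)$ times its area, while on a region where $a\le 0$ the potential term $\tfrac{\kappa^2}{2}(a-|\psi|^2)^2$ is minimized (to leading order) by $\psi\equiv0$, giving the contribution $\tfrac{\kappa^2}{2}a^2$ times the area.

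\textbf{Upper bound.} First I would fix $\ell=\ell(\kappa)\to 0$ with $\kappa\ell\to\infty$ (e.g.\ $\ell\sim\kappa^{-\rho}$ for suitable $\rho\in(0,1)$), tile $\Omega$ by squares $Q_j$ of side $\ell$, and in each square where $a>0$ insert the (rescaled, gauge-transformed) minimizer of the constant-coefficient model problem associated with the frozen values $a(x_j,\kappa)$, $B_0(x_j)$, cut off near $\partial Q_j$ so that the global test configuration lies in $H^1(\Omega;\C)\times H^1_{\Div}(\Omega)$; in squares where $a\le 0$, take $\psi\equiv 0$. The magnetic potential is taken to be $\Fb$ plus a small correction, so the last term $\kappa^2H^2\int|\curl\Ab-B_0|^2$ is controlled. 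Using $(A_3)$ and the hypothesis $L(\kappa)=\mathcal O(\kappa^{1/2})$ one replaces $a(x,\kappa)$ by $a(x_j,\kappa)$ on $Q_j$ with an error $\mathcal O(L(\kappa)\ell)=o(1)$ pointwise, hence $o(\kappa^2)$ after integrating over $\Omega$ and summing; the cut-off/boundary-layer errors near the $\mathcal O(\ell^{-2})$ square boundaries are $\mathcal O(\ell^{-1})$ in total length times $\mathcal O(\kappa)$ local energy density contributions, which must be arranged to be $o(\kappa^2)$ by the choice of $\ell$. Finally the continuity of $\hat f$ and of $b\mapsto \alpha^2\hat f(b/\alpha)$, together with $(A_1)$--$(A_2)$, lets one pass from the Riemann sum $\sum_j \kappa^2 a(x_j,\kappa)^2\hat f(\cdots)\ell^2$ to the integral in \eqref{eq-2D-thm}, uniformly in $\kappa$.

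\textbf{Lower bound.} Here I would start from a minimizer $(\psi,\Ab)$, use the a priori estimates (standard for \eqref{eq-2D-GLeq}: $\|\psi\|_\infty\le \sqrt{\overline a}+o(1)$, and closeness of $\curl\Ab$ to $B_0$ in $L^2$ with quantitative rate, so that $\kappa H\Ab$ is close to $\kappa H\Fb$ in a suitable norm) to reduce, up to $o(\kappa^2)$ errors, to the functional with $\Ab$ replaced by $\Fb$. Then localize via a partition of unity subordinate to the same grid of squares $Q_j$ (IMS-type formula), absorbing the localization error $\sum_j\|\nabla\chi_j\|_\infty^2\int_{Q_j}|\psi|^2=\mathcal O(\ell^{-2})\cdot\mathcal O(\kappa^2\ell^2\cdot\overline a)$ --- which again forces a careful balance in the choice of $\ell$, but is $o(\kappa^2)$ with room to spare. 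On each $Q_j$, freeze $a$ and $B_0$ as above (error $o(\kappa^2)$ via $(A_3)$ and $L(\kappa)=\mathcal O(\kappa^{1/2})$) and bound the frozen local energy from below by $\kappa^2 a(x_j,\kappa)^2\hat f(\cdots)\ell^2$ when $a(x_j,\kappa)>0$, resp.\ $\tfrac{\kappa^2}{2}a(x_j,\kappa)^2\ell^2$ when $a(x_j,\kappa)\le 0$, using the definition of $\hat f$ as an infimum (after the standard rescaling $x\mapsto \sqrt{\kappa H|B_0(x_j)|}\,x$ on $Q_j$, assuming $B_0(x_j)\neq 0$; the squares meeting $\Gamma$ are handled separately as a negligible set). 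The key point for handling the transition between $\{a>0\}$ and $\{a\le 0\}$ is assumption $(A_4)$: the squares straddling $\partial\{a(\cdot,\kappa)>0\}$ number at most $\mathcal O(\mathcal L(\partial\{a>0\})/\ell)=\mathcal O(\kappa^{1/2}/\ell)$, each contributing at most $\mathcal O(\kappa^2\ell^2)$, a total of $\mathcal O(\kappa^{5/2}\ell)$, which is $o(\kappa^2)$ precisely when $\ell\ll\kappa^{-1/2}$; so one must also require $\ell\ll\kappa^{-1/2}$ while keeping $\kappa\ell\to\infty$, i.e.\ $\ell$ between $\kappa^{-1}$ and $\kappa^{-1/2}$. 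Summing the frozen local lower bounds and recognizing the Riemann sum gives the desired inequality.

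\textbf{Main obstacle.} The hardest part is the simultaneous calibration of the scale $\ell(\kappa)$ so that \emph{all} error terms are $o(\kappa^2)$: the boundary-layer/cut-off and IMS-localization errors push $\ell$ to be not too small (need $\kappa\ell\to\infty$, and the localization error $\mathcal O(\kappa^2\overline a)\cdot o(1)$ asks for $\ell^{-2}\ll \kappa^2$, automatically true), while the $(A_4)$-controlled ``interface'' squares and the effect of the oscillation of $a$ push $\ell$ to be small ($\ell\ll\kappa^{-1/2}$ for the interface count, and $L(\kappa)\ell=\mathcal O(\kappa^{1/2}\ell)\to 0$ for the freezing of $a$, also $\ell\ll\kappa^{-1/2}$). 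Showing these windows are nonempty and that the model-function estimates from \cite{KA2} apply uniformly over the (bounded, by $(A_2)$) range of frozen parameters $a(x_j,\kappa)\in[\underline a,\overline a]$ and $|B_0(x_j)|$ --- with the degenerate locus $\Gamma$ contributing only $o(\kappa^2)$ thanks to \eqref{B(x)} --- is where the real work lies; the rest is bookkeeping of Riemann sums and standard elliptic a priori estimates for \eqref{eq-2D-GLeq}.
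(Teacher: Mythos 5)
Your proposal follows essentially the same strategy as the paper: tile $\Omega$ by squares of side $\ell(\kappa)$ with $\kappa^{-1}\ll\ell\ll\kappa^{-1/2}$ (the paper takes $\ell=\kappa^{-7/12}$), freeze $a$ and $B_0$ on each square, reduce to the constant-coefficient model problem whose energy density is given by $\hat f$, discard the $\mathcal O(\kappa^{1/2}\ell^{-1})$ interface squares via $(A_4)$ and the squares near $\Gamma$ and $\partial\Omega$, and recognize Riemann sums. Your error bookkeeping for the interface squares, the freezing of $a$ via $L(\kappa)\ell\to 0$, and the window for $\ell$ all match the paper's Applications 3.4 and 5.3. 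Two points of divergence are worth recording. First, for the upper bound the paper needs no cut-off: the model minimizer $\widetilde u_R$ is taken in $H^1_0(Q_R)$ (Dirichlet energy $e_D$), so it extends by zero, and the test magnetic potential is exactly $\Fb$. Second, and more substantively, for the lower bound the paper does \emph{not} use an IMS partition of unity; it restricts the minimizer $\psi$ to each square and bounds that restriction below by the \emph{Neumann} ground-state energy $e_N(b,R,\alpha)$ of the model problem, then invokes $e_N\geq e_D-C_M\alpha^2R(b/\alpha)^{1/2}$ (Proposition~\ref{pro-f(b)}). This avoids two issues with your route: (i) the quartic term $\frac{\kappa^2}{2}(a-|\chi_j\psi|^2)^2$ does not decompose cleanly under $\sum\chi_j^2=1$ on the overlap regions, which needs a separate (standard but nontrivial) argument; and (ii) your displayed localization error ``$\mathcal O(\ell^{-2})\cdot\mathcal O(\kappa^2\ell^2\overline a)$'' is miscomputed -- as written it is $\mathcal O(\kappa^2)$, not $o(\kappa^2)$; the correct total is $\sum_j\|\nabla\chi_j\|_\infty^2\int_{Q_j}|\psi|^2=\mathcal O(\ell^{-2})=o(\kappa^2)$, so your conclusion survives but not by the computation you wrote. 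Finally, the ``suitable norm'' in which $\Ab$ is close to $\Fb$ must be H\"older ($C^{0,\beta}$, obtained from the $H^2$ bound $\|\Ab-\Fb\|_{H^2}\leq C/H$ and Sobolev embedding), since one needs a pointwise gauge-corrected approximation on each square; an $L^2$ bound on $\curl(\Ab-\Fb)$ alone does not suffice.
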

When $\Omega\cap\{a(x,\kappa)>0\}=\varnothing$, we obtain directly from \eqref{eq-2D-gs}
$$
\mathcal E_{\kappa,H,a,B_{0}}(\psi,\Ab)\geq\frac{\kappa^{2}}{2}\int_{\Omega}a(x,\kappa)^{2}\,dx = \mathcal E_{\kappa,H,a,B_{0}}(0,\Fb)\,.
$$
Hence the minimizer of $\mathcal E_{\kappa,H,a,B_{0}}$  is the normal state. In physical terms, this case corresponds to the case when we are above the critical temperature.

We will describe later  cases when the remainder term in \eqref{eq-2D-thm} is indeed small compared with the leading order term (see Section~\ref{examples}).

The assumptions in Theorem~\ref{thm-2D-main} contain the case when
the function $a$ is constant and equals~$1$, which was proved in
\cite{KA}  under Assumption~\eqref{cond-H}.

Along the proof of Theorem~\ref{thm-2D-main}, we obtain an estimate
of the `magnetic energy' as follows:

\begin{corol}\label{corol-2D-main}
Under the assumptions of Theorem~\ref{thm-2D-main}, we have
\begin{equation}
(\kappa H)^2\int_{\Omega}|\curl\Ab-B_0|^2\,dx=\textit{o}(\kappa^{2})\,,\qquad{\rm as}~\kappa\longrightarrow+\infty\,.
\end{equation}
\end{corol}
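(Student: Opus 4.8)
The plan is to extract the magnetic energy bound directly from the energy upper and lower bounds that are established in the course of proving Theorem~\ref{thm-2D-main}. The key observation is that the leading order term in \eqref{eq-2D-thm},
\[
\mathcal{M}(\kappa) := \kappa^{2}\int_{\{a(x,\kappa)>0\}}a(x,\kappa)^{2}\,\hat{f}\!\left(\tfrac{H}{\kappa}\tfrac{|B_{0}(x)|}{a(x,\kappa)}\right)dx+\frac{\kappa^{2}}{2}\int_{\{a(x,\kappa)\leq 0\}}a(x,\kappa)^{2}\,dx\,,
\]
is reproduced, up to $o(\kappa^{2})$ error, by a test configuration $(\psi_{\mathrm{tr}},\Fb)$ whose vector potential is exactly $\Fb$, i.e.\ with $\curl\Fb = B_{0}$ and hence \emph{zero} magnetic energy. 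This test configuration is the one used to prove the upper bound in Theorem~\ref{thm-2D-main}; therefore we already know $\E0(\kappa,H,a,B_{0}) \le \mathcal{M}(\kappa) + o(\kappa^{2})$.

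First I would take an actual minimizer $(\psi,\Ab)$ of $\mathcal E_{\kappa,H,a,B_{0}}$. By nonnegativity of every term in \eqref{eq-2D-GLf}, we have
\[
(\kappa H)^2\int_{\Omega}|\curl\Ab-B_0|^2\,dx \;\le\; \mathcal E_{\kappa,H,a,B_{0}}(\psi,\Ab) - \int_\Omega\!\Big(|(\nabla-i\kappa H\Ab)\psi|^2+\tfrac{\kappa^2}{2}(a-|\psi|^2)^2\Big)dx\,.
\]
The left term on the right-hand side equals $\E0(\kappa,H,a,B_{0}) = \mathcal{M}(\kappa) + o(\kappa^{2})$ by Theorem~\ref{thm-2D-main}. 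So it remains to show that the integral being subtracted is \emph{at least} $\mathcal{M}(\kappa) + o(\kappa^{2})$, i.e.\ to produce a matching \emph{lower} bound for the non-magnetic part of the energy evaluated at the minimizer. This is precisely the content of the lower-bound half of the proof of Theorem~\ref{thm-2D-main}: there one shows, by a localization/partition argument reducing to the local models governed by $\hat f$ (as in \cite{KA2,KA}), that
\[
\int_\Omega\!\Big(|(\nabla-i\kappa H\Ab)\psi|^2+\tfrac{\kappa^2}{2}(a-|\psi|^2)^2\Big)dx \;\ge\; \mathcal{M}(\kappa) + o(\kappa^{2})\,,
\]
and in fact that proof only uses the bound $\curl\Ab = B_0 + o(1)$ in suitable averaged norms together with $H^1$-estimates on $\Ab - \Fb$, which are themselves consequences of the a priori estimates on solutions of \eqref{eq-2D-GLeq}.

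Combining the three displays gives
\[
(\kappa H)^2\int_{\Omega}|\curl\Ab-B_0|^2\,dx \;\le\; \big(\mathcal{M}(\kappa)+o(\kappa^2)\big) - \big(\mathcal{M}(\kappa)+o(\kappa^2)\big) = o(\kappa^{2})\,,
\]
which is the claim. The main obstacle is purely bookkeeping: one must make sure that the $o(\kappa^2)$ error terms appearing in the upper bound (from the test function) and in the lower bound (from the localization argument) are genuinely independent of $\Ab$ and do not secretly absorb a fraction of the magnetic energy — in other words, that the lower bound for the condensation energy is proved \emph{without} first discarding the magnetic term. Since the structure of the proof of Theorem~\ref{thm-2D-main} establishes the lower bound via local estimates that tolerate an $o(1)$ perturbation of the magnetic field, this independence holds, and the corollary follows as a direct byproduct rather than requiring a separate argument.
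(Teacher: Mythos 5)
Your proposal is correct and follows essentially the same route as the paper: one writes $\E0=\mathcal E_0(\psi,\Ab;a,\Omega)+(\kappa H)^2\int_\Omega|\curl(\Ab-\Fb)|^2\,dx$, uses Theorem~\ref{thm-2D-main} for the total energy and the lower bound \eqref{fianl-E01} (with $\mathcal D=\Omega$, $h=1$) for the non-magnetic part, and subtracts. Your caveat about the lower bound not discarding the magnetic term is exactly resolved by the fact that Theorem~\ref{lw-Eg} bounds the local energy $\mathcal E_0$ itself from below, so the argument closes as you describe.
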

If $\mathcal D$ is a domain in $\Omega$,
we introduce the local energy in $\mathcal D$ of $(\psi,\Ab) \in H^1(\Omega;\C)\times H^1_{\Div}(\Omega)$ by:
\begin{equation}\label{eq-GLe0}
\mathcal E_{0}(\psi,\Ab;a,\mathcal{D})=\int_{\mathcal{D}}|(\nabla -i\kappa
H\Ab)\psi|^2\,dx+\frac{\kappa^{2}}{2}\int_{\mathcal{D}}(a(x,\kappa)-|\psi|^{2})^2\,dx\,.
\end{equation}

The next theorem gives an estimate of the local energy $\mathcal E_{0}(\psi,\Ab;a,\mathcal{D})$.
\begin{theorem}\label{lc-en}
Under the assumptions of Theorem~\ref{thm-2D-main}, if $(\psi,\Ab)$
is a minimizer of \eqref{eq-2D-GLf} and $\mathcal{D}$ is regular set
such that $\mathcal{\overline{D}}\subset\Omega$, then
\begin{multline}\label{eq-lc-en}
\mathcal E_{0}(\psi,\Ab;a,\mathcal{D})=\kappa^{2}\int_{\mathcal{D}\cap\{a(x,\kappa)>0\}}a(x,\kappa)^{2}\,\hat{f}\left(\frac{H}{\kappa}\frac{|B_{0}(x)|}{a(x,\kappa)}\right)\,dx\\
 +\frac{\kappa^{2}}{2}\int_{\mathcal{D}\cap\{a(x,\kappa)\leq 0\}}a(x,\kappa)^{2}\,dx+\textit{o}\left(\kappa^{2}\right)\,,\qquad{\rm as}~\kappa\longrightarrow+\infty\,.
\end{multline}
\end{theorem}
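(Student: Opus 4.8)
The plan is to obtain \eqref{eq-lc-en} by combining a lower bound and an upper bound for $\mathcal E_0(\psi,\Ab;a,\mathcal D)$, both ultimately reducing to the same local model problem governed by the function $\hat f$. The essential point is that Theorem~\ref{thm-2D-main} already furnishes the \emph{global} asymptotics of $\E0(\kappa,H,a,B_0)$, and Corollary~\ref{corol-2D-main} tells us the magnetic field $\curl\Ab$ is close to $B_0$ in $L^2$; the task is to localize this information to an interior regular set $\mathcal D$.

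\emph{Upper bound.} For the upper bound on the local energy one argues by subtracting energies. Write $\Omega=\mathcal D\cup(\Omega\setminus\overline{\mathcal D})$ (up to the boundary $\partial\mathcal D$, which has measure zero). Applying Theorem~\ref{thm-2D-main} with $\Omega$ replaced by the interior regular set $\Omega\setminus\overline{\mathcal D}$ — or rather, applying the \emph{lower bound} half of that theorem's proof locally on $\Omega\setminus\overline{\mathcal D}$ — gives
\begin{equation*}
\mathcal E_0(\psi,\Ab;a,\Omega\setminus\overline{\mathcal D})\geq \kappa^2\!\!\int_{(\Omega\setminus\overline{\mathcal D})\cap\{a>0\}}\!\! a^2\,\hat f\Bigl(\tfrac{H}{\kappa}\tfrac{|B_0|}{a}\Bigr)dx+\tfrac{\kappa^2}{2}\!\!\int_{(\Omega\setminus\overline{\mathcal D})\cap\{a\le 0\}}\!\! a^2\,dx+\textit{o}(\kappa^2).
\end{equation*}
Since $\mathcal E_{\kappa,H,a,B_0}(\psi,\Ab)=\mathcal E_0(\psi,\Ab;a,\mathcal D)+\mathcal E_0(\psi,\Ab;a,\Omega\setminus\overline{\mathcal D})+(\kappa H)^2\int_\Omega|\curl\Ab-B_0|^2\,dx$, and the left-hand side equals $\E0(\kappa,H,a,B_0)$ (which by Theorem~\ref{thm-2D-main} has the stated global value), and the magnetic term is $\textit{o}(\kappa^2)$ by Corollary~\ref{corol-2D-main}, subtracting the local lower bound on $\Omega\setminus\overline{\mathcal D}$ from the global asymptotics yields the desired upper bound on $\mathcal E_0(\psi,\Ab;a,\mathcal D)$.

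\emph{Lower bound.} Symmetrically, one needs the matching lower bound on $\mathcal E_0(\psi,\Ab;a,\mathcal D)$ directly. This is where the genuine work lies: one partitions $\mathcal D$ into small squares (or curvilinear cells) of side length $\ell=\ell(\kappa)\to 0$ with $\kappa H\ell^2\to\infty$, on each of which $B_0$ and $a(\cdot,\kappa)$ are nearly constant (using $(A_1)$, $(A_3)$, and $L(\kappa)=\mathcal O(\kappa^{1/2})$ to control oscillations, and using $(A_4)$ to control the number of cells meeting $\partial\{a>0\}$, whose contribution must be shown negligible). On each good cell, after a gauge transformation replacing $\kappa H\Ab$ by $\kappa H B_0(x_j)\mathbf A_0$ up to an error controlled again by Corollary~\ref{corol-2D-main}, the energy is bounded below by the local reference energy whose asymptotics are given by the function $\hat f$ from \cite[Theorem~2.1]{KA2} (or \cite{KA}); summing over cells and passing to the limit produces the integral $\kappa^2\int_{\mathcal D\cap\{a>0\}}a^2\hat f(\frac{H}{\kappa}\frac{|B_0|}{a})\,dx$, while on $\{a\le 0\}$ one uses $(a-|\psi|^2)^2\ge a^2$ pointwise after discarding the kinetic term. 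The main obstacle is precisely this localization argument: ensuring that the cell-decomposition errors — from the variation of $a$ and $B_0$ across cells, from the boundary layer near $\partial\{a>0\}$, from the gauge replacement, and from cells straddling $\partial\mathcal D$ — are all $\textit{o}(\kappa^2)$ uniformly, which requires a careful choice of $\ell(\kappa)$ balanced against $L(\kappa)=\mathcal O(\kappa^{1/2})$ and the length bound in $(A_4)$. Much of this machinery is presumably already developed in the proof of Theorem~\ref{thm-2D-main}, so the proof of Theorem~\ref{lc-en} should mostly consist of re-running those localized estimates on $\mathcal D$ rather than on all of $\Omega$, together with the subtraction argument above for the upper bound.
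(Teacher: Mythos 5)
Your proposal is correct and follows essentially the same route as the paper: the lower bound on $\mathcal E_0(\psi,\Ab;a,\mathcal D)$ comes from the localized cell-decomposition estimate (the paper's Theorem~\ref{lw-Eg}, already established in the proof of Theorem~\ref{thm-2D-main}), and the upper bound is obtained by writing $\mathcal E_0(\psi,\Ab;a,\mathcal D)=\mathcal E_0(\psi,\Ab;a,\Omega)-\mathcal E_0(\psi,\Ab;a,\Omega\setminus\overline{\mathcal D})$ and applying the same lower bound to the complement together with the global asymptotics (with the magnetic term controlled as in Corollary~\ref{corol-2D-main}). This is exactly the subtraction argument the paper uses.
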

Theorem~\ref{lc-en} will be useful in the proof of the next theorem which gives the asymptotic  behavior of the order parameter $\psi$, when $(\psi,\Ab)$ is a global minimizer.
\begin{theorem}\label{est-psi-main}
Under the assumptions of Theorem~\ref{thm-2D-main}, if $(\psi,\Ab)$ is a
minimizer of \eqref{eq-2D-GLf} and $\mathcal{D}$ is a regular set such that $\overline{\mathcal{D}}\subset \Omega$, then
\begin{equation}\label{est-psi-D}
\int_{\mathcal{D}}|\psi(x)|^{4}\,dx=-\int_{\mathcal{D} \cap\{a(x,\kappa)>0\}} a(x,\kappa)^{2}\left\{2\hat{f}\left(\frac{H}{\kappa}\frac{|B_{0}(x)|}{a(x,\kappa)}\right)-1\right\}\,dx+\textit{o}\left(1\right)\,,\quad{\rm as}~\kappa\longrightarrow+\infty\,.
\end{equation}
\end{theorem}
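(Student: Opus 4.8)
The plan is to convert the local energy expansion of Theorem~\ref{lc-en} into the desired estimate on $\int_{\mathcal D}|\psi|^{4}$ by exploiting the identity obtained from testing the Ginzburg--Landau equation $\eqref{eq-2D-GLeq}_{a}$ against $\overline\psi$. Since $\overline{\mathcal D}\subset\Omega$, a direct computation produces an uncontrolled boundary term on $\partial\mathcal D$; to avoid it I would test instead against $\chi^{2}\psi$, where $\chi\in C_{c}^{\infty}(\Omega)$, $0\le\chi\le1$, $\chi\equiv1$ on $\mathcal D$, $\supp\chi\subset\mathcal D_{\epsilon}^{+}:=\{x\in\Omega:\dist(x,\overline{\mathcal D})<\epsilon\}$ and $|\nabla\chi|\le C/\epsilon$, keeping the collar width $\epsilon\in(0,\epsilon_{0})$ as a free parameter to be sent to $0$ \emph{after} $\kappa\to+\infty$. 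Throughout I would use two a priori facts: (i) $\|\psi\|_{L^{\infty}(\Omega)}\le C$, which follows from the maximum principle applied to $|\psi|^{2}$ via $\tfrac12\Delta|\psi|^{2}=|(\nabla-i\kappa H\Ab)\psi|^{2}-\kappa^{2}(a-|\psi|^{2})|\psi|^{2}$ together with the Neumann condition $\eqref{eq-2D-GLeq}_{c}$ (so $|\psi|^{2}\le\overline a$; if $\overline a\le0$ then $\{a(x,\kappa)>0\}=\varnothing$, $\psi\equiv0$, and the statement is trivial, so I may assume $\overline a>0$); and (ii) $\int_{\Omega}|(\nabla-i\kappa H\Ab)\psi|^{2}\le\mathcal E_{\kappa,H,a,B_{0}}(\psi,\Ab)\le\mathcal E_{\kappa,H,a,B_{0}}(0,\Fb)=\tfrac{\kappa^{2}}{2}\int_{\Omega}a^{2}=\mathcal O(\kappa^{2})$, because $(\psi,\Ab)$ is a minimizer and $a$ is bounded by $(A_{2})$.

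Testing the weak form of $\eqref{eq-2D-GLeq}_{a}$ against $\chi^{2}\psi$, writing $(\nabla-i\kappa H\Ab)(\chi^{2}\psi)=2\chi\nabla\chi\,\psi+\chi^{2}(\nabla-i\kappa H\Ab)\psi$ and taking real parts, I would get the exact identity
\[
\kappa^{2}\!\int_{\Omega}\chi^{2}|\psi|^{4}=\kappa^{2}\!\int_{\Omega}\chi^{2}a|\psi|^{2}-\int_{\Omega}\chi^{2}|(\nabla-i\kappa H\Ab)\psi|^{2}-2\RE\!\int_{\Omega}\chi\,\overline\psi\,\nabla\chi\cdot(\nabla-i\kappa H\Ab)\psi .
\]
Using this to eliminate $\kappa^{2}\int_{\Omega}\chi^{2}a|\psi|^{2}$ from the expansion of the $\chi$-weighted energy $\mathcal E_{0}^{\chi}:=\int_{\Omega}\chi^{2}|(\nabla-i\kappa H\Ab)\psi|^{2}+\tfrac{\kappa^{2}}{2}\int_{\Omega}\chi^{2}(a-|\psi|^{2})^{2}$, the kinetic terms cancel and I am left with
\[
\int_{\Omega}\chi^{2}|\psi|^{4}=\int_{\Omega}\chi^{2}a^{2}-\frac{2}{\kappa^{2}}\,\mathcal E_{0}^{\chi}-\frac{4}{\kappa^{2}}\RE\!\int_{\Omega}\chi\,\overline\psi\,\nabla\chi\cdot(\nabla-i\kappa H\Ab)\psi .
\]
By Cauchy--Schwarz together with (i), (ii), $|\nabla\chi|\le C/\epsilon$ and $|\supp\nabla\chi|\le|\mathcal D_{\epsilon}^{+}\setminus\mathcal D|\le C\epsilon$, the last term is $\mathcal O(\kappa^{-1}\epsilon^{-1/2})$, which for each fixed $\epsilon$ tends to $0$ as $\kappa\to+\infty$.

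It then remains to feed in Theorem~\ref{lc-en}. Since $0\le\chi\le1$ with $\chi\equiv1$ on $\mathcal D$, one has $\mathcal E_{0}(\psi,\Ab;a,\mathcal D)\le\mathcal E_{0}^{\chi}\le\mathcal E_{0}(\psi,\Ab;a,\mathcal D_{\epsilon}^{+})$, and $\mathcal D_{\epsilon}^{+}$ is, for small $\epsilon$, a regular set with $\overline{\mathcal D_{\epsilon}^{+}}\subset\Omega$. Applying Theorem~\ref{lc-en} to the two \emph{fixed} sets $\mathcal D$ and $\mathcal D_{\epsilon}^{+}$ and bounding the contribution of $\mathcal D_{\epsilon}^{+}\setminus\mathcal D$ by $\mathcal O(\epsilon\kappa^{2})$ (using $|a|\le C$, $0\le\hat f\le\tfrac12$), I would get $\tfrac{2}{\kappa^{2}}\mathcal E_{0}^{\chi}=2\int_{\mathcal D\cap\{a(x,\kappa)>0\}}a^{2}\hat f\big(\tfrac{H}{\kappa}\tfrac{|B_{0}|}{a}\big)+\int_{\mathcal D\cap\{a(x,\kappa)\le0\}}a^{2}+\mathcal O(\epsilon)+o(1)$. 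Substituting this into the previous display, replacing $\int_{\Omega}\chi^{2}|\psi|^{4}$ and $\int_{\Omega}\chi^{2}a^{2}$ by $\int_{\mathcal D}|\psi|^{4}$ and $\int_{\mathcal D}a^{2}$ up to $\mathcal O(\epsilon)$ (again by (i) and $|a|\le C$), and using $\int_{\mathcal D}a^{2}=\int_{\mathcal D\cap\{a>0\}}a^{2}+\int_{\mathcal D\cap\{a\le0\}}a^{2}$, I would reach
\[
\int_{\mathcal D}|\psi|^{4}=-\!\int_{\mathcal D\cap\{a(x,\kappa)>0\}}\!a(x,\kappa)^{2}\Big\{2\hat f\Big(\tfrac{H}{\kappa}\tfrac{|B_{0}(x)|}{a(x,\kappa)}\Big)-1\Big\}\,dx+\mathcal O(\epsilon)+o(1)+\mathcal O(\kappa^{-1}\epsilon^{-1/2}).
\]
Taking $\limsup_{\kappa\to+\infty}$ of the difference of the two sides bounds it by $C\epsilon$, and then letting $\epsilon\to0$ gives \eqref{est-psi-D}.

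The step I expect to be the main obstacle is controlling the localization error, i.e. the cross term $\tfrac{4}{\kappa^{2}}\RE\int\chi\overline\psi\,\nabla\chi\cdot(\nabla-i\kappa H\Ab)\psi$ and the $\mathcal O(\epsilon)$ discrepancies: none of these is small at a single value of $\epsilon$, so the argument has to be organized as a genuine iterated limit ($\kappa\to+\infty$ first, $\epsilon\to0$ afterwards), and one must keep $\epsilon$ frozen when invoking Theorem~\ref{lc-en} so that its $o(\kappa^{2})$ remainder is legitimate. The auxiliary facts (i)--(ii) and the regularity of $\mathcal D_{\epsilon}^{+}$ for small $\epsilon$ are standard but should be recorded explicitly.
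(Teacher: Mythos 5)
Your proposal is correct. The core identity is the same as the paper's (multiply $\eqref{eq-2D-GLeq}_{a}$ by a cut-off squared times $\psi$ and integrate by parts), but the organization is genuinely different. The paper proceeds in three steps: it first treats $\mathcal D=\Omega$ exactly, using \eqref{eq:A-psi} and the global energy asymptotics \eqref{eq-2D}; it then obtains only an \emph{upper} bound for $\int_{\mathcal D}|\psi|^{4}$ via an \emph{inner} cut-off $\chi_{\ell}$ supported in $\mathcal D$ with width $\ell=\kappa^{-7/12}$ tied to $\kappa$, discarding the nonnegative term $\kappa^{2}\int(\chi_{\ell}^{2}-\chi_{\ell}^{4})|\psi|^{4}$ and invoking only the one-sided lower bound \eqref{fianl-E01}; and it finally recovers the matching \emph{lower} bound through the complement decomposition $\int_{\mathcal D}=\int_{\Omega}-\int_{\overline{\mathcal D}^{c}}$. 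You instead take an \emph{outer} cut-off equal to $1$ on $\mathcal D$ and supported in $\mathcal D_{\epsilon}^{+}$, retain the cross term exactly, and sandwich the weighted energy $\mathcal E_{0}^{\chi}$ between $\mathcal E_{0}(\psi,\Ab;a,\mathcal D)$ and $\mathcal E_{0}(\psi,\Ab;a,\mathcal D_{\epsilon}^{+})$, to which the two-sided Theorem~\ref{lc-en} applies; this yields both bounds in one stroke at the price of an iterated limit ($\kappa\to+\infty$ first, then $\epsilon\to0$). This is legitimate since Theorem~\ref{lc-en} is established beforehand — in effect the paper's complement trick is absorbed into that theorem's proof rather than repeated here. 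Your supporting facts are also available in the paper: the $L^{\infty}$ bound is \eqref{eq-psi<a} (proved there by an energy argument rather than your maximum-principle sketch, but the conclusion is identical), and the kinetic-energy bound $\mathcal O(\kappa^{2})$ is \eqref{2nd-<}. The only points to record explicitly, as you note, are the regularity of $\mathcal D_{\epsilon}^{+}$ for small $\epsilon$ and the fact that the $o(\kappa^{2})$ remainder of Theorem~\ref{lc-en} is used only at fixed $\epsilon$.
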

Formula \eqref{est-psi-D} indicates that $\psi$ is asymptotically localized in the region where $a>0$. When $a(x,\kappa)=1$, Theorem~\ref{est-psi-main} was proved in \cite{KA}.

The techniques that we are going to use here are inspired from those of \cite{KA} and \cite{KA2} (where the case $a=1$ was treated). At a technical level, our proof is slightly different than the proofs in \cite{KA,FK2,SS} since we do not use the uniform elliptic estimates. These important estimates are frequently used in the papers about the Ginzburg-Landau functional (see \cite{FH1}) with a  constant pinning term. They appeared first in \cite{LP} and were then extended to the full regime in \cite{FH2}.

Compared with other papers studying the pinned functional, one
novelty here is that the pinning term has no definite sign, another
one being the consideration of a variable (and a potentially
vanishing) applied magnetic field.

The rest of this paper is devoted to the study of third critical field, i.e. the field above which the normal state $(0,\Fb)$ is the only critical point of the functional in \eqref{eq-2D-GLf},  in the case when the pining term $a$ is independent of $\kappa$ (i.e. $a(x,\kappa)=a(x)$). We define the set:
\begin{equation}\label{def:Ncp}
\mathcal{N}^{\rm cp}(\kappa)=\{H>0: \mathcal E_{\kappa,H,a,B_{0}}~\text{\rm has a non-normal critical point}\}\,.
\end{equation}\label{def:N}
Notice that the above set is bounded (see Theorem~\ref{thm:GP}). We also  introduce the two sets:
\begin{equation}
\mathcal{N}(\kappa)=\{H>0:\mathcal E_{\kappa,H,a,B_{0}}~\text{\rm has a non-normal minimizer}\}\,.
\end{equation}
\begin{equation}\label{def:Nloc}
\mathcal{N}^{\rm loc}(\kappa)=\{H>0:\mu_{1}(\kappa,H)<0\}\,.
\end{equation}
Here, $\mu_{1}(\kappa,H)$ is the ground state energy of the semi-bounded quadratic form
\begin{equation}\label{Quad}
\mathcal{Q}_{\kappa H\Fb,-\kappa^{2}a}^{\Omega}(\phi)=\int_{\Omega}\left(|(\nabla-i\kappa H\Fb)\phi|^{2}-\kappa^{2}\,a(x,\kappa)|\phi|^{2}\right)\,dx\,,
\end{equation}
i.e.
\begin{equation}\label{def:mu1}
\mu_{1}(\kappa,H)=\inf_{\substack{\phi\in H^{1}(\Omega)\\ \phi\neq 0}}\left(\frac{\mathcal{Q}_{\kappa H\Fb,-\kappa^{2}a}^{\Omega}(\phi)}{\|\phi\|^{2}_{L^{2}(\Omega)}}\right)\,.
\end{equation}
Note that $\mu_{1}(\kappa,H)$ is the lowest eigenvalue of $P_{\kappa H\Fb,-\kappa^{2}a}^{\Omega}$. Here, we  refer to \cite{CR,KIH,JPM,XB-KH} for previous contributions.\\
We introduce the following critical fields (cf. e.g.\cite{FH3,LP})\,.
\begin{align}
&\overline{H}_{C_3}^{cp}(\kappa)=\sup\,\mathcal{N}^{cp}(\kappa)\,,\qquad\underline{H}_{C_3}^{cp}(\kappa)=\inf\,(\R_{+}\setminus\mathcal{N}^{cp}(\kappa))\label{def:HC3-o}\,,\\
&\overline{H}_{C_3}(\kappa)=\sup\,\mathcal{N}(\kappa)\,,\qquad\quad\underline{H}_{C_3}(\kappa)=\inf\,(\R_{+}\setminus\mathcal{N}(\kappa))\,,\label{def:HC3}\\
&\overline{H}_{C_3}^{loc}(\kappa)=\sup\,\mathcal{N}^{loc}(\kappa)\,,\qquad\underline{H}_{C_3}^{loc}(\kappa)=\inf\,(\R_{+}\setminus\mathcal{N}^{loc}(\kappa))\label{def:HC3-u}\,.
\end{align}

Below $\underline{H}_{C_3}$, normal states will loose their stability and  above $\overline{H}_{C_3}$, the normal state is (up to a gauge transformation) the only critical point of the functional in \eqref{eq-2D-GLf}.\\
Our aim is to determine the asymptotics of  all the critical fields as $\kappa\longrightarrow+\infty$. This involves spectral quantities related to  three models depending on $\Gamma$ being empty or not. \\
Let us introduce
$$\displaystyle\Theta_{0}=\inf_{\xi\in\R} \mu(\xi)\,,$$
where $\mu$ is the lowest eigen value of the operator
$$
\mathfrak{h}^{N,\xi}:=-\frac{d^2}{dt^2}+(t+\xi)^2\qquad{\rm in}~L^{2}(\R_+)\,,
$$
subject to the Neumann boundary condition $u'(0)=0$.
\begin{theorem}\label{thm:HC3}
Suppose that $\Gamma=\{x\in\Omega: B_{0}(x)=0\}=\varnothing$  and  that $a\in C^{1}(\overline{\Omega})$ satisfies  $\{a>0\}\neq\varnothing$. Then, as $\kappa\longrightarrow+\infty$, all the six critical fields satisfy an asymptotic expansion in the form:
\begin{equation}
H_{C_3}(\kappa)=\max\left(\sup_{x\in\Omega}\frac{a(x)}{|B_{0}(x)|},\sup_{x\in\partial\Omega}\frac{a(x)}{\Theta_{0}|B_{0}(x)|}\right)\,\kappa+\mathcal{O}(\kappa^{\frac{1}{2}})\,.
\end{equation}
\end{theorem}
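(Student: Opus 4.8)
The plan is to sandwich all six critical fields between the two extreme ones and then to pin these down by a linear spectral analysis. Write $c:=\max\big(\sup_{\Omega}\tfrac{a}{|B_{0}|},\,\sup_{\partial\Omega}\tfrac{a}{\Theta_{0}|B_{0}|}\big)$, which is finite and strictly positive because $\Gamma=\varnothing$ forces $\inf_{\overline\Omega}|B_{0}|>0$ and $\{a>0\}\neq\varnothing$, and introduce the effective bottom
\[
V_{\kappa}(H):=\min\Big(\ \inf_{x\in\overline\Omega}\big(\kappa H|B_{0}(x)|-\kappa^{2}a(x)\big),\ \ \inf_{x\in\partial\Omega}\big(\Theta_{0}\kappa H|B_{0}(x)|-\kappa^{2}a(x)\big)\ \Big).
\]
As a minimum of increasing affine functions of $H$ it is increasing, satisfies $V_{\kappa}(H)<0\iff H<c\kappa$, and obeys $V_{\kappa}(H)\ge\Theta_{0}\big(\inf_{\overline\Omega}|B_{0}|\big)\,\kappa(H-c\kappa)$ for $H\ge c\kappa$ and $V_{\kappa}(H)\le-\Theta_{0}\big(\inf_{\overline\Omega}|B_{0}|\big)\,\kappa(c\kappa-H)$ for $H\le c\kappa$. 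The first observation is that if $\mu_{1}(\kappa,H)<0$ then, for a ground state $\phi$ of \eqref{Quad} and $\varepsilon$ small,
\[
\mathcal E_{\kappa,H,a,B_{0}}(\varepsilon\phi,\Fb)=\tfrac{\kappa^{2}}{2}\int_{\Omega}a^{2}+\varepsilon^{2}\mu_{1}(\kappa,H)\|\phi\|_{2}^{2}+O(\varepsilon^{4})<\mathcal E_{\kappa,H,a,B_{0}}(0,\Fb),
\]
so the (existing) minimizer is non-normal, hence a non-normal critical point; thus $\mathcal N^{\rm loc}(\kappa)\subseteq\mathcal N(\kappa)\subseteq\mathcal N^{\rm cp}(\kappa)$ and every one of the six fields lies in $[\,\underline H_{C_{3}}^{\rm loc}(\kappa),\ \overline H_{C_{3}}^{\rm cp}(\kappa)\,]$. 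It therefore suffices to prove $\underline H_{C_{3}}^{\rm loc}(\kappa)\ge c\kappa-C\kappa^{1/2}$ and $\overline H_{C_{3}}^{\rm cp}(\kappa)\le c\kappa+C\kappa^{1/2}$ for some $C$ and all large $\kappa$.

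For the lower bound I would show $\mu_{1}(\kappa,H)<0$ whenever $H\le c\kappa-C\kappa^{1/2}$. Pick a point $x^{*}\in\overline\Omega$ (resp.\ $x^{*}\in\partial\Omega$) realising the minimum that defines $V_{\kappa}(H)$; then $a(x^{*})>0$, and one tests $\mathcal Q^{\Omega}_{\kappa H\Fb,-\kappa^{2}a}$ against the quasimode obtained by transplanting to $x^{*}$, via boundary coordinates and a gauge change, the ground state of the constant-field model — the lowest Landau function when $x^{*}\in\Omega$, the de Gennes half-plane function (which brings in the factor $\Theta_{0}$) when $x^{*}\in\partial\Omega$ — rescaled to the magnetic length $(\kappa H)^{-1/2}$. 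Expanding $B_{0}\in C^{\infty}$ and $a\in C^{1}$ at $x^{*}$ and using $\|\nabla B_{0}\|_{\infty},\|\nabla a\|_{\infty}=O(1)$ gives $\mathcal Q^{\Omega}_{\kappa H\Fb,-\kappa^{2}a}(\phi)=\big(V_{\kappa}(H)+o(|V_{\kappa}(H)|)\big)\|\phi\|_{2}^{2}$ on this range, and since there $V_{\kappa}(H)\le-C'\kappa^{3/2}$, we get $\mu_{1}(\kappa,H)<0$. Hence $[0,c\kappa-C\kappa^{1/2})\subseteq\mathcal N^{\rm loc}(\kappa)$, i.e.\ $\underline H_{C_{3}}^{\rm loc}(\kappa)\ge c\kappa-C\kappa^{1/2}$.

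For the upper bound, suppose $(\psi,\Ab)$ solves \eqref{eq-2D-GLeq} with $\psi\not\equiv0$. The usual a priori estimates give $\|\psi\|_{\infty}\le\sqrt{\overline a}$ (maximum principle in $\eqref{eq-2D-GLeq}_{a}$); testing $\eqref{eq-2D-GLeq}_{a}$ with $\psi$ yields $\|(\nabla-i\kappa H\Ab)\psi\|_{2}\le\kappa\sqrt{\overline a}\,\|\psi\|_{2}$ and $\|\psi\|_{2}\le\sqrt{\overline a\,|\Omega|}$; and $\eqref{eq-2D-GLeq}_{b,d}$ together with the elliptic estimates for the $\Div$--$\curl$ system gives $\|\curl\Ab-B_{0}\|_{H^{1}(\Omega)}\le\frac{C}{\kappa H}\|\psi\|_{\infty}\|(\nabla-i\kappa H\Ab)\psi\|_{2}=O(\kappa^{-1})$, hence $\|\curl\Ab-B_{0}\|_{L^{p}(\Omega)}=O(\kappa^{-1})$ for every $p<\infty$. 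Next I would invoke the semiclassical lower bound for the magnetic Schr\"odinger operator with a vector potential close to $\Fb$ — an IMS partition of $\Omega$ at scale $\kappa^{-2/3}$, comparison of each cell with the pertinent constant-field model (Landau in the bulk, de Gennes with the constant $\Theta_{0}$ along $\partial\Omega$), a Faber--Krahn bound to absorb the small-measure set where $\curl\Ab$ leaves a neighbourhood of $B_{0}$, and collection of the localization, field-oscillation and curvature errors — to obtain
\[
\inf\Spec\big(P^{\Omega}_{\kappa H\Ab,\,-\kappa^{2}a}\big)\ \ge\ V_{\kappa}(H)-C\kappa^{4/3}.
\]
Since pairing $\eqref{eq-2D-GLeq}_{a,c}$ with $\psi$ gives $\langle P^{\Omega}_{\kappa H\Ab,-\kappa^{2}a}\psi,\psi\rangle=-\kappa^{2}\|\psi\|_{4}^{4}\le0$, so $\inf\Spec(P^{\Omega}_{\kappa H\Ab,-\kappa^{2}a})\le0$, the two displays force $V_{\kappa}(H)\le C\kappa^{4/3}$, and then $V_{\kappa}(H)\ge\Theta_{0}\big(\inf_{\overline\Omega}|B_{0}|\big)\kappa(H-c\kappa)$ for $H\ge c\kappa$ yields $H\le c\kappa+C'\kappa^{1/3}\le c\kappa+C'\kappa^{1/2}$. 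Thus no non-normal critical point exists for $H>c\kappa+C'\kappa^{1/2}$, i.e.\ $\overline H_{C_{3}}^{\rm cp}(\kappa)\le c\kappa+C'\kappa^{1/2}$, which closes the sandwich and yields the stated expansion.

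The step I expect to be the genuine obstacle is the uniform semiclassical lower bound for $P^{\Omega}_{\kappa H\Ab,-\kappa^{2}a}$: the a priori bounds only place $\Ab-\Fb$ (equivalently $\curl\Ab-B_{0}$) in $W^{1,p}$ with $p<\infty$, not in $L^{\infty}$, so the cell-by-cell comparison with constant-field models and the treatment of the exceptional set require real care; this is precisely where one must adapt the techniques of \cite{KA,KA2} (which dispense with the uniform elliptic estimates unavailable here), now in the presence of a variable, sign-changing $a$. Everything else is either elementary or a transcription of the linear-spectral and a priori machinery already in place for $a\equiv1$.
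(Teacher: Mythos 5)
Your overall architecture (sandwich all six fields in $[\underline H_{C_3}^{\rm loc},\overline H_{C_3}^{\rm cp}]$, then control the two extremes by the linear analysis of $\mu_1(\kappa,H)$) is exactly the paper's, and the lower-bound half is sound: the quasimode upper bounds for $\mu_1$ (Gaussian in the bulk, de~Gennes at the boundary), the observation that $\mu_1<0$ forces a non-normal minimizer via the trial state $(\varepsilon\phi,\Fb)$, and the resulting $\underline H_{C_3}^{\rm loc}\ge c\kappa-C\kappa^{1/2}$ all match Sections~\ref{Section:4}--\ref{10}. (You should still say a word about the regime $\kappa H=\mathcal O(1)$, where the magnetic-length rescaling degenerates; the paper covers it separately in Lemma~\ref{lem-H=kappa} with a fixed real test function.)

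The genuine gap is in the upper-bound half. Your argument hinges on the unproven inequality $\inf\Spec\big(P^{\Omega}_{\kappa H\Ab,-\kappa^{2}a}\big)\ge V_{\kappa}(H)-C\kappa^{4/3}$ for the minimizer's \emph{own} potential $\Ab$, and you yourself identify this as ``the genuine obstacle'': the a priori bounds give $\Ab-\Fb$ only in $H^{2}\hookrightarrow C^{0,\beta}$ with norm $\mathcal O(H^{-1})$, not the pointwise control of $\curl\Ab-B_{0}$ that a cell-by-cell comparison with constant-field models requires, and the Faber--Krahn treatment of the exceptional set is precisely the uniform-elliptic-estimate machinery this paper deliberately avoids. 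As written, the key lemma is asserted, not proved. The paper never proves a spectral lower bound for $P_{\kappa H\Ab}$ at all: in Theorem~\ref{thm:lb-H} it bounds the quadratic form with $\Ab$ from below by $(1-\sqrt{\top}\,\kappa^{-1})$ times the quadratic form with the \emph{fixed} potential $\Fb$, minus $\frac{(\kappa H)^2}{\sqrt{\top}\kappa^{-1}}\|(\Ab-\Fb)\psi\|_{2}^{2}$, where $\top=\kappa^{2}\|\psi\|_{4}^{4}$; the error terms are then absorbed using the a priori estimates of Theorem~\ref{thm-2D-apriori} together with the boundary-localization estimate $\|\psi\|_{2}^{2}\le C\kappa^{-3/8}\|\psi\|_{4}^{2}$ of Proposition~\ref{lem:psi-2<4}, yielding $-\top\ge\mu_{1}(\kappa,H)\|\psi\|_{2}^{2}-C\kappa^{-3/8}\top$ and hence $\mu_{1}(\kappa,H)<0$, contradicting Proposition~\ref{prop:mu>0}. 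That localization estimate is an ingredient you do not have and would need; alternatively, you would have to supply the full semiclassical lower bound with a rough potential, which is a substantial piece of work and not a transcription of the $a\equiv1$ machinery. (A secondary remark: your route, if completed, would give $\mathcal O(\kappa^{1/3})$ rather than $\mathcal O(\kappa^{1/2})$, which is not wrong but should make you double-check the error bookkeeping in the unproven step.)
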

We introduce
\begin{equation}\label{lambda0}
\lambda_{0}=\inf_{\tau\in \R} \lambda(\tau)\,,
\end{equation}
where $\lambda(\tau)$ is the lowest eigenvalue of the  selfadjoint realization of the differential operator
\begin{equation}\label{defM}
 M(\tau) = -\frac{d^2}{dt^2} +\frac 14  (t^2+2\tau)^2\qquad{\rm in}~L^{2}(\R)\,.
 \end{equation}
We consider, for any $\theta\in(0,\pi)$ the bottom of the spectrum  $\lambda(\R_{+}^{2},\theta)$  of the operator
\begin{equation}\label{def:lambda-theta}
P_{\Ab_{\rm app,\theta},0}^{\R^{2}_{+}}\quad{\rm with}\quad\Ab_{\rm app,\theta}=-\left(\frac{x^{2}_{2}}{2}\cos\,\theta,\frac{x^{2}_{1}}{2}\sin\,\theta \right)\,.
\end{equation}
\begin{theorem}\label{thm:HC3-vr}
Suppose that $\Gamma=\{x: B_{0}(x)=0\}\neq\varnothing$, that
\eqref{B(x)} holds  and  that $a\in C^{1}(\overline{\Omega})$ satisfies  $\{a>0\}\neq\varnothing$. As
$\kappa\longrightarrow+\infty$, the six critical fields in
\eqref{def:HC3-o}-\eqref{def:HC3-u} satisfy the asymptotic
expansion:
$$
H_{C_3}(\kappa)=\max\left(\sup_{x\in\Gamma\cap\overline{\Omega}}\frac{a(x)^{\frac{3}{2}}}{\lambda_{0}^{\frac{3}{2}}|\nabla B_{0}(x)|},\sup_{x\in\Gamma\cap\partial\Omega}\frac{a(x)^{\frac{3}{2}}}{\lambda(\R^{2}_{+},\theta(x))^{\frac{3}{2}}|\nabla B_{0}(x)|}\right)\,\kappa^{2}+\mathcal{O}\left(\kappa^{\frac{7}{4}}\right)\,.
$$
Here $\theta(x)$ denotes the angle between $\nabla B_{0}(x)$ and the inward normal vector $-\nu(x)$.
\end{theorem}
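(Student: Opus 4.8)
The plan is to reduce the study of all six critical fields to the analysis of the single spectral quantity $\mu_1(\kappa,H)$ from \eqref{def:mu1}, and then to determine the precise threshold at which $\mu_1(\kappa,H)$ changes sign. The first step is to establish the ordering of the six fields. Since a non-normal minimizer is in particular a non-normal critical point, one has $\mathcal N(\kappa)\subset\mathcal N^{\rm cp}(\kappa)$; and by the standard second-variation argument, if $\mu_1(\kappa,H)<0$ then the normal state $(0,\Fb)$ is not a local minimizer, so there exists a non-normal minimizer, giving $\mathcal N^{\rm loc}(\kappa)\subset\mathcal N(\kappa)$. Conversely, if $(\psi,\Ab)$ is a non-normal critical point, testing the quadratic form $\mathcal Q_{\kappa H\Fb,-\kappa^2 a}^\Omega$ against $\psi$ and using equation $\eqref{eq-2D-GLeq}_a$ together with the diamagnetic-type estimate $\|\curl\Ab-B_0\|$ small (Corollary \ref{corol-2D-main}-type control, or a direct a priori bound on critical points) forces $\mu_1(\kappa,H)\le 0$ for $\kappa$ large, so $\overline H_{C_3}^{cp}(\kappa)\le\overline H_{C_3}^{loc}(\kappa)(1+o(1))$. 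Hence all six fields coincide to leading order with the value of $H$ at which $\mu_1(\kappa,H)=0$, and it suffices to locate that value with error $\mathcal O(\kappa^{7/4})$.

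The second step is the spectral asymptotics of $\mu_1(\kappa,H)$ when $H\sim C\kappa^2$. Write $b=H/\kappa$, so $\kappa H\Fb$ has curl $\kappa H B_0\sim C\kappa^3 B_0$. The operator $P_{\kappa H\Fb,-\kappa^2 a}^\Omega=-(\nabla-i\kappa H\Fb)^2-\kappa^2 a$ must be analyzed by the usual partition-of-unity localization near $\Gamma$. Away from a neighborhood of $\Gamma\cap\overline\Omega$, the magnetic field is bounded below, so $-(\nabla-i\kappa H\Fb)^2\ge (1-o(1))\kappa H\inf|B_0|\gg\kappa^2\ge\kappa^2 a$, and the form is positive there. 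Near an interior point of $\Gamma$ one rescales by $\kappa H$ in the direction normal to $\Gamma$ and by $(\kappa H)^{1/2}$... more precisely one uses the anisotropic scaling adapted to the zero set of a magnetic field vanishing linearly: the local model after rescaling is exactly $M(\tau)$ from \eqref{defM}, yielding a local ground-state energy $\asymp -\kappa^2 a(x)+ (\kappa H |\nabla B_0(x)|)^{2/3}\lambda_0$. Near a boundary point of $\Gamma\cap\partial\Omega$, the relevant model is $P_{\Ab_{\rm app,\theta(x)},0}^{\R^2_+}$ with bottom $\lambda(\R^2_+,\theta(x))$, giving local energy $\asymp -\kappa^2 a(x)+(\kappa H|\nabla B_0(x)|)^{2/3}\lambda(\R^2_+,\theta(x))$. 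Combining, $\mu_1(\kappa,H)<0$ iff for some $x\in\Gamma\cap\overline\Omega$ one has $(\kappa H|\nabla B_0(x)|)^{2/3}\lambda_0<\kappa^2 a(x)$ (interior) or the analogous boundary inequality holds, i.e. iff
$$
H<\max\left(\sup_{x\in\Gamma\cap\overline\Omega}\frac{a(x)^{3/2}}{\lambda_0^{3/2}|\nabla B_0(x)|},\ \sup_{x\in\Gamma\cap\partial\Omega}\frac{a(x)^{3/2}}{\lambda(\R^2_+,\theta(x))^{3/2}|\nabla B_0(x)|}\right)\kappa^2\,(1+o(1))\,.
$$
To upgrade the $o(1)$ to $\mathcal O(\kappa^{-1/4})$ relative error (i.e. an $\mathcal O(\kappa^{7/4})$ error in $H_{C_3}$) one chooses the localization length scale $\ell=\kappa^{-s}$ optimally: the IMS localization error is $\mathcal O(\ell^{-2})$, the error from freezing $a$ and the curvature of $\Gamma$ over a cell is $\mathcal O((\kappa H)^{2/3}\ell)=\mathcal O(\kappa^{4/3}\ell)$, and balancing $\kappa^{2s}\sim\kappa^{4/3-s}$ gives $s=4/9$ hence error $\mathcal O(\kappa^{8/9})$ inside $\mu_1$, which translates through the implicit-function relation $\mu_1(\kappa,H)=0$ (where $\partial_H\mu_1\asymp(\kappa H)^{-1/3}\kappa\asymp\kappa^{1/3}$ after the rescaling) into an $H$-error of order $\kappa^{8/9}/\kappa^{1/3}$; a more careful accounting of the boundary versus interior balance, and of the $C^1$ (rather than $C^2$) regularity of $a$, yields the stated $\mathcal O(\kappa^{7/4})$.

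The main obstacle, and where most of the technical work lies, is the lower bound near $\Gamma$: one needs a genuine two-term lower bound for the localized magnetic operator with a vanishing field, uniformly in the base point $x\in\Gamma$ and robust under the $C^1$-only control of $a$ and the smooth-curve structure of $\Gamma$ guaranteed by \eqref{B(x)}. This requires (i) a good quadratic (or higher) approximation of $\Fb$ near $\Gamma$ via a gauge transformation bringing it to the normal form $\Ab_{\rm app,\theta}$, with controlled remainder; (ii) handling the transition region between the boundary model and the interior model when a connected component of $\Gamma$ meets $\partial\Omega$ — here the transversality hypothesis $\nabla B_0\times\vec n\neq0$ on $\Gamma\cap\partial\Omega$ is essential to ensure $\Gamma$ crosses $\partial\Omega$ transversally so the two models do not interact to leading order; and (iii) a perturbation argument showing that replacing $a(x,\kappa)=a(x)$ by its value at the optimal base point costs only $\mathcal O(L(\kappa)\ell)$. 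The matching upper bound is comparatively easy: one inserts a quasimode built from the ground state of the appropriate model operator $M(\tau_{\rm opt})$ or $P_{\Ab_{\rm app,\theta(x_0)},0}^{\R^2_+}$, localized near a near-maximizer $x_0$ of the relevant ratio, rescaled and gauged back to $\Omega$, and evaluates the Rayleigh quotient. Once $\mu_1$ is pinned down, the final step — converting the sign-change location of $\mu_1$ into the asymptotics of all six $H_{C_3}$ — follows from the ordering established in step one together with the monotonicity of $b\mapsto\mu_1$ in the relevant range.
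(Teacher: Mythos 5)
Your overall strategy is the one the paper follows: reduce all six fields to the sign of $\mu_{1}(\kappa,H)$, prove matching upper and lower spectral bounds by localizing near $\Gamma$ and comparing with the model operators $M(\tau)$ (interior) and $P_{\Ab_{\rm app,\theta},0}^{\R^2_+}$ (boundary), and then transfer the threshold to the critical fields via the inclusions $\mathcal N^{\rm loc}\subset\mathcal N\subset\mathcal N^{\rm cp}$. However, there is a genuine gap in your first step, precisely at the inequality $\overline H_{C_3}^{cp}\lesssim \overline H_{C_3}^{loc}$. You claim that testing the quadratic form against a non-normal critical point $\psi$, together with a diamagnetic/a priori bound on $\curl(\Ab-\Fb)$, forces $\mu_1(\kappa,H)\le 0$. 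It does not. Writing $\top=\kappa^2\|\psi\|_{L^4}^4=-\mathcal Q_{\kappa H\Ab,-\kappa^2a}^\Omega(\psi)$ and using the a priori bounds \eqref{8d-<} and \eqref{5d-<}, the best one gets by Cauchy--Schwarz is
\begin{equation*}
-\top\;\geq\;\mu_1(\kappa,H)\,\|\psi\|_{L^2(\Omega)}^2-C\sqrt{\top}\,\kappa\,\|\psi\|_{L^2(\Omega)}^2\,,
\end{equation*}
and if $\mu_1\geq0$ this only yields $\|\psi\|_{L^4(\Omega)}^2\leq C\,\|\psi\|_{L^2(\Omega)}^2$, which is perfectly consistent (it is implied by H\"older on a bounded domain) and produces no contradiction. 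The missing ingredient is a concentration estimate showing that $\psi$ lives near $\Gamma$: since $\kappa H|B_0|\gg\kappa^2\overline a$ at distance $\gtrsim\kappa^{-1/2}$ from $\Gamma$ when $H\approx\kappa^2$, one obtains $\|\psi\|_{L^2(\Omega)}^2\le C\kappa^{-1/4}\|\psi\|_{L^4(\Omega)}^2$ (this is Proposition~\ref{prop:est-psi-var} in the paper, proved by an IMS localization away from $\Gamma$ combined with the estimate \eqref{3d-<}). Only with this reversed, $\kappa$-improving comparison between the $L^2$ and $L^4$ norms can the error term $C\sqrt{\top}\,\kappa\,\|\psi\|_{L^2}^2\le C'\kappa^{-1/4}\top$ be absorbed and the contradiction with $\mu_1>0$ reached. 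Your proposal contains no substitute for this step.

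Two smaller points. First, your error bookkeeping is not reliable: the dominant error in the lower bound for $\mu_1$ is not the IMS term $\ell^{-2}$ but the relative error $\mathcal B^{-1/18}$ coming from the approximation of $\Fb$ by the model potentials near $\Gamma$, which gives $\mu_1=\kappa^2\widehat\Lambda_1+\mathcal O(\kappa^{11/6})$; also $\partial_H\mu_1$ is of order $1$ (not $\kappa^{1/3}$) since $\mu_1\approx\kappa^2\big(\lambda_0(\tfrac{H}{\kappa^2}|\nabla B_0|)^{2/3}-a\big)$, so your claimed route to $\mathcal O(\kappa^{7/4})$ is unsubstantiated. Second, the regimes $\kappa H=\mathcal O(1)$ and $H\gg\kappa^2$ must be treated separately (by a direct test-function argument showing $\mu_1<0$, and by the Giorgi--Phillips-type Theorem~\ref{thm:GP}, respectively) before the spectral asymptotics, which require $\kappa H$ large and $H/\kappa^2$ bounded, can be invoked; your proposal omits both.
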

%
\subsection*{Organization of the paper} The rest of the paper is split into twelve sections. Section~\ref{2} analyzes the model problem with a constant magnetic field and a constant pinning term.  Section~\ref{upperbound} establishes an upper bound on the ground state energy.  Section~\ref{section:P.E.} contains useful estimates on minimizers. The estimates in Section~\ref{section:P.E.} are used in Section~\ref{section5} to establish a lower bound of the ground state energy and to finish the proof of Theorem~\ref{thm-2D-main}, Corollary~\ref{corol-2D-main} and Theorem~\ref{lc-en}. In Section~\ref{examples}, we discuss the conclusion in Theorem~\ref{thm-2D-main} by providing various examples of pinning terms obeying Assumption~\ref{assumption}.  Section~{7} is devoted to the proof of Theorem~\ref{est-psi-main}.  Section~\ref{GP} generalizes a theorem of Giorgi-Phillips concerning the breakdown of superconductivity under a large applied magnetic field. Sections~\ref{Section:4} and \ref{10} are devoted to the proof of Theorem~\ref{thm:HC3}. The proof of Theorem~\ref{thm:HC3-vr} is the purpose of Sections~\ref{Section:Asympt-m1-vanish} and \ref{12}.

\subsection* {Notation.}
{Throughout the paper, we use the following notation:}
\begin{itemize}
\item If $b_{1}(\kappa)$ and $b_{2}(\kappa)$ are two positive functions on $[\kappa_{0},+\infty)$, we write $b_{1}(\kappa)\ll b_{2}(\kappa)$ if $b_{1}(\kappa)/b_{2}(\kappa)\to0$ as $\kappa\to\infty$.
\item If $b_{1}(\kappa)$ and $b_{2}(\kappa)$ are two functions with $b_{2}(\kappa)\not=0$, we write $b_{1}(\kappa)\sim b_{2}(\kappa)$\\
 if $b_{1}(\kappa)/b_{2}(\kappa)\to1$ as $\kappa\to\infty$.
\item If $b_{1}(\kappa)$ and $b_{2}(\kappa)$ are two positive functions, we write $b_{1}(\kappa)\approx b_{2}(\kappa)$
if there exist positive constants $c_1$, $c_2$ and $\kappa_0$ such
that $c_1b_{2}(\kappa)\leq b_{1}(\kappa)\leq c_2b_{2}(\kappa)$ for all
$\kappa\geq\kappa_0$.
\item Let $a_{+}(\widetilde{x}_{0},\kappa)=[a(\widetilde{x}_{0},\kappa)]_{+}$ and $a_{-}(\widetilde{x}_{0},\kappa)=[a(\widetilde{x}_{0},\kappa)]_-$  where, for any $x\in\R$, $[x]_+=\max(x,0)$ and $[x]_{-}=\max(-x,0)$.
\item Given $R>0$ and $x=(x_{1},x_{2})\in\R^{2}$, $Q_{R}(x)=(-R/2+x_{1},R/2+x_{1})\times(-R/2+x_{2},R/2+x_{2})$ denotes the
square of side length $R$ centered at $ x=(x_1,x_2)$ and we write $Q_{R}=Q_{R}(0)$.
\end{itemize}

\section{A reference problem}\label{2}
The reference problem  is obtained by freezing the pinning term and the magnetic field. This approximation will appear to be reasonable in squares avoiding the boundary and the zero set $\Gamma$ of the magnetic field $B_0$.
\subsection{A useful function}\label{uf}
Consider $R>0$, $b>0$,  $\zeta\in\{-1,+1\}$  and $\alpha\in\R\,$. We define the following Ginzburg-Landau energy with constant magnetic field on $H^1(Q_R)$ by
\begin{equation}\label{eq-GL-F}
u\mapsto F^{\zeta,\alpha}_{b,Q_{R}}(u)=\int_{Q_{R}}\left(b|(\nabla-i\zeta\Ab_0)u|^2+\frac{1}{2}\left(\alpha-|u|^2\right)^{2}\right)\,dx\,,
\end{equation}
where
\begin{equation}\label{eq-hc2-mpA0}
\Ab_{0}(x)=\frac1{2}(-x_2,x_1)\,,\qquad\forall\,x=(x_1,x_2)\in\R^{2}\,.
\end{equation}
We have two cases according to the sign of $\alpha$\,:~\\
\textbf{Case~1}. $\alpha>0$:~\\
We notice that
\begin{equation}\label{change-F}
F^{\zeta,\alpha}_{b,Q_{R}}(u)=\alpha^{2} F^{\zeta,1}_{\widetilde{b},Q_{R}}(\widetilde{u})\,,
\end{equation}
where
\begin{equation}\label{def-b-u}
\widetilde{b}=\frac{b}{\alpha}\qquad\text{and}\qquad\widetilde{u}=\frac{u}{\sqrt{\alpha}}\,.
\end{equation}
We introduce the two ground state energies
\begin{eqnarray}
e_{N}(b,R,\alpha)=\inf\left\{F^{+1,\alpha}_{b,Q_{R}}(u): u\in H^{1}(Q_{R};\C)\right\}\label{eN}\\
e_{D}(b,R,\alpha)=\inf\left\{F^{+1,\alpha}_{b,Q_{R}}(u): u\in H^{1}_{0}(Q_{R};\C)\right\}\label{eD}\,.
\end{eqnarray}
As $F^{+1,\alpha}_{b,Q_{R}}(u)=F^{-1,\alpha}_{b,Q_{R}}(\overline{u})$, it is immediate that,
\begin{equation}\label{F+=F-}
\inf F^{+1,\alpha}_{b,Q_{R}}(u)=\inf F^{-1,\alpha}_{b,Q_{R}}(u)\,.
\end{equation}
Using \eqref{eN} and \eqref{eD}, we get from \eqref{change-F}
\begin{equation}\label{eNa=eN}
e_{N}(b,R,\alpha)=\alpha^{2}\,e_{N}\left(\frac{b}{\alpha},R,1\right)=\alpha^{2}\,e_{N}\left(\frac{b}{\alpha},R\right)\,,
\end{equation}
and
\begin{equation}\label{eDa=eD}
e_{D}(b,R,\alpha)=\alpha^{2}\,e_{D}\left(\frac{b}{\alpha},R,1\right)=\alpha^{2}\,e_{D}\left(\frac{b}{\alpha},R\right)\,.
\end{equation}
As a consequence of \eqref{change-F} and \eqref{def-b-u}, $\widetilde{u}$ is a minimizer of $F^{\zeta,1}_{\widetilde{b},Q_{R}}$ if and only if $u$ is a minimizer of $F^{\zeta,\alpha}_{b,Q_{R}}$. In particular any minimizer of $F^{\zeta,\alpha}_{b,Q_{R}}$ satisfies
\begin{equation}\label{up-u-a}
|u|\leq \sqrt{\alpha}\,.
\end{equation}
Recall  from  \cite[Theorem~2.1]{FK2}  that,
\begin{equation}\label{f(x)}
\displaystyle \hat{f}\left(\mathfrak{b}\right)=\lim_{R\longrightarrow\infty}\frac{e_{D}(\mathfrak{b},R)}{R^{2}}\,.
\end{equation}

The next proposition was proved in \cite[Lemma~2.2, Proposition~2.4]{KA2} in the case $\alpha=1$. It's present form can be deduced immediately from \eqref{eNa=eN}.
\begin{prop}\label{pro-f(b)}
For all $M>0$, there exist universal constants $C_{M}$ and $R_{M}$ such that $\forall R\geq R_{M},\,\forall\,b>0,\,\forall\,\alpha>0$ such that $\displaystyle 0<\frac{b}{\alpha}\leq M$, we have
\begin{equation}\label{eN>eD}
e_{N}(b,R,\alpha)\geq\,e_{D}\left(b,R,\alpha\right)-C_{M}\alpha^{2}R\left(\frac{b}{\alpha}\right)^{\frac{1}{2}}
\end{equation}
\begin{equation}\label{est-f(b)-eD}
\alpha^{2}\hat{f}\left(\frac{b}{\alpha}\right) \leq \frac{e_{D}(b,R,\alpha)}{R^{2}}\leq\alpha^{2}\hat{f}\left(\frac{b}{\alpha}\right)+C_{M}\frac{\alpha^{\frac{3}{2}}\sqrt{b}}{R}.
\end{equation}
\end{prop}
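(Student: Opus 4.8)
The plan is to reduce the statement to its $\alpha = 1$ counterpart, established in \cite[Lemma~2.2, Proposition~2.4]{KA2}, using only the homogeneity identities \eqref{eNa=eN} and \eqref{eDa=eD} (themselves consequences of the scaling \eqref{change-F}). There is essentially no new content to produce: the whole argument is a change of variables.

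First I would fix $M>0$ and invoke \cite{KA2} to obtain universal constants $C_M$ and $R_M$ such that, for every $\mathfrak{b}\in(0,M]$ and every $R\geq R_M$,
\[
e_N(\mathfrak{b},R)\geq e_D(\mathfrak{b},R)-C_M R\sqrt{\mathfrak{b}},\qquad \hat{f}(\mathfrak{b})\leq\frac{e_D(\mathfrak{b},R)}{R^2}\leq\hat{f}(\mathfrak{b})+C_M\frac{\sqrt{\mathfrak{b}}}{R}.
\]
These are precisely \eqref{eN>eD} and \eqref{est-f(b)-eD} in the case $\alpha=1$ (recalling that $e_N(\mathfrak{b},R)$, $e_D(\mathfrak{b},R)$ are shorthand for the $\alpha=1$ energies and that $\hat f$ is defined through \eqref{f(x)}).

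Next, given $b>0$ and $\alpha>0$ with $0<b/\alpha\leq M$, I would apply the two displayed inequalities with the admissible choice $\mathfrak{b}=b/\alpha$ and multiply each by $\alpha^2$. Substituting $\alpha^2 e_N(b/\alpha,R)=e_N(b,R,\alpha)$ and $\alpha^2 e_D(b/\alpha,R)=e_D(b,R,\alpha)$ from \eqref{eNa=eN}--\eqref{eDa=eD}, the first inequality becomes exactly \eqref{eN>eD}, while for the second one uses in addition the elementary identity $\alpha^2\sqrt{b/\alpha}=\alpha^{3/2}\sqrt{b}$ to recover \eqref{est-f(b)-eD}. Since $C_M$ and $R_M$ depend on $M$ alone, they are universal in the required sense, and the proof is finished.

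I do not anticipate a genuine obstacle. The only point meriting a moment of care is that the hypothesis $b/\alpha\leq M$ is exactly what makes $\mathfrak{b}=b/\alpha$ a legitimate parameter for the cited $\alpha=1$ estimates, and that along the way the constants never acquire a dependence on $b$ or $\alpha$ separately but only on their ratio; both are immediate from the structure of the scaling \eqref{change-F}.
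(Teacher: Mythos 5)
Your proposal is correct and is exactly the argument the paper intends: the author states that the proposition "can be deduced immediately" from the scaling identities \eqref{eNa=eN}--\eqref{eDa=eD} applied to the $\alpha=1$ case from \cite{KA2}, which is precisely your reduction via $\mathfrak{b}=b/\alpha$ followed by multiplication by $\alpha^{2}$ and the identity $\alpha^{2}\sqrt{b/\alpha}=\alpha^{3/2}\sqrt{b}$. Nothing is missing.
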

\textbf{Case~2}. $\alpha\leq 0\,$:~\\
When $\alpha\leq 0$, we write $\alpha=-\alpha_{0}$, $\alpha_{0}\geq 0$ and \eqref{eq-GL-F} becomes
\begin{equation}\label{eq-GL-F2}
F^{\zeta,\alpha}_{b,Q_{R}}(u)=\int_{Q_{R}}\left(b|(\nabla-i\zeta\Ab_0)u|^2+\frac{1}{2}\left(\alpha_{0}+|u|^2\right)^{2}\right)\,dx\,.
\end{equation}
It is clear that,
$$F^{\zeta,\alpha}_{b,Q_{R}}(u)\geq \frac{1}{2}\alpha_{0}^{2} R^{2}\qquad{\rm and}\qquad F^{\zeta,\alpha}_{b,Q_{R}}(0)=\frac{1}{2}\alpha_{0}^{2} R^{2}\,.$$
As a consequence, we have
$$
\frac{1}{2}\alpha_{0}^{2}R^{2}\leq e_{D}(b,R,\alpha)\leq F^{\zeta,\alpha}_{b,Q_{R}}(0)=\frac{1}{2}\alpha_{0}^{2}R^{2}\,.
$$
When $\alpha=0$, it is easy to show that
$$F^{\zeta,\alpha}_{b,Q_{R}}(u)=0\,.$$
Notice that the only minimizer of $F^{\zeta,\alpha}_{b,Q_{R}}$ is $u=0\,$.
Thus, for any $\alpha\leq 0\,$, we obtain
\begin{equation}\label{F=}
\frac{e_{D}(b,R,\alpha)}{R^{2}}=\frac{1}{2}\alpha^{2}\,.
\end{equation}
\section{Upper bound of the energy}\label{upperbound}
The aim of this section is to give an upper bound of the ground state energy $\E0(\kappa,H,a,B_{0})$ introduced in \eqref{eq-2D-gs} under Assumption~\eqref{cond-H}.
For this we cover $\Omega$ by (the closure of)  disjoint open squares $(Q_{\ell}(\gamma))_{\gamma}$ whose centers $\gamma$ belong to a square lattice $\Gamma_\ell= \ell \mathbb Z \times \ell \mathbb Z$.

We will get an upper bound  by matching together approximate minimizers, in each square $Q_{\ell}(\gamma)$ contained in $\Omega$, obtained by freezing the pinning term and the magnetic field at a suitable point $\tilde \gamma$. The size $\ell$ of the square will be chosen as a function of $\kappa$. We start with estimates in a given square $Q_\ell (x_0)$ and will take later $x_0=\gamma\,$.\\

{\bf About Assumption $(A_4)$.}\\
 We first explain what was meant in Assumption $(A_4)$. By $\mathcal L(\partial\{a >0\}) \leq  C_1 \kappa^\frac 12$ we mean the existence of $C_2 >0$ and $\kappa_0$ such that:
 \begin{equation}\label{defA4}
 \forall \kappa \geq \kappa_0\,,\, \forall \ell \leq C_2 \kappa^{-\frac 12}\,,\, {\rm card}\,\{ \gamma \in \Gamma_\ell \cap \Omega \mbox{ with } Q_\ell (\gamma) \cap \partial\{a >0\} \cap \Omega \neq \emptyset\} \leq C_1 \kappa^\frac 12 \ell^{-1}\,.
 \end{equation}
~\\

Using Assumption~\eqref{def:L}, for any $\widetilde{x}_{0}\in \overline{Q_{\ell}(x_{0})}$ and $\kappa \geq \kappa_0$, we observe that,
\begin{equation}\label{app-a}
|a(x,\kappa)-a(\widetilde{x}_{0},\kappa)|\leq \left(\sup_x  |\nabla_{x}\,a(x,\kappa)|\right)\,|x-x_{0}|\leq \frac{\ell}{\sqrt{2}}\,L (\kappa) \,,\qquad\forall x\in Q_{\ell}(x_{0})\,.
\end{equation}
\begin{definition}[$\rho$-admissible]\label{rho-adm}
Let $\rho\in(0,1)$. We say that triple $(\ell,x_0,\widetilde{x}_{0})$ is $\rho$-admissible if  $\overline{Q_{\ell}(x_0)}\subset\{|B_{0}|>\rho\}\cap\Omega$ and $\widetilde{x}_{0}\in\overline{Q_{\ell}(x_{0})}$. In this case,  we also say that the pair $(\ell,x_{0})$ is $\rho$-admissible and the corresponding square $Q_{\ell}(x_{0})$ is $\rho$ admissible.
\end{definition}
We recall from \cite[Section~3]{KA2} the definition of the test function,
\begin{equation}\label{def-w2}
\widetilde{w}_{\ell,x_0,\widetilde{x}_{0}}(x)=\begin{cases}
e^{i\kappa H\varphi_{x_{0},\widetilde{x}_{0}}}\widetilde{u}_{R}\left(\frac{R}{\ell}(x-x_{0})\right)&{\rm if}~x\in Q_{\ell}(x_0)\subset\{B_{0}>\rho\}\cap\Omega\\
e^{i\kappa H\varphi_{x_{0},\widetilde{x}_{0}}}\overline{\widetilde{u}}_{R}\left(\frac{R}{\ell}(x-x_{0})\right)&{\rm if}~x\in Q_{\ell}(x_0)\subset\{B_{0}<-\rho\}\cap\Omega\,,
\end{cases}
\end{equation}
where $\widetilde{u}_{R}\in H^{1}_{0}(\Omega)$ is a minimizer of $F^{+1,1}_{b,Q_{R}}$ satisfying by \eqref{up-u-a} $|\widetilde{u}_{R}|\leq 1$ and $\varphi_{x_{0},\widetilde{x}_{0}}$ is the function introduced in \cite[Lemma~A.3]{KA} that satisfies
\begin{equation}\label{F-A}
|\Fb(x)-B_{0}(\widetilde{x}_{0})\Ab_{0}(x-x_{0})-\nabla\varphi_{x_{0},\widetilde{x}_{0}}(x)|\leq C\, \ell^{2},\,\qquad\,\,\forall x \in Q_{\ell}(x_{0})\,.
\end{equation}
Here $B_{0}=\curl\Fb$ and $\Ab_{0}$ is the magnetic potential introduced in \eqref{eq-hc2-mpA0}.

Let us introduce the function:
\begin{equation}\label{def-w}
w_{\ell,x_0,\widetilde{x}_{0}}(x)=\sqrt{a_{+}(\widetilde{x}_{0},\kappa)}\,\widetilde{w}_{\ell,x_0,\widetilde{x}_{0}}(x)\,, \qquad \forall x\in Q_{\ell}(\widetilde{x}_{0})\,.
\end{equation}
Using the bound $|\widetilde{w}_{\ell,x_0,\widetilde{x}_{0}}|\leq1$, which is immediately deduced from the bound of $|\widetilde{u}_{R}|$, we get from \eqref{def-w},
\begin{equation}\label{up-w}
|w_{\ell,x_0,\widetilde{x}_{0}}|^{2} \leq a_{+}(\widetilde{x}_{0},\kappa)\,.
\end{equation}
\begin{prop}\label{pp-up-Eg}
Under Assumptions~\eqref{B(x)}-\eqref{a3}, there exist positive constants $C$ and $\kappa_{0}$ such that if $\kappa\geq\kappa_{0}$, $\ell\in(0,1)$, $\delta\in(0,1)$, $\rho>0$, $\ell^{2}\kappa H\rho>1$ and $(\ell,x_{0},\widetilde{x}_{0})$ is a $\rho$-admissible triple, then,
\begin{multline}\label{up-Eg-eq}
\frac{1}{|Q_{\ell}(x_0)|}\mathcal{E}_{0}(w_{\ell,x_0,\widetilde{x}_{0}},\Fb;a,Q_{\ell}(x_0))\leq (1+\delta)\kappa^{2}\left[a_{+}(\widetilde{x}_{0},\kappa)^{2}\hat{f}\left(\frac{H\,|B_{0}(\widetilde{x}_{0})|}{\kappa\,a_{+}(\widetilde{x}_{0},\kappa)}\right)+\frac{1}{2}a_{-}(\widetilde{x}_{0},\kappa)^{2}\right]\\
+C\left(\frac{1}{\kappa\ell}+\delta^{-1}\ell^{2}L(\kappa)^{2}+\delta^{-1}\kappa^{2}\ell^{4}\right)\kappa^{2}\,.
\end{multline}
\end{prop}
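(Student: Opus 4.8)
The plan is a test-function computation with $w=w_{\ell,x_0,\widetilde{x}_0}$ from \eqref{def-w2}--\eqref{def-w}, splitting according to the sign of $a(\widetilde{x}_0,\kappa)$. If $a(\widetilde{x}_0,\kappa)\le0$, then $a_+(\widetilde{x}_0,\kappa)=0$, so $w\equiv0$ on $Q_\ell(x_0)$ and $\mathcal{E}_0(w,\Fb;a,Q_\ell(x_0))=\frac{\kappa^2}{2}\int_{Q_\ell(x_0)}a(x,\kappa)^2\,dx$; writing $a(x,\kappa)=a(\widetilde{x}_0,\kappa)+r(x)$ with $|r(x)|\le C\ell L(\kappa)$ (cf. \eqref{app-a}) and using the elementary inequality $(s+t)^2\le(1+\delta)s^2+(1+\delta^{-1})t^2$ gives, after division by $|Q_\ell(x_0)|=\ell^2$, an estimate of the announced form (the $\hat f$-term being absent since $a_+(\widetilde{x}_0,\kappa)=0$). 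So from now on assume $a(\widetilde{x}_0,\kappa)>0$, hence $a_-(\widetilde{x}_0,\kappa)=0$ and $w=\sqrt{a_+(\widetilde{x}_0,\kappa)}\,e^{i\kappa H\varphi_{x_0,\widetilde{x}_0}}v$ with $v(x)=\widetilde{u}_R(\frac{R}{\ell}(x-x_0))$ (or its complex conjugate if $B_0<-\rho$ on $Q_\ell(x_0)$), where $|\widetilde{u}_R|\le1$.

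I bound the two terms of $\mathcal{E}_0$ separately. For the magnetic term, factor out $a_+(\widetilde{x}_0,\kappa)$, cancel the gauge phase $e^{i\kappa H\varphi_{x_0,\widetilde{x}_0}}$, use \eqref{F-A} to replace $\Fb$ by the constant-field potential $B_0(\widetilde{x}_0)\Ab_0(\cdot-x_0)$ up to a pointwise error $\le C\kappa H\ell^2|v|$, and then apply the $\delta$-inequality together with $\int_{Q_\ell(x_0)}|v|^2\le\ell^2$ to get
\[
\int_{Q_\ell(x_0)}|(\nabla-i\kappa H\Fb)w|^2\,dx\le(1+\delta)\,a_+(\widetilde{x}_0,\kappa)\int_{Q_\ell(x_0)}\bigl|(\nabla-i\kappa HB_0(\widetilde{x}_0)\Ab_0(x-x_0))v\bigr|^2\,dx+C\delta^{-1}a_+(\widetilde{x}_0,\kappa)\kappa^2H^2\ell^6.
\]
For the potential term, write $a(x,\kappa)=a_+(\widetilde{x}_0,\kappa)+r(x)$, $|r|\le C\ell L(\kappa)$, note $|w|^2=a_+(\widetilde{x}_0,\kappa)|v|^2$, so $a-|w|^2=a_+(\widetilde{x}_0,\kappa)(1-|v|^2)+r$, and the same splitting gives
\[
\frac{\kappa^2}{2}\int_{Q_\ell(x_0)}(a-|w|^2)^2\,dx\le(1+\delta)\frac{\kappa^2}{2}a_+(\widetilde{x}_0,\kappa)^2\int_{Q_\ell(x_0)}(1-|v|^2)^2\,dx+C\delta^{-1}\kappa^2L(\kappa)^2\ell^4.
\]

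Next I rescale $y=\frac{R}{\ell}(x-x_0)$ with the choice $R:=\ell\sqrt{\kappa H|B_0(\widetilde{x}_0)|}$, which maps $Q_\ell(x_0)$ onto $Q_R$ and, $\Ab_0$ being linear, turns $\kappa HB_0(\widetilde{x}_0)\Ab_0(x-x_0)$ into the unit-strength potential $\pm\Ab_0(y)$. Adding the two leading contributions above and factoring out $\kappa\, a_+(\widetilde{x}_0,\kappa)^2/(H|B_0(\widetilde{x}_0)|)$, this leading part equals $(1+\delta)\frac{\kappa a_+(\widetilde{x}_0,\kappa)^2}{H|B_0(\widetilde{x}_0)|}F^{+1,1}_{b,Q_R}(\widetilde{u}_R)=(1+\delta)\frac{\kappa a_+(\widetilde{x}_0,\kappa)^2}{H|B_0(\widetilde{x}_0)|}e_D(b,R)$ with $b=\frac{H|B_0(\widetilde{x}_0)|}{\kappa a_+(\widetilde{x}_0,\kappa)}$, the very argument of $\hat f$ in \eqref{up-Eg-eq}; when $B_0<-\rho$, the conjugation in \eqref{def-w2} and \eqref{F+=F-} give the same value. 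If $b\ge1$, bound $e_D(b,R)\le F^{+1,1}_{b,Q_R}(0)=\frac12R^2=\hat f(b)R^2$; if $b\le1$, apply \eqref{est-f(b)-eD} with $\alpha=1$ and $M=1$ (legitimate since $\rho$-admissibility and $\ell^2\kappa H\rho>1$ force $R=\ell\sqrt{\kappa H|B_0(\widetilde{x}_0)|}\ge R_M$) to get $e_D(b,R)\le R^2\hat f(b)+CR\sqrt b$. As $R^2=\kappa H|B_0(\widetilde{x}_0)|\ell^2$ and $R\sqrt b=\ell H|B_0(\widetilde{x}_0)|\,a_+(\widetilde{x}_0,\kappa)^{-1/2}$, the leading part equals $\kappa^2a_+(\widetilde{x}_0,\kappa)^2\ell^2\hat f(b)$ with remainder $C\kappa a_+(\widetilde{x}_0,\kappa)^{3/2}\ell$.

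Finally, collecting everything and dividing by $|Q_\ell(x_0)|=\ell^2$, then using $a_+(\widetilde{x}_0,\kappa)\le\overline a$ (by $(A_2)$), $H\le\lambda_{\max}\kappa$ (by \eqref{cond-H}) and $1+\delta^{-1}\le2\delta^{-1}$, the three remainders $C\kappa a_+(\widetilde{x}_0,\kappa)^{3/2}\ell^{-1}$, $C\delta^{-1}a_+(\widetilde{x}_0,\kappa)\kappa^2H^2\ell^4$ and $C\delta^{-1}\kappa^2L(\kappa)^2\ell^2$ are absorbed respectively into $C(\kappa\ell)^{-1}\kappa^2$, $C\delta^{-1}\kappa^2\ell^4\cdot\kappa^2$ and $C\delta^{-1}\ell^2L(\kappa)^2\cdot\kappa^2$, which is \eqref{up-Eg-eq} (with $a_-(\widetilde{x}_0,\kappa)=0$). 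The main obstacle is that $b=\frac{H|B_0(\widetilde{x}_0)|}{\kappa a_+(\widetilde{x}_0,\kappa)}$ need not remain bounded, since $a_+(\widetilde{x}_0,\kappa)$ may be small, so \eqref{est-f(b)-eD} cannot be invoked unconditionally; separating the cases $b\ge1$ and $b\le1$ — with the saturation $\hat f(b)=\frac12$ taking care of the first — is exactly the point that requires care.
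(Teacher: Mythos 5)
Your proof is correct and follows essentially the same route as the paper's: the same test function $w_{\ell,x_0,\widetilde{x}_{0}}$, the same case split on the sign of $a(\widetilde{x}_{0},\kappa)$, the same $\delta$-splittings for the gauge and pinning approximations, and the same rescaling $y=\frac{R}{\ell}(x-x_0)$ reducing everything to the reference energy $e_{D}$ on $Q_R$. The one place where you are more careful than the paper is the regime where $b=\frac{H|B_{0}(\widetilde{x}_{0})|}{\kappa\,a_{+}(\widetilde{x}_{0},\kappa)}$ may be large (small $a_{+}$): the paper applies the upper bound of \eqref{est-f(b)-eD} ``for all $b>0$'' even though its constant $C_M$ depends on an upper bound $M$ for $b/\alpha$, whereas your explicit split $b\geq 1$ (where $e_{D}(b,R)\leq \frac{1}{2}R^{2}=\hat f(b)R^{2}$ trivially, with no error term) versus $b\leq 1$ (where $M=1$ suffices) closes this point cleanly.
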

\begin{proof}~\\
Let
\begin{equation}\label{def-Rb}
R=\ell\sqrt{\kappa H |B_{0}(\widetilde{x}_{0})|}\qquad{\rm and}\qquad b=\frac{H\,|B_{0}(\widetilde{x}_{0})|}{\kappa}\,.
\end{equation}
First we estimate $\frac{\kappa^{2}}{2}\int_{Q_{\ell}(x_{0})}(a(x,\kappa)-|w_{\ell,x_0,\widetilde{x}_{0}}|^{2})^{2}\,dx$ from above. Using \eqref{app-a}, we get the existence of a constant $C>0$ such that for any $\delta\in(0,1)$ and any $\kappa\geq \kappa_{0}$,
\begin{align}\label{up-a}
\frac{\kappa^{2}}{2}\int_{Q_{\ell}(x_{0})}\left(a(x,\kappa)-|w_{\ell,x_0,\widetilde{x}_{0}}|^{2}\right)^{2}\,dx&\leq (1+\delta)\frac{\kappa^{2}}{2}\int_{Q_{\ell}(x_{0})}\left(a(\widetilde{x}_{0},\kappa)-|w_{\ell,x_0,\widetilde{x}_{0}}|^{2}\right)^{2}\,dx\nonumber\\
&\qquad\qquad\qquad+(1+\delta^{-1})\frac{\kappa^{2}}{2}\int_{Q_{\ell}(x_{0})}\left(a(\widetilde{x}_{0},\kappa)-a(x,\kappa)\right)^{2}\,dx\nonumber\\
&\leq(1+\delta)\frac{\kappa^{2}}{2}\int_{Q_{\ell}(x_{0})}\left(a(\widetilde{x}_{0},\kappa)-|w_{\ell,x_0,\widetilde{x}_{0}}|^{2}\right)^{2}\,dx\nonumber\\
&\qquad\qquad\qquad\qquad\qquad\qquad\qquad\qquad+C\delta^{-1}\kappa^{2}\ell^{4}L(\kappa)^{2}\,.
\end{align}
The estimate of $\int_{Q_{\ell}(x_{0})}|(\nabla-i\kappa H \Fb)w_{\ell,x_0,\widetilde{x}_{0}}|^{2}\,dx$ from above is the same as in \cite[Proposition~3.1]{KA2}. We have
\begin{align}\label{up-F}
&\int_{Q_{\ell}(x_{0})}|(\nabla-i\kappa H \Fb)w_{\ell,x_0,\widetilde{x}_{0}}|^{2}\,dx\nonumber\\
&\qquad\qquad\leq (1+\delta)\int_{Q_{\ell}(x_0)}\left|\left(\nabla-i\kappa H (B_{0}(\widetilde{x}_{0})\Ab_{0}(x-x_0)+\nabla\varphi_{x_{0},\widetilde{x}_{0}})\right)w_{\ell,x_0,\widetilde{x}_{0}}\right|^{2}\,dx\nonumber\\
&\qquad\qquad\qquad\qquad\qquad\qquad\qquad\qquad\qquad\qquad\qquad\qquad\qquad+C\delta^{-1}\kappa^{4}\ell^{6}|\, w_{\ell,x_0,\widetilde{x}_{0}}|^{2}\,.
\end{align}
From \eqref{def:sup-a}, by collecting \eqref{up-a}, \eqref{up-F} and \eqref{up-w}, we find that,
\begin{multline}\label{1up-loc-en}
\mathcal E_{0}(w_{\ell,x_0,\widetilde{x}_{0}},\Fb;a,Q_{\ell}(x_0))\leq(1+\delta)\mathcal E_{0}\big(w_{\ell,x_0,\widetilde{x}_{0}},\,B_{0}(\widetilde{x}_{0})\Ab_{0}(x-x_0)+\nabla\varphi_{x_{0},\widetilde{x}_{0}};a(\widetilde{x}_{0},\kappa),Q_{\ell}(x_0)\big)\\
+C\delta^{-1}(\kappa^{2}\ell^{4}L(\kappa)^{2}+\kappa^{4}\ell^{6}\,a_{+}(\widetilde{x}_{0},\kappa))\,.
\end{multline}
As we did in \cite{KA2}, we use the change of variable $y=\frac{R}{\ell}(x-x_0)$ and obtain
\begin{align*}
&\mathcal E_{0}\big(w_{\ell,x_0,\widetilde{x}_{0}},\,B_{0}(\widetilde{x}_{0})\Ab_{0}(x-x_0)+\nabla\varphi_{x_{0},\widetilde{x}_{0}};a(\widetilde{x}_{0},\kappa),Q_{\ell}(x_0)\big)\\
&=\int_{Q_{R}}\left[a_{+}(\widetilde{x}_{0},\kappa)\left|\left(\frac{R}{\ell}\nabla-i\frac{R}{\ell}\zeta_{\ell}\,\Ab_{0}(y)\right)\widetilde{u}_{R}(y)\right|^{2}+\frac{\kappa^{2}}{2}\left(a(\widetilde{x}_{0},\kappa)-a_{+}(\widetilde{x}_{0},\kappa)\left|\widetilde{u}_{R}(y)\right|^2\right)^2\right]\frac{\ell^2}{R^2}dy.
\end{align*}
Here, we denote by $\zeta_{\ell}$ the sign of $B_{0}(x_{0})$.\\
We distinguish between two cases:\\
\textbf{Case~1:} When $a(\widetilde{x}_{0},\kappa)>0$, we get
$$
\mathcal E_{0}\big(w_{\ell,x_0,\widetilde{x}_{0}},B_{0}(\widetilde{x}_{0})\Ab_{0}(x-x_0)+\nabla\varphi_{x_{0},\widetilde{x}_{0}};a(\widetilde{x}_{0},\kappa),Q_{\ell}(x_0)\big)=\frac{a(\widetilde{x}_{0},\kappa)^{2}}{b}F^{\zeta_{\ell},1}_{b/a(\widetilde{x}_{0},\kappa),Q_{R}}(\widetilde{u}_{R})\,.
$$
From \eqref{F+=F-} and \eqref{eNa=eN}, we obtain,
\begin{equation}\label{2up-loc-en}
\mathcal E_{0}\big(w_{\ell,x_0,\widetilde{x}_{0}},B_{0}(\widetilde{x}_{0})\Ab_{0}(x-x_0)+\nabla\varphi_{x_{0},\widetilde{x}_{0}};a(\widetilde{x}_{0},\kappa),Q_{\ell}(x_0)\big)=\frac{1}{b}e_{D}(b,R,a(\widetilde{x}_{0},\kappa))\,.
\end{equation}
As a consequence of the upper bound in \eqref{est-f(b)-eD}, the ground state energy $e_{D}(b,R,a(\widetilde{x}_{0},\kappa))$ in \eqref{2up-loc-en} is bounded for all $b>0$ and $R\geq 1$ by:
\begin{align}\label{F=eD}
e_{D}(b,R,a(\widetilde{x}_{0},\kappa))&\leq a(\widetilde{x}_{0},\kappa)^{2}\,R^{2}\,\hat{f}\left(\frac{b}{a(\widetilde{x}_{0},\kappa)}\right)+C_{M}a(\widetilde{x}_{0},\kappa)^{\frac{3}{2}}\,R\,\sqrt{b}\,.
\end{align}
With the choice of $R$ in \eqref{def-Rb}, we have effectively $R\geq 1$ which follows from the assumption $R\geq \ell\sqrt{\kappa H \rho}>1$.\\
We get from \eqref{2up-loc-en} and \eqref{F=eD} the estimate
\begin{multline}\label{3up-loc-en}
\mathcal E_{0}(w_{\ell,x_0,\widetilde{x}_{0}},\zeta_{\ell}|B_{0}(\widetilde{x}_{0})|\Ab_{0}(x-x_0)+\nabla\varphi_{x_{0},\widetilde{x}_{0}};a(\widetilde{x}_{0},\kappa),Q_{\ell}(x_0))\leq a(\widetilde{x}_{0},\kappa)^{2}\frac{R^{2}}{b}\hat{f}\left(\frac{b}{a(\widetilde{x}_{0},\kappa)}\right)\\
+C_{M}\frac{a(\widetilde{x}_{0},\kappa)^{\frac{3}{2}}\,R}{\sqrt{b}}\,,
\end{multline}
with $(b,R)$ defined in \eqref{def-Rb}.\\
By collecting the estimates in \eqref{1up-loc-en}-\eqref{3up-loc-en} we get,
\begin{multline}\label{up-E0}
\mathcal{E}_{0}(w_{\ell,x_0,\widetilde{x}_{0}},\Fb;a(\widetilde{x}_{0},\kappa),Q_{\ell}(x_0))\leq (1+\delta)\, a(\widetilde{x}_{0},\kappa)^{2}\, \frac{R^{2}}{b}\hat{f}\left(\frac{b}{a(\widetilde{x}_{0},\kappa)}\right)\\
+C_{M}\frac{\overline{a}^{\frac{3}{2}}\,R}{\sqrt{b}}+C\delta^{-1}(\kappa^{2}\ell^{4}L(\kappa)^{2}+\kappa^{4}\ell^{6}\overline{a})\,.
\end{multline}
Here, we have used the fact that $\displaystyle a(\widetilde{x}_{0},\kappa)\leq\sup_{x\in\overline{\Omega},\,\kappa\geq\kappa_{0}}a(x,\kappa)=\overline{a}\,$.\\
\textbf{Case~2:} When $a(\widetilde{x}_{0},\kappa)\leq 0\,$, we have,
$$
\mathcal{E}_{0}(w_{\ell,x_0,\widetilde{x}_{0}},\Fb;a(\widetilde{x}_{0},\kappa),Q_{\ell}(x_0))=\frac{\kappa^2}{2}\int_{Q_{\ell}(x_0)}a(x,\kappa)^{2}\,dx\,.
$$
From \eqref{app-a}, we get the existence of a constant $C>0$ such that for any $\delta\in(0,1)$,
\begin{equation}\label{up-E0-2}
\mathcal{E}_{0}(w_{\ell,x_0,\widetilde{x}_{0}},\Fb;a(\widetilde{x}_{0},\kappa),Q_{\ell}(x_0))\leq (1+\delta)\, \frac{\kappa^{2}}{2} \, a(\widetilde{x}_{0},\kappa)^{2}\ell^{2}
+C\,\delta^{-1}\,\kappa^{2}\ell^{4}L(\kappa)^{2}\,.
\end{equation}
\textbf{The results of cases 1-2}, we obtain,
\begin{multline}
\mathcal{E}_{0}(w_{\ell,x_0,\widetilde{x}_{0}},\Fb;a(\widetilde{x}_{0},\kappa),Q_{\ell}(x_0))\leq (1+\delta)\kappa^{2}\left[a_{+}(\widetilde{x}_{0},\kappa)^{2}\hat{f}\left(\frac{H\,|B_{0}(\widetilde{x}_{0})|}{\kappa\,a_{+}(\widetilde{x}_{0},\kappa)}\right)+\frac{1}{2}a_{-}(\widetilde{x}_{0},\kappa)^{2}\right]\ell^{2}\\
+C\left(\frac{\kappa}{\ell}\,\overline{a}^{\frac{3}{2}}+\delta^{-1}\kappa^{2}\ell^{2}L(\kappa)^{2}+\delta^{-1}\kappa^{4}\ell^{4}\,\overline{a}\right)\ell^{2}\,,
\end{multline}
which finishes the proof of Proposition~\ref{pp-up-Eg}.
\end{proof}
\begin{app}\label{app:1}~\\
We select $\ell,\,\rho,\,\delta$ and the constraint on $L(\kappa)$ as follows:
\begin{equation}\label{choice-ell-rho}
\ell=\kappa^{-\frac{7}{12}}\,,\qquad\rho=\kappa^{-\frac{17}{24}}\,,\qquad L(\kappa)\leq C\,\kappa^{\frac{1}{2}}\,.
\end{equation}
and
\begin{equation}\label{choice-delta}
\delta=\kappa^{-\frac{1}{12}}
\end{equation}
Under Assumption~\eqref{cond-H}, this choice permits to verify the assumptions in Proposition~\ref{pp-up-Eg} and to obtain error terms of order $\textit{o}(\kappa^{2})$. We have indeed as $\kappa\longrightarrow\infty$
$$\frac{\kappa}{\ell}=\kappa^{\frac{19}{12}}\ll\kappa^{2}\,,$$
$$\delta^{-1}\kappa^{2}\ell^{2}\,L(\kappa)^{2}\leq\kappa^{\frac{23}{12}}\ll\kappa^{2}\,,$$
$$\delta^{-1}\kappa^{4}\ell^{4}=\kappa^{\frac{21}{12}}\ll \kappa^{2}\,,$$
$$\ell^{2}\kappa H \rho=\kappa^{\frac{3}{24}}\gg 1\,.$$
\end{app}
\begin{theorem}\label{up-Eg}
Under Assumptions~\eqref{B(x)}-\eqref{a4}, if \eqref{cond-H} holds and $L(\kappa)\leq C\,\kappa^{\frac{1}{2}}$, then, the ground state energy $\E0(\kappa,H,a,B_{0})$ in \eqref{eq-2D-gs} satisfies
\begin{multline}\label{up-Eg-eq1}
\E0(\kappa,H,a,B_{0})\leq \kappa^{2}\int_{\{a(x,\kappa)>0\}} a(x,\kappa)^{2}\,\hat{f}\left(\frac{H\,|B_{0}(x)|}{\kappa\,a(x,\kappa)}\right)\,dx\\
+\frac{\kappa^{2}}{2}\int_{\{a(x,\kappa)\leq 0\}} a(x,\kappa)^{2}\,dx+\textit{o}(\kappa^{2})\,,\quad{\rm as}~\kappa\longrightarrow\infty\,.
\end{multline}
\end{theorem}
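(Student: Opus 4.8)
The plan is to derive the global upper bound by tiling $\Omega$ with the lattice of squares $Q_\ell(\gamma)$, $\gamma\in\Gamma_\ell$, and summing the local estimate of Proposition~\ref{pp-up-Eg} over those squares that sit well inside $\Omega$ and away from the zero set of $B_0$, while controlling the contribution of the remaining squares crudely. Concretely, I would make the choices of $\ell,\rho,\delta$ recorded in Application~\ref{app:1}, namely $\ell=\kappa^{-7/12}$, $\rho=\kappa^{-17/24}$, $\delta=\kappa^{-1/12}$, which are compatible with \eqref{cond-H}, verify $\ell^2\kappa H\rho\gg1$, and split the index set $\Gamma_\ell\cap\Omega$ into three families: (i) squares $Q_\ell(\gamma)$ with $\overline{Q_\ell(\gamma)}\subset\{|B_0|>\rho\}\cap\Omega$ (the "good" squares), (ii) squares meeting $\partial\Omega$ or the boundary layer $\{|B_0|\le\rho\}$, and (iii) squares meeting $\partial\{a>0\}$. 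For each good square I choose $\widetilde\gamma=\gamma$ (or any point of $\overline{Q_\ell(\gamma)}$) and take the test function $w_{\ell,\gamma,\widetilde\gamma}$ from \eqref{def-w}; on all other squares I simply put the trial order parameter equal to $0$, so that the energy contribution there is $\tfrac{\kappa^2}{2}\int_{Q_\ell(\gamma)}a(x,\kappa)^2\,dx\le\tfrac12\overline a^2\kappa^2|Q_\ell(\gamma)|$. Gluing these pieces yields a global admissible pair $(\psi,\Fb)$ (the test $\psi$ is continuous since $w_{\ell,\gamma,\widetilde\gamma}\in H^1_0(Q_\ell(\gamma))$ extends by zero), and $\mathcal E_{\kappa,H,a,B_0}(\psi,\Fb)=\sum_\gamma\mathcal E_0(\psi,\Fb;a,Q_\ell(\gamma))$ because $\curl\Fb=B_0$ kills the magnetic field term.

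Next I would estimate the three sums. For the good squares, Proposition~\ref{pp-up-Eg} gives
$$
\sum_{\text{good }\gamma}\mathcal E_0(w_{\ell,\gamma,\gamma},\Fb;a,Q_\ell(\gamma))\le(1+\delta)\kappa^2\sum_{\text{good }\gamma}\Big[a_+(\gamma,\kappa)^2\hat f\Big(\tfrac{H|B_0(\gamma)|}{\kappa a_+(\gamma,\kappa)}\Big)+\tfrac12 a_-(\gamma,\kappa)^2\Big]\ell^2+\mathcal R,
$$
where the error $\mathcal R$ is a sum over at most $|\Omega|\ell^{-2}$ squares of the per-square error $C(\kappa^{-1}\ell^{-1}+\delta^{-1}\ell^2 L(\kappa)^2+\delta^{-1}\kappa^2\ell^4)\kappa^2|Q_\ell(\gamma)|$, so $\mathcal R\le C(\kappa/\ell+\delta^{-1}\kappa^2\ell^2L(\kappa)^2+\delta^{-1}\kappa^4\ell^4)$, which by the arithmetic in Application~\ref{app:1} is $o(\kappa^2)$. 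The main sum is a Riemann sum: the integrand $g(x)=a_+(x,\kappa)^2\hat f\big(\tfrac{H|B_0(x)|}{\kappa a_+(x,\kappa)}\big)+\tfrac12 a_-(x,\kappa)^2$ is bounded uniformly in $\kappa$ (by \eqref{a2} and $0\le\hat f\le\tfrac12$) and has modulus of continuity controlled by $L(\kappa)$, $\|\nabla B_0\|_\infty$ and $\rho^{-1}$ on $\{|B_0|>\rho\}$; hence $\sum_{\text{good }\gamma}g(\gamma)\ell^2=\int_{\{|B_0|>\rho\}}g(x)\,dx+o(1)$, uniformly enough that after multiplying by $(1+\delta)\kappa^2$ the discrepancy stays $o(\kappa^2)$. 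For families (ii) and (iii), the crude bound gives a contribution $\le\tfrac12\overline a^2\kappa^2\big(|\{|B_0|\le\rho\}|+(\text{width of boundary layer})\big)+\tfrac12\overline a^2\kappa^2\,C_1\kappa^{1/2}\ell^{-1}\cdot\ell^2$; the first term is $o(\kappa^2)$ because $|\{|B_0|\le\rho\}|\to0$ as $\rho\to0$ (using \eqref{B(x)}, which forces $\{B_0=0\}$ to have measure zero) and the boundary strip has width $O(\ell)$, and the $\partial\{a>0\}$ term is $\le C\kappa^{5/2}\ell=C\kappa^{5/2-7/12}=o(\kappa^2)$ by \eqref{defA4}.

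Finally I would reconcile the region of integration: the main term above is $\int_{\{|B_0|>\rho\}}g(x)\,dx$ over all of $\{|B_0|>\rho\}$, including where $a\le0$; on $\{a\le0\}$ one has $a_+=0$, $a_-^2=a^2$, so $g=\tfrac12 a^2$, matching the second integral in \eqref{up-Eg-eq1}; on $\{a>0\}$ one has $a_+=a$, $a_-=0$, and $\hat f\big(\tfrac{H|B_0|}{\kappa a}\big)$ is exactly the integrand of the first term. Replacing $\int_{\{|B_0|>\rho\}}$ by $\int_\Omega$ costs only $\int_{\{|B_0|\le\rho\}}g\le\tfrac12\overline a^2|\{|B_0|\le\rho\}|=o(1)$, again absorbed into $o(\kappa^2)$ after multiplying by $\kappa^2$. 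This yields \eqref{up-Eg-eq1}.

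I expect the main obstacle to be the bookkeeping of the Riemann-sum step: one must check that the combined effect of (a) replacing $g(x)$ by the frozen value $g(\gamma)$ on each good square, and (b) the prefactor $(1+\delta)$, produces an error that is genuinely $o(\kappa^2)$ and not merely $O(\kappa^2)$. This forces the quantitative relations among $\ell$, $\delta$, $\rho$ and $L(\kappa)$ — precisely the scalings pinned down in Application~\ref{app:1} — and requires that the modulus of continuity of $x\mapsto\hat f\big(\tfrac{H|B_0(x)|}{\kappa a(x,\kappa)}\big)$ be estimated carefully near $\Gamma$, which is why the argument is confined to $\{|B_0|>\rho\}$ with $\rho\to0$ slowly. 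The handling of the sets where $a$ changes sign is otherwise routine once one notes that the test function is simply $0$ there and that $\hat f(\cdot)$ is Lipschitz away from $0$ while $\hat f\equiv\tfrac12$ on $[1,\infty)$.
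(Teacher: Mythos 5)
Your proposal follows the same overall architecture as the paper's proof: the same lattice of squares, the same test function $w_{\ell,\gamma,\widetilde\gamma}$ extended by zero, the same parameter choices from Application~\ref{app:1}, the same crude treatment of the squares meeting $\partial\Omega$, $\{|B_0|\le\rho\}$ and $\partial\{a>0\}$ via \eqref{defA4}, and the same observation that taking $\Ab=\Fb$ kills the field term. The one place where you genuinely diverge is the step you yourself flag as the main obstacle: converting the sum of frozen local energies into the integral. You take $\widetilde\gamma=\gamma$ and propose to control the modulus of continuity of $x\mapsto a_+(x,\kappa)^2\hat f\bigl(\tfrac{H|B_0(x)|}{\kappa a_+(x,\kappa)}\bigr)$; this is workable but delicate, since $\hat f$ is only Lipschitz with constant of order $\log(1/\rho)$ on $[c\rho,1]$ and the argument of $\hat f$ can oscillate wildly on squares where $a$ is small but positive (one must exploit the prefactor $a_+^2$ there, much as the paper does in its Section~6 Lipschitz analysis). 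The paper sidesteps this entirely: in each square it chooses $\widetilde\gamma$ to be a point where the frozen integrand attains its infimum over $\overline{Q_{\gamma,\ell}}$, so that the resulting sum is a \emph{lower} Riemann sum, bounded above by the corresponding integral using only continuity and the non-negativity of the integrand — no quantitative modulus of continuity for $\hat f$ is needed. Your route buys nothing extra here and costs a nontrivial estimate, so if you carry it out you should either import the paper's minimizing choice of $\widetilde\gamma$ or write out the continuity estimate in full (including the regime $a_+\lesssim L(\kappa)\ell$, where the argument of $\hat f$ is uncontrolled but the prefactor saves you). The remaining points — the qualitative shrinking of $|\{|B_0|\le\rho\}|$ from \eqref{B(x)} and the reconciliation of the integration domains — are handled correctly.
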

\begin{proof}
Let $\ell\in(0,1)$, $\delta$ and $\rho$ be chosen as in \eqref{choice-ell-rho} and \eqref{choice-delta}.
We consider the lattice $\Gamma_{\ell}:=\ell\Z\times\ell\Z$ and write, for $\gamma \in\Gamma_{\ell}$,  $ Q_{\gamma,\ell}=Q_{\ell}(\gamma)$. In the next decomposition we keep the $\rho$-admissible boxes $Q_\ell(\gamma)$ in $\Omega$ which in addition are either contained in $\{a >0\}$ or in $\{ a \leq 0\}$. Hence we introduce \begin{equation}\label{def-card-squares}
\mathcal I_{\ell, \rho}^{+}=\left\{\gamma; \,\overline{Q_{\gamma, \ell}}\subset \Omega\cap\left\{|B_{0}|>\rho\,; a>0\right\}\right\},\quad \mathcal I_{\ell, \rho}^{-}=\left\{\gamma; \,\overline{Q_{\gamma, \ell}}\subset \Omega\cap\left\{ |B_{0}|>\rho\,; a\leq 0\right\}\right\}\,,
\end{equation}
and
\begin{equation}
N^{+}={\rm card}~ \mathcal I_{\ell, \rho}^{+}\,,\qquad N^{-}={\rm card}~ \mathcal I_{\ell, \rho}^{-}\,.
\end{equation}
Under Assumption \eqref{a4}, we have,
\begin{equation}\label{N}
N^{+}+N^{-}=|\Omega|\ell^{-2}+\mathcal{O}(\kappa^{\frac{1}{2}}\ell^{-1}+\ell^{-1} + \rho\ell^{-2})\,,\qquad{\rm as}~\kappa\rightarrow+\infty\,.
\end{equation}
In \eqref{N},  $\kappa^{\frac{1}{2}}\ell^{-1}$ appears when treating the boundary of the set $\{a(x,\kappa)>0\}$ (using Assumption $(A_4)$ as explained in \eqref{defA4}), $\ell^{-1}$ appears in the treatment of the boundary and $\rho\ell^{-2}$ appears when treating the neighborhood of $\Gamma$.\\
In each  $\rho$-admissible $Q_\ell(\gamma)$, we consider some $\widetilde \gamma$ (to be chosen later) such that  $(\ell, \gamma,\widetilde{\gamma})$ be a $\rho$-admissible triple. We consider $w_{\ell,\gamma,\widetilde{\gamma}}$ and extend it by $0$ outside of $Q_{\gamma,\ell}$, keeping the same notation for this extension. Then we define
\begin{equation}
s(x)=
\sum_{\gamma\in\mathcal{I}_{\ell,\rho}^{+}\cup\mathcal{I}_{\ell,\rho}^{-}} \,w_{\ell,\gamma,\widetilde{\gamma}}(x)\,.
\end{equation}
We compute the Ginzburg-Landau energy of the test configuration $(s,\Fb)$ in $\Omega$. Since $\curl \Fb=B_{0}\,$, we get,
\begin{align}\label{sumE1}
\mathcal{E}_{\kappa,H,a,B_{0}}(s,\Fb,\Omega)=\sum_{\gamma\in\mathcal{I}_{\ell,\rho}^{+}\cup\mathcal{I}_{\ell,\rho}^{-}}\mathcal{E}_{0}(w_{\ell,\gamma,\widetilde{\gamma}},\Fb;a(\widetilde{\gamma},\kappa),Q_{\gamma, \ell})\,.
\end{align}
Notice that for any $\widetilde{\gamma}\in Q_{\gamma,\ell}\,$, $a(\widetilde{\gamma},\kappa)$ satisfies \eqref{app-a} with $x=\gamma$ and $\widetilde{x}_{0}=\widetilde{\gamma}\,$, and $B_{0}(\widetilde{\gamma})$ satisfies \eqref{F-A}. We recall that $\hat{f}$ is a continuous, non-decreasing function (see \cite[Theorem~2.1]{KA2}) and that $B_{0}$ and  $ a(\cdot,\kappa)$ are in $C^{1}$. Then, in each box $Q_{\gamma,\ell}$,  we select $\widetilde{\gamma}\in \overline{Q_{\gamma,\ell}}$ such that
$$
|a(\widetilde{\gamma},\kappa)|^{2}\,\hat{f}\left(\frac{H\,B_{0}(\widetilde{\gamma})}{\kappa\,a(\widetilde{\gamma},\kappa)}\right)=\inf_{\widehat{\gamma}\in Q_{\gamma,\ell}} |a(\widehat{\gamma},\kappa)|^{2}\,\hat{f}\left(\frac{H\,B_{0}(\widehat{\gamma})}{\kappa\,a(\widehat{\gamma},\kappa)}\right)\quad(\text{if}~ \gamma\in \mathcal I_{\ell, \rho}^{+})
$$
and
$$
|a(\widetilde{\gamma},\kappa)|^{2}=\inf_{\widehat{\gamma}\in Q_{\gamma,\ell}} |a(\widehat{\gamma},\kappa)|^{2}\quad(\text{if}~ \gamma\in \mathcal I_{\ell, \rho}^{-})\,.
$$
Using Proposition~\ref{pp-up-Eg} and noticing that $|Q_{\gamma,\ell}|=\ell^2$, we get the existence of $C>0$ such that, for any $\delta\in(0,1)$
\begin{multline}\label{sumE2}
\sum_{\gamma\in\mathcal{I}_{\ell,\rho}^{+}\cup\mathcal{I}_{\ell,\rho}^{-}}\mathcal{E}_{0}(w_{\ell,\gamma,\widetilde{\gamma}},\Fb;a(\widetilde{\gamma},\kappa),Q_{\gamma, \ell})\leq \kappa^{2}(1+\delta)\sum_{\gamma\in\mathcal{I}_{\ell,\rho}^{+}} \inf_{\widehat{\gamma}\in Q_{\gamma,\ell}} [a(\widehat{\gamma},\kappa)]_{+}^{2}\,\hat{f}\left(\frac{H\,B_{0}(\widehat{\gamma})}{\kappa\,a(\widehat{\gamma},\kappa)}\right)\ell^{2}\\
+\kappa^{2}(1+\delta)\sum_{\gamma\in\mathcal{I}_{\ell,\rho}^{-}} \inf_{\widehat{\gamma}\in Q_{\gamma,\ell}} \frac{[a(\widehat{\gamma},\kappa)]_{-}^{2}}{2}\ell^{2}+\, C\, \sum_{\gamma\in\mathcal{I}_{\ell,\rho}^{+}\cup\mathcal{I}_{\ell,\rho}^{-}}\left(\frac{\kappa}{\ell}+\delta^{-1}\kappa^{2}\ell^{2}L(\kappa)^{2}+\delta^{-1}\kappa^{4}\ell^{4}\right)\,\ell^{2}\,.
\end{multline}
We recognize the lower Riemann sum of the function $x\longmapsto [a(x,\kappa)]_+^{2}\,\hat{f}\left(\frac{H\,B_{0}(x)}{\kappa\,a(x,\kappa)}\right)$ in $(\cup_{\gamma\in\mathcal{I}^{+}_{\ell,\rho}}Q_{\gamma,\ell})$ and the function $x\longmapsto [a(x,\kappa)]_-^{2}$ in $(\cup_{\gamma\in\mathcal{I}^{-}_{\ell,\rho}} Q_{\gamma,\ell})$ . Notice that $\{\cup_{\gamma\in \mathcal{I}_{\ell, \rho}}Q_{\gamma, \ell}\}\subset\Omega$. Thanks to Application~\ref{app:1}, using \eqref{N}  and the non negativity of $\hat f$, we get by collecting \eqref{sumE1}-\eqref{sumE2} that,
\begin{equation}\label{final-est}
\mathcal{E}_{\kappa,H,a,B_{0}}(s,\Fb,\Omega)\leq\kappa^{2}\int_{\{a(x,\kappa)>0\}} a(x,\kappa)^{2} \hat{f}\left(\frac{H}{\kappa}\frac{|B_{0}(x)|}{a(x,\kappa)}\right)\,dx+\frac{\kappa^{2}}{2}\int_{\{a(x,\kappa)\leq 0\}}a(x,\kappa)^{2}\,dx+C\,\kappa^{\frac{23}{12}}\,.
\end{equation}
Since $(\psi,\Ab)$ is a minimizer of the functional $\mathcal E_{\kappa,H,a,B_{0}}$ in \eqref{eq-2D-GLf}, we get
$$
\E0(\kappa,H,a,B_{0})\leq \mathcal{E}_{\kappa,H,a,B_{0}}(s,\Fb,\Omega)\,.
$$
This finishes the proof of Theorem~\ref{up-Eg}.
\end{proof}
\section{A priori estimates of minimizers}\label{section:P.E.}
The aim of this section is to give a priori estimates for the solutions of the Ginzburg-Landau equations \eqref{eq-2D-GLeq}. In the case when $a(x,\kappa)=1$ the starting point is an $L^{\infty}$ estimate of $\psi$. This estimate can be easly extended in the general case considered in this paper when $\eqref{eq-2D-GLeq}_{a}$ and $\eqref{eq-2D-GLeq}_{c}$ hold. Let us introduce:
\begin{equation}
\overline{a}(\kappa)= \sup_{x\in\overline{\Omega}}a(x,\kappa)\,.
\end{equation}
\begin{prop}\label{prop-psi<a}
Let $\kappa>0$; if $(\psi,\Ab)$  is a critical point (see \eqref{eq-2D-GLeq}), then,
\begin{equation}\label{eq-psi<supa}
 |\psi(x)|^{2}\leq \max\left\{\overline{a}(\kappa),0\right\}\,,\qquad\forall x\in\overline{\Omega}\,.
\end{equation}
\end{prop}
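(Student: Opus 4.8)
The plan is to use the standard maximum-principle argument for the first Ginzburg–Landau equation $\eqref{eq-2D-GLeq}_a$, adapted to the variable pinning term. Set $M=\max\{\overline a(\kappa),0\}$ and suppose, for contradiction, that $\sup_{\overline\Omega}|\psi|^2>M$. Since $\psi\in H^1(\Omega;\C)$ and, by elliptic regularity applied to $\eqref{eq-2D-GLeq}_a$ (the right-hand side $\kappa^2(a-|\psi|^2)\psi$ lies in $L^p$ for all $p$ once $|\psi|\in L^\infty_{\rm loc}$, bootstrapping from the bounded-energy bound), $\psi$ is smooth up to $\overline\Omega$, the continuous function $|\psi|^2$ attains its maximum at some point $x_0\in\overline\Omega$ with $|\psi(x_0)|^2>M\ge0$; in particular $\psi(x_0)\neq0$.

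First I would treat the interior case $x_0\in\Omega$. The clean way is to test $\eqref{eq-2D-GLeq}_a$ against $\overline\psi$ after multiplying and taking real parts: one has the pointwise (Kato-type) inequality $\Delta|\psi|^2 = 2\,\RE\big(\overline\psi\,\Delta\psi\big)+2|\nabla\psi|^2 \ge 2\,\RE\big(\overline\psi\,(\nabla-i\kappa H\Ab)^2\psi\big)$, which combined with $\eqref{eq-2D-GLeq}_a$ gives
\begin{equation}\label{eq-kato-max}
\Delta|\psi|^2 \ge -2\kappa^2\big(a(x,\kappa)-|\psi|^2\big)|\psi|^2 = 2\kappa^2\big(|\psi|^2-a(x,\kappa)\big)|\psi|^2\,.
\end{equation}
At an interior maximum $x_0$ of $|\psi|^2$ we have $\Delta|\psi|^2(x_0)\le0$, hence $\big(|\psi(x_0)|^2-a(x_0,\kappa)\big)|\psi(x_0)|^2\le0$; since $|\psi(x_0)|^2>0$ this forces $|\psi(x_0)|^2\le a(x_0,\kappa)\le\overline a(\kappa)\le M$, a contradiction.

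Next the boundary case $x_0\in\partial\Omega$: here I would use the Neumann-type condition $\eqref{eq-2D-GLeq}_c$, $\nu\cdot(\nabla-i\kappa H\Ab)\psi=0$ on $\partial\Omega$, which yields $\partial_\nu|\psi|^2 = 2\,\RE\big(\overline\psi\,\nu\cdot\nabla\psi\big) = 2\kappa H\,\RE\big(i\,\overline\psi\,(\nu\cdot\Ab)\psi\big)\cdot(\ldots)=0$ on $\partial\Omega$ (the magnetic term is purely imaginary times $|\psi|^2$, so its real part vanishes), i.e. $|\psi|^2$ has vanishing normal derivative. One then applies the Hopf boundary-point lemma to \eqref{eq-kato-max}: if the maximum were attained only on the boundary, the interior sublevel set would be a domain on whose boundary $|\psi|^2$ achieves a strict maximum, and Hopf would force $\partial_\nu|\psi|^2(x_0)>0$ (outward normal), contradicting $\partial_\nu|\psi|^2(x_0)=0$; alternatively one compares on a small half-ball. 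Either way the same contradiction $|\psi(x_0)|^2\le a(x_0,\kappa)\le M$ is reached. The main obstacle is not any single step but making the maximum-principle/Hopf argument rigorous for the (a priori only $H^1$) solution and handling the smooth boundary correctly — once the regularity and the pointwise inequality \eqref{eq-kato-max} are in place, the rest is routine; note the argument uses only $\eqref{eq-2D-GLeq}_a$ and $\eqref{eq-2D-GLeq}_c$, which is exactly why it extends verbatim from the case $a\equiv1$.
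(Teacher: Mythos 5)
Your argument is correct in outline but takes a genuinely different route from the paper's. The paper splits into two cases: when $\overline{a}(\kappa)\leq 0$ it simply integrates $\eqref{eq-2D-GLeq}_{a}$ against $\overline{\psi}$ and reads off $\psi\equiv 0$; when $\overline{a}(\kappa)>0$ it establishes only that $\psi\in W^{2,q}(\Omega)$ for $q<2$, hence $\psi\in C^{0}(\overline{\Omega})$, and then runs the Fournais--Helffer truncation argument: it tests the equation against $\widehat{\psi}=\bigl[|\psi|-\sqrt{\overline{a}(\kappa)}\bigr]_{+}\,\psi/|\psi|$, integrates by parts using $\eqref{eq-2D-GLeq}_{c}$, and observes that the resulting integrand over $\Omega_{+}=\{|\psi|>\sqrt{\overline{a}(\kappa)}\}$ is nonnegative, forcing $|\Omega_{+}|=0$. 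You instead use the pointwise Kato-type inequality $\Delta|\psi|^{2}\geq 2\kappa^{2}(|\psi|^{2}-a)|\psi|^{2}$ together with the interior maximum principle and the Hopf boundary-point lemma; your identity is correct, your computation $\partial_{\nu}|\psi|^{2}=0$ on $\partial\Omega$ from $\eqref{eq-2D-GLeq}_{c}$ is correct (and is immediate here since $\Ab\cdot\nu=0$), and your single argument treats $\overline{a}(\kappa)\leq 0$ and $\overline{a}(\kappa)>0$ uniformly. The trade-off is regularity: the paper's weak-formulation argument needs only $\psi\in H^{1}\cap C^{0}$, whereas yours needs $\Delta|\psi|^{2}$ to make pointwise sense and Hopf's lemma to apply, i.e.\ $\psi\in C^{2}$, or at least $\psi\in W^{2,p}$ with $p>2$ together with the strong maximum principle for strong solutions. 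That regularity is attainable (with $\Ab\in H^{1}\hookrightarrow L^{p}$ one bootstraps $\eqref{eq-2D-GLeq}_{a}$ to $\psi\in W^{2,p}$ for all $p<\infty$, hence $|\psi|^{2}\in W^{2,p}$), but you flag this step without carrying it out, and it is exactly the technical burden the paper's variational argument is designed to avoid.
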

\begin{proof}
We distinguish between two cases:\\
\textbf{Case 1:} \textbf {$\displaystyle\overline{a}(\kappa)\leq 0\,$.}\\
 Multiplying the equation for $\psi$ in \eqref{eq-2D-GLeq}$_a$ by $\overline{\psi}$ and integrating over $\Omega$, we get
\begin{equation}\label{eq:A-psi}
\int_{\Omega}|(\nabla-i\kappa H\Ab)\psi|^{2}\,dx=\kappa^{2}\int_{\Omega}(a(x,\kappa)-|\psi|^{2})|\psi|^{2}\,dx\,.
\end{equation}
Since $(a(x,\kappa)-|\psi|^{2})\leq -|\psi|^{2}$, we obtain that $ |\psi|^{2}=0\,$ almost everywhere.\\
\textbf{Case 2:} \textbf{ $\displaystyle\overline{a}(\kappa)>0\,$}.\\
We will show that $\psi\in C^{0}(\overline{\Omega})$. In fact, $(\psi,\Ab)$ satisfies  \eqref{eq-2D-GLeq}$_{a}$, $\psi\in L^{p}(\Omega)$ for all $2\leq p<+\infty$ and $\Ab\in H_{\rm div}^{1}(\Omega)\hookrightarrow L^{p}(\Omega)$. Thus, $\psi \in W^{2,q}(\Omega)$ for all $q<2$. As a consequence of the continuous Sobolev embedding of $W^{j+m,q}(\Omega)$ into $C^{j}(\overline{\Omega})$ for any $q>\frac{2}{m}$, we obtain that $\psi\in C^{0}(\overline{\Omega})$. Define for any $\kappa>0$ the following open set:
\begin{equation}\label{omega+}
\Omega_{+}=\left\{x\in\Omega:\,|\psi(x)|>\sqrt{\overline{a}(\kappa)}\right\}\,,
\end{equation}
and the following functions on $\Omega_{+}$
$$\phi=\frac{\psi}{|\psi|}\,,\qquad \widehat{\psi}=\left[|\psi|-\sqrt{\overline{a}(\kappa)}\right]_{+}\phi\,.$$
It is clear that
$$\nabla\left[|\psi|-\sqrt{\overline{a}(\kappa)}\right]_{+}=1_{\Omega_{+}}\nabla\left(|\psi|-\sqrt{\overline{a}(\kappa)}\right)=1_{\Omega_{+}}\nabla|\psi|\,.$$
Notice that $\psi\in H^{1}(\Omega)$, so applying \cite[Proposition~3.1.2]{FH1}, we get the property that\break  $\displaystyle\nabla\left[|\psi|-\sqrt{\overline{a}(\kappa)}\right]_{+}\in L^{2}(\Omega)$, which implies that $\displaystyle\left[|\psi|-\sqrt{\overline{a}(\kappa)}\right]_{+}\in H^{1}(\Omega)$.\\
We introduce an increasing cut-off function $\chi\in C^{\infty}(\R)$ such that,
\begin{equation}\label{chi}
\chi(t)=\left\{
\begin{array}{ll}
0 & \text{for }~t\leq\frac{1}{4}\displaystyle\sqrt{\overline{a}(\kappa)}\\
1& \text{for}~t\geq\frac{3}{4}\displaystyle\sqrt{\overline{a}(\kappa)}\,,
\end{array}
\right.
\end{equation}
and define
\begin{equation}\label{phi1}
\widehat{\phi}=\chi(|\psi|)\frac{\psi}{|\psi|}\,.
\end{equation}
Since $\chi(|\psi|)\frac{\psi}{|\psi|}$ is smooth with bounded derivatives and $\psi\in H^{1}(\Omega)$, the chain rule gives that $\widehat{\phi}\in H^{1}(\Omega)\,.$
Furthermore,
\begin{equation}\label{phi2}
(\nabla-i\kappa H \Ab)\widehat{\psi}=1_{\Omega_{+}}\widehat{\phi}\,\nabla|\psi|+\displaystyle\left[|\psi|-\sqrt{\overline{a}(\kappa)}\right]_{+}(\nabla-i\kappa H\Ab)\widehat{\phi}.
\end{equation}
Using \eqref{chi} and \eqref{phi1}, we get
\begin{equation}\label{phi3}
1_{\Omega_{+}}(\nabla-i\kappa H \Ab)\psi=1_{\Omega_{+}}(\nabla-i\kappa H\Ab)(|\psi|\widehat{\phi})=1_{\Omega_{+}}\{\widehat{\phi}\,\nabla|\psi|+|\psi|(\nabla-i\kappa H\Ab)\widehat{\phi}\}\,.
\end{equation}
We have on $\Omega_{+}$ that $|\phi|=|\widehat{\phi}|=1\,$. Therefore
\begin{align*}
\phi\nabla\overline{\phi}+\overline{ \phi\nabla\overline{\phi}}&=\phi\nabla\overline{\phi}+\overline{\phi}\nabla\phi\\
&=\nabla|\phi|^{2}\\
&=0\,.
\end{align*}
So, $\RE(1_{\Omega_{+}}\phi\nabla\overline{\phi})=0\,$. This implies by using \eqref{phi2} and \eqref{phi3} that
$$
\RE\left\{\overline{(\nabla-i\kappa H\Ab)\widehat{\psi}}\cdot(\nabla-i\kappa H\Ab)\psi \right\}=
1_{\Omega_{+}}\left(|\nabla|\psi||^{2}+\left(|\psi|-\sqrt{\overline{a}(\kappa)}\right)|\psi||(\nabla-i\kappa H\Ab)\widehat{\phi}|^{2}\right).
$$
Multiplying $\eqref{eq-2D-GLeq}_{a}$ by $\overline{\widehat{\psi}}$ and using $\eqref{eq-2D-GLeq}_{c}$, it results from an integration by parts over $\Omega$ that
\begin{align*}
0&=\RE\left\{\int_{\Omega} \overline{(\nabla-i\kappa H\Ab)\widehat{\psi}}(\nabla-i\kappa H\Ab)\psi+\overline{\widehat{\psi}}(|\psi|^{2}-a)\psi\,dx \right\}\\
&\geq \RE\left\{\int_{\Omega} \overline{(\nabla-i\kappa H\Ab)\widehat{\psi}}(\nabla-i\kappa H\Ab)\psi+\overline{\widehat{\psi}}\left(|\psi|^{2}-\overline{a}(\kappa)\right)\psi\,dx \right\}\\
&\geq \int_{\Omega_{+}} |\nabla|\psi||^{2}+\left(|\psi|-\overline{a}(\kappa)\right)|\psi||(\nabla-i\kappa H\Ab)\widehat{\phi}|^{2}\\
&\qquad\qquad\qquad\qquad+\left(|\psi|+\sqrt{\overline{a}(\kappa)}\right)\left(|\psi|-\sqrt{\overline{a}(\kappa)}\right)^{2}|\psi|\,dx\,.
\end{align*}
Since the integrand is non-negative in $\Omega_{+}$, we easily conclude that $\Omega_{+}$ has measure zero, and consequently, we get that $|\psi|\in L^{\infty}(\Omega)$\,.\\
Since $\Omega_{+}$ has measure zero and $\psi\in C^{0}(\overline{\Omega})$, we get
$$
|\psi(x)|^{2}\leq  \overline{a}(\kappa) \,,\qquad\forall x\in\overline{\Omega}\,.
$$
\end{proof}

\begin{corol}
Let $\kappa>0$; If $(\psi,\Ab)\in H^1(\Omega;\C)\times H^1_{\Div}(\Omega)$ is a  critical point, we have,
\begin{equation}\label{eq-psi<a}
 |\psi(x)|^{2}\leq \max\left\{\overline{a},0\right\}\,,\qquad\forall x\in\overline{\Omega}\,,
\end{equation}
where $ \overline{a}= \sup_{\kappa} \overline{a}(\kappa)$ was introduced in \eqref{def:sup-a}.
\end{corol}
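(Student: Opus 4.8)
The plan is to deduce the corollary directly from Proposition~\ref{prop-psi<a} by comparing $\overline{a}(\kappa)$ with $\overline{a}$. Recall that Proposition~\ref{prop-psi<a} gives, for every fixed $\kappa>0$ and every critical point $(\psi,\Ab)$, the bound $|\psi(x)|^{2}\leq\max\{\overline{a}(\kappa),0\}$ for all $x\in\overline{\Omega}$, where $\overline{a}(\kappa)=\sup_{x\in\overline{\Omega}}a(x,\kappa)$. On the other hand, $\overline{a}=\sup_{x\in\overline{\Omega},\,\kappa\geq\kappa_0}a(x,\kappa)$ from \eqref{def:sup-a}, which is exactly $\sup_{\kappa\geq\kappa_0}\overline{a}(\kappa)$ by interchanging the two suprema.

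The first thing I would do is note that for every $\kappa\geq\kappa_0$ one has $\overline{a}(\kappa)\leq\overline{a}$, simply because $\overline{a}(\kappa)$ is a supremum over a subset of the index set defining $\overline{a}$. Since the function $t\mapsto\max\{t,0\}$ is non-decreasing, this immediately yields $\max\{\overline{a}(\kappa),0\}\leq\max\{\overline{a},0\}$ for all $\kappa\geq\kappa_0$. Combining with Proposition~\ref{prop-psi<a} gives, for any critical point $(\psi,\Ab)$ and any $\kappa\geq\kappa_0$,
\begin{equation*}
|\psi(x)|^{2}\leq\max\{\overline{a}(\kappa),0\}\leq\max\{\overline{a},0\}\,,\qquad\forall x\in\overline{\Omega}\,,
\end{equation*}
which is exactly \eqref{eq-psi<a}.

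There is essentially no obstacle here: the corollary is a one-line consequence of the proposition once one observes the monotonicity of $\kappa\mapsto\overline{a}(\kappa)$ relative to the global supremum $\overline{a}$ and the monotonicity of $\max\{\cdot,0\}$. The only mild point of care is that Assumption~\ref{assumption}, specifically $(A_2)$ in \eqref{a2}, guarantees $\overline{a}<+\infty$, so the right-hand side is a genuine finite bound; and that the bound is uniform in $\kappa$, which is the whole added value of the corollary over Proposition~\ref{prop-psi<a}. No new estimate, regularity argument, or PDE manipulation is required.
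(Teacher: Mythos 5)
Your proposal is correct and matches the paper's (implicit) reasoning: the corollary is stated without proof precisely because it follows immediately from Proposition~\ref{prop-psi<a} via $\overline{a}(\kappa)\leq\overline{a}$ and the monotonicity of $t\mapsto\max\{t,0\}$, exactly as you argue.
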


The following estimates play an essential role in controlling the errors resulting from various approximations (see  Section~\ref{section5}). These estimates are simpler than the delicate elliptic estimates in \cite{FH2} and \cite{LP}.
\begin{prop}\label{pr-est}
Suppose that \eqref{cond-H} holds. Let $\beta\in(0,1)$. There exist positive constants $\kappa_{0}$ and $C$ such that, if $\kappa\geq\kappa_{0}$ and $(\psi, \Ab)$ is a minimizer of \eqref{eq-2D-GLf}, then
\begin{equation}\label{eq-curlAF}
\|\curl(\Ab-\Fb)\|_{L^{2}(\Omega)}\leq \frac{C}{H}\,.
\end{equation}
\begin{equation}\label{est-A-F}
\|\Ab-\Fb\|_{H^{2}(\Omega)}\leq \frac{C}{H}\,,
\end{equation}
\begin{equation}\label{est-A-F2}
\|\Ab-\Fb\|_{C^{0,\beta}(\overline{\Omega})}\leq  \frac{C}{H}\,.
\end{equation}
Here  we recall that $\Fb$ is the magnetic potential  defined by
\begin{equation}\label{div-curlF}
\curl \Fb = B_0\,,\, \Fb \in H^1_{\Div}(\Omega)\,.
\end{equation}
\end{prop}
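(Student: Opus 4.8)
The plan is to exploit equation $\eqref{eq-2D-GLeq}_{b}$, which says $-\nabla^{\bot}\curl(\Ab-\Fb) = \frac{1}{\kappa H}\IM(\overline{\psi}(\nabla-i\kappa H\Ab)\psi)$ in $\Omega$, together with the boundary condition $\eqref{eq-2D-GLeq}_{d}$, namely $\curl\Ab=\curl\Fb$ on $\partial\Omega$. First I would derive the basic energy bound: since $(\psi,\Ab)$ is a minimizer, comparing with the normal configuration $(0,\Fb)$ and using Theorem~\ref{up-Eg} (or just the trivial bound $\mathcal E_{\kappa,H,a,B_0}(\psi,\Ab)\leq \mathcal E_{\kappa,H,a,B_0}(0,\Fb)\leq C\kappa^2$), one gets
\begin{equation*}
\int_\Omega |(\nabla-i\kappa H\Ab)\psi|^2\,dx \leq C\kappa^2 \quad\text{and}\quad (\kappa H)^2\int_\Omega|\curl\Ab-B_0|^2\,dx \leq C\kappa^2\,.
\end{equation*}
The second inequality immediately yields $\|\curl(\Ab-\Fb)\|_{L^2(\Omega)}\leq C\kappa/(\kappa H) = C/H$, which is \eqref{eq-curlAF}.

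Next, for \eqref{est-A-F} I would test $\eqref{eq-2D-GLeq}_{b}$ against $\Ab-\Fb$ and use the a priori bound $|\psi|\leq C$ from Proposition~\ref{prop-psi<a} (Corollary after it) to estimate the right-hand side: by Cauchy--Schwarz,
\begin{equation*}
\left\|\frac{1}{\kappa H}\IM(\overline{\psi}(\nabla-i\kappa H\Ab)\psi)\right\|_{L^2(\Omega)} \leq \frac{C}{\kappa H}\,\|(\nabla-i\kappa H\Ab)\psi\|_{L^2(\Omega)} \leq \frac{C}{\kappa H}\cdot C\kappa = \frac{C}{H}\,.
\end{equation*}
Since $\Ab-\Fb\in H^1_{\Div}(\Omega)$ satisfies $\Div(\Ab-\Fb)=0$ in $\Omega$, $(\Ab-\Fb)\cdot\nu=0$ on $\partial\Omega$, and $\curl(\Ab-\Fb)$ has a boundary trace controlled by \eqref{eq-curlAF} combined with $\eqref{eq-2D-GLeq}_{d}$ (in fact $\curl(\Ab-\Fb)=0$ on $\partial\Omega$), I would invoke the standard elliptic regularity estimate for the div-curl system (the $\curl$--$\Div$ a priori estimate, as in \cite{FH1}): $\|\Ab-\Fb\|_{H^2(\Omega)}\leq C\|\curl(\Ab-\Fb)\|_{H^1(\Omega)}$, and since $-\Delta(\Ab-\Fb)=\nabla^\bot\curl(\Ab-\Fb)$ in the div-free gauge, $\|\Ab-\Fb\|_{H^2(\Omega)}\leq C\|\nabla^\bot\curl(\Ab-\Fb)\|_{L^2(\Omega)} = C\|\text{RHS of }\eqref{eq-2D-GLeq}_b\|_{L^2(\Omega)}\leq C/H$. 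Finally \eqref{est-A-F2} follows from \eqref{est-A-F} by the Sobolev embedding $H^2(\Omega)\hookrightarrow C^{0,\beta}(\overline{\Omega})$ in two dimensions for any $\beta\in(0,1)$.

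The main technical obstacle is making the elliptic estimate for $\Ab-\Fb$ fully rigorous: one must carefully handle the gauge condition ($\Div(\Ab-\Fb)=0$), the boundary conditions, and the fact that the right-hand side of $\eqref{eq-2D-GLeq}_{b}$ is only in $L^2$ (so one directly gets $H^2$ but not more without extra work), and check that the elliptic constant is independent of $\kappa$. A subtlety worth noting is that a naive bootstrap would try to improve the bound on $(\nabla-i\kappa H\Ab)\psi$ using the improved bound on $\Ab$, but this is not needed here; the crude energy bound $\|(\nabla-i\kappa H\Ab)\psi\|_{L^2}\leq C\kappa$ already suffices because of the favorable $\frac{1}{\kappa H}$ prefactor in $\eqref{eq-2D-GLeq}_{b}$. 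I would also double-check that Proposition~\ref{prop-psi<a} applies, i.e. that the $L^\infty$ bound on $\psi$ holds uniformly in $\kappa$, which it does by the corollary following it.
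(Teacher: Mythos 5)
Your proposal is correct and follows essentially the same route as the paper: the $L^2$ bound on $\curl(\Ab-\Fb)$ comes from the energy upper bound (the paper invokes Theorem~\ref{up-Eg}, you use the equivalent comparison with $(0,\Fb)$), the $H^2$ bound comes from equation $\eqref{eq-2D-GLeq}_{b}$ together with the $L^\infty$ bound on $\psi$, the energy bound on $(\nabla-i\kappa H\Ab)\psi$ and elliptic regularity for the $\curl$--$\Div$ system (the paper simply cites \cite[Proposition~4.1]{KA2} for this step, and your spelled-out argument is exactly that reference's method), and the H\"older estimate is the Sobolev embedding $H^{2}(\Omega)\hookrightarrow C^{0,\beta}(\overline{\Omega})$. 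The only cosmetic remark is that the elliptic estimate requires the full $H^{1}$ norm of $\curl(\Ab-\Fb)$, i.e.\ both $\|\curl(\Ab-\Fb)\|_{L^{2}}$ and $\|\nabla^{\bot}\curl(\Ab-\Fb)\|_{L^{2}}$, but you have both pieces bounded by $C/H$, so the argument is complete.
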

\begin{proof}
Under Assumption~\eqref{cond-H}, Theorem~\ref{up-Eg} yields
\begin{align}
&\|\curl(\Ab-\Fb)\|_{L^{2}(\Omega)}\leq\frac{1}{\kappa H}\E0(\kappa,H,a,B_{0})^{\frac{1}{2}}\nonumber\\
&\qquad\qquad\leq\frac{1}{\kappa H}\left(\kappa^{2}\,\int_{\{a(x,\kappa)>0\}} \,a(x,\kappa)^{2} \hat{f}\left(\frac{H}{\kappa}\frac{|B_{0}(x)|}{a(x,\kappa)}\right)\,dx+\frac{\kappa^{2}}{2}\int_{\{a(x,\kappa)\leq 0\}}\,a(x,\kappa)^{2}\,dx\right)^{\frac{1}{2}}\,.
\end{align}
Using \eqref{a2} and the bound $\hat{f}(b)\leq\frac{1}{2}$, we get,
\begin{equation}\label{est-curlAF}
\|\curl(\Ab-\Fb)\|_{L^{2}(\Omega)}\leq\frac{C}{H}\,.
\end{equation}
As in \cite[Proposition~4.1]{KA2}, we prove that
\begin{equation}\label{est-F2}
\|\Ab-\Fb\|_{H^{2}(\Omega)}\leq\frac{C}{H}\,.
\end{equation}
Now, the estimate in $C^{0,\beta}$-norm is a consequence of the continuous Sobolev embedding of $H^{2}(\Omega)$ in $C^{0,\beta}(\overline{\Omega})$.
\end{proof}

\section{Lower bounds for the  global  and local energies} \label{section5}
In this section, we suppose that $\mathcal{D}$ is an open set with smooth boundary such that $\overline{\mathcal{D}}\subset\Omega$ (or $\mathcal{D}=\Omega$). We will give a lower bound of the ground state energy $\E0(\kappa,H,a,B_{0})$ introduced in \eqref{eq-2D-gs}.
\begin{prop}\label{prop-lb}
 Under Assumptions~\eqref{B(x)}-\eqref{a3}, there exist for all $\beta\in(0,1)$ positive
constants $C$ and $\kappa_{0}$ such that if $\kappa\geq\kappa_{0}$, $\ell \in (0,\frac 12)$, $\delta\in(0,1)$, $\rho>0$, $\ell^{2}\kappa H \rho>1$, $(\psi,\Ab)$ is a minimizer of \eqref{eq-2D-GLf}, $h\in C^{1}(\overline{\Omega})$, $\|h\|_{\infty}\leq 1$ and $(\ell,x_{0},\widetilde{x}_{0})$ is a $\rho$-admissible triple, then,
\begin{multline}\label{lb}
\frac{1}{|Q_{\ell}(x_{0})|}\mathcal E_0(h\psi,\Ab;a,Q_{\ell}(x_0))\geq(1-\delta)\kappa^{2}\left\{ a_{+}(\widetilde{x}_{0},\kappa)^{2}\hat{f}\left(\frac{H}{\kappa}\frac{|B_{0}(\widetilde{x}_{0})|}{a_{+}(\widetilde{x}_{0},\kappa)}\right)+\frac{1}{2}a_{-}(\widetilde{x}_{0},\kappa)^{2}\right\}\\
-C\kappa^{2}\left(\delta^{-1}\ell^{2}L(\kappa)^{2}+\delta^{-1}\kappa^{2}\ell^{4}+\delta^{-1}\ell^{2\beta}+(\kappa\ell)^{-1}+\ell\,L(\kappa)\right)\,,
\end{multline}
where $L(\kappa)$ is introduced in \eqref{def:L}.
\end{prop}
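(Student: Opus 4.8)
The plan is to mirror the upper bound of Proposition~\ref{pp-up-Eg}, replacing the explicit test function $w_{\ell,x_0,\widetilde{x}_{0}}$ by the restriction $h\psi$ of a genuine minimizer, and to exploit that after freezing the magnetic field and the pinning term on the small square $Q_\ell(x_0)$ we land exactly in the model problem of Section~\ref{2}. First I would use the a priori estimates of Proposition~\ref{pr-est}, specifically \eqref{est-A-F2}, to replace $\kappa H\Ab$ by $\kappa H\Fb$ up to an error: for any $\eta\in(0,1)$,
\begin{equation*}
\int_{Q_\ell(x_0)}|(\nabla-i\kappa H\Ab)(h\psi)|^2\,dx\geq(1-\eta)\int_{Q_\ell(x_0)}|(\nabla-i\kappa H\Fb)(h\psi)|^2\,dx-\eta^{-1}(\kappa H)^2\|\Ab-\Fb\|_\infty^2\int_{Q_\ell(x_0)}|h\psi|^2\,dx,
\end{equation*}
and since $\|\Ab-\Fb\|_\infty\leq C/H$, $\|h\|_\infty\le1$ and $|\psi|^2\leq\max(\overline a,0)$ by Proposition~\ref{prop-psi<a}, the last term is $O(\kappa^2\ell^2)$; choosing $\eta$ comparable to $\delta$ this is absorbed in the stated error. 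Next I would use \eqref{F-A} to replace $\kappa H\Fb$ by $\kappa H\big(B_0(\widetilde x_0)\Ab_0(x-x_0)+\nabla\varphi_{x_0,\widetilde x_0}\big)$ on $Q_\ell(x_0)$, incurring an error $O(\delta^{-1}(\kappa H)^2\ell^4\,|h\psi|^2)=O(\delta^{-1}\kappa^2\ell^4)\kappa^2$-type term (after using $H\approx\kappa$), exactly as in \cite{KA2}; the gauge factor $e^{i\kappa H\varphi_{x_0,\widetilde x_0}}$ is then removed by the usual gauge transformation, which does not change the modulus and hence not the potential term.

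Having reduced the magnetic potential to $\zeta_\ell|B_0(\widetilde x_0)|\Ab_0(x-x_0)$ (up to gauge), I would treat the pinning term similarly: by \eqref{app-a}, $|a(x,\kappa)-a(\widetilde x_0,\kappa)|\leq\frac{\ell}{\sqrt2}L(\kappa)$ on $Q_\ell(x_0)$, so for any $\delta\in(0,1)$
\begin{equation*}
\frac{\kappa^2}{2}\int_{Q_\ell(x_0)}\big(a(x,\kappa)-|h\psi|^2\big)^2\,dx\geq(1-\delta)\frac{\kappa^2}{2}\int_{Q_\ell(x_0)}\big(a(\widetilde x_0,\kappa)-|h\psi|^2\big)^2\,dx-C\delta^{-1}\kappa^2\ell^4L(\kappa)^2.
\end{equation*}
At this point the right-hand side, up to the listed errors, is bounded below by $e_D$ or $e_N$ of the frozen model on $Q_R$ with $R=\ell\sqrt{\kappa H|B_0(\widetilde x_0)|}$ and $b=H|B_0(\widetilde x_0)|/\kappa$: when $a(\widetilde x_0,\kappa)>0$ one rescales $y=\frac{R}{\ell}(x-x_0)$ exactly as in the proof of Proposition~\ref{pp-up-Eg} to get, with $\alpha=a(\widetilde x_0,\kappa)$,
\begin{equation*}
\mathcal E_0(h\psi,\zeta_\ell|B_0(\widetilde x_0)|\Ab_0(\cdot-x_0);a(\widetilde x_0,\kappa),Q_\ell(x_0))\geq \tfrac1b\,e_N\big(b,R,\alpha\big)\,,
\end{equation*}
because $h\psi$ is only an $H^1$ (not $H^1_0$) function on $Q_\ell(x_0)$; then Proposition~\ref{pro-f(b)}, namely $e_N(b,R,\alpha)\geq e_D(b,R,\alpha)-C_M\alpha^2R(b/\alpha)^{1/2}$ together with the lower bound in \eqref{est-f(b)-eD}, gives $\tfrac1b e_N(b,R,\alpha)\geq \alpha^2\ell^2\hat f(b/\alpha)-C\kappa^2\ell^2(\kappa\ell)^{-1}-\dots$, which is the main term $\kappa^2 a_+(\widetilde x_0,\kappa)^2\hat f\big(\frac{H}{\kappa}\frac{|B_0(\widetilde x_0)|}{a_+(\widetilde x_0,\kappa)}\big)\ell^2$ plus an error of the claimed order (here $M$ is fixed via \eqref{cond-H} since $b/\alpha=\frac{H}{\kappa}\frac{|B_0(\widetilde x_0)|}{a(\widetilde x_0,\kappa)}$ is bounded). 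When $a(\widetilde x_0,\kappa)\leq0$ the frozen potential is $\geq\frac12 a(\widetilde x_0,\kappa)^2\ell^2$ by Case~2 of Section~\ref{2}, giving the $\frac12 a_-^2$ term; the extra $\ell L(\kappa)$ error in \eqref{lb} comes from the difference between $\frac12 a_-(x,\kappa)^2$ and $\frac12 a_-(\widetilde x_0,\kappa)^2$ integrated over $Q_\ell(x_0)$, bounded by $C\kappa^2\ell L(\kappa)$ after dividing by $\ell^2$.

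The main obstacle, and the place where this differs genuinely from the upper bound, is the passage from the \emph{true} minimizer $h\psi$ (which does not vanish on $\partial Q_\ell(x_0)$) to the model energies: one cannot simply invoke $e_D$, only the Neumann energy $e_N$, and the control of the gap between $e_N$ and $e_D$ in \eqref{eN>eD} is precisely what costs the $(\kappa\ell)^{-1}$ term. A secondary technical point is that the replacement of $\Ab$ by $\Fb$ and of $\Fb$ by its local model potential must be done \emph{before} freezing $a$, and one must be careful that all error terms are uniform in the admissible triple; the boundedness $|\psi|^2\le\max(\overline a,0)$ and the hypotheses $\ell^2\kappa H\rho>1$, $L(\kappa)=\mathcal O(\kappa^{1/2})$ ensure $R\ge1$ and that every error is of the form $C\kappa^2$ times one of $\delta^{-1}\ell^2L(\kappa)^2$, $\delta^{-1}\kappa^2\ell^4$, $\delta^{-1}\ell^{2\beta}$, $(\kappa\ell)^{-1}$, $\ell L(\kappa)$, as asserted. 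Collecting all the above and dividing by $|Q_\ell(x_0)|=\ell^2$ yields \eqref{lb}.
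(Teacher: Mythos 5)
Your overall strategy coincides with the paper's: split according to the sign of $a(\widetilde x_0,\kappa)$, freeze $a$ and $B_0$ on the square at the cost of $L(\kappa)$-errors, rescale to the model functional of Section~\ref{2}, and pass from the Neumann energy $e_N$ to $e_D$ and then to $\hat f$ via Proposition~\ref{pro-f(b)}, which is indeed where the $(\kappa\ell)^{-1}$ term originates. The treatment of the case $a(\widetilde x_0,\kappa)\le 0$ and the identification of the $\ell L(\kappa)$ error are also consistent with the paper.

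There is, however, one genuine gap: your first step, replacing $\kappa H\Ab$ by $\kappa H\Fb$ using only $\|\Ab-\Fb\|_\infty\le C/H$, produces the error
$\eta^{-1}(\kappa H)^2\|\Ab-\Fb\|_\infty^2\int_{Q_\ell(x_0)}|h\psi|^2\,dx\le C\,\eta^{-1}\kappa^2\,\overline a\,\ell^2$,
which after division by $|Q_\ell(x_0)|=\ell^2$ is of size $\delta^{-1}\kappa^2$. None of the admissible error terms in \eqref{lb} is of this form: with the choices $\ell=\kappa^{-7/12}$, $\delta=\kappa^{-1/12}$ of Application~5.2 all listed errors are $o(\kappa^2)$, whereas $\delta^{-1}\kappa^2=\kappa^{2+1/12}$ actually dominates the main term, so the estimate would be vacuous. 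The point is that on the small square one must exploit the \emph{oscillation} of $\Ab-\Fb$, not its size. The paper does this by introducing the linear gauge $\phi_{x_0}(x)=(\Ab(x_0)-\Fb(x_0))\cdot x$ and using the $C^{0,\beta}$ bound \eqref{est-A-F2} to get $|\Ab(x)-\nabla\phi_{x_0}(x)-\Fb(x)|\le C\ell^{\beta}/H$ on $Q_\ell(x_0)$ (estimate \eqref{alpha}); the extra factor $\ell^{\beta}$ turns the error into $\delta^{-1}(\kappa H)^2(\ell^{2\beta}/H^2)\ell^2$, i.e.\ $\delta^{-1}\kappa^2\ell^{2\beta}$ per unit area, which is exactly the $\delta^{-1}\ell^{2\beta}$ term in \eqref{lb} and the reason the statement is quantified over $\beta\in(0,1)$. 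Your argument, as written, never uses the Hölder regularity of $\Ab-\Fb$ and therefore cannot produce an error of the stated form; inserting the gauge $\phi_{x_0}$ (absorbed into the phase $\varphi=\varphi_{x_0,\widetilde x_0}+\phi_{x_0}$, which indeed does not affect the potential term) repairs the step and the rest of your outline then goes through as in the paper.
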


\begin{proof}
We distinguish between two cases according to the sign of $a(\widetilde{x}_{0},\kappa)$.\\
\textbf{We begin with the case when $a(\widetilde{x}_{0},\kappa)\leq 0\,$.} We have,
\begin{align*}
\mathcal E_0(h\psi,\Ab;a,Q_{\ell}(x_0))&=\int_{Q_{\ell}(x_0)}|(\nabla -i\kappa
H\Ab)h\psi|^2\,dx+\frac{\kappa^{2}}{2}\int_{Q_{\ell}(x_0)}(a(x,\kappa)-|h\psi|^{2})^2\,dx\\
&\geq\frac{\kappa^{2}}{2}\int_{Q_{\ell}(x_0)} a(x,\kappa)^{2}\,dx-\kappa^{2}\int_{Q_{\ell}(x_0)}a(x,\kappa)|h\psi|^{2}\,dx\,.
\end{align*}
Using \eqref{app-a}, \eqref{eq-psi<a} and the assumptions on $h$, the simple decomposition $a(x,\kappa)=a(\widetilde{x}_{0},\kappa)+(a(x,\kappa)-a(\widetilde{x}_{0},\kappa))$ yields for any $\delta\in(0,1)$
\begin{align}\label{up-a2}
\frac{\kappa^{2}}{2}\int_{Q_{\ell}(x_0)} a(x,\kappa)^{2}\,dx&\geq (1-\delta)\frac{\kappa^{2}}{2}\int_{Q_{\ell}(x_0)} a(\widetilde{x}_{0},\kappa)^{2}\,dx\nonumber\\
&\qquad\qquad\qquad+(1-\delta^{-1})\frac{\kappa^{2}}{2}\int_{Q_{\ell}(x_0)} (a(x,\kappa)-a(\widetilde{x}_{0},\kappa))^{2}\,dx\nonumber\\
&\geq (1-\delta)\,\frac{\kappa^{2}}{2}\,a(\widetilde{x}_{0},\kappa)^{2}\,|Q_{\ell}(x_0)|-C\delta^{-1}\kappa^{2}\ell^{2}L(\kappa)^{2}\,|Q_{\ell}(x_0)|\,,
\end{align}
and
\begin{align}\label{up-apsi}
-\kappa^{2}\int_{Q_{\ell}(x_0)} a(x,\kappa)|h\psi|^{2}\,dx&\geq -\kappa^{2}\int_{Q_{\ell}(x_0)} a(\widetilde{x}_{0},\kappa)|h\psi|^{2}\,dx-C\,\ell\,L(\kappa)\,\kappa^{2}\,|Q_{\ell}(x_0)|\nonumber\\
&\geq-C\,\ell\,L(\kappa)\,\kappa^{2}\,|Q_{\ell}(x_0)|\,.
\end{align}
Collecting \eqref{up-a2} and \eqref{up-apsi}, we get,
\begin{equation}\label{es of A2}
\frac{1}{|Q_{\ell}(x_{0})|}\mathcal E_0(h\psi,\Ab;a,Q_{\ell}(x_0))\geq (1-\delta)\,\frac{\kappa^{2}}{2}\,a(\widetilde{x}_{0},\kappa)^{2}-C\delta^{-1}\kappa^{2}\ell^{2}L(\kappa)^{2}-C'\,\ell\,L(\kappa)\,\kappa^{2}\,.
\end{equation}
\textbf{Now, we treat the case when $a(\widetilde{x}_{0},\kappa)>0\,$.} Let $\phi_{x_{0}}(x)=(\Ab(x_0)-\Fb(x_0))\cdot x$, where $\Fb$ is the magnetic potential introduced in \eqref{div-curlF}. Using the estimate of $\|\Ab-\Fb\|_{C^{0,\beta}(\Omega)}$ given in Proposition~\ref{pr-est}, we get for any $\beta\in (0,1)$ the existence of a constant $C$ such that for all $x\in Q_{\ell}(x_0)$,
\begin{equation}\label{alpha}
|\Ab(x)-\nabla\phi_{x_0}-\Fb(x)|\leq C\,\frac{\ell^{\beta}}{H}\,.
\end{equation}
Let $\widetilde{x}_{0}\in\overline{ Q_{\ell}(x_{0})}$ and $\varphi=\varphi_{x_{0},\widetilde{x}_{0}}+\phi_{x_{0}}$ with $\varphi_{x_{0},\widetilde{x}_{0}}$ satisfying \eqref{F-A}. We define the function in $Q_{\ell}(x_{0})$,
\begin{equation}\label{defu}
u(x)=e^{-i\kappa H\varphi}h\psi(x)\,.
\end{equation}
Similarly to \eqref{up-a}, we have, for any $\delta\in(0,1)$,
\begin{align}\label{lw-a}
\frac{\kappa^{2}}{2}\int_{Q_{\ell}(x_{0})}\left(a(x,\kappa)-|h\psi|^{2}\right)^{2}\,dx\geq(1-\delta)\frac{\kappa^{2}}{2}\int_{Q_{\ell}(x_{0})}\left(a(\widetilde{x}_{0},\kappa)-|h\psi|^{2}\right)^{2}\,dx-C\delta^{-1}\kappa^{2}\ell^{4}L(\kappa)^{2}\,.
\end{align}
Using the same techniques as in \cite[Lemma~4.1]{KA}, we get, for any $\beta\in(0,1)$,
\begin{multline}\label{lw-A}
\int_{Q_{\ell}(x_{0})}|(\nabla-i\kappa H \Ab)h\psi|^{2}\,dx\geq (1-\delta)\int_{Q_{\ell}(x_{0})}|(\nabla-i\kappa H(\zeta_{\ell}|B_{0}(\widetilde{x}_{0})|\Ab_{0}(x-x_{0})+\nabla\varphi(x)))h\psi|^{2}\,dx\\
-C\delta^{-1}(\kappa H)^{2}\left(\ell^{4}+\frac{\ell^{2\beta}}{H^{2}}\right)\int_{Q_{\ell}(x_0)}|h\psi|^2\,dx\,.
\end{multline}
Thus, by collecting \eqref{lw-a} and \eqref{lw-A}, using \eqref{a3}, \eqref{eq-psi<a} and $\|h\|_{L^{\infty}(\Omega)}\leq 1$, we get
\begin{multline}\label{es of A}
\mathcal E_0(h\psi,\Ab;a(\widetilde{x}_{0},\kappa),Q_{\ell}(x_0))\geq (1-\delta)\mathcal
E_0(e^{-i\kappa H\varphi}h\psi(x),\zeta_{\ell}|B_{0}(\widetilde{x}_{0})|\Ab_{0}(x-x_{0});a(\widetilde{x}_{0},\kappa),Q_{\ell}(x_0))\\
-C\delta^{-1}\kappa^{2}\ell^{4}L(\kappa)^{2}
-C_{1}\delta^{-1}\kappa^{2}H^{2}\left(\ell^{4}+\frac{\ell^{2\beta}}{H^{2}}\right)\ell^{2}\,.
\end{multline}
Let $R$ and $b$ be as in \eqref{def-Rb}. Let us introduce the function $v_{\ell,x_0,\widetilde{x}_{0}}$ in $Q_{R}$ as follows:
\begin{equation}\label{def-v}
v_{\ell,x_0,\widetilde{x}_{0}}(x)=\begin{cases}
u\left(\frac{\ell}{R} x+x_{0}\right)&{\rm if}~x\in Q_{R}\subset\{B_{0}>\rho\}\cap\Omega\\
\overline{u}\left(\frac{\ell}{R} x+x_{0}\right)&{\rm if}~x\in Q_{R}\subset\{B_{0}<-\rho\}\cap\Omega\,,
\end{cases}
\end{equation}
where $u$ is defined in \eqref{defu}.\\
Similarly to \eqref{2up-loc-en}, we use the change of variable $y=\frac{R}{\ell}(x-x_{0})$ and get
\begin{equation}\label{E0<}
\mathcal E_0(e^{-i\kappa H\varphi}h\psi(x),\zeta_{\ell}\,\kappa H|B_{0}(\widetilde{x}_{0})|\Ab_{0}(x-x_0);a(\widetilde{x}_{0},\kappa),Q_{\ell}(x_0))=\frac{1}{b} F^{+1,a(\widetilde{x}_{0},\kappa)}_{b,Q_{R}}(v_{\ell,x_{0},\widetilde{x}_{0}})\,,
\end{equation}
where $F^{+1,a(\widetilde{x}_{0},\kappa),}_{b,Q_{R}}$ is introduced in \eqref{eq-GL-F}.\\
Since $v_{\ell,x_{0},\widetilde{x}_{0}}\in H^{1}(Q_{R})$ then, using \eqref{eN>eD} and \eqref{est-f(b)-eD}, we get
\begin{align}\label{F>f}
\frac{1}{b}F^{+1,a(\widetilde{x}_{0},\kappa)}_{b,Q_{R}}(v_{\ell,x_{0},\widetilde{x}_{0}})&\geq \frac{1}{b} e_{N}\left(b,R,a(\widetilde{x}_{0},\kappa)\right)\nonumber\\
&\geq \frac{1}{b}e_{D}\left(b,R,a(\widetilde{x}_{0},\kappa)\right)-C_{M}\,a(\widetilde{x}_{0},\kappa)^{\frac{3}{2}}\frac{R}{\sqrt{b}}\nonumber\\
&\geq a(\widetilde{x}_{0},\kappa)^{2}\frac{R^{2}}{b}\hat{f}\left(\frac{b}{a(\widetilde{x}_{0},\kappa)}\right)-\widehat{C}_{M}\,\frac{R}{\sqrt{b}}\,.
\end{align}
Inserting \eqref{F>f} into \eqref{E0<}, we get
\begin{multline}\label{E0<2}
\mathcal E_0(e^{-i\kappa H\varphi}h\psi(x),\zeta_{\ell}\,\kappa H|B_{0}(\widetilde{x}_{0})|\Ab_{0}(x-x_0);a(\widetilde{x}_{0},\kappa),Q_{\ell}(x_0))\geq a(\widetilde{x}_{0},\kappa)^{2}\frac{R^{2}}{b}\hat{f}\left(\frac{b}{a(\widetilde{x}_{0},\kappa)}\right)\\
-\widehat{C}_{M}\frac{R}{\sqrt{b}}\,.
\end{multline}
Having in mind \eqref{def-Rb} and \eqref{E0<2}, we get from \eqref{es of A},
\begin{multline}\label{E0<4}
\frac{1}{|Q_{\ell}(x_{0})|}\mathcal E_0(h\psi,\Ab;a(\widetilde{x}_{0},\kappa),Q_{\ell}(x_0))\geq(1-\delta)\kappa^{2}a(\widetilde{x}_{0},\kappa)^{2}\hat{f}\left(\frac{H}{\kappa}\frac{|B_{0}(\widetilde{x}_{0})|}{a(\widetilde{x}_{0},\kappa)}\right)\\
-C\delta^{-1}\kappa^{2}\ell^{2}L(\kappa)^{2}
-C_{1}\delta^{-1}\kappa^{2}H^{2}\left(\ell^{4}+\frac{\ell^{2\beta}}{H^{2}}\right)
-C_{2}\frac{\kappa}{\ell}\,.
\end{multline}
The estimates in \eqref{es of A2} and \eqref{E0<4}  achieve the proof of Proposition~\ref{prop-lb}.
\end{proof}
\begin{app}
We keep the same choice of $\ell$, $\rho$, $L(\kappa)$ and $\delta$ as in \eqref{choice-ell-rho}, \eqref{choice-delta} and choose:
\begin{equation}\label{choice-delta-alpha}
\beta=\frac{3}{4}\,.
\end{equation}
This choice and Assumption~\eqref{cond-H}  permit to have the assumptions  in Proposition~\ref{prop-lb} satisfied and make the error terms in its statement
 of order $\textit{o}(\kappa^{2})$.
We have as $\kappa\longrightarrow\infty\,$,
$$\delta^{-1}\kappa^{4}\ell^{4}=\kappa^{\frac{21}{12}}\ll \kappa^{2}\,,$$
$$\delta^{-1}\kappa^{2}\ell^{2\beta}=\kappa^{\frac{29}{24}}\ll \kappa^{2}\,,$$
$$\delta^{-1}\kappa^{2}\ell^{2}L(\kappa)^{2}=\kappa^{\frac{23}{12}}\ll\kappa^{2}\,,$$
$$\frac{\kappa}{\ell}=\kappa^{\frac{19}{12}}\ll \kappa^{2}\,,$$
$$
\ell\,L(\kappa)\,\kappa^{2}=\kappa^{\frac{23}{12}}\ll\kappa^{2}\,,
$$
$$\ell^{2}\kappa H\rho=\kappa^{\frac{3}{24}}\gg 1\,.$$
\end{app}
The next theorem presents a lower bound of the local energy in a relatively compact smooth domain $\mathcal{D}$ in $\Omega$. We deduce the lower bound of the global energy by replacing $\mathcal{D}$ by $\Omega$.

\begin{theorem}\label{lw-Eg}~\\
Under Assumptions~\eqref{B(x)}-\eqref{a4}, if \eqref{cond-H}  holds, $L(\kappa)\leq C\,\kappa^{\frac{1}{2}}$ with $C>0$, $h\in C^{1}(\overline{\Omega})$, $\|h\|_{\infty}\leq 1$, $(\psi,\Ab)$ is a minimizer of \eqref{eq-2D-GLf} and $\mathcal D$ an open set  in $\Omega$, then as $\kappa\longrightarrow+\infty$,
\begin{multline}\label{fianl-Eg1}
\mathcal{E} (h\psi,\Ab;a,B_{0},\mathcal{D})\geq\mathcal{E}_{0} (h\psi,\Ab;a,\mathcal{D}) \geq \kappa^{2}\int_{\mathcal{D}\cap\{a(x,\kappa)>0\}} a(x,\kappa)^{2}\hat{f}\left(\frac{H}{\kappa}\frac{|B_{0}(x)|}{a(x,\kappa)}\right)\,dx\\
+\frac{\kappa^{2}}{2}\int_{\mathcal{D}\cap\{a(x,\kappa)\leq 0\}} a(x,\kappa)^{2}\,dx+\textit{o}\left(\kappa^{2}\right)\,.
\end{multline}
\end{theorem}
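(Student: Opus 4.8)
The plan is to run the square-by-square patching of Theorem~\ref{up-Eg} ``in reverse''. Whereas the upper bound \emph{extended} local test functions by zero and used $\hat f\geq 0$ to fill in the missing squares, here one \emph{discards} the energy outside a well-chosen disjoint family of squares --- legitimate because the Ginzburg--Landau integrand in \eqref{eq-GLe0} is pointwise non-negative --- and the lower Riemann sums of the upper-bound proof are replaced by upper Riemann sums. Since $(\kappa H)^{2}\int_{\mathcal D}|\curl\Ab-B_{0}|^{2}\geq 0$, the first inequality in \eqref{fianl-Eg1} is immediate, so it is enough to bound $\mathcal E_{0}(h\psi,\Ab;a,\mathcal D)$ from below.

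First I would fix $\ell=\kappa^{-7/12}$, $\rho=\kappa^{-17/24}$, $\delta=\kappa^{-1/12}$ and $\beta=\tfrac34$ exactly as in \eqref{choice-ell-rho}, \eqref{choice-delta} and \eqref{choice-delta-alpha}; under \eqref{cond-H} and $L(\kappa)\leq C\kappa^{1/2}$ these choices make all the hypotheses of Proposition~\ref{prop-lb} hold for $\kappa$ large and, as recorded in the Application after that proposition, make each of $\delta^{-1}\kappa^{2}\ell^{2}L(\kappa)^{2}$, $\delta^{-1}\kappa^{4}\ell^{4}$, $\delta^{-1}\kappa^{2}\ell^{2\beta}$, $\kappa/\ell$ and $\ell L(\kappa)\kappa^{2}$ be $\textit{o}(\kappa^{2})$. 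Then I would tile by $Q_{\gamma,\ell}=Q_{\ell}(\gamma)$, $\gamma\in\Gamma_{\ell}=\ell\Z\times\ell\Z$, and retain only the ``good'' indices $\mathcal I_{\ell,\rho}^{+}=\{\gamma:\overline{Q_{\gamma,\ell}}\subset\mathcal D\cap\{|B_{0}|>\rho\,;\,a>0\}\}$ and $\mathcal I_{\ell,\rho}^{-}=\{\gamma:\overline{Q_{\gamma,\ell}}\subset\mathcal D\cap\{|B_{0}|>\rho\,;\,a\leq 0\}\}$, of cardinalities $N^{+}$ and $N^{-}$ (this is \eqref{def-card-squares} with $\Omega$ replaced by $\mathcal D$). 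These squares being pairwise disjoint and contained in $\mathcal D$, non-negativity of the integrand gives $\mathcal E_{0}(h\psi,\Ab;a,\mathcal D)\geq\sum_{\gamma\in\mathcal I_{\ell,\rho}^{+}\cup\mathcal I_{\ell,\rho}^{-}}\mathcal E_{0}(h\psi,\Ab;a,Q_{\gamma,\ell})$.

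Write $g(x)=a(x,\kappa)^{2}\hat f\big(\tfrac{H}{\kappa}\tfrac{|B_{0}(x)|}{a(x,\kappa)}\big)$, continuous on $\{a>0\}$ and satisfying $0\leq g\leq\tfrac12\sup|a(\cdot,\kappa)|^{2}$, a quantity bounded uniformly in $x$ and $\kappa$ by \eqref{a2}. In each good square I would choose $\widetilde\gamma\in\overline{Q_{\gamma,\ell}}$ to \emph{maximize} the relevant density: $g$ if $\gamma\in\mathcal I_{\ell,\rho}^{+}$ (here $a$ is bounded below by a positive constant on the compact $\overline{Q_{\gamma,\ell}}$, so the maximum exists), and $x\mapsto a(x,\kappa)^{2}$ if $\gamma\in\mathcal I_{\ell,\rho}^{-}$. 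Each triple $(\ell,\gamma,\widetilde\gamma)$ is $\rho$-admissible, so Proposition~\ref{prop-lb} (valid for any $\rho$-admissible triple), multiplied by $\ell^{2}$ and summed, yields $\mathcal E_{0}(h\psi,\Ab;a,\mathcal D)\geq (1-\delta)\kappa^{2}\sum_{\mathcal I_{\ell,\rho}^{+}}\big(\max_{\overline{Q_{\gamma,\ell}}}g\big)\ell^{2}+(1-\delta)\tfrac{\kappa^{2}}{2}\sum_{\mathcal I_{\ell,\rho}^{-}}\big(\max_{\overline{Q_{\gamma,\ell}}}a(\cdot,\kappa)^{2}\big)\ell^{2}-C\kappa^{2}(N^{+}+N^{-})\ell^{2}\big(\delta^{-1}\ell^{2}L(\kappa)^{2}+\delta^{-1}\kappa^{2}\ell^{4}+\delta^{-1}\ell^{2\beta}+(\kappa\ell)^{-1}+\ell L(\kappa)\big)$; since $(N^{+}+N^{-})\ell^{2}\leq|\mathcal D|$, the last term is $\textit{o}(\kappa^{2})$ by the parameter choice above. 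Because $g\geq0$ and $a^{2}\geq0$, each maximum over $\overline{Q_{\gamma,\ell}}$ dominates the corresponding mean, so the two sums are $\geq\int_{\bigcup_{\mathcal I_{\ell,\rho}^{+}}Q_{\gamma,\ell}}g\,dx$ and $\geq\int_{\bigcup_{\mathcal I_{\ell,\rho}^{-}}Q_{\gamma,\ell}}a(x,\kappa)^{2}\,dx$; replacing these unions by $\mathcal D\cap\{a>0\}$ and $\mathcal D\cap\{a\leq0\}$ costs at most the measure of the discarded part times a uniform constant. Finally, $\int_{\mathcal D\cap\{a>0\}}g\,dx$ and $\int_{\mathcal D\cap\{a\leq0\}}a^{2}\,dx$ are bounded (by $\tfrac12|\mathcal D|\sup|a|^{2}$), so the factor $(1-\delta)$ costs another $\textit{o}(\kappa^{2})$, which produces \eqref{fianl-Eg1}. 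Taking $\mathcal D=\Omega$ and $h\equiv1$ and comparing with Theorem~\ref{up-Eg} would then yield Theorem~\ref{thm-2D-main}, Corollary~\ref{corol-2D-main} (by subtraction), and, for general $\mathcal D$, Theorem~\ref{lc-en}.

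The hard part will be the last replacement above: showing that the discarded region has measure $\textit{o}(1)$. A point of $\mathcal D\cap\{a>0\}$ lying outside $\bigcup_{\mathcal I_{\ell,\rho}^{+}}Q_{\gamma,\ell}$ belongs to a square $Q_{\gamma,\ell}$ meeting $\partial\mathcal D$, or meeting $\{|B_{0}|\leq\rho\}$, or meeting both $\{a>0\}$ and $\{a\leq0\}$ and hence the interface $\partial\{a>0\}$; the number of such squares is $O(\ell^{-1})$ for the smooth boundary $\partial\mathcal D$, $O(\rho\ell^{-2})$ because the non-degeneracy assumption \eqref{B(x)} forces $|\{|B_{0}|\leq\rho\}|=O(\rho)$ near $\Gamma=\{B_{0}=0\}$, and at most $C_{1}\kappa^{1/2}\ell^{-1}$ by Assumption~$(A_{4})$ in the form \eqref{defA4}; hence the discarded region has measure $O(\ell+\rho+\kappa^{1/2}\ell)=\textit{o}(1)$, and the identical bookkeeping applies to the $\mathcal I_{\ell,\rho}^{-}$ family. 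It is precisely the treatment of the free boundary $\partial\{a>0\}$ that makes Assumption~$(A_{4})$ indispensable here, while the neighbourhood of $\Gamma$ is absorbed using the quantitative vanishing hypothesis in \eqref{B(x)}.
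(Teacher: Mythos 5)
Your proposal is correct and follows essentially the same route as the paper: tile by $\ell$-squares, keep only the $\rho$-admissible squares lying entirely in $\{a>0\}$ or $\{a\le 0\}$, apply Proposition~\ref{prop-lb} on each with the choices \eqref{choice-ell-rho}, \eqref{choice-delta}, \eqref{choice-delta-alpha}, and control the discarded region via the regularity of $\partial\mathcal D$, the non-degeneracy in \eqref{B(x)}, and Assumption $(A_4)$ as in \eqref{eq:D/D+-}. Your explicit choice of $\widetilde\gamma$ maximizing the density (upper Riemann sum) just makes precise a step the paper leaves implicit, so no substantive difference.
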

\begin{proof}
The proof is similar to the one in Theorem~\ref{up-Eg} and we keep the same notation. Let
$$
\mathcal{D}^{+}_{\ell,\rho}={\rm int}\left(\cup_{\gamma\in \mathcal I_{\ell, \rho}^{+}} \overline{Q_{\gamma,\ell}}\right)\qquad{\rm and}\qquad\mathcal{D}^{-}_{\ell,\rho}={\rm int}\left(\cup_{\gamma\in \mathcal I_{\ell, \rho}^{-}} \overline{Q_{\gamma,\ell}}\right)\,,
$$
where $\gamma\in \mathcal I_{\ell, \rho}^{+}$ and $\gamma\in \mathcal I_{\ell, \rho}^{-}$ are introduced in \eqref{def-card-squares}.\\
Thanks to Proposition~\ref{prop-lb}, we can easily prove the existence of positive constant $C$ such that for any $\delta\in(0,1)$ and $\beta\in(0,1)$,
\begin{multline*}
\mathcal{E}_{0} (h\psi,\Ab;a,\mathcal{D})\geq \kappa^{2}(1-\delta)\left\{\int_{\mathcal{D}^{+}_{\ell,\rho}\cap\{a(x,\kappa)>0\}} a(x,\kappa)^{2}\hat{f}\left(\frac{H}{\kappa}\frac{|B_{0}(x)|}{a(x,\kappa)}\right)\,dx\right.\\
\left.+\frac{1}{2}\int_{\mathcal{D}^{-}_{\ell,\rho}\cap\{a(x,\kappa)\leq 0\}} a(x,\kappa)^{2}\,dx\right\}-C\,r(\kappa,\ell,\delta,\rho,L(\kappa),\beta)\,,
\end{multline*}
where
\begin{equation}\label{asympr}
r(\kappa,\ell,\delta,\rho,L(\kappa),\beta)=\kappa^{2}\ell+\kappa^{2}\rho+\frac{\kappa}{\ell}+\delta^{-1}\kappa^{2}\ell^{2}L(\kappa)^{2}+\delta^{-1}\kappa^{4}\ell^{4}+\delta^{-1}\kappa^{2}\ell^{2\beta}+\ell\,L(\kappa)\,\kappa^{2}\,.
\end{equation}
Notice that using the regularity of $\partial\mathcal{D}$, \eqref{B(x)} and \eqref{a4} (see \eqref{defA4}), we get the existence of constants $C_{1}>0$ and $C_{2}>0$ such that,
\begin{equation}\label{eq:D/D+-}
\forall \ell\leq C_{2}\,\kappa^{-\frac{1}{2}}\,,\quad \forall \rho\in(0,1)\,,\qquad |\mathcal{D}\setminus\mathcal{\mathcal{D}^{+}_{\ell,\rho}}|+ |\mathcal{D}\setminus\mathcal{\mathcal{D}^{-}_{\ell,\rho}}|\leq C_{1}(\kappa^{\frac{1}{2}}\,\ell+\rho)\,.
\end{equation}
This implies by using \eqref{a3} and the upper bound $\hat{f}\leq \frac{1}{2}$,
\begin{multline}\label{eq:1st-main}
\int_{\mathcal{D}^{+}\cap\{a(x,\kappa)>0\}} a(x,\kappa)^{2}\hat{f}\left(\frac{H}{\kappa}\frac{|B_{0}(x)|}{a(x,\kappa)}\right)\,dx\geq\int_{\mathcal{D}^{+}_{\ell,\rho}\cap\{a(x,\kappa)>0\}} a(x,\kappa)^{2}\hat{f}\left(\frac{H}{\kappa}\frac{|B_{0}(x)|}{a(x,\kappa)}\right)\,dx\\
-\frac{1}{2}\,\overline{a}\,|\mathcal{D}\setminus\mathcal{\mathcal{D}_{\ell,\rho}}|
\end{multline}
and
\begin{equation}\label{eq:2nd-main}
\frac{1}{2}\int_{\mathcal{D}^{-}\cap\{a(x,\kappa)\leq 0\}} a(x,\kappa)^{2}\,dx\geq\frac{1}{2}\int_{\mathcal{D}_{\ell,\rho}\cap\{a(x,\kappa)\leq 0\}} a(x,\kappa)^{2}\,dx-\frac{1}{2}\,\overline{a}\,|\mathcal{D}\setminus\mathcal{\mathcal{D}^{-}_{\ell,\rho}}|\,,
\end{equation}
where $\overline{a}$ is introduced in \eqref{def:sup-a}.\\
Collecting \eqref{eq:1st-main} and \eqref{eq:2nd-main}, using Assumptions  \eqref{a2} and \eqref{eq:D/D+-}, we find that,
\begin{multline}\label{fianl-E0}
\mathcal{E}_{0} (h\psi,\Ab;a,\mathcal{D})\geq \kappa^{2}(1-\delta)\left\{\int_{\mathcal{D}\cap\{a(x,\kappa)>0\}} a(x,\kappa)^{2}\hat{f}\left(\frac{H}{\kappa}\frac{|B_{0}(x)|}{a(x,\kappa)}\right)\,dx\right.\\
\left.+\frac{1}{2}\int_{\mathcal{D}\cap\{a(x,\kappa)\leq 0\}} a(x,\kappa)^{2}\,dx\right\}-C\,\hat r(\kappa,\ell,\delta,\rho,L(\kappa),\beta)\,,
\end{multline}
where $\hat r(\kappa,\ell,\delta,\rho,L(\kappa),\beta)$ satisfies \eqref{asympr}.\\
Under Assumption~\eqref{cond-H}, the choice of the parameters $\rho$, $\ell$, $L(\kappa)$  in \eqref{choice-ell-rho}, $\delta$ in \eqref{choice-delta} and $\beta$ in \eqref{choice-delta-alpha}, implies that all error terms are of lower order compared to $\kappa^{2}$.\\
As a consequence of  \eqref{cond-H}, the inequality \eqref{fianl-E0} becomes as $\kappa\longrightarrow+\infty$
\begin{multline}\label{fianl-E01}
\mathcal{E}_{0} (h\psi,\Ab;a,\mathcal{D})\geq \kappa^{2}\left\{\int_{\mathcal{D}\cap\{a(x,\kappa)>0\}} a(x,\kappa)^{2}\hat{f}\left(\frac{H}{\kappa}\frac{|B_{0}(x)|}{a(x,\kappa)}\right)\,dx+\frac{1}{2}\int_{\mathcal{D}\cap\{a(x,\kappa)\leq 0\}} a(x,\kappa)^{2}\,dx\right\}\\
+\textit{o}(\kappa^{2})\,.
\end{multline}
Moreover, we know that
$$\mathcal{E}(h\psi,\Ab;a,B_{0},\mathcal{D})\geq \mathcal{E}_{0} (h\psi,\Ab;a,\mathcal{D})\,.$$
This achieves the proof of Theorem~\ref{lw-Eg}.
\end{proof}
As we now show, Theorem~\ref{lw-Eg} permits to achieve the proof of two statements presented in the introduction:

\begin{proof}[\textbf{Proof of Corollary~\ref{corol-2D-main}}]~\\
If $(\psi,\Ab)$ is a minimizer of \eqref{eq-2D-GLf}, we have
\begin{equation}\label{eq-glob-en}
\E0(\kappa,H)=\mathcal E_0(\psi,\Ab;a,\Omega) + (\kappa H)^2
\int_{\Omega} |\curl\big(\Ab - \Fb\big)|^2\,dx \,,
\end{equation}
where $\mathcal E_{0}(\psi,\Ab;a,\Omega)$ is defined in \eqref{eq-GLe0}.\\
Using \eqref{eq-2D-thm} and \eqref{fianl-E01} (with $\mathcal{D}=\Omega$), then under Assumption~\eqref{cond-H} as $\kappa\longrightarrow+\infty$
\begin{equation}\label{eq-2D}
\mathcal E_{0}(\psi,\Ab;a,\Omega)=\kappa^{2}\int_{\{a(x,\kappa)>0\}}a(x,\kappa)^{2}\,\hat{f}\left(\frac{H}{\kappa}\frac{|B_{0}(x)|}{a(x,\kappa)}\right)\,dx+\frac{\kappa^{2}}{2}\int_{\{a(x,\kappa)\leq 0\}}a(x,\kappa)^{2}\,dx+\textit{o}\left(\kappa^{2}\right)\,.
\end{equation}
Putting \eqref{eq-2D} and \eqref{eq-2D-thm} into \eqref{eq-glob-en}, we finish the proof of Corollary~\ref{corol-2D-main}.
\end{proof}
~\\
\begin{proof}[\textbf{Proof of Theorem~\ref{lc-en}}.]~\\
Noticing that \eqref{fianl-E01} is valid when $h=1$ and $\mathcal{D}$ replaced by $\mathcal{\overline{D}}^{c}:=\Omega \setminus \overline{\mathcal D}$ for any open domain $\mathcal{D}\subset\Omega$ with smooth boundary, then we get:
\begin{multline}\label{fianl-E01c}
\mathcal{E}_{0} (\psi,\Ab;a,\mathcal{\overline{D}}^{c})\geq \kappa^{2}\left\{\int_{\mathcal{\overline{D}}^{c}\cap\{a(x,\kappa)>0\}} a(x,\kappa)^{2}\hat{f}\left(\frac{H}{\kappa}\frac{|B_{0}(x)|}{a(x,\kappa)}\right)\,dx\right.\\
\left.+\frac{1}{2}\int_{\mathcal{\overline{D}}^{c}\cap\{a(x,\kappa)\leq 0\}} a(x,\kappa)^{2}\,dx\right\}+\textit{o}(\kappa^{2})\,.
\end{multline}
We can decompose $\mathcal{E}_{0} (\psi,\Ab;a,\mathcal{D})$ as follow:
$$
\mathcal{E}_{0} (\psi,\Ab;a,\mathcal{D})=\mathcal{E}_{0} (\psi,\Ab;a,\Omega)-\mathcal{E}_{0} (\psi,\Ab;a,\mathcal{\overline{D}}^{c})\,.
$$
Using \eqref{eq-2D} and \eqref{fianl-E01c}, we get
\begin{multline}\label{fianl-E01-l}
\mathcal{E}_{0} (\psi,\Ab;a,\mathcal{D})\leq \kappa^{2}\left\{\int_{\mathcal{D}\cap\{a(x,\kappa)>0\}} a(x,\kappa)^{2}\hat{f}\left(\frac{H}{\kappa}\frac{|B_{0}(x)|}{a(x,\kappa)}\right)\,dx+\frac{1}{2}\int_{\mathcal{D}\cap\{a(x,\kappa)\leq 0\}} a(x,\kappa)^{2}\,dx\right\}\\
+\textit{o}(\kappa^{2})\,.
\end{multline}
\end{proof}

\section{study of examples}\label{examples}
In this section, we will describe situations where the remainder term in \eqref{eq-2D-thm} is indeed small as $\kappa \rightarrow +\infty$ compared with the leading order term
\begin{equation}\label{def:leading-term}
 E_{\rm g}^{\textbf{L}}(\kappa,H,a,B_{0}):=\kappa^{2} \left(\int_{\{a(x,\kappa)>0\}}a(x,\kappa)^{2}\,\hat{f}\left(\sigma\frac{|B_{0}(x)|}{a(x,\kappa)}\right)\,dx\\
 +\frac{1}{2}\int_{\{a(x,\kappa)\leq 0\}}a(x,\kappa)^{2}\,dx\right)\,,
\end{equation}
where,
 \begin{equation}\label{def:sigma}
\sigma=\frac{H}{\kappa}\,.
\end{equation}
Note that $0<\lambda_{\min}\leq\sigma\leq\lambda_{\max}$, so that $\sigma$ will be considered as an independent parameter in  $[\lambda_{\min}\,,\,\lambda_{\max}]$.\\
We will also explore, case by case how one can verify Assumption $(A_4)$ as formulated precisely in \eqref{defA4}.

\subsection{The case of a $\kappa$-independent pinning}~\\

\begin{prop}
Suppose \eqref{B(x)} and \eqref{cond-H} hold. Let  $a(x,\kappa)=a(x)$ where $a(x)\in C^{1}(\overline{\Omega})$ is a function independent of $\kappa$ and satisfies,
\begin{equation}\label{cond-a1}
\left\{
\begin{array}{lll}
\{x\in\Omega:\,a(x)>0\}\neq\varnothing\,,\\
{\rm or}\\
 \{x\in\Omega:\,a(x)<0\}\neq\varnothing\,.
\end{array}
\right.
\end{equation}
 There exist positive constants $C$ and $\kappa_{0}$ such that,
$$
\forall\kappa\geq\kappa_{0}\,,\qquad E_{\rm g}^{\textbf{L}}(\kappa,H,a,B_{0})\geq C\,\kappa^{2}\,.
$$
\end{prop}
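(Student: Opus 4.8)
The plan is to show that the leading-order functional $E_{\rm g}^{\textbf L}(\kappa,H,a,B_0)$ stays bounded below by a positive multiple of $\kappa^2$, using only that $a$ is fixed (independent of $\kappa$), continuous, and that one of the two open sets $\{a>0\}$, $\{a<0\}$ is nonempty, together with $0<\lambda_{\min}\le\sigma\le\lambda_{\max}$ and \eqref{B(x)}. Dividing \eqref{def:leading-term} by $\kappa^2$, it suffices to bound below
$$
I(\kappa):=\int_{\{a>0\}}a(x)^{2}\,\hat f\!\left(\sigma\frac{|B_{0}(x)|}{a(x)}\right)\,dx+\frac12\int_{\{a\le 0\}}a(x)^{2}\,dx
$$
by a positive constant uniform in $\kappa$ (equivalently, uniform in $\sigma\in[\lambda_{\min},\lambda_{\max}]$). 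Both terms are nonnegative, so I only need one of them to be bounded below.

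First I would treat the case $\{x\in\Omega:a(x)<0\}\neq\varnothing$. This is the easy one: since $a$ is continuous and this open set is nonempty, there is a ball $B(x_*,r)\subset\{a<0\}$ and a constant $c>0$ with $a(x)^2\ge c$ on that ball (take $c=\tfrac12\inf_{B(x_*,r/2)}a^2>0$ on a slightly smaller ball, say). Then $\frac12\int_{\{a\le0\}}a^2\,dx\ge \frac{c}{2}|B(x_*,r/2)|>0$, independent of $\kappa$, and we are done with $C$ this constant.

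Next, the case $\{x\in\Omega:a(x)>0\}\neq\varnothing$. Pick a ball $B=B(x_*,r)\subset\{a>0\}$; by continuity there are constants $0<a_{\min}\le a_{\max}$ with $a_{\min}\le a(x)\le a_{\max}$ on $B$. The argument of $\hat f$ is $\sigma|B_0(x)|/a(x)$; since $B_0$ is bounded on $\overline\Omega$, say $|B_0|\le M$, and $\sigma\le\lambda_{\max}$, this argument is at most $\lambda_{\max}M/a_{\min}=:b_*$, a fixed finite number. Because $\hat f$ is continuous, increasing, and maps $[0,\infty)$ into $(0,\tfrac12]$ with $\hat f(b)>0$ for every $b>0$ — indeed $\hat f$ is nondecreasing and positive away from $0$, and in fact one only needs $\hat f(b_*)>0$ — wait: one must be careful, since $\hat f(0)$ could in principle be $0$ and the argument $\sigma|B_0(x)|/a(x)$ vanishes exactly on $\Gamma$. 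To avoid this I restrict further: by \eqref{B(x)} the set $\Gamma=\{B_0=0\}$ has measure zero (it is a finite union of smooth curves in any relatively compact subset of $\Omega$), so $B\setminus\Gamma$ still has positive measure; and on the compact subset $\{x\in \overline{B'}: |B_0(x)|\ge\eta\}$ for a slightly smaller ball $B'\Subset B$ and $\eta>0$ small, which has positive measure for $\eta$ small enough, the argument of $\hat f$ lies in $[\lambda_{\min}\eta/a_{\max},\,b_*]$, a compact subinterval of $(0,\infty)$. On that interval $\hat f$ attains a positive minimum $\hat f_{\min}>0$ by monotonicity (it equals $\hat f(\lambda_{\min}\eta/a_{\max})>0$). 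Hence
$$
\int_{\{a>0\}}a(x)^{2}\,\hat f\!\left(\sigma\frac{|B_{0}(x)|}{a(x)}\right)dx\ \ge\ a_{\min}^{2}\,\hat f_{\min}\,\bigl|\{x\in B': |B_0(x)|\ge\eta\}\bigr|\ =:\ C>0\,,
$$
uniformly in $\sigma\in[\lambda_{\min},\lambda_{\max}]$, hence uniformly in $\kappa\ge\kappa_0$. Combining the two cases gives $I(\kappa)\ge C$ and therefore $E_{\rm g}^{\textbf L}(\kappa,H,a,B_0)\ge C\kappa^2$.

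The only subtle point — the step I would call the "main obstacle," though it is minor — is ensuring the argument of $\hat f$ stays bounded away from $0$ on a set of positive measure inside $\{a>0\}$, which is where the nonvanishing/structure assumption \eqref{B(x)} on $B_0$ enters (it guarantees $\Gamma$ is negligible so that $\{|B_0|>0\}\cap B$ has full measure in $B$, and then a small $\eta$-truncation does the job). Everything else is continuity of $a$ and $B_0$ on the compact set $\overline\Omega$, positivity and monotonicity of $\hat f$ away from $0$, and the two-sided bound on $\sigma$ from \eqref{cond-H}.
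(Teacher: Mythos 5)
Your proposal is correct and follows essentially the same route as the paper: in the case $\{a<0\}\neq\varnothing$ one bounds the second integral below on a small ball, and in the case $\{a>0\}\neq\varnothing$ one restricts to a region where both $a$ and $|B_0|$ are bounded away from zero and uses the monotonicity and positivity of $\hat f$ on $(0,\infty)$ together with $\sigma\geq\lambda_{\min}$. The paper simply chooses a disk $D(x_0,r_0)\subset\{a>a_0\}\cap\{|B_0|>\rho_0\}$ at the outset rather than truncating afterwards by $\{|B_0|\geq\eta\}$, and your explicit handling of the vanishing set $\Gamma$ is the same point the paper leaves implicit.
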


\begin{proof}
Since $a(x,\kappa)=a(x)$,  the energy $E_{\rm g}^{\textbf{L}}$ becomes:
$$
 E_{\rm g}^{\textbf{L}}(\kappa,H,a,B_{0}):=\kappa^{2} \left(\int_{\{a(x)>0\}}a(x)^{2}\,\hat{f}\left(\sigma\frac{|B_{0}(x)|}{a(x)}\right)\,dx\\
 +\frac{1}{2}\int_{\{a(x)\leq 0\}}a(x)^{2}\,dx\right)\,.
$$
Each term being positive, it is  clear that the leading term is positive if $\{x\in\Omega:\,a(x)<0\} \neq\varnothing$.\\
If $ \{x\in\Omega:\,a(x)<0\}=\varnothing$ and $\{x\in\Omega:\,a(x)>0\} \neq\varnothing$, there exist $\rho_{0}>0$, $a_{0}>0$ and a disk $D(x_{0},r_{0})$ such that
$$
D(x_{0},r_{0})\subset\{a(x)>a_{0}\}\cap\{|B_{0}|>\rho_{0}\}\,.
$$
Using the monotonicity of $\hat{f}$ and the bound of $a(x)$ in \eqref{a2}, we may write
\begin{align}\label{est:a-k-ind}
\int_{\{a(x)>0\}}a(x)^{2}\,\hat{f}\left(\frac{H}{\kappa}\frac{|B_{0}(x)|}{a(x)}\right)\,dx&\geq \int_{D(x_{0},r_{0})} a(x)^{2}\,\hat{f}\left(\sigma\frac{|B_{0}(x)|}{a(x)}\right)\,dx\nonumber\\
&\geq \pi\,r_{0}^{2}\,a_{0}^{2}\,\hat{f}\left(\frac{\rho_{0}}{\overline{a}}\sigma\right)\,,
\end{align}
where $\overline{a}$ is introduced in \eqref{def:sup-a}.\\
In particular, when \eqref{cond-H} is satisfied, there exists $\kappa_{0}>0$ such that
\begin{equation}\label{ex1}
\forall\kappa\geq \kappa_{0}\,,\qquad\int_{\{a(x)>0\}}a(x)^{2}\,\hat{f}\left(\frac{H}{\kappa}\frac{|B_{0}(x)|}{a(x)}\right)\,dx\geq \pi\,r_{0}^{2}\,a_{0}^{2}\,\hat{f}\left(\frac{\rho_{0}}{\overline{a}}\lambda_{\min}\right)\,.
\end{equation}
\end{proof}

\begin{figure}[ht!]
\begin{center}
\includegraphics[scale=1]{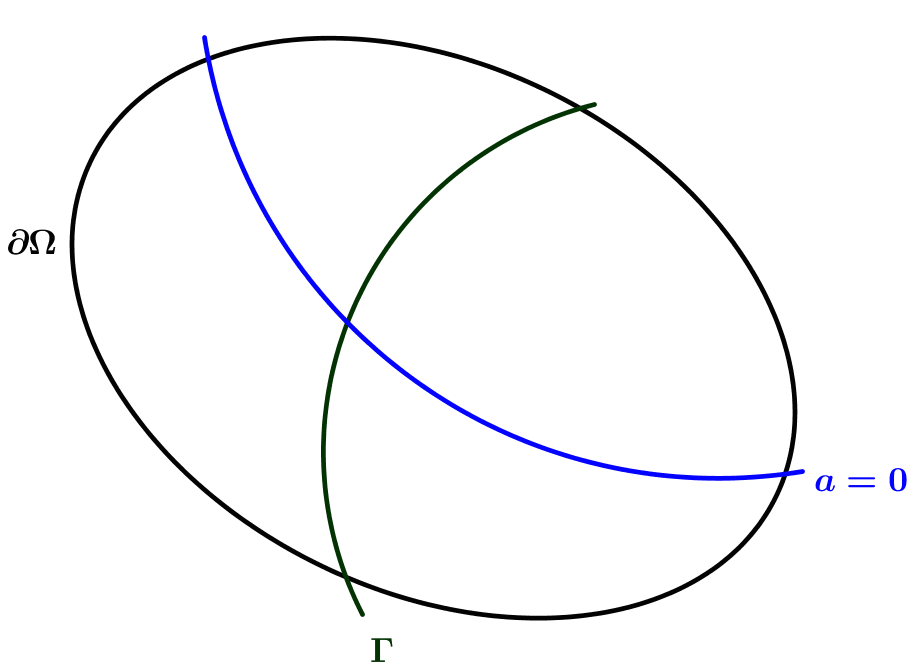}
\caption{Schematic representation of  $\Omega$  with pinning term independent of $\kappa$ and with variable magnetic field.}\label{example:1}
\end{center}
\end{figure}

\begin{prop}[\textbf{Verification of $(A_{4})$}]
Suppose that the function $a$ satisfies (see Fig.\ref{example:1}),
\begin{equation}\label{cond-a}
\left\{
\begin{array}{ll}
|a| + |\nabla a | >0&\mbox{ in } \overline{\Omega}\,,\\
\nabla a\times\vec{n}\neq 0 &\mbox{on}~ \widetilde{\Gamma}\cap\partial\Omega\,,
\end{array}
\right.
\end{equation}
where $\widetilde{\Gamma}$ defined as follows:
\begin{equation}\label{gamma-tilde}
\widetilde{\Gamma}=\{x\in\overline{\Omega}: a(x)=0\}\,.
\end{equation}
Then Assumption $(A_{4})$ is satisfied.
\end{prop}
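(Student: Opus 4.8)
The plan is to reduce the verification of $(A_{4})$ in its precise form \eqref{defA4} to a purely geometric statement about the fixed zero set $\widetilde{\Gamma}$, and then to invoke an elementary covering estimate.

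First I would note that, since here $a$ is independent of $\kappa$, the set $\partial\{a>0\}$ is fixed. By continuity of $a$, if $a(x)>0$ then $x$ is an interior point of $\{a>0\}$, and if $a(x)<0$ then $x\notin\overline{\{a>0\}}$; hence $\partial\{a>0\}\subset\{a=0\}=\widetilde{\Gamma}$. Consequently, for every $\ell\in(0,1)$,
$$
\mathrm{card}\,\{\gamma\in\Gamma_\ell\cap\Omega:\ Q_\ell(\gamma)\cap\partial\{a>0\}\cap\Omega\neq\emptyset\}\ \le\ \mathrm{card}\,\{\gamma\in\Gamma_\ell:\ Q_\ell(\gamma)\cap\widetilde{\Gamma}\neq\emptyset\}\,,
$$
so it suffices to bound the right-hand side, uniformly in $\ell\in(0,1)$, by $C\,\ell^{-1}$.

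Next I would describe the structure of $\widetilde{\Gamma}$ under \eqref{cond-a}. On $\widetilde{\Gamma}$ one has $a=0$, hence $|\nabla a|>0$ there by the first line of \eqref{cond-a}. At a point $x\in\widetilde{\Gamma}\cap\Omega$, the implicit function theorem shows that near $x$ the set $\widetilde{\Gamma}$ is a $C^{1}$ embedded arc. At a point $x\in\widetilde{\Gamma}\cap\partial\Omega$, the tangent of $\{a=0\}$ is orthogonal to $\nabla a(x)$ and the tangent of $\partial\Omega$ is orthogonal to $\vec n(x)$, so the condition $\nabla a(x)\times\vec n(x)\neq 0$ says exactly that $\{a=0\}$ meets $\partial\Omega$ transversally at $x$; thus near $x$ the set $\widetilde{\Gamma}$ is a $C^{1}$ arc with one endpoint on $\partial\Omega$. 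Since $\overline{\Omega}$ is compact, $\widetilde{\Gamma}$ is therefore a compact one-dimensional $C^{1}$ submanifold of $\overline{\Omega}$ with boundary contained in $\partial\Omega$; in particular it has finitely many connected components and finite total length $m_{0}:=\mathcal{L}(\widetilde{\Gamma})<\infty$. This boundary step is the only one requiring care: the transversality hypothesis is precisely what prevents $\widetilde{\Gamma}$ from accumulating tangentially along $\partial\Omega$ (which would be compatible with $|a|+|\nabla a|>0$ but would destroy the finite-length conclusion).

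Finally I would apply the standard covering lemma: a rectifiable curve of length $m$ meets at most $C(1+m/\ell)$ squares of the lattice $\Gamma_\ell=\ell\Z\times\ell\Z$. Indeed, parametrize each component of $\widetilde{\Gamma}$ by arc length and cut $\widetilde{\Gamma}$ into $O(1+m_{0}\ell^{-1})$ subarcs of length (hence diameter) $\le\ell$; each such subarc is contained in a bounded number ($\le 4$) of adjacent $\ell$-squares, so
$$
\mathrm{card}\,\{\gamma\in\Gamma_\ell:\ Q_\ell(\gamma)\cap\widetilde{\Gamma}\neq\emptyset\}\ \le\ C\bigl(1+m_{0}\,\ell^{-1}\bigr)\ \le\ C'\,\ell^{-1}\qquad(0<\ell\le 1)\,.
$$
Now pick $C_{2}>0$ and $\kappa_{0}\ge\max(1,C_{2}^{2})$; then for $\kappa\ge\kappa_{0}$ and $\ell\le C_{2}\kappa^{-\frac12}$ we have $\ell\le 1$ and $1\le\kappa^{\frac12}$, so the last display is bounded by $C'\kappa^{\frac12}\ell^{-1}$. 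Combined with the reduction of the first paragraph, this is exactly \eqref{defA4} with $C_{1}=C'$, which is the meaning of $(A_{4})$. Everything here is routine once the finite-length property of $\widetilde{\Gamma}$ has been established, so that is where the work lies.
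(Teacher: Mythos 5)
Your proof is correct, and it follows the same overall strategy as the paper: reduce $(A_4)$ to counting the lattice squares that meet the fixed curve $\widetilde{\Gamma}$, use that $\widetilde{\Gamma}$ has finite length $\mathcal L$, and obtain the stronger bound $C_1\ell^{-1}$ (with no $\kappa^{1/2}$ factor), from which \eqref{defA4} follows trivially. The two places where you diverge are worth noting. First, the counting device: the paper encloses all squares touching $\widetilde\Gamma$ in the tubular neighborhood $D_{\sqrt2\,\ell}$, proves $\lim_{\epsilon\to0}|D_\epsilon|/\epsilon=\mathcal L$ via tubular coordinates, and concludes by comparing areas ($\ell^2\cdot\mathrm{card}\le|D_{\sqrt2\,\ell}|\le C\ell$); you instead subdivide $\widetilde\Gamma$ into $O(1+\mathcal L/\ell)$ subarcs of diameter at most $\ell$ and observe each meets at most $4$ lattice cells. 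Both are standard and give the same bound; your version avoids the tubular-coordinate computation, while the paper's area estimate \eqref{Area:D} is reused later for the oscillating example. Second, you justify the finite-length/finitely-many-components structure of $\widetilde\Gamma$ explicitly (implicit function theorem in the interior, transversality $\nabla a\times\vec n\neq0$ at $\partial\Omega$ to rule out tangential accumulation), whereas the paper takes this structure for granted ("by assumption $\widetilde\Gamma$ consists of a finite number of connected curves"); your paragraph on this point is a genuine, if small, improvement in rigor. Your preliminary reduction $\partial\{a>0\}\subset\widetilde\Gamma$ is also sound and matches the paper's (unjustified) identification of the two cardinalities.
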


\begin{proof}
From \eqref{cond-a}, we observe that,
$$
{\rm card}\,\{ \gamma \in \Gamma_\ell \cap \Omega \mbox{ with } Q_\ell (\gamma) \cap \partial\{a >0\} \neq \emptyset\}={\rm card}\,\{ \gamma \in \Gamma_\ell \cap \Omega \mbox{ with } Q_\ell (\gamma) \cap \widetilde{\Gamma} \neq \emptyset\}\,.
$$
Let $\epsilon\in(0,1)$, we introduce the domain
$$
D_{\epsilon}=\{x\in\Omega: \dist(x,\widetilde{\Gamma})\leq \epsilon\}\,.
$$
\textbf{Now we give a rough upper bound for the area of $D_\epsilon$.}\\
 By assumption  $\widetilde\Gamma$ consists of a finite number of connected curves, which are either closed in $\Omega$ or join two points of $\partial\Omega$. Let us consider the first case, we denote by $\widetilde\Gamma^{(1)}$ such a curve. We can parametrize this curve using the standard tubular coordinates $(s,t)$, where $s$ measures the arc-length in $\widetilde\Gamma^{(1)}$ and $t$ measures the distance to $\widetilde\Gamma^{(1)}$ (see \cite[Appendix~F]{FH1} for the detailed construction of these coordinates).

In the neighborhood of $\widetilde{\Gamma}^{(1)}$, we choose one point $\gamma_{0}$ on $\widetilde{\Gamma}^{(1)}$ corresponding to $(0,0)$. Let $N\in\N$ and $\mathcal{L}$ the length of $\widetilde\Gamma^{(1)}$. We consider for $i=0,...,N$, $s_{i}=\frac{i}{N}\,\mathcal{L}\,\,({\rm modulo}\,\,\mathcal{L}\Z)$ and $\gamma_{i}=(s_{i},0)$.\\
Notice that, there exists a positive constant $C$ such that,
$$
|\dist(\gamma_{i},\gamma_{i+1})|=(1+\epsilon_{i})|s_{i}-s_{i+1}|\,,\qquad\left(-\frac{C}{N}\leq \epsilon_{i}<0\right)\,.
$$
Thus,
\begin{align*}
\left|\left\{x\in\Omega: \dist\Big(x,\widetilde{\Gamma}^{(1)}\Big)\leq\frac{\mathcal{L}}{N}\right\}\right|&\leq \sum_{i} \left|Q_{\frac{\mathcal{L}}{N}}((s_i,0))\right|\,.
\end{align*}
Coming back to our problem, we select $N=\left[\frac{\mathcal{L}}{\epsilon}\right]$ and we note that $$\frac{\mathcal{L}}{N +1} \leq \epsilon\leq \frac{\mathcal{L}}{N}\,,$$
which implies that,
\begin{align*}
|D_{\epsilon}|&\leq \frac{\mathcal{L}^2}{N}\left(1+\mathcal{O}\left(\frac{1}{N}\right)\right)\\
&\leq \mathcal{L}\,\epsilon\,\left(1+\mathcal{O}\left(\frac{1}{N}\right)\right) = \epsilon \mathcal L (1+ \mathcal O(\epsilon))\,.
\end{align*}
Hence we have shown that,
$$\limsup_{\epsilon\to 0}\frac{|D_{\epsilon}|}{\epsilon}\leq\mathcal{L}\,.$$
In a similar fashion, we prove that
$$\liminf_{\epsilon\to 0}\frac{|D_{\epsilon}|}{\epsilon}\geq\mathcal{L}\,.$$
and, as a consequence, we end up with  the following conclusion:
\begin{equation}\label{Area:D}
\lim_{\epsilon\to 0}\frac{|D_{\epsilon}|}{\epsilon}=\mathcal{L}\,.
\end{equation}

Coming back to Assumption $(A_4)$,
we now observe that all the $Q_{\ell}(\gamma)$ touching $\widetilde \Gamma$ are inside $D_{\sqrt{2}\ell}$, hence
we get, by comparison of the area
$$
\ell^2 {\rm card}\,\{ \gamma \in \Gamma_\ell \cap \Omega \mbox{ with } Q_\ell (\gamma) \cap \widetilde{\Gamma} \neq \emptyset\}\leq C\,\ell \,,
$$
and consequently,
there exist positive constants $C_1$, $C_{2}$ and $\kappa_{0}$ such that
$$
\forall \kappa \geq \kappa_0\,,\, \forall \ell \leq C_2 \kappa^{-\frac 12}\,,\, {\rm card}\,\{ \gamma \in \Gamma_\ell \cap \Omega \mbox{ with } Q_\ell (\gamma) \cap \partial\{a >0\} \neq \emptyset\}\leq C_{1}\,\ell^{-1}\,,
$$
which is a stronger form of $(A_4)$.
\end{proof}

\subsection{The case with a $\kappa$-dependent oscillation. }
\subsubsection{Preliminaries}
We start with two lemmas which are standard in homogenization theory (see \cite[Section~16-17]{BLP})
\begin{lem}\label{lem:a-}
Let $D\subset\R^{2}$ be a bounded open set and $\varphi$ be a $\Gamma_{T_{1},T_{2}}$-periodic continuous function in $\R^{2}$ with $\Gamma_{T_{1},T_{2}}=T_{1}\Z\times T_{2}\Z$. There exists a positive constant $M_{0}$ such that if $M\geq M_{0}$, then,
$$
\int_{D}\varphi(M x)\,dx=\frac{|D|}{T_{1}T_{2}}\int_{0}^{T_{1}}\int_{0}^{T_{2}}\varphi(t_{1},t_{2})dt_{1}dt_{2}+\mathcal{O}(M^{-1})\,.
$$
\end{lem}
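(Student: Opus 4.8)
The plan is to reduce the integral to a sum over a lattice of translated periodicity cells and to estimate the contribution of the cells that meet $\partial D$. First I would rescale: setting $y = Mx$, we have
$$
\int_{D}\varphi(Mx)\,dx = M^{-2}\int_{MD}\varphi(y)\,dy\,,
$$
where $MD = \{My : x\in D\}$ is the dilated domain, whose area is $M^{2}|D|$. So it suffices to show that $\int_{MD}\varphi(y)\,dy = \frac{|D|M^{2}}{T_{1}T_{2}}\,I_{\varphi} + \mathcal{O}(M)$, where $I_{\varphi}=\int_{0}^{T_{1}}\!\int_{0}^{T_{2}}\varphi(t_{1},t_{2})\,dt_{1}dt_{2}$ denotes the integral over one period cell.

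Next I would tile $\R^{2}$ by the translated cells $C_{k}= k + ([0,T_{1})\times[0,T_{2}))$ for $k\in\Gamma_{T_{1},T_{2}}$, and split the index set into those $k$ with $C_{k}\subset MD$ (call this $\mathcal{K}_{\mathrm{int}}$) and those $k$ with $C_{k}\cap MD\neq\emptyset$ but $C_{k}\not\subset MD$ (call this $\mathcal{K}_{\partial}$). On each interior cell, periodicity gives $\int_{C_{k}}\varphi = I_{\varphi}$ exactly, so the interior cells contribute $(\mathrm{card}\,\mathcal{K}_{\mathrm{int}})\,I_{\varphi}$. On each boundary cell, $|\int_{C_{k}\cap MD}\varphi|\leq T_{1}T_{2}\,\|\varphi\|_{\infty}$, so the total boundary contribution is bounded by $(\mathrm{card}\,\mathcal{K}_{\partial})\,T_{1}T_{2}\,\|\varphi\|_{\infty}$. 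It remains to count these cardinalities: comparing areas, $(\mathrm{card}\,\mathcal{K}_{\mathrm{int}})\,T_{1}T_{2}\leq |MD| = M^{2}|D|$, and conversely $|MD\setminus\bigcup_{\mathcal{K}_{\mathrm{int}}}C_{k}|$ is contained in an $\mathcal{O}(1)$-neighbourhood of $\partial(MD)=M\,\partial D$, which has area $\mathcal{O}(M)$ because $\partial D$ is (we may assume, or by an elementary covering argument for an arbitrary bounded open set, up to a harmless enlargement) of finite perimeter / Minkowski content; hence $\mathrm{card}\,\mathcal{K}_{\partial}=\mathcal{O}(M)$ and $M^{2}|D|/(T_{1}T_{2}) - \mathrm{card}\,\mathcal{K}_{\mathrm{int}} = \mathcal{O}(M)$ as well. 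Combining,
$$
\int_{MD}\varphi(y)\,dy = \frac{M^{2}|D|}{T_{1}T_{2}}\,I_{\varphi} + \mathcal{O}(M)\,,
$$
and dividing by $M^{2}$ yields the claim, valid for all $M\geq M_{0}$ with $M_{0}$ depending only on $D$, $T_{1}$, $T_{2}$.

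The only delicate point is the geometric estimate $\mathrm{card}\,\mathcal{K}_{\partial}=\mathcal{O}(M)$, i.e.\ that the number of periodicity cells straddling the boundary of the dilated domain grows linearly in $M$. For a domain with rectifiable (e.g.\ Lipschitz or piecewise smooth) boundary this is immediate from the fact that a $\delta$-neighbourhood of $\partial D$ has area $\mathcal{O}(\delta)$; for a general bounded open set one replaces $D$ by a slightly larger polygonal domain $D'\supset D$ with $|D'\setminus D|$ small, applies the estimate to $D'$, and absorbs the difference — but since in all our applications (Fig.~\ref{example:1} and the forthcoming periodic examples) $D$ is smooth, I would simply invoke the smoothness of $\partial D$. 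Everything else is bookkeeping, and the error is uniform in $M\geq M_{0}$ because the per-cell bounds $I_{\varphi}$ and $T_{1}T_{2}\|\varphi\|_{\infty}$ are independent of $M$.
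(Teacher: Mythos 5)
Your argument is correct, and it is the standard periodic-cell-counting proof; the paper itself does not prove Lemma~\ref{lem:a-} at all but only cites it as standard homogenization theory (Bensoussan--Lions--Papanicolaou), so there is no in-paper proof to compare against. The one substantive point is the one you already flag yourself: the $\mathcal{O}(M^{-1})$ rate genuinely requires that a $\delta$-neighbourhood of $\partial D$ have area $\mathcal{O}(\delta)$ (finite Minkowski content of the boundary), and this can fail for a completely arbitrary bounded open set as the lemma is literally stated. Note that your proposed repair for that general case does not actually work: replacing $D$ by a polygonal $D'\supset D$ with $|D'\setminus D|$ small introduces a \emph{fixed} error of size $\|\varphi\|_{\infty}\,|D'\setminus D|$, which is $\mathcal{O}(1)$ and not $\mathcal{O}(M^{-1})$, so one only recovers the weak-$*$ convergence without a rate. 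Since every application in Section~\ref{examples} takes $D=\Omega$ or $D=\Omega_{\rho}$ with (piecewise) smooth boundary, your decision to simply invoke boundary regularity is the right resolution, and with that hypothesis made explicit the proof is complete.
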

\begin{lem}\label{lem:a+}
Let $D\subset\R^{2}$ be a bounded open set
and  $\phi:\R^{2}\times \overline{D} \longrightarrow\R^{2}$ be a continuous function satisfying:
\begin{equation}\label{1st:cond}
\phi(t+T,x)=\phi(t,x)\,,\qquad\forall T\in T_{1}\Z\times T_{2}\Z\,,
\end{equation}
and uniformly Lipschitz, i.e. with the property that there exist constants $C>0$ and $\epsilon_0$, such that,
\begin{equation}\label{2nd:cond}
|\phi(t,x)-\phi(t,\widetilde{x})|\leq C\, |x-\widetilde{x}|\,,\quad \, \forall t \in \mathbb R^2\,, \,\forall x, \widetilde{x}\in\overline{D}, \;  {\rm s.t.}\, |x-\widetilde x|<\epsilon_0\,.
\end{equation}
There exists a positive constant $M_{0}$ such that if $M\geq M_{0}$, then,
$$
\int_{D}\phi(M x,x)\,dx=\int_{D}\overline{\phi}(x)\,dx+\mathcal{O}(M^{-1})\,,
$$
where,
\begin{equation}\label{def:phi}
\overline{\phi}(x)=\frac{1}{T_{1}T_{2}}\int_{0}^{T_{1}}\int_{0}^{T_{2}}\phi((t_{1},t_{2}),x)\,dt_{1}dt_{2}\,.
\end{equation}
\end{lem}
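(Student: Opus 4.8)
The plan is to subtract the cell-average $\overline\phi$, reducing the claim to an oscillation estimate for a function with vanishing mean over the period cell, and then to run the classical ``corrector / cell problem'' device of homogenization theory. The one point requiring care is that $\phi$ is only Lipschitz, not $C^{1}$, in the slow variable $x$; I would circumvent this by a mollification in the fast variable $t$, sending the mollification parameter to $0$ only after the estimate in $M$ has been obtained. (Throughout, as in \cite{BLP} and as is the case in all applications of the lemma in this paper, I tacitly assume $\partial D$ rectifiable of finite length, e.g. Lipschitz; the $\mathcal{O}(M^{-1})$ rate genuinely uses this.)

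\textbf{Reduction.} Arguing componentwise we may assume $\phi$ scalar. By the $\Gamma_{T_1,T_2}$-periodicity of $\phi(\cdot,x)$ and the continuity of $\phi$ on the compact set $[0,T_1]\times[0,T_2]\times\overline D$, $\phi$ is bounded on $\R^{2}\times\overline D$, and together with \eqref{2nd:cond} this shows that $\phi(t,\cdot)$ is globally Lipschitz on $\overline D$, uniformly in $t$; hence $\overline\phi$ is Lipschitz as well. Setting $g(t,x)=\phi(t,x)-\overline\phi(x)$, the function $g$ is continuous, $\Gamma_{T_1,T_2}$-periodic in $t$, globally Lipschitz in $x$ uniformly in $t$, and by \eqref{def:phi} has vanishing mean over the period cell for every $x$. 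Since $\int_D\overline\phi(x)\,\md x$ is exactly the asserted main term, it remains to prove that $\int_D g(Mx,x)\,\md x=\mathcal{O}(M^{-1})$.

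\textbf{Cell problem for a mollified datum.} I would fix $\epsilon>0$ and set $g_\epsilon(\cdot,x)=\rho_\epsilon *_{t} g(\cdot,x)$, the mollification of $g$ in the fast variable on the torus $\mathbb{T}^{2}:=\R^{2}/\Gamma_{T_1,T_2}$. Then $g_\epsilon$ is $C^{\infty}$ in $t$, $\Gamma_{T_1,T_2}$-periodic, of vanishing $t$-mean, Lipschitz in $x$ with the same constant as $g$, satisfies $\|g_\epsilon\|_{\infty}\le\|g\|_{\infty}$, and $\|g-g_\epsilon\|_{L^{\infty}(\R^{2}\times\overline D)}=:\omega(\epsilon)\to0$ as $\epsilon\to0$ by uniform continuity. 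For each $x$ let $\Phi_\epsilon(\cdot,x)$ be the mean-zero solution on $\mathbb{T}^{2}$ of $-\Delta_{t}\Phi_\epsilon(\cdot,x)=g_\epsilon(\cdot,x)$, which is solvable since its right-hand side has vanishing mean and is smooth in $t$ because the data are, and put $G_\epsilon=-\nabla_{t}\Phi_\epsilon$, so that $\Div_{t}G_\epsilon=g_\epsilon$. Applying the $W^{2,p}(\mathbb{T}^{2})$ elliptic estimate and Sobolev embedding, first to $\Phi_\epsilon(\cdot,x)$ and then to $\Phi_\epsilon(\cdot,x)-\Phi_\epsilon(\cdot,\widetilde x)$, yields constants $C_0,C_1$ depending only on $\|g\|_{\infty}$ and on the $x$-Lipschitz constant of $g$, in particular independent of $\epsilon$ and of $M$, such that $|G_\epsilon|\le C_0$ on $\R^{2}\times\overline D$ and $|\nabla_{x}G_\epsilon|\le C_1$ almost everywhere.

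\textbf{Two-scale divergence identity and conclusion.} For $\epsilon$ fixed, $x\mapsto G_\epsilon(Mx,x)$ is Lipschitz on $\overline D$, and the chain rule gives, almost everywhere on $D$,
\[
\Div_{x}\big(G_\epsilon(Mx,x)\big)=M\,(\Div_{t}G_\epsilon)(Mx,x)+(\Div_{x}G_\epsilon)(Mx,x)=M\,g_\epsilon(Mx,x)+(\Div_{x}G_\epsilon)(Mx,x)\,.
\]
Integrating over $D$ and applying the divergence theorem gives
\[
M\int_D g_\epsilon(Mx,x)\,\md x=\int_{\partial D}G_\epsilon(Mx,x)\cdot\nu\,\md\sigma-\int_D(\Div_{x}G_\epsilon)(Mx,x)\,\md x\,,
\]
and the right-hand side is bounded in absolute value by $C_0\,\mathcal{H}^{1}(\partial D)+2C_1|D|=:C$, independently of $\epsilon$ and $M$. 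Hence $\big|\int_D g_\epsilon(Mx,x)\,\md x\big|\le C/M$, so $\big|\int_D g(Mx,x)\,\md x\big|\le C/M+|D|\,\omega(\epsilon)$; letting $\epsilon\to0$ with $M$ fixed yields $\big|\int_D g(Mx,x)\,\md x\big|\le C/M$, which is the claim (and, taking $\phi$ independent of $x$, this also recovers Lemma~\ref{lem:a-}). The main obstacle is the low $x$-regularity already flagged: the corrector $G_\epsilon$ is only Lipschitz in $x$, so a naive second $x$-differentiation is not permitted, and it is precisely the device of mollifying in $t$ and postponing $\epsilon\to0$ until after the $M^{-1}$ bound is established, with constants that do not see $\epsilon$, that makes the classical smooth argument rigorous.
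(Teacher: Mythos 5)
Your argument is sound, but note first that the paper does not actually prove Lemma~\ref{lem:a+}: it is quoted as standard, with a pointer to \cite[Section~16-17]{BLP}. The proof implicit in that reference, and the one most in keeping with the lattice-of-squares bookkeeping used throughout Section~\ref{upperbound}, is the elementary Riemann-sum one: tile $D$ by the cells $\gamma/M+[0,T_{1}/M]\times[0,T_{2}/M]$, $\gamma\in\Gamma_{T_{1},T_{2}}$; on each cell contained in $D$ freeze the slow variable at the cell centre, which by \eqref{2nd:cond} costs $\mathcal O(M^{-1})$ per unit area, and observe that the fast integral over a full period cell reproduces $\overline{\phi}$ exactly; the $\mathcal O(M)$ cells meeting $\partial D$ contribute $\mathcal O(M^{-1})$ in total. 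Your corrector/cell-problem proof reaches the same conclusion by PDE means; it is heavier (elliptic regularity on the torus, mollification, the divergence theorem), but it isolates cleanly why the mean-zero part averages itself away and it transfers to settings where the Riemann-sum bookkeeping is awkward. Both routes consume exactly the same two inputs: the uniform $x$-Lipschitz bound and the finiteness of $\mathcal{H}^{1}(\partial D)$, i.e. $|\{\dist(\cdot,\partial D)\leq \delta\}|=\mathcal O(\delta)$. You are right to flag the latter: for a general bounded open set the boundary layer need not be $\mathcal O(M^{-1})$, so the lemma as stated is slightly too generous, and the hypothesis is harmless only because every $D$ to which it is applied in the paper ($\Omega$, $\Omega_{\rho}$) has rectifiable boundary.

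One step to tighten: the a.e. identity $\Div_{x}\big(G_\epsilon(Mx,x)\big)=M\,g_\epsilon(Mx,x)+(\Div_{x}G_\epsilon)(Mx,x)$ is not automatic, because $G_\epsilon$ is only Lipschitz in $x$ and the null set where $\nabla_{x}G_\epsilon$ fails to exist could a priori meet the graph $t=Mx$ in a set of positive two-dimensional measure; this is the classical pitfall of the chain rule for Lipschitz compositions. The repair stays entirely inside your scheme: mollify $g$ in $x$ as well (after a Lipschitz extension to a neighbourhood of $\overline{D}$), so that the corrector is smooth in both variables and the chain rule is classical; the bounds $|G_{\epsilon,\delta}|\leq C_{0}$ and $|\nabla_{x}G_{\epsilon,\delta}|\leq C_{1}$ survive because mollification increases neither $\|g\|_{\infty}$ nor the $x$-Lipschitz constant, and you send $\delta\to0$ together with $\epsilon\to0$ only after the $M^{-1}$ bound is in hand. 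With that adjustment the proof is complete.
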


\subsubsection{First example:}
\begin{prop}\label{prop:1st-ex}
Suppose that \eqref{B(x)} and \eqref{cond-H} hold. Let $
a(x,\kappa)=\alpha(\kappa^{\frac{1}{2}}\,x)
$
where $\alpha(\cdot)\in C^{1}(\overline{\Omega})$ is a $\Gamma_{T_{1},T_{2}}$-periodic function\footnote{see Fig.~\ref{example:3}}. Then the leading order term $E_{\rm g}^{\textbf{L}}$ defined in \eqref{def:leading-term} satisfies,
$$
E_{\rm g}^{\textbf{L}}(\kappa,H,a,B_{0})=\kappa^{2} \, \int_{\Omega}\overline{\phi}_+(x)\,dx+\kappa^{2} |\Omega| \, \overline{\phi}_{-}  +\textit{o}(\kappa^{2})\,,\quad{\rm as}~\kappa\to+\infty\,.
$$
Here,
$$
\overline{\phi}_{+}(x)=\frac{1}{T_{1}T_{2}}\int_{0}^{T_{1}}\int_{0}^{T_{2}}\alpha_{+}(t_{1},t_{2})^{2}\,\hat{f}\left(\sigma\frac{|B_{0}(x)|}{\alpha_{+}(t_{1},t_{2})}\right)\,dt_{1}dt_{2}\,,
$$
and
$$
\overline{\phi}_{-} = \frac{1}{T_{1}T_{2}}\int_{0}^{T_{1}}\int_{0}^{T_{2}}\alpha_{-}(t_{1},t_{2})^2\,dt_{1}dt_{2}\,.
$$
\end{prop}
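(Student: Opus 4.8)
The plan is to apply the homogenization Lemmas~\ref{lem:a-} and \ref{lem:a+} to each of the two integrals defining $E_{\rm g}^{\textbf L}$ in \eqref{def:leading-term}, with the scaling parameter $M=\kappa^{1/2}$. First I would split $E_{\rm g}^{\textbf L}(\kappa,H,a,B_{0})$ according to the sign of $a(x,\kappa)=\alpha(\kappa^{1/2}x)$. Since $\alpha$ is $\Gamma_{T_{1},T_{2}}$-periodic, the positive and negative parts $\alpha_{\pm}(\kappa^{1/2}x)$ are again periodic (and continuous), so the sets $\{a(x,\kappa)>0\}$ and $\{a(x,\kappa)\le0\}$ are, up to a null set, the preimages under $x\mapsto\kappa^{1/2}x$ of fixed periodic sets. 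The cleanest way to proceed is to write each integrand over all of $\Omega$ using the conventions $\hat f(+\infty)=\tfrac12$ when $\alpha_{+}=0$ is irrelevant because $\alpha_{+}^{2}\hat f(\cdot)$ extends continuously by $0$ there, and $\alpha_{-}^{2}$ vanishes on $\{a>0\}$. Concretely, I would observe that
$$
\int_{\{a(x,\kappa)>0\}}a(x,\kappa)^{2}\,\hat f\!\left(\sigma\frac{|B_{0}(x)|}{a(x,\kappa)}\right)dx=\int_{\Omega}\phi\bigl(\kappa^{1/2}x,x\bigr)\,dx,
$$
where $\phi(t,x)=\alpha_{+}(t)^{2}\,\hat f\!\left(\sigma|B_{0}(x)|/\alpha_{+}(t)\right)$ (interpreted as $0$ when $\alpha_{+}(t)=0$), and similarly the second integral equals $\int_{\Omega}\alpha_{-}(\kappa^{1/2}x)^{2}\,dx=\int_{\Omega}\varphi(\kappa^{1/2}x)\,dx$ with $\varphi(t)=\alpha_{-}(t)^{2}$.

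The second step is to verify the hypotheses of the two lemmas for these choices. For $\varphi(t)=\alpha_{-}(t)^{2}$, periodicity and continuity are immediate from $\alpha\in C^{1}(\overline\Omega)$, so Lemma~\ref{lem:a-} gives directly
$$
\int_{\Omega}\alpha_{-}(\kappa^{1/2}x)^{2}\,dx=\frac{|\Omega|}{T_{1}T_{2}}\int_{0}^{T_{1}}\!\!\int_{0}^{T_{2}}\alpha_{-}(t_{1},t_{2})^{2}\,dt_{1}dt_{2}+\mathcal O(\kappa^{-1/2})=|\Omega|\,\overline{\phi}_{-}+\mathcal O(\kappa^{-1/2}).
$$
For $\phi(t,x)$ I must check \eqref{1st:cond}, i.e. $\Gamma_{T_{1},T_{2}}$-periodicity in $t$ (clear, since only $\alpha_{+}(t)$ depends on $t$ and $\alpha$ is periodic), and the uniform Lipschitz condition \eqref{2nd:cond} in $x$: since $B_{0}\in C^{\infty}(\overline\Omega)$ it is Lipschitz on $\overline\Omega$, $\hat f$ is Lipschitz on compact subsets of $[0,\infty)$ (it is continuous, monotone, and constant $=\tfrac12$ beyond $1$), $\sigma\in[\lambda_{\min},\lambda_{\max}]$ is bounded, and $|B_{0}(x)|/\alpha_{+}(t)$ is bounded above by $\|B_{0}\|_{\infty}/\overline a$ uniformly — here one uses that when $\alpha_{+}(t)$ is small the value $\hat f(\sigma|B_{0}|/\alpha_{+})$ sits on the flat part $=\tfrac12$, so no blow-up occurs; the only $x$-dependence is through $|B_{0}(x)|$, giving $|\phi(t,x)-\phi(t,\tilde x)|\le C\,\alpha_{+}(t)^{2}\,|\,|B_{0}(x)|-|B_{0}(\tilde x)|\,|\le C'|x-\tilde x|$ uniformly in $t$. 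Then Lemma~\ref{lem:a+} yields
$$
\int_{\Omega}\phi\bigl(\kappa^{1/2}x,x\bigr)\,dx=\int_{\Omega}\overline{\phi}(x)\,dx+\mathcal O(\kappa^{-1/2}),
$$
with $\overline{\phi}$ given by \eqref{def:phi}, which is exactly $\overline{\phi}_{+}(x)$ as stated.

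Combining the two applications and multiplying by $\kappa^{2}$, the error terms $\mathcal O(\kappa^{-1/2})$ become $\mathcal O(\kappa^{3/2})=\textit{o}(\kappa^{2})$, giving
$$
E_{\rm g}^{\textbf L}(\kappa,H,a,B_{0})=\kappa^{2}\int_{\Omega}\overline{\phi}_{+}(x)\,dx+\kappa^{2}|\Omega|\,\overline{\phi}_{-}+\textit{o}(\kappa^{2}),
$$
which is the claim. The main obstacle, and the only point requiring genuine care, is the verification of the uniform Lipschitz bound \eqref{2nd:cond} for $\phi$: one has to argue that the potential singularity of $b\mapsto\hat f(b)$ is harmless because $\hat f$ is globally Lipschitz (Lipschitz on $[0,1]$ by \cite[Theorem~2.1]{KA2} together with continuity, and constant afterwards), so that the composition with $\sigma|B_{0}(x)|/\alpha_{+}(t)$ — whose $x$-modulus of continuity is controlled by $\alpha_{+}(t)\cdot\mathrm{Lip}(B_{0})/\alpha_{+}(t)^{\,?}$ — remains uniformly Lipschitz in $x$ after multiplication by the prefactor $\alpha_{+}(t)^{2}$; writing $g(t,s)=\alpha_{+}(t)^{2}\hat f(\sigma s/\alpha_{+}(t))$ and noting $|\partial_{s}g|\le \sigma\alpha_{+}(t)\,\mathrm{Lip}(\hat f)\le C$ uniformly makes this transparent. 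Everything else is a direct citation of Lemmas~\ref{lem:a-}--\ref{lem:a+} and elementary bookkeeping of the scaling $M=\kappa^{1/2}$.
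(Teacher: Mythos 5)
Your overall strategy is the same as the paper's (apply Lemma~\ref{lem:a-} to the $\alpha_-$ term and Lemma~\ref{lem:a+} to the $\alpha_+$ term with $M=\kappa^{1/2}$, then multiply by $\kappa^2$), and the treatment of the second term is fine. However, there is a genuine gap in your verification of the uniform Lipschitz hypothesis \eqref{2nd:cond} for $\phi(t,x)=\alpha_+(t)^2\hat f\bigl(\sigma|B_0(x)|/\alpha_+(t)\bigr)$. You assert that $\hat f$ is ``globally Lipschitz (Lipschitz on $[0,1]$ \dots and constant afterwards)'', and you justify Lipschitz continuity on compacta by ``continuous, monotone, and constant beyond $1$''. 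This is false: by \eqref{ashatf}, $\hat f(\mathfrak b)=\frac{\mathfrak b}{2}\ln\frac{1}{\mathfrak b}(1+o(1))$ as $\mathfrak b\to 0_+$, so the difference quotient at $0$ diverges and $\hat f$ is \emph{not} Lipschitz on $[0,1]$; the paper proves only that its Lipschitz constant on $[\mathfrak b_0,1]$ is $O(\log(1/\mathfrak b_0))$. Consequently your bound $|\partial_s g|\le\sigma\alpha_+(t)\,\mathrm{Lip}(\hat f)\le C$ fails, and $x\mapsto\phi(t,x)$ is only log-Lipschitz, not uniformly Lipschitz, wherever the argument $\sigma|B_0(x)|/\alpha_+(t)$ can approach $0$. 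Your ``flat part'' observation handles the regime where $\alpha_+(t)$ is small (there the argument is large and $\hat f=\tfrac12$), but the dangerous regime is the opposite one: $|B_0(x)|$ small with $\alpha_+(t)$ of order one. Since \eqref{B(x)} permits $\Gamma=\{B_0=0\}\neq\varnothing$ (this is exactly the situation depicted for this example), that regime genuinely occurs, and Lemma~\ref{lem:a+} cannot be invoked on all of $\Omega$ as you do.

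The repair, which is what the paper does, is to restrict to $\Omega_\rho=\Omega\cap\{|B_0|>\rho\}$. There the argument of $\hat f$ is either bounded below by $\mathfrak b_0=\lambda_{\min}\rho/\alpha_0$ (when $\alpha_+(t)\ge\epsilon$ with $\epsilon=\lambda_{\min}\rho/2$) or lies on the flat part where $\hat f\equiv\tfrac12$ (when $\alpha_+(t)\le\epsilon$), giving a uniform Lipschitz constant of order $\log(1/\rho)$. Lemma~\ref{lem:a+} then yields an error $\mathcal O_\rho(\kappa^{-1/2})$ on $\Omega_\rho$, the complement contributes $\mathcal O(\rho)$ by boundedness of the integrand, and one concludes with a two-parameter limiting argument (first choose $\rho$ small, then $\kappa$ large). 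Without this truncation and the accompanying diagonal argument, your proof is incomplete; with it, the rest of your bookkeeping is correct.
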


\begin{proof}~\\
{\bf We first estimate the second term in \eqref{def:leading-term}.}  We apply Lemma~\ref{lem:a-} with $D=\Omega$, $ M=\kappa^\frac{1}{2}$ and $\varphi=\alpha_{-}^2$, we obtain,
$$
\int_{\Omega}a_{-}(x,\kappa)^{2}\,dx=\frac{|\Omega|}{T_{1}T_{2}}\int_{0}^{T_{1}}\int_{0}^{T_{2}}\alpha_{-}(t_{1},t_{2})^{2}\,dt_{1}dt_{2}+\mathcal{O}(\kappa^{-\frac{1}{2}})\,,
$$
and consequently,
$$
\kappa^{2}\int_{\{a(x)\leq 0\}}a(x,\kappa)^{2}\,dx=\kappa^{2}\,\frac{|\Omega|}{T_{1}T_{2}}\int_{0}^{T_{1}}\int_{0}^{T_{2}}\alpha_{-}(t_{1},t_{2})^2 \,dt_{1}dt_{2}+\mathcal{O}(\kappa^{\frac{3}{2}})\,.
$$
{\bf Now, we estimate the first term in \eqref{def:leading-term}.}
We first prove that $\hat{f}$ is a Lipschitz function in $[\mathfrak{b}_{0},1]$ with $\mathfrak{b}_{0}\in(0,1)$. We consider this restriction because when $\mathfrak{b}\to 0_{+}$ (see \cite[Theorem~2.1]{KA2}), $\hat{f}$ satisfies,
\begin{equation}\label{ashatf}
\hat{f}(\mathfrak{b})=\frac{\mathfrak{b}}{2}\ln\frac{1}{\mathfrak{b}}(1+\textit{o}(1))\,,
\end{equation}
and $\hat{f}$ is not a Lipschitz function at $0$. We recall the definition of $\hat{f}$
$$
\displaystyle \hat{f}\left(\mathfrak{b}\right)=\lim_{R\longrightarrow\infty}\frac{e_{D}(\mathfrak{b},R)}{R^{2}}\qquad(\forall \mathfrak{b}\in[0,1])\,,
$$
where
$$
e_{D}(\mathfrak{b},R)=\inf_{u}F^{+1,+1}_{\mathfrak{b},Q_{R}}(u):=\inf_{u}\int_{Q_{R}}\left(\mathfrak{b}|(\nabla-i\Ab_0)u|^2+\frac{1}{2}\left(1-|u|^2\right)^{2}\right)\,dx\,.
$$
From the definition,  we can conclude that $\hat f$ is concave and hence locally Lipschitz in $(0,+\infty)$ (see \cite[Theorem~2.35]{MG}). For completion we write below a proof making  explicit the Lipschitz constant. For $\mathfrak{b}' >0$, let  $u_{\mathfrak{b}',R}\in H^{1}_{0}(Q_{R})$ be a minimizer of $F^{+1,+1}_{\mathfrak{b}',Q_{R}}$. Then for all $\mathfrak{b}\in (0,1)$, we have,
$$
e_{D}(\mathfrak{b},R)\leq F^{+1,+1}_{\mathfrak{b},Q_{R}} (u_{\mathfrak{b}',R})\leq e_{D}(\mathfrak{b}',R)+\|(\nabla-i\Ab_0)u_{\mathfrak{b}',R}\|^{2}_{L^{2}(Q_{R})}|\mathfrak{b}-\mathfrak{b}'|\,.
$$
Now, we estimate $\|(\nabla-i\Ab_0)u_{\mathfrak{b}',R}\|^{2}_{L^{2}(Q_{R})}$ from above. Coming back to the definition, we get the existence of a positive constant $C$, such that for any $\mathfrak{b}\in[\mathfrak{b}_0,1]$ and for any $\mathfrak{b}'\in[\mathfrak{b}_0,1]$,
\begin{align}\label{eq:est-from-above2}
\|(\nabla-i\Ab_0)u_{\mathfrak{b}',R}\|^{2}_{L^{2}(Q_{R})}&\leq \frac{e_{D}(\mathfrak{b}',R)}{\mathfrak{b}'}\nonumber \,.
\end{align}
This implies that,
$$
e_{D}(\mathfrak{b},R)\leq e_{D}(\mathfrak{b}',R)+ \frac{e_{D}(\mathfrak{b}',R)}{\mathfrak{b}'}   |\mathfrak{b}-\mathfrak{b}'|\,.
$$
Dividing by $R^2$ and taking the limit as $R\rightarrow +\infty$, we obtain
$$
\hat f (\mathfrak{b}) \leq \hat f (\mathfrak{b}') + \frac{ | \hat f (\mathfrak{b}') |}{\mathfrak{b}'} |\mathfrak{b}-\mathfrak{b}'|\,.
$$
Using the asymptotic behavior of $\hat f$ in \eqref{ashatf} as $\mathfrak{b}'\rightarrow 0_{+}$, we finally obtain the existence of $C$ such that
$$
\hat f (\mathfrak{b}) \leq \hat f (\mathfrak{b}') + C \left(\log \frac {1}{\mathfrak{b}_0}\right)  \,  |\mathfrak{b}-\mathfrak{b}'|\,,\, \forall \mathfrak{b}, \mathfrak{b}' \mbox{ with } 1 > \mathfrak{b}>\mathfrak{b}_{0} \mbox{ and } 1 >  \mathfrak{b}' >\mathfrak{b}_0\,.
$$
Exchanging $\mathfrak{b}$ and $\mathfrak{b}'$, we have proved the
\begin{lemma} $\hat f$ is locally Lipschitz in $(0,+\infty)$. More precisely, there exists $C$ such that  for any $\mathfrak{b}_0 >0$,
\begin{equation}
| \hat f (\mathfrak{b}) - \hat f (\mathfrak{b}')| \leq  C \,\left(\log \frac {1}{\mathfrak{b}_0}\right) \,  |\mathfrak{b}-\mathfrak{b}'|\,,\, \forall \mathfrak{b}, \mathfrak{b}' \mbox{ with } 1>\mathfrak{b}>\mathfrak{b}_0 \mbox{ and } 1 > \mathfrak{b}' >\mathfrak{b}_0\,.
\end{equation}
In addition, we have
\begin{equation}
| \hat f (\mathfrak{b}) - \hat f (\mathfrak{b}')| \leq  2   \,  |\mathfrak{b}-\mathfrak{b}'|\,,\, \forall \mathfrak{b}, \mathfrak{b}' \mbox{ with } \mathfrak{b}>\frac 12  \mbox{ and } \mathfrak{b}' > \frac 12\,.
\end{equation}
\end{lemma}

To continue, we consider
$$ \mathbb R^2 \times \Omega_\rho \ni (t,x) \mapsto \phi(t,x)=\alpha_{+}(t)^{2}\,\hat{f}\left(\sigma\frac{|B_{0}(x)|}{\alpha_{+}(t)}\right)\,,$$
where, $\Omega_{\rho}:=\Omega\cap\{|B_0|>\rho\}$.\\
The periodicity condition in \eqref{1st:cond}  is clear. Let us verify the Lipschitz property.
Let
$$
\mathfrak{b}_{0}=\frac{\lambda_{\min}}{\alpha_{0}}\,\rho \,,
$$
where, $\lambda_{\min}$ is introduced in \eqref{cond-H} and $\alpha_{0}=\sup \alpha_{+}(t)$.\\

Let $\epsilon>0$, $\mathcal{I}_{+}=\{t\in\R: \alpha_{+}(t)\geq \epsilon\}$ and $\mathcal{I}_{-}=\{t\in\R: \alpha_{+}(t)\leq \epsilon\}$, we distinguish between two cases:\\
\textbf{Case 1:} $( \alpha_{+}(t)\geq \epsilon)$. We observe that for $(x,t) \in \Omega_\rho\times \mathcal{I}_{+}$, we have
$$
\mathfrak{b}_0\leq\sigma\frac{|B_{0}(x)|}{\alpha_{+}(t)}\leq  \frac{\sigma\,|B_{0}(x)|}{\epsilon} \,.
$$
Thus, for any $t\in \mathcal{I}_{+}$ and for any $x,x'\in\overline{\Omega}_{\rho}$, we get
\begin{align}\label{eq:lipschitz1}
\left|\alpha_{+}(t)^{2}\,\hat{f}\left(\sigma\frac{|B_{0}(x)|}{\alpha_{+}(t)}\right)-\alpha_{+}(t)^{2}\,\hat{f}\left(\sigma\frac{|B_{0}(x')|}{\alpha_{+}(t)}\right)\right|&=\alpha_{+}(t)^{2}|\hat{f}\left(\mathfrak{b}\right)-\hat{f}\left(\mathfrak{b}'\right)|\nonumber\\
&\le
C\,\left(\log \frac {1}{\rho}\right)\,\Big||B_{0}(x)|-|B_{0}(x')|\Big|\,.
\end{align}
Therefore,  using also the Lipschitz property for $x\mapsto |B_0(x)|$, we get that $\Omega_\rho\ni x \mapsto \phi (t,x)$ is uniformly Lipschitz for $t\in  \mathcal{I}_{+}$.\\
\textbf{Case 2:} $( \alpha_{+}(t)\leq \epsilon)$. We observe that for $(x,t) \in \Omega_\rho\times\mathcal{I}_{-}$,
$$
\frac{\sigma\,|B_{0}(x)|}{\alpha_{+}(t)}\geq  \frac{\sigma\,|B_{0}(x)|}{\epsilon}\,.
$$
We note that $
\hat{f}(\mathfrak{b})=\frac{1}{2},\,\forall \mathfrak{b}\geq 1$ (see \cite[Theorem~2.1]{FK2}).  For this reason we choose
$$
\epsilon=\frac{\lambda_{\min}}{2}\rho \,,
$$
which implies that for $(x,t) \in \Omega_\rho\times  \mathcal{I}_{-}$,
$$\frac{\sigma\,|B_{0}(x)|}{\alpha_{+}(t)}\geq 2\qquad{\rm and}\qquad\hat{f}\left(\sigma\frac{|B_{0}(x)|}{\alpha_{+}(t)}\right)=\frac{1}{2}\,.
$$
Thus, for any  $t\in\mathcal{I}_{-}$ and for any $x,x'\in\overline{\Omega}_{\rho}$, we get
\begin{align}\label{eq:lipschitz2}
\left|\alpha_{+}(t)^{2}\,\hat{f}\left(\sigma\frac{|B_{0}(x)|}{\alpha_{+}(t)}\right)-\alpha_{+}(t)^{2}\,\hat{f}\left(\sigma\frac{|B_{0}(x')|}{\alpha_{+}(t)}\right)\right|&=\left|\frac{\alpha_{+}(t)^{2}}{2}-\frac{\alpha_{+}(t)^{2}}{2}\right|\nonumber\\
&=0\,.
\end{align}
Hence we get that $\Omega_\rho\ni x \mapsto \phi (t,x)$ is uniformly Lipschitz for $t\in  \mathcal{I}_{-}\,$.\\

Now, we apply Lemma~\ref{lem:a+} with $D=\Omega_\rho$ and $M=\kappa^\frac 12$ and we obtain,
\begin{align}\label{eq:1st-ex-2}
\int_{\Omega_{\rho}}a_{+}(x,\kappa)^{2}\,\hat{f}\left(\sigma\frac{|B_{0}(x)|}{a_{+}(x,\kappa)}\right)\,dx&=\int_{\Omega_{\rho}}\overline{\phi}(x)\,dx+\mathcal{O}_{\rho}(\kappa^{-\frac{1}{2}})\,,
\end{align}
where  $\overline{\phi}$ is introduced in \eqref{def:phi}.\\
Coming back to the integral over $\Omega$, we get, for any $\rho\in(0,\rho_{0})$ and for any $\kappa\geq\kappa_{0}$ with $\rho_{0}$ small enough and $\kappa_{0}$ large enough,
\begin{equation}\label{eq:final}
\int_{\Omega}a_{+}(x,\kappa)^{2}\,\hat{f}\left(\sigma\frac{|B_{0}(x)|}{a_{+}(x,\kappa)}\right)\,dx=\int_{\Omega}\overline{\phi}(x)\,dx+\mathcal{O}(\rho)+\mathcal{O}_{\rho}(\kappa^{-\frac{1}{2}})\,.
\end{equation}
Here, we have used the fact that $\overline{\phi}$ is a bounded function in $\Omega$.
Let us show that the remainder term $s(\kappa)$ in the right hand side in \eqref{eq:final} is $\textit{o}(1)$. The remainder term has the form $s_{1}(\kappa)+s_{2}(\kappa)$ with $s_{1}(\kappa)=\mathcal{O}(\rho)$ and $s_{2}(\kappa)= \mathcal{O}_{\rho}(\kappa^{-\frac{1}{2}})$. Let us show that it is $o(1)$.
Given $\varepsilon > 0$, there exists $\rho_{\varepsilon}> 0$ such that $|s_{1}(\kappa)|\leq \frac{\varepsilon}{2}$, for all $\kappa \geq \kappa_0$. Then, $\rho=\rho_{\varepsilon}$ being chosen, we can find $\kappa_{\varepsilon}\geq \kappa_0$ such that, for any $\kappa\geq\kappa_{\epsilon}$, $|s_{2}(\kappa)|\leq \frac{\varepsilon}{2}$.

\begin{figure}[ht!]
\begin{center}
\includegraphics[scale=1]{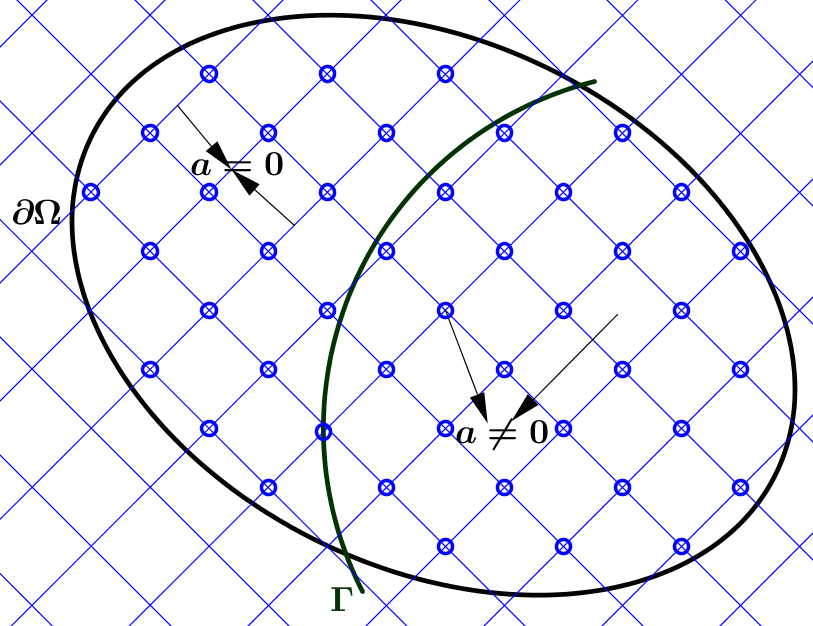}
\caption{Schematic representation of a domain with a $\kappa$-dependent oscillation pinning and with vanishing magnetic field along $\Gamma$.}\label{example:3}
\end{center}
\end{figure}
\end{proof}

\begin{prop}[Verification of $(A_{4})$]
Suppose that the function $\alpha$ defined in Proposition~\ref{prop:1st-ex} satisfies
\begin{equation}\label{cond-alpha}
|\alpha| + |\nabla \alpha | >0\quad\mbox{ in } \R^2\,.
\end{equation}
Then Assumption $(A_{4})$ is satisfied.
\end{prop}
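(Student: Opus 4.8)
The plan is to use the dilation $y=\kappa^{\frac12}x$ to replace the $\kappa$-dependent set $\partial\{a(\cdot,\kappa)>0\}$ by the \emph{fixed} periodic curve $\widetilde{\Gamma}_{\alpha}=\{t\in\R^{2}:\alpha(t)=0\}$, and then to run an area-comparison argument parallel to the one used above for a $\kappa$-independent pinning term. First I would identify $\partial\{a(\cdot,\kappa)>0\}$: by \eqref{cond-alpha} the gradient $\nabla\alpha$ does not vanish on $\widetilde{\Gamma}_{\alpha}$, so the implicit function theorem shows that $\widetilde{\Gamma}_{\alpha}$ is a $C^{1}$ one-dimensional submanifold of $\R^{2}$; moreover $\alpha$ changes sign across $\widetilde{\Gamma}_{\alpha}$ and has locally constant sign off it, whence $\partial\{\alpha>0\}=\widetilde{\Gamma}_{\alpha}$ and consequently $\partial\{a(\cdot,\kappa)>0\}\cap\Omega=\kappa^{-\frac12}\,\widetilde{\Gamma}_{\alpha}\cap\Omega$. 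Since $\widetilde{\Gamma}_{\alpha}$ is $\Gamma_{T_{1},T_{2}}$-periodic, its intersection with the closure of one period cell is a compact $C^{1}$ curve of finite length $\mathcal{L}_{0}$, so by periodicity the length of $\widetilde{\Gamma}_{\alpha}$ inside any bounded region is at most $\mathcal{L}_{0}$ times the number of period cells meeting that region, and likewise its number of connected components inside such a region is controlled by that number of cells.

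Next I would apply the dilation $y=\kappa^{\frac12}x$: it sends the lattice $\Gamma_{\ell}$ to $\Gamma_{\ell'}$ with $\ell'=\kappa^{\frac12}\ell$, the square $Q_{\ell}(\gamma)$ to $Q_{\ell'}(\kappa^{\frac12}\gamma)$, the domain $\Omega$ to $\Omega'=\kappa^{\frac12}\Omega$, and $\partial\{a(\cdot,\kappa)>0\}\cap\Omega$ to $\widetilde{\Gamma}_{\alpha}\cap\Omega'$, so that the cardinality appearing in \eqref{defA4} equals
$${\rm card}\,\bigl\{\gamma'\in\Gamma_{\ell'}\cap\Omega'\ :\ Q_{\ell'}(\gamma')\cap\widetilde{\Gamma}_{\alpha}\cap\Omega'\neq\emptyset\bigr\}\,.$$
The hypothesis $\ell\leq C_{2}\kappa^{-\frac12}$ becomes $\ell'\leq C_{2}$, i.e. the rescaled squares have side bounded independently of $\kappa$, while $\Omega'$ has area $\kappa|\Omega|$ and is covered by $\mathcal{O}(\kappa)$ period cells, so by the first step the length of $\widetilde{\Gamma}_{\alpha}$ inside $\Omega'$ (and even inside the fixed-width enlargement $\{\dist(\cdot,\Omega')\leq\sqrt{2}\,C_{2}\}$) is $\mathcal{O}(\kappa)$, and so is its number of connected components there. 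I would then conclude exactly as in the $\kappa$-independent case: each selected square $Q_{\ell'}(\gamma')$ lies in the tubular neighbourhood $D'_{\sqrt{2}\ell'}=\{x:\dist(x,\widetilde{\Gamma}_{\alpha}\cap\Omega')\leq\sqrt{2}\ell'\}$, these squares are pairwise disjoint with area $\ell'^{2}$, so
$$\ell'^{2}\,{\rm card}\,\bigl\{\gamma'\in\Gamma_{\ell'}\cap\Omega'\ :\ Q_{\ell'}(\gamma')\cap\widetilde{\Gamma}_{\alpha}\cap\Omega'\neq\emptyset\bigr\}\ \leq\ \bigl|D'_{\sqrt{2}\ell'}\bigr|\,.$$
Covering an arc-length parametrisation of $\widetilde{\Gamma}_{\alpha}\cap\Omega'$ by $\mathcal{O}(\kappa/\ell')$ sub-arcs of diameter $\leq\ell'$, plus one extra sub-arc per connected component, i.e. $\mathcal{O}(\kappa/\ell'+\kappa)$ sub-arcs in all, bounds $\bigl|D'_{\sqrt{2}\ell'}\bigr|$ by $\mathcal{O}\bigl((\kappa/\ell'+\kappa)\ell'^{2}\bigr)=\mathcal{O}(\kappa\ell')$, the last equality using $\ell'\leq C_{2}$. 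Dividing by $\ell'^{2}$ gives ${\rm card}\,\bigl\{\cdots\bigr\}\leq C\kappa/\ell'=C\kappa^{\frac12}\ell^{-1}$, which is the bound required in \eqref{defA4}.

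The point demanding care is the uniformity in $\kappa$: one must check that the constants in the covering/area estimate do not degenerate as $\kappa\to+\infty$. This is precisely what the constraint $\ell\leq C_{2}\kappa^{-\frac12}$ buys — after rescaling the squares have side $\ell'\leq C_{2}$ bounded, so the only surviving $\kappa$-dependence is the $\mathcal{O}(\kappa)$ growth of the length (and of the number of connected components) of $\widetilde{\Gamma}_{\alpha}$ inside $\Omega'=\kappa^{\frac12}\Omega$, and undoing the scaling converts this into exactly the factor $\kappa^{\frac12}\ell^{-1}$. The mere $C^{1}$ (rather than $C^{2}$) regularity of $\widetilde{\Gamma}_{\alpha}$ causes no trouble here, since the bound on $\bigl|D'_{\sqrt{2}\ell'}\bigr|$ only uses that the curve is rectifiable with controlled length; if $\widetilde{\Gamma}_{\alpha}=\emptyset$ the statement is trivial.
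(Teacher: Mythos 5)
Your proof is correct and follows the same strategy as the paper: identify $\partial\{a(\cdot,\kappa)>0\}$ with $\kappa^{-1/2}\{\alpha=0\}$, dilate by $\kappa^{1/2}$ so the rescaled squares have side $\ell'=\kappa^{1/2}\ell\leq C_2$, and compare the total area of the disjoint selected squares with the area of a tubular neighbourhood of width $\sqrt{2}\,\ell'$ around the fixed periodic curve. The one substantive difference is quantitative, and your bookkeeping is the sound one. Since $\kappa^{1/2}\Omega$ contains $\mathcal{O}(\kappa)$ period cells, the length of $\{\alpha=0\}$ inside it is $\mathcal{O}(\kappa)=\mathcal{O}(M^2)$, so the $\epsilon$-neighbourhood has area $\mathcal{O}(M^{2}\epsilon)$; this yields ${\rm card}\leq C\kappa/\ell'=C\kappa^{1/2}\ell^{-1}$, which is exactly the bound demanded by \eqref{defA4}. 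The paper instead asserts $|\widehat D_{\epsilon,M}|\leq C M\epsilon$ and concludes the stronger bound $C_1\ell^{-1}$; for a genuinely periodic zero set (e.g.\ $\alpha(t)=\sin t_1$) that estimate is off by a factor of $M$, so the stronger conclusion does not hold in general — only the $C\kappa^{1/2}\ell^{-1}$ bound you obtain does, and it suffices for $(A_4)$.
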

\begin{proof}
Using \eqref{cond-alpha}, a change of variable $y=\kappa^{\frac{1}{2}}\,x$ and $\gamma'=\kappa^{\frac{1}{2}}\,\gamma$ yields,
 \begin{align*}
& {\rm card}\,\{ \gamma \in \Gamma_\ell \cap \Omega \mbox{ with } Q_\ell (\gamma) \cap \partial\{x\in\Omega:\,a(x,\kappa)>0\} \neq \emptyset\}\\
&\hspace*{7cm}={\rm card}\,\{ \gamma' \in \Gamma_{\kappa^{\frac{1}{2}}\ell} \cap \kappa^{\frac{1}{2}}\Omega \mbox{ with } Q_{\kappa^{\frac{1}{2}}\ell} (\gamma') \cap  \widehat{\Gamma} \neq \emptyset\}\,,
\end{align*}
where,
$$
\widehat{\Gamma}=\{y\in \mathbb R^2 \,|\,  \alpha(y)=0\}\,.
$$
Let $\epsilon\in(0,1)$, we introduce the domain
$$
\widehat D_{\epsilon, M}=\{y\in M \,\cdot \,\Omega: \dist(y,\widehat{\Gamma})\leq \epsilon\}\,.
$$
Thanks to \eqref{Area:D} and the periodicity assumption, we get the existence of positive constants $C$, $M_0$  and $\epsilon_0$ such that, for any $\epsilon \in (0,\epsilon_0)$, $M\geq M_0$
$$
|\widehat D_{\epsilon, M}|\leq C\,M\,\epsilon\,.
$$
 In the sequel, we choose $M=\kappa^{\frac{1}{2}}$ and $\epsilon=M\,\sqrt{2}\,\ell$. We note that, there exist constants $c > 0$ and $ \kappa_{0}> 0$ such that,
$$
\forall \kappa\geq \kappa_{0}\,,\quad\forall \ell\leq c\,\kappa^{-\frac{1}{2}}\,,\qquad  0< \epsilon\leq \epsilon_0 \,.
$$
We now observe that all the $Q_{\kappa^{\frac{1}{2}}\ell}(\gamma)$ touching $\widehat{\Gamma}$ are inside $\widehat D_{\kappa^{\frac{1}{2}}\,\sqrt{2}\,\ell,\kappa^{\frac{1}{2}}}$, hence
we get, by comparison of the areas
$$
\kappa\,\ell^2 {\rm card}\,\{ \gamma' \in \Gamma_{\kappa^{\frac{1}{2}}\ell} \cap \kappa^{\frac{1}{2}}\Omega \mbox{ with } Q_{\kappa^{\frac{1}{2}}\ell} (\gamma') \cap  \widehat{\Gamma}_{\kappa} \neq \emptyset\}\leq C\sqrt{2} \,\kappa\,\ell \,.
$$
There exist positive constants $C_1$ and $C_{2}$, such that,
$$
\forall \kappa \geq \kappa_0\,,\, \forall \ell \leq C_2 \kappa^{-\frac 12}\,,\, {\rm card}\,\{ \gamma \in \Gamma_\ell \cap \Omega \mbox{ with } Q_\ell (\gamma) \cap \partial\{x\in\Omega:\,a(x,\kappa)>0\} \neq \emptyset\}\leq C_{1}\,\ell^{-1}\,.
$$
\end{proof}

\subsubsection{Second example.}
This example was considered by Aftalion, Sandier and Serfaty (see $(H_2)$).
\begin{prop}
Suppose that \eqref{B(x)} and \eqref{cond-H} hold. Let $a(x,\kappa)=a(x)+\beta(x,\kappa)$,  where $\beta(x,\kappa)$ is a nonnegative function and $\{ a>0\}\cap\Omega\neq\varnothing$, (see Fig.~\ref{example:2}). There exist positive constants $\tau_1$ and $\kappa_{0}$ such that,
$$
\forall\kappa\geq\kappa_{0}\,,\qquad E_{\rm g}^{\textbf{L}}(\kappa,H,a,B_{0})\geq \tau_1 \,\kappa^{2}\,.
$$
\end{prop}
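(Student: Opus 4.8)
The plan is to reproduce, in this $\kappa$-dependent setting, the argument behind estimate~\eqref{est:a-k-ind}: the only new point to check is that all the quantities entering the lower bound can be chosen uniformly in $\kappa$, which is exactly what the hypothesis $\beta\geq0$ provides. Since $a\in C^{1}(\overline\Omega)$, the set $\{a>0\}\cap\Omega$ is open, and it is non-empty by assumption; since $\Gamma=\{B_{0}=0\}$ has Lebesgue measure zero by~\eqref{B(x)}, the open set $\{a>0\}\cap\{|B_{0}|>0\}\cap\Omega$ has positive measure and is therefore non-empty. Fixing a point $x_{0}$ in it and using the continuity of $a$ and $B_{0}$, we obtain constants $a_{0}>0$, $\rho_{0}>0$, $r_{0}>0$ and a disk $D:=D(x_{0},r_{0})$ such that
$$
\overline{D}\subset\{a>a_{0}\}\cap\{|B_{0}|>\rho_{0}\}\cap\Omega\,.
$$

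On $D$ the nonnegativity of $\beta$ gives, for every $\kappa\geq\kappa_{0}$,
$$
a(x,\kappa)=a(x)+\beta(x,\kappa)\geq a(x)\geq a_{0}>0\,,
$$
so $D\subset\{a(\cdot,\kappa)>0\}$, and moreover $a(x,\kappa)\leq\overline a$ by~\eqref{a2}. Combining this with $|B_{0}(x)|\geq\rho_{0}$ on $D$ and $\sigma\geq\lambda_{\min}$ (see~\eqref{cond-H}, \eqref{def:sigma}), we get
$$
\sigma\,\frac{|B_{0}(x)|}{a(x,\kappa)}\geq\frac{\lambda_{\min}\,\rho_{0}}{\overline a}=:\mathfrak b_{0}>0\,,\qquad\forall x\in D,\ \forall\kappa\geq\kappa_{0}\,.
$$

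It remains to bound $E_{\rm g}^{\textbf{L}}$ from below in~\eqref{def:leading-term}: the second integral is nonnegative, so it may be dropped, and restricting the first integral to $D$ and using $a(x,\kappa)^{2}\geq a_{0}^{2}$ together with the monotonicity of $\hat{f}$ yields
$$
E_{\rm g}^{\textbf{L}}(\kappa,H,a,B_{0})\geq\kappa^{2}\int_{D}a(x,\kappa)^{2}\,\hat{f}\!\left(\sigma\,\frac{|B_{0}(x)|}{a(x,\kappa)}\right)dx\geq\kappa^{2}\,\pi r_{0}^{2}\,a_{0}^{2}\,\hat{f}(\mathfrak b_{0})\,.
$$
Since $\hat{f}$ is increasing and $\hat{f}(\mathfrak b)>0$ for every $\mathfrak b>0$ (by monotonicity and the asymptotics recalled in~\eqref{ashatf}), the constant $\tau_{1}:=\pi r_{0}^{2}a_{0}^{2}\hat{f}(\mathfrak b_{0})>0$ does the job. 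There is no genuine obstacle here: the argument is a direct transcription of~\eqref{est:a-k-ind}--\eqref{ex1}, the hypothesis $\beta\geq0$ being precisely what guarantees the $\kappa$-uniform lower bound $a(\cdot,\kappa)\geq a(\cdot)$ on $D$, while~\eqref{a2} supplies the $\kappa$-uniform upper bound $\overline a$ needed to control from below the argument of $\hat{f}$.
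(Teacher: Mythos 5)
Your proof is correct and follows essentially the same route as the paper: both use the nonnegativity of $\beta$ to get $a(x,\kappa)\geq a(x)$ and $\{a>0\}\subset\{a(\cdot,\kappa)>0\}$, the uniform bound $\overline a$ from $(A_2)$ together with the monotonicity of $\hat f$ to control its argument, and then the disk argument of \eqref{est:a-k-ind}--\eqref{ex1}. The only cosmetic difference is that you restrict to the disk before bounding the argument of $\hat f$, whereas the paper first passes to the integral over $\{a>0\}$ with $\hat f(\sigma|B_0(x)|/\overline a)$ and then invokes \eqref{est:a-k-ind}; the resulting constant $\tau_1=\pi r_0^2a_0^2\hat f(\mathfrak b_0)$ is the same.
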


\begin{proof}
We can write,
\begin{align}\label{cp-err}
\kappa^{2}\int_{\{a(x,\kappa)>0\}}a(x,\kappa)^{2}\,\hat{f}\left(\frac{H}{\kappa}\frac{|B_{0}(x)|}{a(x,\kappa)}\right)\,dx&\geq \kappa^{2}\int_{\{a(x)>0\}}a(x,\kappa)^{2}\,\hat{f}\left(\frac{H}{\kappa}\frac{|B_{0}(x)|}{a(x,\kappa)}\right)\,dx\nonumber\\
&\geq \kappa^{2}\int_{\{a(x)>0\}}a(x)^{2}\,\hat{f}\left(\frac{H}{\kappa}\frac{|B_{0}(x)|}{\overline{a}}\right)\,dx\,.
\end{align}
Here we have used that $\hat{f}$ is increasing, the nonnegativity of $\beta$ to get $a(x,\kappa)\geq a(x)$, Assumption $(A_{2})$ to estimate $\hat{f}$ from below, and $\{a(x)>0\}\subset\{a(x,\kappa)>0\}$.\\
Proceding like in  \eqref{est:a-k-ind}, there exist $\tau_{1}>0$ and $\kappa_{0}>0$ such that,
\begin{equation}\label{lb:a-k2}
\forall\kappa\geq \kappa_{0}\,,\qquad\kappa^{2}\int_{\{a(x,\kappa)>0\}}a(x,\kappa)^{2}\,\hat{f}\left(\frac{H}{\kappa}\frac{|B_{0}(x)|}{a(x,\kappa)}\right)\,dx\geq\tau_{1}\,\kappa^{2}\,.
\end{equation}

\end{proof}

\begin{figure}[ht!]
\begin{center}
\includegraphics[scale=1]{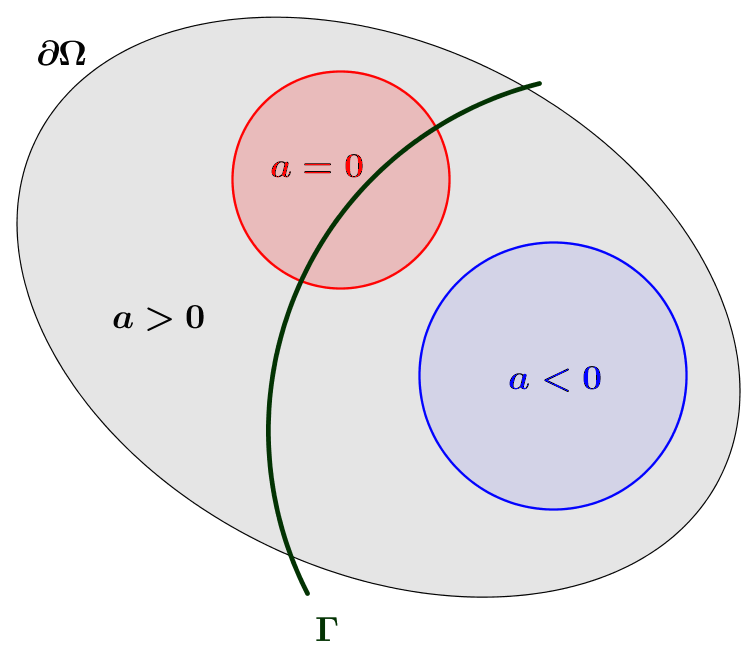}
\caption{Schematic representation of some domain with pinning term dependent of $\kappa$ and with vanishing magnetic field along $\Gamma$.}\label{example:2}
\end{center}
\end{figure}

\subsubsection{Third example:} This example is similar to the previous example, but here we suppose that
$$\beta(x,\kappa)=\alpha(\kappa^{\frac{1}{2}}x)\,,$$
where $\alpha(\cdot)$ is a $\Gamma_{T_{1},T_{2}}$-periodic positive function in $\R^2$.
\begin{prop}\label{prop:3rd-ex}
 Suppose that \eqref{B(x)} and \eqref{cond-H} hold. Let $a(x,\kappa)=a(x)+\alpha(\kappa^{\frac{1}{2}}x)$, where $\alpha(\cdot)$ is a $\Gamma_{T_{1},T_{2}}$-periodic positive bounded function in $\R^2$, $a(\cdot)\in C^{1}(\overline{\Omega})$ and $\{a<0\}\cap\Omega=\varnothing$. Then the leading order term $E_{\rm g}^{\textbf{L}}$ defined in \eqref{def:leading-term} satisfies,
$$
E_{\rm g}^{\textbf{L}}(\kappa,H,a,B_{0})=\kappa^{2} \, \int_{\Omega}\overline{\phi}(x)\,dx+\textit{o}(\kappa^{2})\,,\quad{\rm as}~\kappa\to+\infty\,.
$$
Here,
$$
\overline{\phi}(x)=\frac{1}{T_{1}T_{2}}\int_{0}^{T_{1}}\int_{0}^{T_{2}}(a(x)+\alpha(t_1,t_2))^{2}\,\hat{f}\left(\sigma\frac{|B_{0}(x)|}{a(x)+\alpha(t_1,t_2)}\right)\,dt_{1}dt_{2}\,.
$$
\end{prop}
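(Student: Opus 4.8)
The plan is to reduce the statement to a single application of the homogenization Lemma~\ref{lem:a+}, following closely the scheme used for Proposition~\ref{prop:1st-ex}. First I would observe that since $\{a<0\}\cap\Omega=\varnothing$ and $a$ is continuous on $\overline{\Omega}$, we have $a\geq 0$ on $\overline{\Omega}$; moreover $\alpha$, being continuous, periodic and positive, attains a positive minimum, so there is a constant $\alpha_{\min}>0$ with $\alpha\geq\alpha_{\min}$ on $\R^{2}$. Consequently $a(x,\kappa)=a(x)+\alpha(\kappa^{\frac{1}{2}}x)\geq\alpha_{\min}>0$ for every $x\in\overline{\Omega}$ and every $\kappa$, so that $\{a(x,\kappa)>0\}=\Omega$ and $\{a(x,\kappa)\leq 0\}=\varnothing$. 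Hence
$$E_{\rm g}^{\textbf{L}}(\kappa,H,a,B_{0})=\kappa^{2}\int_{\Omega}a(x,\kappa)^{2}\,\hat{f}\left(\sigma\frac{|B_{0}(x)|}{a(x,\kappa)}\right)\,dx\,,$$
and it suffices to prove that $\int_{\Omega}a(x,\kappa)^{2}\,\hat{f}\left(\sigma|B_{0}(x)|/a(x,\kappa)\right)\,dx=\int_{\Omega}\overline{\phi}(x)\,dx+\textit{o}(1)$ as $\kappa\to+\infty$.

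Second, I would set $\phi(t,x)=(a(x)+\alpha(t))^{2}\,\hat{f}\left(\sigma|B_{0}(x)|/(a(x)+\alpha(t))\right)$, so that $\phi(\kappa^{\frac{1}{2}}x,x)$ is exactly the above integrand and $\overline{\phi}$ is its average over one period cell as in \eqref{def:phi}. The $\Gamma_{T_{1},T_{2}}$-periodicity of $t\mapsto\phi(t,x)$ is immediate. The delicate point is the uniform Lipschitz estimate \eqref{2nd:cond}: near $\Gamma=\{B_{0}=0\}$ the argument of $\hat{f}$ may be small and $\hat{f}$ fails to be Lipschitz at $0$. I would therefore fix $\rho\in(0,1)$, work on $\Omega_{\rho}:=\Omega\cap\{|B_{0}|>\rho\}$, and note that on $\Omega_{\rho}$ the quantity $\sigma|B_{0}(x)|/(a(x)+\alpha(t))$ stays in the fixed compact subinterval of $(0,+\infty)$ with lower endpoint $\lambda_{\min}\rho/(\overline{a}+\sup\alpha)$ and upper endpoint $\lambda_{\max}(\sup_{\Omega}|B_{0}|)/\alpha_{\min}$, on which $\hat{f}$ is Lipschitz (by the local Lipschitz lemma for $\hat{f}$ proved in the treatment of Proposition~\ref{prop:1st-ex}, together with $\hat{f}\equiv\frac{1}{2}$ on $[1,+\infty)$). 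Since $a\in C^{1}(\overline{\Omega})$ and $B_{0}\in C^{\infty}(\overline{\Omega})$, the map $x\mapsto(a(x)+\alpha(t),|B_{0}(x)|)$ is Lipschitz uniformly in $t$, hence $x\mapsto\phi(t,x)$ is uniformly Lipschitz on $\overline{\Omega_{\rho}}$, and Lemma~\ref{lem:a+} with $D=\Omega_{\rho}$ and $M=\kappa^{\frac{1}{2}}$ yields
$$\int_{\Omega_{\rho}}a(x,\kappa)^{2}\,\hat{f}\left(\sigma\frac{|B_{0}(x)|}{a(x,\kappa)}\right)\,dx=\int_{\Omega_{\rho}}\overline{\phi}(x)\,dx+\mathcal{O}_{\rho}(\kappa^{-\frac{1}{2}})\,.$$

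Third, I would control the thin set $\Omega\setminus\Omega_{\rho}=\{|B_{0}|\leq\rho\}\cap\Omega$. Using $0\leq a(x,\kappa)\leq\overline{a}+\sup\alpha$ and $0\leq\hat{f}\leq\frac{1}{2}$, both $\int_{\Omega\setminus\Omega_{\rho}}a(x,\kappa)^{2}\hat{f}(\cdots)\,dx$ and $\int_{\Omega\setminus\Omega_{\rho}}\overline{\phi}(x)\,dx$ are $\leq C|\{|B_{0}|\leq\rho\}\cap\Omega|$; by \eqref{B(x)} one has $|\nabla B_{0}|>0$ on $\Gamma$, so $\Gamma$ is a finite union of smooth curves and $|\{|B_{0}|\leq\rho\}\cap\Omega|=\mathcal{O}(\rho)$, exactly as in \eqref{Area:D} and \eqref{eq:D/D+-}. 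Combining with the second step, for every $\rho$ small enough and every $\kappa$ large enough,
$$\int_{\Omega}a(x,\kappa)^{2}\,\hat{f}\left(\sigma\frac{|B_{0}(x)|}{a(x,\kappa)}\right)\,dx=\int_{\Omega}\overline{\phi}(x)\,dx+\mathcal{O}(\rho)+\mathcal{O}_{\rho}(\kappa^{-\frac{1}{2}})\,.$$
As at the end of the proof of Proposition~\ref{prop:1st-ex}, the remainder on the right is $\textit{o}(1)$: given $\varepsilon>0$ first choose $\rho$ so that the $\mathcal{O}(\rho)$ term is $<\varepsilon/2$ for all $\kappa\geq\kappa_{0}$, then choose $\kappa$ large enough that the $\mathcal{O}_{\rho}(\kappa^{-\frac{1}{2}})$ term is $<\varepsilon/2$. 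Multiplying by $\kappa^{2}$ gives $E_{\rm g}^{\textbf{L}}(\kappa,H,a,B_{0})=\kappa^{2}\int_{\Omega}\overline{\phi}(x)\,dx+\textit{o}(\kappa^{2})$. The main obstacle is exactly this uniform Lipschitz verification intertwined with the non-Lipschitz behaviour of $\hat{f}$ near $0$; the simplification relative to Proposition~\ref{prop:1st-ex} is that here $a(x)+\alpha(\kappa^{\frac{1}{2}}x)\geq\alpha_{\min}>0$ automatically, so only the $\Gamma$-splitting is needed and one avoids the extra case distinction according to whether the oscillating part is small.
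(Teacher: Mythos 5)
Your proof is correct and is essentially the argument the paper intends: the paper itself only states that the proof of Proposition~\ref{prop:3rd-ex} is ``similar to that of Proposition~\ref{prop:1st-ex}'', and your write-up is precisely that adaptation — apply Lemma~\ref{lem:a+} to $\phi(t,x)=(a(x)+\alpha(t))^{2}\hat{f}\bigl(\sigma|B_{0}(x)|/(a(x)+\alpha(t))\bigr)$ on $\Omega_{\rho}$, using the local Lipschitz property of $\hat{f}$ away from $0$, and absorb $\Omega\setminus\Omega_{\rho}$ via $|\{|B_{0}|\leq\rho\}\cap\Omega|=\mathcal{O}(\rho)$ followed by the $\varepsilon/2$ argument. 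You also correctly identify the one genuine simplification relative to Proposition~\ref{prop:1st-ex}, namely that $a(x)+\alpha(\kappa^{1/2}x)\geq\alpha_{\min}>0$ (granting the continuity of $\alpha$ implicitly required by Lemma~\ref{lem:a+}) makes the $\epsilon$-case distinction on the oscillating factor unnecessary.
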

The proof of Proposition~\ref{prop:3rd-ex} is similar to that of Proposition~\ref{prop:1st-ex}.

\subsection{Upper bound of the main term.}~\\

It is easy to show that $ E_{\rm g}^{\textbf{L}}$ is less than $C\kappa^{2}$ for some $C>0$. Indeed, using the bound of $a$ in \eqref{a2} and the bound $\hat{f}(b)\leq\frac{1}{2}$, we have,
$$
\kappa^{2}\int_{\{a(x,\kappa)>0\}}a(x,\kappa)^{2}\,\hat{f}\left(\frac{H}{\kappa}\frac{|B_{0}(x)|}{a(x,\kappa)}\right)\,dx\leq C\kappa^{2}\,,
$$
and
$$
\frac{\kappa^{2}}{2}\int_{\{a(x,\kappa)\leq 0\}}a(x,\kappa)^{2}\,dx\leq C\kappa^{2}\,.
$$
\section{Proof of Theorem~\ref{est-psi-main}}\label{7}
The technique that will be used  in this proof has been introduced by Helffer-Kachmar  in \cite{HK} for the case $a(x,\kappa)=\,1$. The proof is decomposed into three steps:\\
\textbf{Step 1: Case $\mathcal{D}=\Omega\,$.}\\
 Let $(\psi,\Ab)$ be a solution of \eqref{eq-2D-GLeq}. Thanks to \eqref{eq:A-psi}, we have,

 \begin{align*}
 \int_{\Omega}|(\nabla-i\kappa H\Ab)\psi|^{2}\,dx&=\kappa^{2}\int_{\Omega}(a(x,\kappa)-|\psi|^{2})|\psi|^{2}\,dx\\
 &=\frac{\kappa^{2}}{2}\int_{\Omega}(a(x,\kappa)^{2}-|\psi (x)|^{4})\,dx-\frac{\kappa^{2}}{2}\int_{\Omega}(a(x,\kappa)-|\psi|^{2})^{2}\,dx\,.
 \end{align*}
 Having in mind the definition of $\mathcal E_{0}(\psi,\Ab;a,\Omega)$, we get,
\begin{equation}
\frac{\kappa^{2}}{2}\int_{\Omega}(a(x,\kappa)^{2}-|\psi (x)|^{4})\,dx=\mathcal E_{0}(\psi,\Ab;a,\Omega)\,.
\end{equation}

Using \eqref{eq-2D}, we get that as $\kappa\longrightarrow+\infty$
\begin{multline} \label{previousone}
\frac{\kappa^{2}}{2}\int_{\Omega}(a(x,\kappa)^{2}-|\psi(x)|^{4})\,dx=\kappa^{2}\int_{\{a(x,\kappa)>0\}} a(x,\kappa)^{2}\hat{f}\left(\frac{H}{\kappa}\frac{|B_{0}(x)|}{a(x,\kappa)}\right)\,dx\\
+\frac{\kappa^{2}}{2}\int_{\{a(x,\kappa)\leq 0\}} a(x,\kappa)^{2}\,dx+\textit{o}\left(\kappa^{2}\right)\,.
\end{multline}
Notice that
$$\int_{\Omega}a(x,\kappa)^{2}\,dx=\int_{\{a(x,\kappa)\leq 0\}}a(x,\kappa)^{2}\,dx+\int_{\{a(x,\kappa)> 0\}}a(x,\kappa)^{2}\,dx\,.$$
Therefore, dividing  \eqref{previousone} by $\kappa^2$, we get
\begin{equation}\label{asy-psi-Omega}
\int_{\Omega}|\psi(x)|^{4}\,dx=-\int_{\{a(x,\kappa)>0\}} a(x,\kappa)^{2}\left\{2\hat{f}\left(\frac{H}{\kappa}\frac{|B_{0}(x)|}{a(x,\kappa)}\right)\,dx-1\right\}\,dx+\textit{o}\left(1\right)\,.
\end{equation}
\textbf{Step 2: Upper bound.} \\
Let $\mathcal{D}\subset\Omega$ be a regular domain and, for $\ell \in (0,1)$,
\begin{equation}
\mathcal{D}_{\ell}=\{x\in\mathcal{D}: {\rm dist}(x,\partial\mathcal{D})\geq\ell\}\,.
\end{equation}
We introduce a cut-off function $\chi_{\ell}\in C^{\infty}_{c}(\R^{2})$ such that
\begin{equation}\label{chi-l}
0\leq \chi_{\ell} \leq 1~{\rm in}~\R^{2}\,,\quad {\rm supp}\chi_{\ell}\subset \mathcal{D}\,,\quad \chi_{\ell}=1~{\rm in}~\mathcal{D}_{\ell} \quad{\rm and}\quad|\nabla\chi_{\ell}|\leq\frac{C}{\ell}~{\rm in}~\R^{2}\,,
\end{equation}
where $C$ is a positive constant. We multiply both sides of $\eqref{eq-2D-GLeq}_{a}$ by $\chi_{\ell}^{2}\psi$. It results from an integration by parts that
\begin{align}\label{eq:1}
\int_{\mathcal{D}} \left(|(\nabla-i\kappa H \Ab)\chi_{\ell}\psi|^{2}-\kappa^{2}a\,\chi_{\ell}^{2}|\psi|^{2}+\kappa^{2}\chi_{\ell}^{2}|\psi|^{4}\right)\,dx&=\int_{\mathcal{D}}|\nabla\chi_{\ell}|^{2}\, |\psi|^{2}\,dx\nonumber\\
&=\mathcal{O}(\ell^{-1})\,.
\end{align}
Here, we have used the fact that $|\nabla\,\chi_{\ell}|^{2}=\mathcal{O}(\ell^{-2})$, $|\mathcal{D}_{\ell}|=\mathcal{O}(\ell)$ and the bound of $\psi$ in \eqref{eq-psi<a}.\\
We notice that $\chi_{\ell}^{4}\leq\chi_{\ell}^{2}\leq 1$. We add to both sides the term $\frac{\kappa^{2}}{2}\int_{\mathcal{D}}a^{2}\,dx$ to obtain,
$$
\int_{\mathcal{D}} \left(|(\nabla-i\kappa H \Ab)\chi_{\ell}\psi|^{2}+\frac{\kappa^{2}}{2}a^{2}-\kappa^{2}a\,|\chi_{\ell}\,\psi|^{2}+\kappa^{2}|\chi_{\ell}\,\psi|^{4}\right)\,dx\leq\,C\,\ell^{-1}+\frac{\kappa^{2}}{2}\int_{\mathcal{D}}a^{2}\,dx\,.
$$
This implies that
$$
\mathcal E_0(\chi_{\ell}\psi,\Ab;a,\mathcal{D})\leq
\frac{\kappa^{2}}{2}\int_{\mathcal{D}}(a^{2}-\chi_{\ell}^{4}|\psi|^{4})\,dx+C\,\ell^{-1}\,.
$$
Using \eqref{chi-l}, we get
\begin{align}\label{up-psi42}
\int_{\mathcal{D}}|\psi|^{4}\,dx&= \int_{\mathcal{D}}\chi_{\ell}^{4}|\psi|^{4}\,dx+ \int_{\mathcal{D}}(1-\chi_{\ell}^{4})|\psi|^{4}\,dx\nonumber\\
&\leq \int_{\mathcal{D}}\chi_{\ell}^{4}|\psi|^{4}\,dx+C'\,\ell\,,
\end{align}
and consequently,
\begin{equation}\label{up-psi4}
\mathcal E_0(\chi_{\ell}\psi,\Ab;a,\mathcal{D})\leq
\frac{\kappa^{2}}{2}\int_{\mathcal{D}}(a^{2}-|\psi|^{4})\,dx+C(\ell^{-1}+\ell)\,.
\end{equation}
Using \eqref{fianl-E01} with $h=\chi_{\ell}$ and taking the choice of $\ell$  defined in \eqref{choice-ell-rho}, we get, as $\kappa\to+\infty$,
\begin{multline}
\frac{\kappa^{2}}{2}\int_{\mathcal{D}}(a^{2}-|\psi|^{4})\,dx\geq \kappa^{2}\int_{\mathcal{D}\cap\{a(x,\kappa)>0\}} a(x,\kappa)^{2}\hat{f}\left(\frac{H}{\kappa}\frac{|B_{0}(x)|}{a(x,\kappa)}\right)\,dx+\frac{\kappa^{2}}{2}\int_{\mathcal{D}\cap\{a(x,\kappa)\leq 0\}} a(x,\kappa)^{2}\,dx\\
+\textit{o}\left(\kappa^{2}\right)\,.
\end{multline}
Notice that,
$$\int_{\mathcal{D}}a(x,\kappa)^{2}\,dx=\int_{\mathcal{D}\cap\{a(x,\kappa)\leq 0\}}a(x,\kappa)^{2}\,dx+\int_{\mathcal{D}\cap\{a(x,\kappa)> 0\}}a(x,\kappa)^{2}\,dx\,.$$
Therefore,
\begin{multline}
-\frac{\kappa^{2}}{2}\int_{\mathcal{D}}|\psi|^{4}\,dx\geq \kappa^{2}\int_{\mathcal{D}\cap\{a(x,\kappa)>0\}} a(x,\kappa)^{2}\hat{f}\left(\frac{H}{\kappa}\frac{|B_{0}(x)|}{a(x,\kappa)}\right)\,dx-\frac{\kappa^{2}}{2}\int_{\mathcal{D}\cap\{a(x,\kappa)> 0\}} a(x,\kappa)^{2}\,dx\\
+\textit{o}\left(\kappa^{2}\right)\,.
\end{multline}
Dividing both sides by $-\frac{\kappa^{2}}{2}$, we obtain, as $\kappa\longrightarrow +\infty\,$,
\begin{equation}\label{up-psi45}
\int_{\mathcal{D}}|\psi|^{4}\,dx\leq-\int_{\mathcal{D}\cap\{a(x,\kappa)>0\}} a(x,\kappa)^{2}\left\{2\hat{f}\left(\frac{H}{\kappa}\frac{|B_{0}(x)|}{a(x,\kappa)}\right)-1\right\}\,dx+\textit{o}\left(1\right)\,.
\end{equation}

\begin{rem}\label{rm-up-psi-ng}
We can replace $\mathcal{D}$ by $\mathcal{\overline{D}}^{c}$ such that the estimate in \eqref{up-psi45} is still true. That is:
\begin{equation}\label{up-psi-ng}
\int_{\mathcal{\overline{D}}^{c}}|\psi|^{4}\,dx\leq-\int_{\mathcal{\overline{D}}^{c}\cap\{a(x,\kappa)>0\}} a(x,\kappa)^{2}\left\{2\hat{f}\left(\frac{H}{\kappa}\frac{|B_{0}(x)|}{a(x,\kappa)}\right)-1\right\}\,dx+\textit{o}\left(1\right)\,.
\end{equation}
\end{rem}

\textbf{Step 3: Lower bound.} \\
We can decompose $\int_{\mathcal{D}}|\psi|^{4}\,dx$ as follows:
$$
\int_{\mathcal{D}}|\psi|^{4}\,dx=\int_{\Omega}|\psi|^{4}\,dx-\int_{\mathcal{\overline{D}}^{c}}|\psi|^{4}\,dx
$$
Thanks to Remark~\ref{rm-up-psi-ng}, using the asymptotics in \eqref{asy-psi-Omega} obtained in Step~1 when $\mathcal{D}=\Omega$ and the upper bound in Step~2 , we get
\begin{equation}\label{up-psi-D}
\int_{\mathcal{D}}|\psi|^{4}\,dx\leq-\int_{\mathcal{D}\cap\{a(x,\kappa)>0\}} a(x,\kappa)^{2}\left\{2\hat{f}\left(\frac{H}{\kappa}\frac{|B_{0}(x)|}{a(x,\kappa)}\right)-1\right\}\,dx+\textit{o}\left(1\right)\,.
\end{equation}
\section{Extension of the Giorgi-Phillips Theorem}\label{GP}
In this section we extend a result of Giorgi-Phillips \cite{GP}, in the two cases when the external magnetic field $B_{0}$ is variable (i.e. $\Gamma\neq\varnothing$) and when the external magnetic field $B_{0}$ is constant (i.e. $\Gamma=\varnothing$), with a pinning term.
We recall that the normal solution $(0,\Fb)$ is a trivial solution of the Ginzburg-Landau system \eqref{eq-2D-GLeq}. We will show that this solution is a global minimizer, when $\kappa$ and $H$ are sufficiently large. We first establish a priori estimates for a critical point  $(\psi, \Ab)$ of the G-L-functional.
\subsection{Estimates of $\Ab$ and of $\|(\nabla-i\kappa H\Fb)\psi\|$.}~\\
We need the following estimates on $\Ab$ and on $\|(\nabla-i\kappa H\Fb)\psi\|$ which are independent of the assumption of $\Gamma$.
\begin{thm}\label{thm-2D-apriori}
There exist positive constants $C_{1}$, $C_{2}$ and $C_{3}$ such that, if \eqref{a2} hold, $\kappa>0$, $H>0$ and $(\psi,\Ab)$ is a solution of \eqref{eq-2D-GLeq}, then,
\begin{eqnarray}
\|(\nabla-i\kappa H\Ab)\psi\|_{L^{2}(\Omega)}\leq C_{1}\,\kappa\, \|\psi\|_{L^{2}(\Omega)}\label{2nd-<}\,,\\
\|\curl(\Ab-\Fb)\|_{L^{2}(\Omega)}\leq \frac{C_{2}}{H}\, \|\psi\|_{L^{2}(\Omega)}\|\psi\|_{L^{4}(\Omega)}\label{3d-<}\,,\\
\|(\nabla-i\kappa H\Fb)\psi\|_{L^{2}(\Omega)}\leq C_{3}\,\kappa\,\|\psi\|_{L^{2}(\Omega)}\label{8d-<}\,.
\end{eqnarray}
\end{thm}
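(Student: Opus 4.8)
The plan is to derive all three estimates from the Ginzburg--Landau equations \eqref{eq-2D-GLeq} tested against suitable functions, together with the $L^\infty$-bound on $\psi$ already available from Proposition~\ref{prop-psi<a} (and its Corollary), which gives $\|\psi\|_{L^\infty(\Omega)}^2 \le \max\{\overline{a},0\}$ under \eqref{a2}. Throughout I will write $A = \kappa H\Ab$ and $F = \kappa H\Fb$ for brevity in the argument (not in the final text).

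First I would prove \eqref{2nd-<}. Multiply $\eqref{eq-2D-GLeq}_a$ by $\overline{\psi}$ and integrate over $\Omega$; using the Neumann condition $\eqref{eq-2D-GLeq}_c$ and integrating by parts yields the identity $\int_\Omega |(\nabla - i\kappa H\Ab)\psi|^2\,dx = \kappa^2 \int_\Omega (a(x,\kappa) - |\psi|^2)|\psi|^2\,dx$, exactly as in \eqref{eq:A-psi}. Dropping the negative term $-\kappa^2\int |\psi|^4$ and bounding $a(x,\kappa) \le \overline{a}$ via \eqref{a2} gives $\|(\nabla - i\kappa H\Ab)\psi\|_{L^2(\Omega)}^2 \le \kappa^2 \overline{a}\,\|\psi\|_{L^2(\Omega)}^2$, which is \eqref{2nd-<} with $C_1 = \sqrt{\max\{\overline{a},0\}}$.

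Next, for \eqref{3d-<} I would test the second Ginzburg--Landau equation $\eqref{eq-2D-GLeq}_b$, namely $-\nabla^\bot\curl(\Ab - \Fb) = \frac{1}{\kappa H}\IM(\overline{\psi}(\nabla - i\kappa H\Ab)\psi)$, against $\Ab - \Fb$ itself. Since $\Ab - \Fb \in H^1_{\Div}(\Omega)$ and $\curl\Ab = \curl\Fb$ on $\partial\Omega$ by $\eqref{eq-2D-GLeq}_d$, the integration by parts produces $\|\curl(\Ab - \Fb)\|_{L^2(\Omega)}^2 = \frac{1}{\kappa H}\int_\Omega \IM(\overline{\psi}(\nabla - i\kappa H\Ab)\psi)\cdot(\Ab - \Fb)\,dx$. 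I then apply the Cauchy--Schwarz and Hölder inequalities: the right-hand side is bounded by $\frac{1}{\kappa H}\|\psi\|_{L^4(\Omega)}\|(\nabla - i\kappa H\Ab)\psi\|_{L^2(\Omega)}\|\Ab - \Fb\|_{L^4(\Omega)}$, and the Sobolev embedding $H^1(\Omega)\hookrightarrow L^4(\Omega)$ together with the Poincaré-type inequality $\|\Ab - \Fb\|_{H^1(\Omega)}\le C\|\curl(\Ab - \Fb)\|_{L^2(\Omega)}$ valid on $H^1_{\Div}(\Omega)$ (this is the standard div-curl estimate used also in Proposition~\ref{pr-est}) lets me absorb one factor of $\|\curl(\Ab - \Fb)\|_{L^2(\Omega)}$. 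Combining with \eqref{2nd-<} to replace $\|(\nabla - i\kappa H\Ab)\psi\|_{L^2}$ by $C_1\kappa\|\psi\|_{L^2}$, and using $H = (\kappa H)/\kappa$, the factors of $\kappa$ cancel and one is left with $\|\curl(\Ab - \Fb)\|_{L^2(\Omega)}\le \frac{C_2}{H}\|\psi\|_{L^2(\Omega)}\|\psi\|_{L^4(\Omega)}$.

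Finally, \eqref{8d-<} follows by comparing the two gauge potentials: writing $(\nabla - i\kappa H\Fb)\psi = (\nabla - i\kappa H\Ab)\psi + i\kappa H(\Ab - \Fb)\psi$, the triangle inequality gives $\|(\nabla - i\kappa H\Fb)\psi\|_{L^2(\Omega)} \le \|(\nabla - i\kappa H\Ab)\psi\|_{L^2(\Omega)} + \kappa H\|\Ab - \Fb\|_{L^4(\Omega)}\|\psi\|_{L^4(\Omega)}$. Using \eqref{2nd-<} for the first term, and for the second term the div-curl bound $\|\Ab - \Fb\|_{L^4(\Omega)}\le C\|\curl(\Ab - \Fb)\|_{L^2(\Omega)}$ followed by \eqref{3d-<}, one gets $\kappa H\|\Ab - \Fb\|_{L^4(\Omega)}\|\psi\|_{L^4(\Omega)} \le C\kappa\|\psi\|_{L^2(\Omega)}\|\psi\|_{L^4(\Omega)}^2 \le C\kappa\max\{\overline a,0\}\|\psi\|_{L^2(\Omega)}$, since $\|\psi\|_{L^4(\Omega)}^2 \le \|\psi\|_{L^\infty(\Omega)}^2|\Omega|^{1/2}\cdot\|\psi\|_{L^2(\Omega)}^{0}$—more carefully $\|\psi\|_{L^4}^2\le \|\psi\|_{L^\infty}\|\psi\|_{L^2}\cdot\|\psi\|_{L^\infty}^{0}$, i.e. bound $\|\psi\|_{L^4(\Omega)}^4\le \|\psi\|_{L^\infty(\Omega)}^2\|\psi\|_{L^2(\Omega)}^2$, so that the whole expression is $\le C_3\kappa\|\psi\|_{L^2(\Omega)}$. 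The main obstacle, and the only point requiring care, is the precise form of the div-curl/Poincaré inequality on $H^1_{\Div}(\Omega)$ for a simply connected smooth domain and the bookkeeping of the powers of $\kappa$ and $H$ so that the constants end up independent of $\kappa,H$; both are routine given the tools already invoked earlier in the paper, so no genuinely new difficulty arises.
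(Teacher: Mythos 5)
Your proposal is correct and follows essentially the same route as the paper: the energy identity from testing $\eqref{eq-2D-GLeq}_a$ against $\overline{\psi}$ for \eqref{2nd-<}, the weak form of $\eqref{eq-2D-GLeq}_b$ tested against $\Ab-\Fb$ combined with the div-curl estimate and Sobolev embedding for \eqref{3d-<}, and the gauge comparison plus the $L^\infty$-bound on $\psi$ for \eqref{8d-<}. The only cosmetic differences are that the paper quotes the resulting inequality for $\eqref{eq-2D-GLeq}_b$ from the literature rather than rederiving it, and uses the squared triangle inequality in the last step; your slightly garbled aside about $\|\psi\|_{L^4}^2$ is harmless since the clean bound $\|\psi\|_{L^4(\Omega)}^2\leq\|\psi\|_{L^\infty(\Omega)}^2|\Omega|^{1/2}$ suffices.
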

\begin{proof}
 \textbf{We first prove \eqref{2nd-<}.} In the case when $\overline{a}\leq 0$ with $\overline{a}$  introduced in \eqref{def:sup-a}, we get  using \eqref{eq-psi<a}  that $\psi=0$ and hence \eqref{2nd-<} is proved.\\
In the case when $\overline{a}>0$, thanks to \eqref{eq-psi<a}, we have,
\begin{equation}\label{eq:a-psi}
0\leq(\overline{a}-|\psi|^{2})\leq \overline{a}\,.
\end{equation}
We recall that if $(\psi,\Ab)$ is a solution of \eqref{eq-2D-GLeq} then, (see \eqref{eq:A-psi})
$$
\int_{\Omega}|(\nabla-i\kappa H\Ab)\psi|^{2}\,dx=\kappa^{2}\int_{\Omega}(a(x,\kappa)-|\psi|^{2})|\psi|^{2}\,dx\,.
$$
Using \eqref{a2} and \eqref{eq:a-psi}, we obtain \eqref{2nd-<}.\\
\textbf{Now, we prove \eqref{3d-<}.} We obtain from the equation in \eqref{eq-2D-GLeq}$_{b}$ the following estimate (see \cite[Equation~(11.9b)]{FH1}):
$$
\kappa H\int_{\Omega}|\curl(\Ab-\Fb)|^{2}\,dx\leq \|(\nabla-i\kappa H\Ab)\psi\|_{L^{2}(\Omega)}\, \|(\Ab-\Fb)\psi\|_{L^{2}(\Omega)}\,.
$$
Using \eqref{2nd-<} and applying H$\rm\ddot{o}$lder's inequality, we get
$$
\kappa H\int_{\Omega}|\curl(\Ab-\Fb)|^{2}\,dx\leq C\,\kappa\,\|\psi\|_{L^{2}(\Omega)}\|\psi\|_{L^{4}(\Omega)}\, \|\Ab-\Fb\|_{L^{4}(\Omega)}\,.
$$
We get by regularity of the $\curl$-$\Div$ system (see \cite[A.7]{FH1}),
\begin{equation}\label{eq:curl-div}
\|\Ab-\Fb\|_{H^{1}(\Omega)}\leq C\|\curl(\Ab-\Fb)\|_{L^{2}(\Omega)}\,,
\end{equation}
where $C$ is a positive constant.\\
By the Sobolev embedding theorem, we get,
\begin{align}\label{sb-emb-L4}
\|\Ab-\Fb\|_{L^{4}(\Omega)}&\leq C_{\rm Sob}\, \|\Ab-\Fb\|_{H^{1}(\Omega)}\nonumber\\
&\leq \widehat C \, \|\curl(\Ab-\Fb)\|_{L^{2}(\Omega)}\,.
\end{align}
Consequently,
$$
\kappa H\, \int_{\Omega}|\curl(\Ab-\Fb)|^{2}\,dx\leq \kappa\, \|\psi\|_{L^{2}(\Omega)}\|\psi\|_{L^{4}(\Omega)}\|\curl(\Ab-\Fb)\|_{L^{2}(\Omega)}\,,
$$
which leads to \eqref{3d-<}.\\
\textbf{Finally, we prove \eqref{8d-<}.}
Using \eqref{3d-<} and \eqref{sb-emb-L4}, H\"older's inequality gives,
\begin{align}\label{5d-<}
\|(\Ab-\Fb)\psi\|_{L^{2}(\Omega)}^{2}&\leq \|\Ab-\Fb\|_{L^{4}(\Omega)}^{2}\|\psi\|_{L^{4}(\Omega)}^{2}\nonumber\\
& \leq \frac{C'}{H^{2}}\|\psi\|_{L^{4}(\Omega)}^{4}\|\psi\|_{L^{2}(\Omega)}^{2}\,,
\end{align}
Using \eqref{2nd-<}, \eqref{5d-<} and the bound of $\psi$ above, Young's inequality gives,
\begin{align}
\|(\nabla-i\kappa H\Fb)\psi\|_{L^{2}(\Omega)}^{2}&\leq 2\|(\nabla-i\kappa H\Ab)\psi\|_{L^{2}(\Omega)}^{2}+2\,(\kappa H)^{2}\|(\Ab-\Fb)\psi\|_{L^{2}(\Omega)}^{2}\nonumber\\
&\leq 2\,C''\,\kappa^{2}\|\psi\|^{2}_{L^{2}(\Omega)}\,.
\end{align}
\end{proof}
\subsection{The case $\Gamma=\varnothing$.}~\\
For $\xi\in\R$, we consider the Neumann realization $\mathfrak{h}^{N,\xi}$ in $L^{2}(\R_+)$ associated with the operator $-\frac{d^2}{dt^2}+(t+\xi)^2$, i.e.
\begin{equation}
\mathfrak{h}^{N,\xi}:=-\frac{d^2}{dt^2}+(t+\xi)^2\,,\qquad \mathcal{D}(\mathfrak{h}^{N,\xi})=\{u\in B^{2}(\R_{+}): u'(0)=0\}\,,
\end{equation}
where,
$$
B^2(\R_+)=\{u\in L^{2}(\R_+): t^p u^{(q)}\in L^{2}(\R_+), \forall p,q\in\N~s.t.~p+q\leq 2\}\,.
$$
M. Dauge and B. Helffer \cite{DH} (see also Fournais-Helffer \cite[Proposition~4.2.2]{FH1}) have  proved that the lowest eigenvalue $\mu$ of $\mathfrak{h}^{N,\xi}$ admits a minimum $\Theta_{0}$, which is attained at a unique point $\xi_{0}<0$, and satisfies:
\begin{equation}\label{Theta0}
\Theta_{0}=\inf_{\xi} \mu(\xi)=\mu(\xi_0)<1\,.
\end{equation}
Moreover
\begin{equation}
\Theta_{0}=\xi_{0}^2\,.
\end{equation}
We introduce the notation:
\begin{equation}\label{inf:B0}
\inf_{x\in\overline{\Omega}} |B_{0}(x)|=b_{0}\qquad{\rm and}\qquad \inf_{x\in\partial\Omega} |B_{0}(x)|=b'_{0}\,.
\end{equation}
We denote by $\mu^{N}(\mathcal{B}\Fb;\Omega)$ the lowest eigenvalue of the $\rm Schr\ddot{o}dinger$ operator $P_{\mathcal{B}\Fb,0}^{\Omega}$ (see \eqref{def:P}) with Neumann condition in $L^{2}(\Omega)$:
\begin{equation}\label{muN(kHF)}
\mu^{N}(\mathcal{B}\Fb;\Omega)=\inf_{\substack{\psi\in H^{1}(\Omega)\\ \psi\neq 0}}\frac{\langle P_{\mathcal{B}\Fb,0}^{\Omega}\,\psi,\psi\rangle}{\|\psi\|^{2}_{L^{2}(\Omega)}}\,.
\end{equation}

In \cite{FH1}, it is proved that
\begin{theorem}\label{thm:FH}
Suppose that $\Omega\subset\R^2$ is an open bounded set with smooth boundary and $\Gamma=\varnothing$. Then,
\begin{equation}\label{eq:FH}
\lim_{\mathcal{B}\longrightarrow+\infty}\frac{\mu^{N}(\mathcal{B}\Fb,\Omega)}{\mathcal{B}}=\min(b_0,\Theta_{0}\,b'_{0})\,.
\end{equation}
\end{theorem}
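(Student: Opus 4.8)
The plan is to establish separately the two inequalities
\[
\limsup_{\mathcal{B}\to+\infty}\frac{\mu^N(\mathcal{B}\Fb;\Omega)}{\mathcal{B}}\le\min(b_0,\Theta_0 b_0')\qquad\text{and}\qquad\liminf_{\mathcal{B}\to+\infty}\frac{\mu^N(\mathcal{B}\Fb;\Omega)}{\mathcal{B}}\ge\min(b_0,\Theta_0 b_0')\,,
\]
the first by exhibiting trial functions, the second by a localization (partition of unity) argument.

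For the upper bound I would fix $\epsilon>0$, choose an interior point $x_0\in\Omega$ with $|B_0(x_0)|\le b_0+\epsilon$, and on a small disc $B(x_0,\rho)\subset\Omega$ write $\Fb=|B_0(x_0)|\,\Ab_0(\cdot-x_0)+\nabla\varphi_0+O(\rho^2)$ by integrating $\curl\bigl(\Fb-|B_0(x_0)|\Ab_0(\cdot-x_0)\bigr)=B_0-B_0(x_0)=O(\rho)$ (the mechanism behind \eqref{F-A}). Gauging away $\nabla\varphi_0$ and plugging in $e^{i\mathcal{B}\varphi_0}$ times a rescaled, cut-off ground state of the Landau Hamiltonian $-(\nabla-i|B_0(x_0)|\Ab_0)^2$ concentrated at scale $\mathcal{B}^{-1/2}$ near $x_0$ yields a Rayleigh quotient $\mathcal{B}|B_0(x_0)|+o(\mathcal{B})\le\mathcal{B}(b_0+\epsilon)+o(\mathcal{B})$. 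In the same way, choosing $y_0\in\partial\Omega$ with $|B_0(y_0)|\le b_0'+\epsilon$, passing to tubular coordinates along $\partial\Omega$, and using a trial function built from the ground state of $\mathfrak{h}^{N,\xi_0}$ in the normal variable times an oscillating profile in the tangential variable (again localized at scale $\mathcal{B}^{-1/2}$) gives a quotient $\mathcal{B}\Theta_0|B_0(y_0)|+o(\mathcal{B})\le\mathcal{B}\Theta_0(b_0'+\epsilon)+o(\mathcal{B})$. Letting $\epsilon\to0$ gives the $\limsup$ bound.

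For the lower bound I would pick a scale $\rho=\rho(\mathcal{B})$ with $\mathcal{B}^{-1/2}\ll\rho\ll\mathcal{B}^{-1/4}$, say $\rho=\mathcal{B}^{-3/8}$, and a partition of unity $(\chi_j)$ with $\sum_j\chi_j^2\equiv1$, $\operatorname{diam}\supp\chi_j\lesssim\rho$ and $\sum_j|\nabla\chi_j|^2\le C\rho^{-2}$, splitting the indices into interior balls $B(x_j,\rho)\subset\Omega$ and balls centered on $\partial\Omega$. The IMS localization formula gives, for $\psi\in H^1(\Omega)$,
\[
\langle P_{\mathcal{B}\Fb,0}^{\Omega}\psi,\psi\rangle=\sum_j\langle P_{\mathcal{B}\Fb,0}^{\Omega}(\chi_j\psi),\chi_j\psi\rangle-\sum_j\bigl\||\nabla\chi_j|\psi\bigr\|^2\ge\sum_j\langle P_{\mathcal{B}\Fb,0}^{\Omega}(\chi_j\psi),\chi_j\psi\rangle-C\rho^{-2}\|\psi\|^2\,,
\]
with $\rho^{-2}=o(\mathcal{B})$. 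In each ball I would freeze the field, $\Fb=|B_0(x_j)|\Ab_0(\cdot-x_j)+\nabla\varphi_j+\mathbf r_j$ with $|\mathbf r_j|=O(\rho^2)$, and after the gauge change $e^{-i\mathcal{B}\varphi_j}(\chi_j\psi)$ absorb $\mathbf r_j$ by Cauchy--Schwarz with parameter $\delta=\mathcal{B}^{-1/8}$, at the cost of a factor $(1-\delta)$ and an error $\delta^{-1}\mathcal{B}^2\rho^4=o(\mathcal{B})$ per unit $L^2$-mass. For an interior ball, $\chi_j\psi$ extends by $0$ to $\R^2$ and the spectral theorem for the Landau Hamiltonian gives $\langle-(\nabla-i\mathcal{B}|B_0(x_j)|\Ab_0)^2 w,w\rangle\ge\mathcal{B}|B_0(x_j)|\|w\|^2\ge\mathcal{B}b_0\|w\|^2$; for a boundary ball one uses, after flattening $\partial\Omega$ and rescaling by $\sqrt{\mathcal{B}}$, that the model is the Neumann half-plane operator with constant field $|B_0(x_j)|$, whose bottom of spectrum is $\Theta_0|B_0(x_j)|$, so the local energy is $\ge(\mathcal{B}\Theta_0|B_0(x_j)|-C\mathcal{B}^{1/2})\|w\|^2\ge\mathcal{B}\Theta_0 b_0'\|w\|^2-o(\mathcal{B})\|w\|^2$. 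Summing over $j$ and using $\sum_j\|\chi_j\psi\|^2=\|\psi\|^2$ gives $\langle P_{\mathcal{B}\Fb,0}^{\Omega}\psi,\psi\rangle\ge\bigl((1-\delta)\min(\mathcal{B}b_0,\mathcal{B}\Theta_0 b_0')-o(\mathcal{B})\bigr)\|\psi\|^2$, hence the $\liminf$ bound after dividing by $\mathcal{B}$ and sending $\mathcal{B}\to\infty$.

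The main obstacle is the boundary lower bound: one must prove that the localized magnetic Neumann form in a $\rho$-ball meeting $\partial\Omega$ is bounded below by $\mathcal{B}\Theta_0|B_0(x_j)|+o(\mathcal{B})$ uniformly in $x_j$. This requires a careful change of variables flattening $\partial\Omega$ (whose metric differs from the Euclidean one by $O(\rho)$ and introduces curvature terms), control of the resulting first-order perturbation of the half-plane model, and the Dauge--Helffer spectral analysis of $\mathfrak{h}^{N,\xi}$ behind \eqref{Theta0}. Since only the leading order is claimed in \eqref{eq:FH}, all metric and curvature corrections are negligible, but handling them rigorously is exactly the content of the corresponding results of \cite{FH1}.
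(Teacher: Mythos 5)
The paper does not actually prove Theorem~\ref{thm:FH}: it is quoted as a known result from \cite{FH1} (going back to Lu--Pan \cite{LP} and Helffer--Morame \cite{HM}), so there is no internal proof to compare against. Your sketch correctly reproduces the standard argument of that reference: matching upper bounds from a Gaussian quasi-mode in the bulk and a de Gennes-type quasi-mode at the boundary, and a lower bound by IMS localization at an intermediate scale, freezing of the field and gauge, and comparison with the Landau Hamiltonian on $\R^2$ for interior cells and the Neumann half-plane model for boundary cells. The announced exponents are consistent: with $\rho=\mathcal{B}^{-3/8}$ and $\delta=\mathcal{B}^{-1/8}$ one gets $\rho^{-2}=\mathcal{B}^{3/4}$ and $\delta^{-1}\mathcal{B}^{2}\rho^{4}=\mathcal{B}^{5/8}$, both $o(\mathcal{B})$. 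The one step you do not carry out --- the uniform lower bound $\mathcal{B}\,\Theta_{0}|B_{0}(x_j)|+o(\mathcal{B})$ on boundary cells after flattening $\partial\Omega$ --- is indeed the technical core, and you flag it honestly; since the paper itself defers the entire statement to \cite{FH1}, deferring that single step to the same source is consistent with the paper's level of detail. Two harmless loose ends to tidy up: since $\Gamma=\varnothing$ only forces $B_{0}$ to have a constant sign, the frozen potential should be $\zeta\,|B_{0}(x_0)|\Ab_{0}(\cdot-x_0)$ with $\zeta=\mathrm{sgn}\,B_{0}$ (or one conjugates the trial state when $B_{0}<0$); and a cut-off at scale exactly $\mathcal{B}^{-1/2}$ leaves a localization error of order $\mathcal{B}$ times a factor that is only small in the cut-off radius, so one should either take the cut-off at a slightly larger scale or let its radius grow slowly with $\mathcal{B}$, as in \cite[Section~2.4]{FH1}.
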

 In the next theorem, we give a simple proof of the result which says that $(0,\Fb)$ is the unique minimizer of the functional when $H$ is sufficiently large and when the magnetic field  $B_0$ is constant with pinning term.
 \begin{theorem}\label{thm:GP2}
Let $\Omega\subset\R^2$ be a smooth, bounded, simply-connected open set and $\Gamma=\varnothing$. Then, there exist positive constants $C$ and $\kappa_{0}$, such that, if
$$
H\geq C \kappa\,,\qquad\kappa\geq\kappa_{0}\,,
$$
then $(0,\Fb)$ is the unique solution to \eqref{eq-2D-GLeq}.
\end{theorem}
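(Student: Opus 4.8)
The plan is to argue by contradiction. Let $(\psi,\Ab)$ be a solution of \eqref{eq-2D-GLeq} and suppose $\psi\not\equiv0$; I will show this forces $H\le C\kappa$ for a constant $C$ depending only on $\Omega$, on $\overline a$ (see \eqref{def:sup-a}) and on $b_0=\inf_{\overline\Omega}|B_0|$, which is \emph{strictly positive} precisely because $\Gamma=\varnothing$. If $\overline a\le0$, Proposition~\ref{prop-psi<a} already gives $\psi\equiv0$, so we may assume $\overline a>0$ and hence, again by Proposition~\ref{prop-psi<a}, the pointwise bounds $|\psi|^2\le\overline a$ and $0\le\overline a-|\psi|^2\le\overline a$ on $\overline\Omega$.

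First I would record two facts. Testing $\eqref{eq-2D-GLeq}_{a}$ against $\overline\psi$ and using $\eqref{eq-2D-GLeq}_{c}$ gives the identity \eqref{eq:A-psi}, which together with $a(x,\kappa)-|\psi|^2\le\overline a$ yields the upper bound
\[
\|(\nabla-i\kappa H\Ab)\psi\|_{L^2(\Omega)}\le\overline a^{1/2}\,\kappa\,\|\psi\|_{L^2(\Omega)}.
\]
On the other hand, the variational characterization \eqref{muN(kHF)} of the lowest Neumann eigenvalue $\mu^{N}(\kappa H\Fb;\Omega)$ of $P_{\kappa H\Fb,0}^{\Omega}$ gives the lower bound $\|(\nabla-i\kappa H\Fb)\psi\|_{L^2(\Omega)}\ge\mu^{N}(\kappa H\Fb;\Omega)^{1/2}\,\|\psi\|_{L^2(\Omega)}$.

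Next I would bridge the two by controlling the gauge difference $\Ab-\Fb$. By the triangle inequality for the magnetic gradient,
\[
\|(\nabla-i\kappa H\Ab)\psi\|_{L^2(\Omega)}\ge\|(\nabla-i\kappa H\Fb)\psi\|_{L^2(\Omega)}-\kappa H\,\|(\Ab-\Fb)\psi\|_{L^2(\Omega)}.
\]
For the last term I would invoke the estimate \eqref{5d-<} established along the proof of Theorem~\ref{thm-2D-apriori}, that is $\|(\Ab-\Fb)\psi\|_{L^2(\Omega)}^2\le C'H^{-2}\|\psi\|_{L^4(\Omega)}^4\|\psi\|_{L^2(\Omega)}^2$, and then use $|\psi|^2\le\overline a$ twice: once to get $\|\psi\|_{L^4(\Omega)}^4\le\overline a\,\|\psi\|_{L^2(\Omega)}^2$, and once to get $\|\psi\|_{L^2(\Omega)}^2\le\overline a\,|\Omega|$. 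This produces $\kappa H\,\|(\Ab-\Fb)\psi\|_{L^2(\Omega)}\le C_4\,\kappa\,\|\psi\|_{L^2(\Omega)}$ with $C_4=\overline a\,(C'|\Omega|)^{1/2}$. Combining the three displays and dividing by $\|\psi\|_{L^2(\Omega)}\neq0$ gives
\[
\mu^{N}(\kappa H\Fb;\Omega)^{1/2}\le\big(\overline a^{1/2}+C_4\big)\,\kappa.
\]

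Finally, since $\Gamma=\varnothing$ we have $b_0>0$, so Theorem~\ref{thm:FH} gives $\mu^{N}(\kappa H\Fb;\Omega)/(\kappa H)\to\Lambda:=\min(b_0,\Theta_0 b'_0)>0$; hence there is $\kappa_1$ with $\mu^{N}(\kappa H\Fb;\Omega)\ge\tfrac{\Lambda}{2}\,\kappa H$ once $\kappa H\ge\kappa_1$. If $H\ge C\kappa$ and $\kappa\ge\kappa_0$, then $\kappa H\ge C\kappa_0^2$, which exceeds $\kappa_1$ for $\kappa_0$ large, and $\mu^{N}(\kappa H\Fb;\Omega)^{1/2}\ge(\Lambda C/2)^{1/2}\kappa$; the last display then forces $(\Lambda C/2)^{1/2}\le\overline a^{1/2}+C_4$, i.e.\ $C\le 2\Lambda^{-1}(\overline a^{1/2}+C_4)^2$. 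Choosing $C$ strictly larger than this fixed quantity contradicts $\psi\not\equiv0$, so $\psi\equiv0$. Then $\eqref{eq-2D-GLeq}_{b}$ reduces to $\nabla^{\bot}\curl(\Ab-\Fb)=0$, so $\curl(\Ab-\Fb)$ is constant; by $\eqref{eq-2D-GLeq}_{d}$ it vanishes on $\partial\Omega$ and hence everywhere, and then $\Div(\Ab-\Fb)=0$, $(\Ab-\Fb)\cdot\nu=0$ on $\partial\Omega$ together with the simple connectedness of $\Omega$ give $\Ab=\Fb$. The one point requiring care is the bookkeeping in the gauge term: one must check that $\kappa H\,\|(\Ab-\Fb)\psi\|_{L^2(\Omega)}$ is only of order $\kappa\,\|\psi\|_{L^2(\Omega)}$ — no worse than the main term $\overline a^{1/2}\kappa\,\|\psi\|_{L^2(\Omega)}$ — and this is exactly where the uniform bound $|\psi|\le\overline a^{1/2}$ is used.
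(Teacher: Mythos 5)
Your proposal is correct and follows essentially the same route as the paper: both rest on the a priori bound $\|(\nabla-i\kappa H\Fb)\psi\|_{L^2}\le C\kappa\|\psi\|_{L^2}$ (the paper's Theorem~\ref{thm-2D-apriori}, whose chain \eqref{eq:A-psi}--\eqref{5d-<} you reproduce inline), the resulting inequality $\mu^{N}(\kappa H\Fb;\Omega)\le C\kappa^2$, and the lower bound $\mu^{N}(\kappa H\Fb;\Omega)\gtrsim \kappa H$ from Theorem~\ref{thm:FH} using $b_0>0$ since $\Gamma=\varnothing$. Your closing paragraph identifying $\Ab=\Fb$ once $\psi\equiv0$ is a small, correct addition that the paper leaves implicit.
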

\begin{proof}
We assume that we have a \textbf{non normal} critical point $(\psi, \Ab)$ for $\mathcal E_{\kappa,H,a,B_{0}}$. This means that $(\psi, \Ab)\in H^{1}(\Omega)\times H^{1}_{{\rm div}}(\Omega)$ is a solution of \eqref{eq-2D-GLeq} and
\begin{equation}\label{psi>0}
\int_{\Omega}|\psi|^{2}\,dx>0\,.
\end{equation}
Therefore, we get from \eqref{eq-psi<a} that,
$$
|\psi(x)|^{2}\leq \overline{a}\,\qquad\forall x\in\overline{\Omega}\,,
$$
where $\overline{a}$ is introduced in \eqref{def:sup-a}.\\
Let
\begin{equation}\label{eq:B=kH}
\mathcal{B}=\kappa H\,.
\end{equation}
Theorem~\ref{thm-2D-apriori} tells us that,
$$
\|(\nabla-i\mathcal{B}\Fb)\psi\|_{L^{2}(\Omega)}^{2}\leq C\,\kappa^{2}\,\|\psi\|^{2}_{L^{2}(\Omega)}\,.
$$
Since $\psi$ satisfies \eqref{psi>0}, this implies by assumption that the lowest Neumann eigenvalue\\
$\mu^{N}(\mathcal{B}\Fb;\Omega)$ of $P_{\mathcal{B}\Fb,0}^{\Omega}$ in $L^{2}(\Omega)$ satisfies,
\begin{equation}\label{muN<}
\mu^{N}(\mathcal{B}\Fb;\Omega)\leq C\,\kappa^{2}\,.
\end{equation}
Thanks to Theorem~\ref{thm:FH}, we get the existence of a constant $C>0$, such that, if $H\geq C\,\kappa$, then $(0,\Fb)$ is the unique solution to \eqref{eq-2D-GLeq}.
\end{proof}
\subsection{The case $\Gamma\neq\varnothing$.}~\\
We recall the definition of $\lambda_{0}$ in \eqref{lambda0}, the definition of $\Gamma$ in \eqref{gamma} and for any $\theta\in(0,\pi)$ we recall that $\lambda(\R_{+}^{2},\theta)$ is the bottom of the spectrum of the operator
$ P_{\Ab_{\rm app,\theta},0}^{\R^{2}_{+}}$, with
$$\Ab_{\rm app,\theta}=-\left(\frac{x^{2}_{2}}{2}\cos\,\theta,\frac{x^{2}_{1}}{2}\sin\,\theta \right)\,.$$
Define
\begin{equation}\label{alpha1}
\alpha_{1}(B_{0})=\min\left\{\lambda_{0}^{\frac{3}{2}}\min_{x\in \Gamma\cap\Omega}|\nabla B_{0}(x)|,\min_{x\in \Gamma\cap\partial\Omega}\lambda(\R^{2}_{+},\theta(x))^{\frac{3}{2}}|\nabla B_{0}(x)|\right\}\,.
\end{equation}

In \cite{XB-KH}, it is proved that
\begin{thm}\label{thm:BK}
 Suppose that \eqref{B(x)} holds and $\Gamma\neq\varnothing$. Then
\begin{equation}\label{eq:pan}
\lim_{\mathcal{B}\longrightarrow+\infty}\frac{\mu^{N}(\mathcal{B}\Fb,\Omega)}{\mathcal{B}^{\frac{2}{3}}}=\alpha_{1}(B_{0})^{\frac{2}{3}}\,.
\end{equation}
\end{thm}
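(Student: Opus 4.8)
The plan is to prove $\mu^{N}(\mathcal{B}\Fb,\Omega)=\alpha_{1}(B_{0})^{2/3}\,\mathcal{B}^{2/3}\,(1+\textit{o}(1))$ by establishing matching upper and lower bounds. The exponent $2/3$ is dictated by scaling near $\Gamma$: at a point $x_{0}\in\Gamma$ the field $B_{0}=\curl\Fb$ vanishes linearly, so on a ball of radius $L$ it has size $\sim|\nabla B_{0}(x_{0})|\,L$ and carries flux $\sim\mathcal{B}\,|\nabla B_{0}(x_{0})|\,L^{3}$; balancing this flux against the kinetic scale $L^{-2}$ forces $L\sim(\mathcal{B}\,|\nabla B_{0}(x_{0})|)^{-1/3}$ and a local ground energy $\sim(\mathcal{B}\,|\nabla B_{0}(x_{0})|)^{2/3}$. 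The limiting model at this scale is the Montgomery operator family $M(\tau)$ of \eqref{defM} (fibered over the frequency tangent to $\Gamma$, with bottom of spectrum $\lambda_{0}$) when $x_{0}\in\Gamma\cap\Omega$, and $P_{\Ab_{\rm app,\theta},0}^{\R^{2}_{+}}$ of \eqref{def:lambda-theta} with bottom $\lambda(\R^{2}_{+},\theta)$ when $x_{0}\in\Gamma\cap\partial\Omega$, $\theta=\theta(x_{0})$ being the angle between $\nabla B_{0}(x_{0})$ and the inward normal.

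\textbf{Upper bound.} I would pick $x_{0}\in\Gamma$ realizing the minimum defining $\alpha_{1}(B_{0})$ in \eqref{alpha1}. After translating to $x_{0}$, choosing a gauge in which $\Fb$ is in a normal form whose curl equals the linearization $\nabla B_{0}(x_{0})\cdot(x-x_{0})$ of $B_{0}$ up to an $\mathcal{O}(|x-x_{0}|^{2})$ field, and rescaling $x=x_{0}+(\mathcal{B}|\nabla B_{0}(x_{0})|)^{-1/3}y$, the quadratic form of $P_{\mathcal{B}\Fb,0}^{\Omega}$ converges, on balls of $y$-radius $\mathcal{B}^{\epsilon}$, to $(\mathcal{B}|\nabla B_{0}(x_{0})|)^{2/3}$ times the model form $M(\tau_{0})$ ($\tau_{0}$ the minimizer in \eqref{lambda0}) in the interior case, or (after straightening $\partial\Omega$ near $x_{0}$) the half-plane Neumann form of $P_{\Ab_{\rm app,\theta(x_{0})},0}^{\R^{2}_{+}}$ in the boundary case. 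Inserting a cut-off ground state of the model into the Rayleigh quotient, with $\epsilon>0$ small enough that the Taylor remainder of $\Fb$ and the cut-off energy are both $\textit{o}(\mathcal{B}^{2/3})$, gives $\mu^{N}(\mathcal{B}\Fb,\Omega)\le\lambda_{0}\,(\mathcal{B}|\nabla B_{0}(x_{0})|)^{2/3}+\textit{o}(\mathcal{B}^{2/3})$, respectively with $\lambda(\R^{2}_{+},\theta(x_{0}))$ in place of $\lambda_{0}$; optimizing over $x_{0}\in\Gamma\cap\Omega$ and $x_{0}\in\Gamma\cap\partial\Omega$ reproduces $\alpha_{1}(B_{0})^{2/3}\mathcal{B}^{2/3}$.

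\textbf{Lower bound.} I would use an Ismagilov--Morgan--Sigal (IMS) partition of unity. First split $\overline{\Omega}$ into the tube $\mathcal{T}_{\delta}=\{\dist(x,\Gamma)<\delta\}$ and its complement: outside $\mathcal{T}_{\delta}$ one has $|B_{0}|\ge c_{\delta}>0$, so the local magnetic Neumann ground energy there is $\ge c_{\delta}\,\mathcal{B}\,(1+\textit{o}(1))\gg\mathcal{B}^{2/3}$ and that region does not bind. Inside $\mathcal{T}_{\delta}$ I would cover by balls of radius $\ell=\mathcal{B}^{-1/3+\epsilon}$, so the IMS localization error is $\mathcal{O}(\ell^{-2})=\textit{o}(\mathcal{B}^{2/3})$; on each ball, after the same translation--gauge--rescaling, bound the local form from below by $(\mathcal{B}|\nabla B_{0}(x_{j})|)^{2/3}$ times a form close to $M(\tau)$ (interior balls) or to $P_{\Ab_{\rm app,\theta},0}^{\R^{2}_{+}}$ (balls meeting $\partial\Omega$), and invoke $\inf_{\tau}\lambda(\tau)=\lambda_{0}$ and the definition of $\lambda(\R^{2}_{+},\theta)$, together with the continuity of $|\nabla B_{0}|$ and of $\theta$ along $\Gamma$. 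Letting $\mathcal{B}\to\infty$, then $\epsilon\to0$, then $\delta\to0$ yields $\mu^{N}(\mathcal{B}\Fb,\Omega)\ge\alpha_{1}(B_{0})^{2/3}\mathcal{B}^{2/3}(1-\textit{o}(1))$, which together with the upper bound gives \eqref{eq:pan}.

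\textbf{Main obstacle.} The delicate step is the lower bound inside $\mathcal{T}_{\delta}$: one must show, \emph{uniformly} in the base point $x_{j}$, that replacing $\Fb$ by its linear normal form on a ball of radius $\mathcal{B}^{-1/3+\epsilon}$ perturbs the local ground energy only by $\textit{o}(\mathcal{B}^{2/3})$, and that the limiting operator is genuinely $M(\tau)$ or its half-plane analogue. This requires a careful treatment of the junction $\Gamma\cap\partial\Omega$, where the transversality hypothesis $\nabla B_{0}\times\vec{n}\ne0$ in \eqref{B(x)} is precisely what guarantees that $\Gamma$ meets $\partial\Omega$ non-tangentially, that the angle $\theta(x)$ is well defined and continuous there, and that the "corner" piece $\mathcal{T}_{\delta}\cap\{\dist(x,\partial\Omega)<\delta\}$ has area $\mathcal{O}(\delta^{2})$ and contributes nothing to leading order.
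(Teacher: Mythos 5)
The paper does not prove Theorem~\ref{thm:BK}: it is quoted verbatim from Pan--Kwek \cite{XB-KH}, and the only related material in the paper is the adaptation to the operator with pinning term in Section~\ref{Section:Asympt-m1-vanish} (Propositions~\ref{prop:mu1-variable} and \ref{prop:up-var}), which follows exactly the strategy you describe. Your sketch is the standard and correct argument: quasi-modes built from the Montgomery model $M(\tau)$ (interior) and $P_{\Ab_{\rm app,\theta},0}^{\R^2_+}$ (boundary) for the upper bound, and an IMS partition at scale $\mathcal B^{-1/3+\epsilon}$ near $\Gamma$ combined with the bound $\gtrsim\Theta_0\,\mathcal B\,|B_0|\gg\mathcal B^{2/3}$ away from $\Gamma$ for the lower bound; you also correctly identify the role of the transversality hypothesis at $\Gamma\cap\partial\Omega$. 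The one place your outline is slightly loose is the lower bound on balls that lie in the tube $\mathcal T_\delta$ but do not meet $\Gamma$, where the correct model is a nonzero (almost constant) field rather than $M(\tau)$; the paper's own adaptation handles this with a two-scale partition (disks of radius $\ell$ centered on $\Gamma$, and smaller disks of radius $\ell_1\ll\ell$ on the set $\{\dist(x,\Gamma)>\ell\}$ where $|B_0|\gtrsim\ell$ gives a local energy $\gtrsim\mathcal B\,\ell\gg\mathcal B^{2/3}$), and your argument should be organized the same way.
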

In the next theorem, we give a simple proof of the result which says that $(0,\Fb)$ is the unique minimizer of the functional when $H$ is sufficiently large and when $B_{0}$ is variable. This result was obtained  in \cite{GP} for the case with constant magnetic field and with a constant pinning term.
\begin{theorem}\label{thm:GP}
Let $\Omega\subset\R^2$ be a smooth, bounded, simply-connected open set, the pinning term $a$ satisfying \eqref{a2}, and the magnetic field  $B_{0}$ satisfying \eqref{B(x)}. Then, there exist positive constants $C$ and $\kappa_{0}$, such that, if
$$
H\geq C \kappa^{2}\,,\qquad\kappa\geq\kappa_{0}\,.
$$
Then $(0,\Fb)$ is the unique solution to \eqref{eq-2D-GLeq}.
\end{theorem}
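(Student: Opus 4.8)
The plan is to argue by contradiction, imitating the proof of Theorem~\ref{thm:GP2} but feeding in the variable-field spectral asymptotics of Theorem~\ref{thm:BK} in place of Theorem~\ref{thm:FH}. First, if $\overline a\le 0$, then the a priori bound \eqref{eq-psi<a} gives $|\psi|^2\le\max\{\overline a,0\}=0$ on $\overline\Omega$, so $\psi\equiv 0$ and the only critical point is $(0,\Fb)$; hence we may assume $\overline a>0$. Suppose then, for contradiction, that $(\psi,\Ab)\in H^1(\Omega;\C)\times H^1_{\Div}(\Omega)$ is a solution of \eqref{eq-2D-GLeq} with $\int_\Omega|\psi|^2\,dx>0$.

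Set $\mathcal B=\kappa H$. Since \eqref{a2} holds, Theorem~\ref{thm-2D-apriori}, estimate \eqref{8d-<}, gives
\begin{equation*}
\|(\nabla-i\mathcal B\Fb)\psi\|_{L^2(\Omega)}^2\le C_3^2\,\kappa^2\,\|\psi\|_{L^2(\Omega)}^2\,.
\end{equation*}
As $\psi\neq 0$, this says exactly that $\psi$ is an admissible trial state in the Rayleigh quotient \eqref{muN(kHF)} defining the lowest Neumann eigenvalue of $P_{\mathcal B\Fb,0}^{\Omega}$, whence
\begin{equation*}
\mu^N(\mathcal B\Fb;\Omega)\le C_3^2\,\kappa^2\,.
\end{equation*}

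For the matching lower bound I would invoke Theorem~\ref{thm:BK}. Since \eqref{B(x)} holds and $\Gamma\neq\varnothing$, one has $B_0=0$ and therefore $|\nabla B_0|>0$ on $\Gamma$; combined with the (known) positivity of $\lambda_0$ and of $\lambda(\R^2_+,\theta(x))$ this shows $\alpha_1(B_0)>0$ in \eqref{alpha1}. Because $H\ge C\kappa^2$ we have $\mathcal B=\kappa H\to+\infty$ as $\kappa\to+\infty$, so by \eqref{eq:pan} there is $\kappa_0$ such that, for $\kappa\ge\kappa_0$,
\begin{equation*}
\mu^N(\mathcal B\Fb;\Omega)\ge \tfrac12\,\alpha_1(B_0)^{2/3}\,(\kappa H)^{2/3}\,.
\end{equation*}
Combining the two displays for $\mu^N(\mathcal B\Fb;\Omega)$ yields $\tfrac12\alpha_1(B_0)^{2/3}(\kappa H)^{2/3}\le C_3^2\kappa^2$, i.e. $H\le C'\kappa^2$ with $C'=\big(2C_3^2/\alpha_1(B_0)^{2/3}\big)^{3/2}$. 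Choosing $C>C'$ in the hypothesis $H\ge C\kappa^2$ contradicts this, so no non-normal critical point exists and $(0,\Fb)$ is the unique solution of \eqref{eq-2D-GLeq}.

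I do not expect a genuine obstacle: the whole argument is a soft comparison between the bound $C_3^2\kappa^2$ on the kinetic energy per particle coming from the Ginzburg--Landau equations and the lower bound of order $(\kappa H)^{2/3}$ on the bottom of the magnetic Neumann spectrum furnished by Theorem~\ref{thm:BK}. The only points that need care are (i) deducing $\alpha_1(B_0)>0$ from \eqref{B(x)}, (ii) checking that $\mathcal B=\kappa H$ is large enough for the asymptotics \eqref{eq:pan} to be available, which is exactly ensured by the scaling $H\ge C\kappa^2$, and (iii) disposing of the degenerate case $\overline a\le 0$ separately; the final constant $C$ is then given explicitly in terms of $C_3$ (from Theorem~\ref{thm-2D-apriori}) and $\alpha_1(B_0)$.
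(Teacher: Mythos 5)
Your proposal is correct and follows essentially the same route as the paper: assume a non-normal critical point, use the a priori bound \eqref{8d-<} to get $\mu^{N}(\kappa H\Fb;\Omega)\leq C\kappa^{2}$, and contradict the lower bound $\mu^{N}(\kappa H\Fb;\Omega)\sim\alpha_{1}(B_{0})^{2/3}(\kappa H)^{2/3}$ from Theorem~\ref{thm:BK} when $H\geq C\kappa^{2}$ with $C$ large. Your additional remarks (disposing of $\overline a\leq 0$, checking $\alpha_{1}(B_{0})>0$, and making the constant explicit) are details the paper leaves implicit but do not change the argument.
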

\begin{proof}
Similarly to the proof of Theorem~\ref{thm:GP2}, we assume that we have a \textbf{non normal} critical point $(\psi, \Ab)$ for $\mathcal E_{\kappa,H,a,B_{0}}$.\\
Therefore, we get from \eqref{8d-<} that,
$$
\mu^{N}(\mathcal{B}\Fb;\Omega)\leq C\,\kappa^{2}\quad(\mathcal B=\kappa H)\,.
$$
Thanks to Theorem~\ref{thm:BK}, we get a contradiction, if $ \displaystyle H\geq C \kappa^{2}$ and $C$ is sufficiently large.
\end{proof}

\section{Asymptotics of $\mu_{1}(\kappa,H)$: the case with non vanishing magnetic field}\label{Section:4}
The aim of this section is to give  an estimate for the lowest
eigenvalue $\mu_{1}(\kappa,H)$ of the operator $P_{\kappa
H\Fb,-\kappa^{2}a}^{\Omega}$ (see \eqref{def:mu1})  in the case when
$\Gamma=\varnothing$ with a $\kappa$-independent pinning (i.e.
$a(x,\kappa)=a(x)$). Recall that the set $\Gamma$ is introduced in
\eqref{gamma}.

\subsection{Lower bound}~\\
Without loss of generality we suppose that $B_{0}>0 \mbox{ in } \overline{ \Omega}$. Our results will be formulated by introducing:
\begin{equation}\label{Lambda11}
\Lambda_{1}(B_{0},a,\sigma)=\min\left\{\inf_{ x\in\Omega}\left\{\sigma\,B_{0}(x)-a(x)\right\},\inf_{ x\in\partial\Omega}\left\{\Theta_{0}\,\sigma\,B_{0}(x)-a(x)\right\}\right\}\,,
\end{equation}
where $\sigma$ is a positive constant.\\
In the case when the pinning term is constant (i.e. $a(x)=a_0$), \eqref{Lambda11} becomes as follows:
$$
\Lambda_{1}(B_{0},a,\sigma)=\sigma\min\left\{\inf_{ x\in\Omega}\left\{B_{0}(x)\right\},\Theta_{0}\,\inf_{ x\in\partial\Omega}\left\{B_{0}(x)\right\}\right\}-a_0\,.
$$
This case was treated by Pan and Kwek \cite{LP1}.\\
Let $\mathcal{Q}_{\mathcal{B}\,\Fb,-\frac{\mathcal B}{\sigma}\,a}^{\Omega}$ be the quadratic form of $P_{\mathcal{B}\Fb,- \frac{\mathcal{B}}{\sigma}\,a}^{\Omega}$, i.e.
\begin{equation}\label{def:quad}
\mathcal{Q}_{\mathcal{B}\,\Fb,-\frac{\mathcal B}{\sigma}\,a}^{\Omega}(\psi)=\int_{\Omega}\left(|(\nabla-i\mathcal{B}\Fb)\psi|^{2}-\frac{\mathcal{B}}{\sigma} \,a(x)|\psi|^{2}\right)\,dx\,.
\end{equation}
\begin{prop}\label{prop:mu1-cst}
Let $\Omega\subset\R^2$ be an open bounded set with smooth boundary, $I$ a closed interval  in $(0,+\infty)$ and $\Gamma=\varnothing$. There exist positive constant $C$ and $\mathcal{B}_{0}$ such that if $\sigma\in I$, $\mathcal{B}\geq\mathcal{B}_{0}$, $\psi\in H^{1}(\Omega)\setminus \{0\} $ and $a\in C^{1}(\overline{\Omega})$, then,
\begin{equation}\label{eq:lower-bound-constant}
\frac{\mathcal{Q}_{\mathcal{B}\Fb,-\frac{\mathcal B}{\sigma}\,a}^{\Omega}(\psi)}{\|\psi\|_{L^{2}(\Omega)}^{2}}\geq \frac{\mathcal B}{\sigma}\,\,\Lambda_{1}(B_{0},a,\sigma)-C\,\mathcal{B}^{\frac{3}{4}}\,.
\end{equation}
\end{prop}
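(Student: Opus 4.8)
The plan is a standard magnetic localization (IMS) argument: partition the form at a small scale $\rho$, freeze both $\mathcal{B}\Fb$ and $a$ on each cell, compare with the constant‑field model operator on $\R^2$ for interior cells and on a half‑plane for boundary cells, and balance the errors by choosing $\rho=\mathcal{B}^{-3/8}$. Throughout, $B_0>0$ on $\overline\Omega$ (as assumed in this subsection) and $\Gamma=\varnothing$, so $\inf_{\overline\Omega}B_0>0$, and $\sigma$ ranges in the compact interval $I\subset(0,+\infty)$, so $\sigma^{-1}$ is bounded.

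\textbf{Step 1 (localization).} I would cover $\overline\Omega$ by balls $B(x_j,\rho)$, $j\in J$, with finite overlap, each either \emph{interior} ($\overline{B(x_j,\rho)}\subset\Omega$) or \emph{boundary} ($x_j\in\partial\Omega$), and pick $\chi_j\in C^\infty_c(B(x_j,\rho))$ with $\sum_j\chi_j^2=1$ on $\overline\Omega$ and $\|\nabla\chi_j\|_\infty\le C\rho^{-1}$. The IMS formula gives
\[
\mathcal{Q}_{\mathcal{B}\Fb,-\frac{\mathcal B}{\sigma}a}^{\Omega}(\psi)=\sum_j \mathcal{Q}_{\mathcal{B}\Fb,-\frac{\mathcal B}{\sigma}a}^{\Omega\cap B(x_j,\rho)}(\chi_j\psi)-\sum_j\int_\Omega|\nabla\chi_j|^2|\psi|^2\,,
\]
and the last sum is $O(\rho^{-2})\|\psi\|_{L^2(\Omega)}^2=O(\mathcal{B}^{3/4})\|\psi\|_{L^2(\Omega)}^2$ for $\rho=\mathcal{B}^{-3/8}$.

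\textbf{Step 2 (freezing and models).} On each cell, using a gauge choice as in \cite[Lemma~A.3]{KA} there is $\varphi_j$ with $|\Fb-B_0(x_j)\Ab_0(\cdot-x_j)-\nabla\varphi_j|\le C\rho^2$; hence, with $\epsilon=\mathcal{B}^{-1/4}$,
\[
\int|(\nabla-i\mathcal{B}\Fb)(\chi_j\psi)|^2\ge(1-\epsilon)\int|(\nabla-i\mathcal{B}B_0(x_j)\Ab_0(\cdot-x_j))(e^{-i\mathcal{B}\varphi_j}\chi_j\psi)|^2-C\epsilon^{-1}\mathcal{B}^2\rho^4\|\chi_j\psi\|^2\,,
\]
while $|a(x)-a(x_j)|\le L\rho$ on the cell (here $a\in C^1(\overline\Omega)$ is $\kappa$‑independent), so $-\frac{\mathcal B}{\sigma}\int a|\chi_j\psi|^2\ge-\frac{\mathcal B}{\sigma}a(x_j)\|\chi_j\psi\|^2-C\frac{\mathcal B}{\sigma}L\rho\|\chi_j\psi\|^2$. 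For an interior cell, the bottom of the spectrum of $-(\nabla-i\mathcal{B}B_0(x_j)\Ab_0)^2$ on $\R^2$ equals $\mathcal{B}B_0(x_j)$; for a boundary cell, after flattening $\partial\Omega$ near $x_j$ and tracking the metric and magnetic perturbations, the relevant half‑plane Neumann model (obtained by rescaling $-(\nabla-i\mathcal{B}B_0(x_j)\Ab_0)^2$ and reducing to the operator $\mathfrak{h}^{N,\xi}$) has bottom $\Theta_{0}\,\mathcal{B}B_0(x_j)$. Collecting these with $\mathfrak m_j\in\{1,\Theta_{0}\}$ (interior/boundary) and using $\epsilon\mathcal{B}+\epsilon^{-1}\mathcal{B}^2\rho^4+\frac{\mathcal B}{\sigma}L\rho=O(\mathcal{B}^{3/4})$ uniformly on $I$, I obtain
\[
\mathcal{Q}_{\mathcal{B}\Fb,-\frac{\mathcal B}{\sigma}a}^{\Omega\cap B(x_j,\rho)}(\chi_j\psi)\ge\Big(\mathfrak m_j\,\mathcal{B}B_0(x_j)-\tfrac{\mathcal B}{\sigma}a(x_j)-C\mathcal{B}^{3/4}\Big)\|\chi_j\psi\|^2\,.
\]

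\textbf{Step 3 (conclusion) and main obstacle.} By definition of $\Lambda_{1}(B_0,a,\sigma)$ in \eqref{Lambda11} one has $\mathfrak m_j\,\mathcal{B}B_0(x_j)-\frac{\mathcal B}{\sigma}a(x_j)\ge\frac{\mathcal B}{\sigma}\Lambda_{1}(B_0,a,\sigma)$ in both cases, so summing over $j$ with $\sum_j\|\chi_j\psi\|^2=\|\psi\|_{L^2(\Omega)}^2$ and folding in the localization error of Step~1 yields \eqref{eq:lower-bound-constant}. I expect the main obstacle to be the boundary analysis in Step~2: carefully flattening the curved boundary inside a boundary cell and controlling the induced perturbations of the metric ($O(\rho)$) and of the magnetic potential ($O(\rho^2)$) so that the half‑plane bottom $\Theta_{0}\mathcal{B}B_0(x_j)$ is reached up to $o(\mathcal{B}^{3/4})$ — this is the delicate but standard computation underlying Theorem~\ref{thm:FH} (see \cite{FH1}). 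The choice $\rho=\mathcal{B}^{-3/8}$ (and $\epsilon=\mathcal{B}^{-1/4}$) is dictated by balancing the localization term $\rho^{-2}$ against the magnetic freezing term $\epsilon^{-1}\mathcal{B}^2\rho^4$ (with $\epsilon\mathcal{B}$), all three being of size $\mathcal{B}^{3/4}$, while $\frac{\mathcal B}{\sigma}L\rho=O(\mathcal{B}^{5/8})$ is lower order.
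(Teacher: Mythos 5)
Your argument is correct and is, in substance, the same as the paper's: the paper simply quotes \cite[Prop.~9.2.1]{FH1} for the pointwise lower bound $\int_\Omega|(\nabla-i\mathcal{B}\Fb)\psi|^2\,dx\geq\int_\Omega\big(U(x)-\bar C\mathcal{B}^{3/4}\big)|\psi|^2\,dx$, with $U=\mathcal{B}B_0$ in the bulk and $U=\Theta_0\,\mathcal{B}B_0$ in a boundary layer of width $\mathcal{B}^{-3/8}$, and then absorbs the $-\frac{\mathcal B}{\sigma}a$ term exactly as in your Step 3 (using $C^1$-regularity of $a$ and $B_0$ to pass from the layer to $\partial\Omega$). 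Your Steps 1--2 amount to re-deriving that cited inequality by the standard IMS/frozen-field localization at scale $\mathcal{B}^{-3/8}$, which is precisely how it is proved in \cite{FH1}, so there is no genuine divergence of method.
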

\begin{proof}
The proof is a consequence of the following inequality that we take
from \cite[Prop.~9.2.1]{FH1},
$$
\forall~\psi\in H^1(\Omega)\,,\quad\int_\Omega|(\nabla-i\mathcal{B}\Fb)\psi|^2\,dx\geq \int_\Omega \big(U(x)-\bar CB^{3/4}\big)|\psi|^2\,dx\,,
$$
where \begin{equation}\label{eq:U}
U(x)= \left\{
\begin{array}{ll}
\mathcal B\,B_0(x)&{\rm if~}{\rm dist}(x,\partial\Omega)\geq B^{-3/8}\,,\\
\Theta_0\mathcal B\,B_0(x)&{\rm if~}{\rm dist}(x,\partial\Omega)< B^{-3/8}\,,
\end{array}
\right.
\end{equation}
$\mathcal B\geq \bar{\mathcal B_0}$, $\bar{\mathcal B_0}$ and $\bar
C$ are two constants independent of $\mathcal B$.

Clearly, there exist two constants $C'>0$ and $\mathcal B_0>0$ such
that, for all $\sigma\in I$, we have,
$$
U(x)-\frac{\mathcal B}{\sigma}a(x)\geq\frac{\mathcal B}{\sigma}\Lambda_1(B_0,a,\sigma)-C'B^{3/4}\,.
$$
\end{proof}
Coming back to our initial parameters $\kappa$ and $H$, we obtain:
\begin{theorem}\label{thm:mu1-cst}
Let $\Omega\subset\R^2$ be an open bounded set with smooth boundary and $\Gamma=\varnothing$. Suppose that \eqref{cond-H} holds and $a\in C^1(\overline{\Omega})$, then,
$$
\mu_{1}(\kappa,H)\geq \kappa^{2}\,\Lambda_{1}\left(B_{0},a,\frac H \kappa\right)+\mathcal{O}(\kappa^{\frac{3}{2}})\,,\qquad{\rm as}\,\kappa\to+\infty\,.
$$
Here, $\Lambda_{1}$ is introduced in \eqref{Lambda11}.
\end{theorem}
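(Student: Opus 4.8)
The plan is to deduce the theorem directly from Proposition~\ref{prop:mu1-cst} via the change of parameters $\mathcal{B}=\kappa H$ and $\sigma=H/\kappa$. First, I observe that under Assumption~\eqref{cond-H} the ratio $\sigma=H/\kappa$ lies in the fixed closed interval $I=[\lambda_{\min},\lambda_{\max}]\subset(0,+\infty)$, so Proposition~\ref{prop:mu1-cst} is applicable with this $I$. Moreover, with $\mathcal{B}=\kappa H$ and $\sigma=H/\kappa$ one has $\mathcal{B}/\sigma=\kappa^{2}$, hence the quadratic form $\mathcal{Q}_{\mathcal{B}\,\Fb,-\frac{\mathcal B}{\sigma}\,a}^{\Omega}$ introduced in \eqref{def:quad} is literally the form $\mathcal{Q}_{\kappa H\Fb,-\kappa^{2}a}^{\Omega}$ of \eqref{Quad} (recall that in this section $a(x,\kappa)=a(x)$). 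In particular, by the variational characterization \eqref{def:mu1},
\begin{equation*}
\mu_{1}(\kappa,H)=\inf_{\substack{\psi\in H^{1}(\Omega)\\ \psi\neq 0}}\frac{\mathcal{Q}_{\mathcal{B}\,\Fb,-\frac{\mathcal B}{\sigma}\,a}^{\Omega}(\psi)}{\|\psi\|_{L^{2}(\Omega)}^{2}}\,.
\end{equation*}

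Next, since $H\geq\lambda_{\min}\kappa$ by \eqref{cond-H}, we have $\mathcal{B}=\kappa H\geq\lambda_{\min}\kappa^{2}\to+\infty$, so there is $\kappa_{0}$ such that $\mathcal{B}\geq\mathcal{B}_{0}$ for all $\kappa\geq\kappa_{0}$, where $\mathcal{B}_0$ is the threshold in Proposition~\ref{prop:mu1-cst}. Applying the lower bound \eqref{eq:lower-bound-constant} to every $\psi\in H^1(\Omega)\setminus\{0\}$ and passing to the infimum yields
\begin{equation*}
\mu_{1}(\kappa,H)\geq\frac{\mathcal B}{\sigma}\,\Lambda_{1}(B_{0},a,\sigma)-C\,\mathcal{B}^{3/4}=\kappa^{2}\,\Lambda_{1}\!\left(B_{0},a,\tfrac H\kappa\right)-C\,(\kappa H)^{3/4}\,.
\end{equation*}
Finally, using $H\leq\lambda_{\max}\kappa$ from \eqref{cond-H}, we estimate $(\kappa H)^{3/4}\leq\lambda_{\max}^{3/4}\,\kappa^{3/2}$, which converts the error term into $\mathcal{O}(\kappa^{3/2})$ as $\kappa\to+\infty$ and gives the claimed inequality.

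There is essentially no obstacle beyond Proposition~\ref{prop:mu1-cst} itself; the only point requiring care is that the constant $C$ in \eqref{eq:lower-bound-constant} be uniform with respect to $\sigma\in I$. This is exactly why Proposition~\ref{prop:mu1-cst} is stated with $\sigma$ ranging over a fixed closed interval, and its proof secures this uniformity from the $\sigma$-independent Fournais--Helffer estimate \cite[Prop.~9.2.1]{FH1} together with the boundedness of $a$ and $B_0$ on $\overline{\Omega}$. The remaining steps — checking $\mathcal{B}\geq\mathcal{B}_0$ and replacing $\mathcal{B}^{3/4}$ by $\kappa^{3/2}$ — use only the two-sided bound \eqref{cond-H}.
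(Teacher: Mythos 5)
Your proposal is correct and follows essentially the same route as the paper: the paper's proof also applies Proposition~\ref{prop:mu1-cst} with $\mathcal{B}=\kappa H$, $\sigma=H/\kappa$ and $I=[\lambda_{\min},\lambda_{\max}]$, checks that $\mathcal{B}=\sigma\kappa^{2}\to+\infty$, and converts the error term $C\,\mathcal{B}^{3/4}$ into $\mathcal{O}(\kappa^{3/2})$ using \eqref{cond-H}. Your additional remarks on the uniformity of the constant in $\sigma$ are a welcome clarification but do not change the argument.
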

\begin{proof}
We apply Proposition~\ref{prop:mu1-cst} with
$$
\mathcal{B}=\kappa H\,,\quad \sigma=\frac{H}{\kappa}\quad{\rm and}\quad I=[\lambda_{\min},\lambda_{\max}]\,.
$$
Let us verify that the conditions of the proposition are satisfied for this choice.\\
It is trivial that $\sigma\in I$. Now, as $\kappa\to+\infty$, we have,
$$
\mathcal{B}=\sigma\,\kappa^{2}\to+\infty\,.
$$
This implies that, as $\kappa\to+\infty$,
$$
\mu_{1}(\kappa,H)\geq \kappa^{2}\,\Lambda_{1}\left(B_{0},a,\frac{H}{\kappa} \right)+\mathcal{O}(\kappa^{\frac{3}{2}})\,.
$$
This finishes the proof of the theorem.
\end{proof}

\subsection{Upper bound}
\begin{prop}[Upper bound in the bulk]\label{prop:up-blk-cst}
Suppose that $\Omega\subset\R^2$ is an open bounded set with smooth boundary $\partial\Omega$, $\lambda_{\max} >0$ and $\Gamma=\varnothing$. For any $ x_0\in\Omega$, there exist positive constants $C$ and $\mathcal{B}_{0}$ such that, if $\sigma\in (0,\lambda_{\max}]$, $\mathcal{B}\geq\mathcal{B}_{0}$ and $a\in C^{1}(\overline{\Omega})$, then,
\begin{equation}\label{eq:up-blk-cst}
\mu_{\mathcal{B},\sigma}\leq \frac{\mathcal B}{\sigma}\,\left\{\sigma\,B_{0}(x_{0})-a(x_{0})\right\}+C\,\mathcal{B}^{\frac{1}{2}}\,.
\end{equation}
Here,
\begin{equation}\label{def:mu1-cst}
\mu_{\mathcal{B},\sigma}=\inf_{\psi\in H^{1}(\Omega)\setminus \{0\}}\frac{\mathcal{Q}_{\mathcal{B}\Fb,-\frac{\mathcal B}{\sigma}\,a}^{\Omega}(\psi)}{\|\psi\|_{L^{2}(\Omega)}^{2}}\,,
\end{equation}
 where $\mathcal{Q}_{\mathcal{B}\Fb,-\frac{\mathcal B}{\sigma}\,a}^{\Omega}$ is introduced in \eqref{def:quad}.
\end{prop}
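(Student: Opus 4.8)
The plan is to build a trial state concentrated near $x_0$, at the natural magnetic length, and insert it into the min--max characterisation of $\mu_{\mathcal B,\sigma}$. Fix $r_0>0$ with $\overline{B(x_0,2r_0)}\subset\Omega$ (possible since $x_0\in\Omega$), and recall that, in the gauge $\Ab_{0}=\frac12(-x_2,x_1)$, the Landau ground state
\[
u_0(x)=\exp\!\Big(-\tfrac{\mathcal B\,B_0(x_0)}{4}\,|x-x_0|^2\Big)
\]
satisfies $-\big(\nabla-i\mathcal B\,B_0(x_0)\Ab_{0}(x-x_0)\big)^2u_0=\mathcal B\,B_0(x_0)\,u_0$ on $\R^2$, with $\|u_0\|_{L^2(\R^2)}^2=2\pi/(\mathcal B\,B_0(x_0))$ and Gaussian localisation at scale $(\mathcal B\,B_0(x_0))^{-1/2}$ (here $B_0(x_0)>0$ by the normalisation $B_0>0$ made above). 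Choose a radial cut-off $\chi\in C_c^\infty\big(B(x_0,2r_0)\big)$ with $\chi\equiv1$ on $B(x_0,r_0)$ and $|\nabla\chi|\le C/r_0$, and let $\varphi_{x_0}$ be the gauge function of \cite[Lemma~A.3]{KA} (equivalently, obtained by a first-order Taylor expansion of $\Fb$), so that $|\Fb(x)-B_0(x_0)\Ab_{0}(x-x_0)-\nabla\varphi_{x_0}(x)|\le C\,|x-x_0|^2$ on $B(x_0,2r_0)$. The test function is $\psi=e^{i\mathcal B\varphi_{x_0}}\,\chi\,u_0$, extended by $0$ outside $B(x_0,2r_0)$; it lies in $H^1(\Omega)\setminus\{0\}$.

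For the denominator, $\|\psi\|_{L^2(\Omega)}^2=\int\chi^2u_0^2\,dx=\|u_0\|_{L^2(\R^2)}^2\big(1+O(e^{-c\mathcal B r_0^2})\big)$. For the numerator I would gauge out $\mathcal B\nabla\varphi_{x_0}$, after which the magnetic gradient of $\psi$ becomes $\big(\nabla-i\mathcal B\,B_0(x_0)\Ab_{0}(x-x_0)-i\widetilde{\Ab}\big)(\chi u_0)$ with $|\widetilde{\Ab}(x)|\le C\mathcal B\,|x-x_0|^2$ on $\supp\chi$, and then split with a parameter $\delta=\mathcal B^{-1/2}$:
\[
\int_\Omega\big|(\nabla-i\mathcal B\Fb)\psi\big|^2\,dx\le(1+\delta)\int\big|(\nabla-i\mathcal B\,B_0(x_0)\Ab_{0}(x-x_0))(\chi u_0)\big|^2\,dx+(1+\delta^{-1})\int|\widetilde{\Ab}|^2\chi^2u_0^2\,dx .
\]
The first integral is evaluated via the IMS localisation formula and the eigenvalue equation for $u_0$: it equals $\mathcal B\,B_0(x_0)\|\psi\|^2+\int|\nabla\chi|^2u_0^2\,dx=\mathcal B\,B_0(x_0)\|\psi\|^2+O(e^{-c\mathcal B r_0^2})$, and the factor $1+\delta$ costs only $\delta\,\mathcal B\,B_0(x_0)=O(\mathcal B^{1/2})\|\psi\|^2$. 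The second integral is bounded using the Gaussian fourth moment $\int|x-x_0|^4u_0^2\,dx=O\big((\mathcal B B_0(x_0))^{-2}\big)\|u_0\|^2$, which yields $(1+\mathcal B^{1/2})\,C^2\mathcal B^2\cdot O(\mathcal B^{-2})\|\psi\|^2=O(\mathcal B^{1/2})\|\psi\|^2$.

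It remains to treat the potential term. Since $a\in C^1(\overline\Omega)$,
\[
-\frac{\mathcal B}{\sigma}\int_\Omega a\,|\psi|^2\,dx=-\frac{\mathcal B}{\sigma}\,a(x_0)\,\|\psi\|^2-\frac{\mathcal B}{\sigma}\int\big(a(x)-a(x_0)\big)\chi^2u_0^2\,dx ,
\]
and the last term is, in absolute value, at most $\frac{\mathcal B}{\sigma}\|\nabla a\|_{L^\infty(\Omega)}\int|x-x_0|\chi^2u_0^2\,dx=O\big(\tfrac{\mathcal B}{\sigma}(\mathcal B B_0(x_0))^{-1/2}\big)\|\psi\|^2$, which is $O(\mathcal B^{1/2})\|\psi\|^2$ uniformly for $\sigma$ in a fixed compact subset of $(0,+\infty)$ (the factor $\sigma^{-1}$, together with $\|a\|_{C^1(\overline\Omega)}$ and $B_0(x_0)$, being absorbed into $C$). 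Collecting the three estimates,
\[
\frac{\mathcal Q_{\mathcal B\Fb,-\frac{\mathcal B}{\sigma}a}^{\Omega}(\psi)}{\|\psi\|_{L^2(\Omega)}^2}\le \mathcal B\,B_0(x_0)-\frac{\mathcal B}{\sigma}\,a(x_0)+C\mathcal B^{1/2}=\frac{\mathcal B}{\sigma}\big(\sigma B_0(x_0)-a(x_0)\big)+C\mathcal B^{1/2},
\]
and the min--max principle gives \eqref{eq:up-blk-cst}.

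The work here is essentially bookkeeping rather than hard analysis: the one point to check carefully is that the choice of the natural Gaussian scale $(\mathcal B B_0(x_0))^{-1/2}$ together with the splitting $\delta=\mathcal B^{-1/2}$ makes all three error sources --- the $(1+\delta)$ loss, the quadratic remainder $\widetilde{\Ab}$ in the replacement of $\Fb$ by the linear potential, and the $O(|x-x_0|)$ oscillation of $a$ --- collapse to the single order $\mathcal B^{1/2}$; no elliptic estimates beyond the explicit Gaussian moments and the IMS formula are needed. (The boundary counterpart of this bound, which would involve $\Theta_0$ and a de Gennes boundary quasimode, is established separately.)
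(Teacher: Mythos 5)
Your proof is correct and follows essentially the same route as the paper's: a Gaussian quasimode $e^{i\mathcal B\varphi}\chi\,u_0$ concentrated at $x_0$ at the magnetic length $(\mathcal B B_0(x_0))^{-1/2}$, inserted into the min--max principle, with the kinetic term reduced to $\mathcal B B_0(x_0)+\mathcal O(\mathcal B^{1/2})$ and the potential term handled by Taylor expanding $a$ near $x_0$; the only (harmless) variations are that you use a fixed-radius cut-off and Gaussian moments where the paper cuts off at scale $\mathcal B^{-1/2}$ and quotes the estimate (2.35) of Fournais--Helffer instead of redoing the IMS computation. Note that your caveat about absorbing $\sigma^{-1}$ into $C$ (so the constant is uniform only for $\sigma$ bounded away from $0$) is shared by the paper's own proof of \eqref{upp:a-cst}, so it is not a defect of your argument relative to the original.
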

\begin{proof}
Thanks to \eqref{def:quad}, we have,
$$
\mathcal{Q}_{\mathcal{B}\Fb,-\frac{\mathcal B}{\sigma}\,a}^{\Omega}(u) = \int_{\Omega}|(\nabla-i\mathcal{B}\Fb)u(x)|^{2}\,dx - \frac{ \mathcal B}{\sigma}\,\int_{\Omega}a(x)|u(x)|^{2}\,dx \,.
$$
The upper bound of the first term in the right hand side above is based on the construction of Gaussian quasi-mode (see \cite[Subsection~2.4.2]{FH1} for the case with constant pinning) centered at $x_0\in\Omega$,
$$
\varphi_{1}(x)=\,e^{i\,\mathcal{B}\,\phi_{0}}\,\chi\left(\mathcal{B}^{+\frac{1}{2}}(x-x_{0})\right)\,u\left(\sqrt{\mathcal{B}B_{0}(x_0)}\,(x-x_{0})\right)\,.
$$
Here, $\chi$ is a cut-off function equal to $1$ in a neighborhood of $0$ such that ${\rm supp}\,\chi\subset D(0,1)$, the function  $\phi_{0}$ satisfies \eqref{F-A} and the function $u$ defined as follows:
$$
u(x)=\frac{\pi^{-\frac{1}{4}}}{\sqrt{2}}e^{-\frac{|x|^2}{2}}\,.
$$
We note that ${\rm supp}\,\varphi_{1}\subset \Omega$ for $\mathcal{B}$ large enough. As in \cite[(2.35)]{FH1}, we get the existence of a positive constant $\mathcal{B}_{0}$ such that, for any $\mathcal{B}\geq\mathcal{B}_{0}$,
\begin{equation}\label{up:FA-cst}
\frac{\int_{\Omega}|(\nabla-i\mathcal{B}\Fb)\varphi_{1}(x)|^{2}\,dx}{\int_{\Omega}|\varphi_{1}(x)|^{2}\,dx}\leq \mathcal{B}\,B_{0}(x_0)+\mathcal{O}(\mathcal{B}^{\frac{1}{2}})\,.
\end{equation}
To derive the upper bound for the second term, we use Taylor's formula for the function $a$ near $x_0$,
\begin{equation}\label{eq:app-a-cst}
|a(x)-a(x_0)|\leq C\,\,\mathcal{B}^{-\frac{1}{2}}\,,\qquad\left(x\in D\left(x_0,\mathcal{B}^{-\frac{1}{2}}\right)\right)\,.
\end{equation}
Using \eqref{eq:app-a-cst}, since ${\rm supp}\,\varphi_{1}\subset D\left(x_0,\mathcal{B}^{-\frac{1}{2}}\right)$, we get,
\begin{equation}\label{upp:a1-cst}
-\int_{\Omega}a(x)|\varphi_{1}(x)|^{2}\,dx\leq -a(x_{0})\int_{\Omega}|\varphi_{1}(x)|^{2}\,dx+C\,\mathcal{B}^{-\frac{1}{2}}\,\int_{\Omega}|\varphi_{1}(x)|^{2}\,dx\,,
\end{equation}
and consequently
\begin{equation}\label{upp:a-cst}
-\frac{\mathcal B}{\sigma}\frac{\int_{\Omega}a(x)|\varphi_{1}(x)|^{2}\,dx}{\int_{\Omega}|\varphi_{1}(x)|^{2}\,dx}\leq -\frac{\mathcal B}{\sigma}\,a(x_{0})+C\,\mathcal{B}^{\frac{1}{2}}\,.
\end{equation}
Collecting \eqref{up:FA-cst} and \eqref{upp:a-cst}, we finish the proof of Proposition~\ref{def:mu1-cst}.
\end{proof}
\begin{rem}\label{rem:first term}
When
$$
\inf_{x\in\Omega}\left\{\sigma\,B_{0}(x)-a(x)\right\} <  \inf_{ x\in\partial\Omega}\left\{\Theta_{0}\,\sigma\,B_{0}(x)-a(x)\right\}\,,
$$
we notice that, if the infimum of $\sigma\,B_{0}(x)-a(x)$ was attained on $\partial\Omega$, (i.e. there exists $x_0\in\partial\Omega$ such that $\inf_{x\in\Omega}\left\{\sigma\,B_{0}(x)-a(x)\right\} =\sigma\,B_{0}(x_0)-a(x_0)$), we would have,
$$
\sigma\,B_{0}(x_0)-a(x_0) < \Theta_{0}\,\sigma\,B_{0}(x_0)-a(x_0)\,,
$$
which is impossible, since $\Theta_{0}<1$. Hence, we can  choose $x_0\in\Omega$, such that,
$$
\sigma\,B_{0}(x_0)-a(x_0)=\inf_{x\in\Omega}\left\{\sigma\,B_{0}(x)-a(x)\right\}\,,
$$
and we apply Proposition~\ref{prop:up-blk-cst} with
$$
\mathcal{B}=\kappa H\quad{\rm and}\quad\sigma=\frac{H}{\kappa}\,.
$$
Thus, we get the existence of a positive constant $\kappa_0$ such
that, if,
\begin{equation}\label{cond:H-cst}
\kappa\geq\kappa_{0}\quad{\rm and}\quad\kappa_{0}\,\kappa^{-1}<H< \lambda_{\max}\,\kappa\,,
\end{equation}
then,
\begin{equation}\label{eq:up-mu1-cst}
\mu_{1}(\kappa,H)\leq \kappa^{2}\,\inf_{x\in\Omega}\left\{\frac{H}{\kappa}\,B_{0}(x)-a(x)\right\}+ \mathcal{O}(\kappa)\,,\qquad{\rm as}\,\kappa\to+\infty\,.
\end{equation}
\end{rem}

\begin{prop}[Upper bound near the boundary]\label{prop:up-bnd-cst}
Suppose that $\Omega\subset\R^2$ is an open bounded set with a smooth boundary, $\lambda_{\max} >0$ and $\Gamma=\varnothing$. For any $ x_0\in\partial\Omega$ and for any $\sigma\in (0,\lambda_{\max}]$, we have,
\begin{equation}\label{eq:up-bnd-cst}
\mu_{\mathcal{B},\sigma}\leq \frac{\mathcal B}{\sigma}\big(\sigma\,\Theta_{0}\,B_{0}(x_{0})-a(x_0)\big)+ \mathcal{O}(\mathcal{B}^{\frac{1}{2}})\,,\qquad{\rm as}~\mathcal{B}\to+\infty\,.
\end{equation}
Here, $\Theta_{0}$ is introduced  in \eqref{Theta0}.
\end{prop}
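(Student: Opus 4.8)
The plan is to construct a trial state concentrated near the boundary point $x_0\in\partial\Omega$ and to use the variational characterization \eqref{def:mu1-cst}. The building block is the de Gennes boundary model: recall that $\Theta_0=\inf_\xi\mu(\xi)$ is attained at $\xi_0<0$ by a normalized ground state $u_0\in B^2(\R_+)$ of $\mathfrak{h}^{N,\xi_0}=-\tfrac{d^2}{dt^2}+(t+\xi_0)^2$. First I would straighten the boundary near $x_0$ using the tubular (boundary) coordinates $(s,t)$ of \cite[Appendix~F]{FH1}, where $s$ is the arclength along $\partial\Omega$ and $t=\dist(x,\partial\Omega)$. In these coordinates, after the usual gauge transformation (choosing a gauge so that $\mathcal B\Fb$ is, to leading order, $\mathcal B B_0(x_0)(-t+\tfrac{k(s)}{2}t^2,0)$ plus lower order, with $k$ the curvature), the magnetic Laplacian becomes, up to metric factors $1-t k(s)$, the operator whose leading symbol after the rescaling $t=(\mathcal B B_0(x_0))^{-1/2}\tau$, $s=(\mathcal B B_0(x_0))^{-1/2}\varsigma$ is $\mathcal B B_0(x_0)\,\mathfrak h^{N,\xi}$ in the $\tau$ variable.

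Concretely I would take the trial function
\begin{equation}\label{eq:trial-bnd-cst}
\varphi_1(x)=e^{i\mathcal B\phi_0(x)}\,\chi\big(\mathcal B^{3/8}t\big)\,\chi\big(\mathcal B^{3/8}s\big)\,e^{i\sqrt{\mathcal B B_0(x_0)}\,\xi_0 s}\,u_0\big(\sqrt{\mathcal B B_0(x_0)}\,t\big)\,,
\end{equation}
where $\chi\in C_c^\infty(\R)$ equals $1$ near $0$ with support in $(-1,1)$, and $\phi_0$ is a phase chosen (as in \eqref{F-A}) so that $\mathcal B\Fb-\mathcal B\nabla\phi_0$ agrees near $x_0$ with the approximating linear vector potential $\mathcal B B_0(x_0)(-t,0)$ up to an error $O(\mathcal B(t^2+s^2))$, which on the support of $\varphi_1$ is $O(\mathcal B^{1/4})$. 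Then $\supp\varphi_1\subset\overline\Omega$ and $\varphi_1\in H^1(\Omega)$. For the kinetic term, the standard computation (identical to \cite[Section~2.4]{FH1}, with the constant field value $B_0(x_0)$ replacing $1$) gives
\begin{equation}\label{eq:kin-bnd-cst}
\frac{\int_\Omega|(\nabla-i\mathcal B\Fb)\varphi_1|^2\,dx}{\int_\Omega|\varphi_1|^2\,dx}\le \mathcal B B_0(x_0)\,\mu(\xi_0)+O(\mathcal B^{1/2})=\Theta_0\,\mathcal B\,B_0(x_0)+O(\mathcal B^{1/2})\,,
\end{equation}
the error coming from the curvature terms $tk(s)$, the cut-off derivatives (each costs $O(\mathcal B^{3/4})$ but is exponentially damped by $u_0$), and the vector-potential approximation error. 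For the potential term I would Taylor expand $a$ at $x_0$: since $\diam(\supp\varphi_1)=O(\mathcal B^{-1/2})$ in the normal direction and $O(\mathcal B^{-1/8})$ along the boundary, using $a\in C^1$ gives $|a(x)-a(x_0)|=O(\mathcal B^{-1/8})$ on $\supp\varphi_1$, hence
\begin{equation}\label{eq:pot-bnd-cst}
-\frac{\mathcal B}{\sigma}\frac{\int_\Omega a(x)|\varphi_1|^2\,dx}{\int_\Omega|\varphi_1|^2\,dx}\le -\frac{\mathcal B}{\sigma}\,a(x_0)+O(\mathcal B\cdot\mathcal B^{-1/8}/\sigma)\,.
\end{equation}
The $O(\mathcal B^{7/8})$ error from \eqref{eq:pot-bnd-cst} is larger than the $O(\mathcal B^{1/2})$ claimed, so I would instead sharpen \eqref{eq:trial-bnd-cst} by taking the tangential cut-off at scale $\mathcal B^{-1/2}$ as well (i.e. replace $\chi(\mathcal B^{3/8}s)$ by $\chi(\mathcal B^{1/2}s)$), which keeps the tangential localization cost $O(\mathcal B)$ but is harmless because $\xi_0 s$ only contributes a phase and the ground state along $s$ is flat; with this choice $|a(x)-a(x_0)|=O(\mathcal B^{-1/2})$ on the support, giving the $O(\mathcal B^{1/2})$ in \eqref{eq:up-bnd-cst}. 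Combining \eqref{eq:kin-bnd-cst} and the corrected \eqref{eq:pot-bnd-cst} and bounding $\mu_{\mathcal B,\sigma}$ by the Rayleigh quotient of $\varphi_1$ yields
\[
\mu_{\mathcal B,\sigma}\le \Theta_0\,\mathcal B\,B_0(x_0)-\frac{\mathcal B}{\sigma}\,a(x_0)+O(\mathcal B^{1/2})=\frac{\mathcal B}{\sigma}\big(\sigma\,\Theta_0\,B_0(x_0)-a(x_0)\big)+O(\mathcal B^{1/2})\,,
\]
uniformly for $\sigma\in(0,\lambda_{\max}]$ since $1/\sigma\le 1/\lambda_{\min}$ there (or, if $\sigma$ is allowed near $0$, one keeps the factor $1/\sigma$ explicit in the error, which is still $O(\mathcal B^{1/2})$ after multiplying the $O(\mathcal B^{-1/2})$ Taylor error by $\mathcal B/\sigma$ and using boundedness of $\sigma$ away from issues—one simply states the estimate for $\sigma$ in a compact subinterval as in Proposition~\ref{prop:up-blk-cst}).

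The main obstacle is purely bookkeeping: tracking that every error term — curvature corrections $tk(s)$ in the metric and in the effective potential, the vector-potential approximation \eqref{F-A}, the commutators with the two cut-offs, and the Taylor remainder of $a$ — is $O(\mathcal B^{1/2})$ or smaller after the rescaling, and that the exponential decay of $u_0$ absorbs the seemingly dangerous cut-off contributions localized at $t\sim\mathcal B^{-3/8}$. All of this is routine once the boundary coordinates are set up and follows \cite[Subsection~2.4.2, Chapter~8]{FH1} almost verbatim, the only new point being the harmless $C^1$ pinning term handled by \eqref{eq:pot-bnd-cst}. No genuinely new idea is needed beyond the model-operator reduction already used in Proposition~\ref{prop:up-blk-cst}.
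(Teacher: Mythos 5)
Your overall route is the same as the paper's: the paper simply quotes the boundary trial function of Helffer--Morame / Fournais--Helffer for the kinetic term and repeats the Taylor argument of \eqref{upp:a-cst} for the pinning term, whereas you write out the de Gennes quasi-mode explicitly. The genuine gap is in your ``fix'' of the tangential scale. For a trial state of the form $\chi(s/\ell)\,e^{i\xi_0\sqrt{\mathcal B B_0(x_0)}\,s}\,u_0\bigl(\sqrt{\mathcal B B_0(x_0)}\,t\bigr)$ (times the gauge phase), the tangential part of the Rayleigh quotient contains the term
\[
\ell^{-2}\,\frac{\displaystyle\int \chi'(s/\ell)^2\,ds}{\displaystyle\int \chi(s/\ell)^2\,ds}\;=\;\ell^{-2}\,\frac{\|\chi'\|_{L^2}^2}{\|\chi\|_{L^2}^2}\,,
\]
and, unlike the normal cut-off, nothing damps it: the profile in $s$ is a pure phase, so the exponential decay of $u_0$ is irrelevant in that direction, and the flatness of the tangential profile only kills the cross term, not the $|\chi'|^2$ term. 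With your corrected choice $\ell=\mathcal B^{-1/2}$ this contributes $c\,\mathcal B$ with $c>0$ a fixed constant (at least of order $\pi^2/4$ for any cut-off equal to $1$ on half its support), i.e.\ an error of the \emph{same order as the leading term} $\Theta_0\,\mathcal B\,B_0(x_0)$. So the corrected trial state does not even recover the correct coefficient of $\mathcal B$, let alone the $O(\mathcal B^{1/2})$ remainder. Your original choice $\ell=\mathcal B^{-3/8}$ is not saved either: its tangential localization cost $\ell^{-2}=\mathcal B^{3/4}$ is likewise not ``exponentially damped by $u_0$'', so already your kinetic estimate carries an $O(\mathcal B^{3/4})$ error rather than $O(\mathcal B^{1/2})$.

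The tension is real and cannot be wished away: one needs $\ell\gtrsim\mathcal B^{-1/4}$ for the cut-off cost $\ell^{-2}$ to be $O(\mathcal B^{1/2})$, while the crude bound $|a(x)-a(x_0)|\leq L|x-x_0|$ on a tangential support of size $\ell$ produces an error $(\mathcal B/\sigma)\,O(\ell)\gtrsim O(\mathcal B^{3/4})$. Two legitimate resolutions: (i) take $\ell\sim\mathcal B^{-1/4}$ with $\chi$ even in $s$, so the linear tangential term of the Taylor expansion of $a$ integrates to zero against $|\chi|^2$ --- this recovers $o(\mathcal B^{3/4})$ and gives $O(\mathcal B^{1/2})$ only under slightly more regularity than $C^1$; or (ii) accept the remainder $O(\mathcal B^{3/4})=O(\kappa^{3/2})$, which is in fact all that the subsequent critical-field arguments (Propositions~\ref{prop:mu1<0} and \ref{prop:mu>0}) use. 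Note that the paper's own one-line proof, by invoking \eqref{upp:a-cst} verbatim, implicitly assumes the quoted trial function is supported in a disc of radius $\mathcal B^{-1/2}$ and thus skates over the same point; but your write-up turns that implicit assumption into an explicit incorrect claim, which must be repaired along the lines of (i) or (ii).
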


\begin{proof}
We recall the definition of $\mu_{\mathcal{B},\sigma}$ as follows:
$$
\mu_{\mathcal{B},\sigma}=\inf_{u\in H^{1}(\Omega)\setminus \{0\}} \left(\frac{\int_{\Omega}|(\nabla-i\mathcal{B}\Fb)u(x)|^{2}\,dx}{\int_{\Omega}|u(x)|^{2}\,dx}-\frac{\mathcal B}{\sigma}\,\frac{\int_{\Omega}a(x)|u(x)|^{2}\,dx}{\int_{\Omega}|u(x)|^{2}\,dx}\right)\,.
$$
The first term in the right hand side is studied by  Helffer-Morame (see \cite[Theorem~9.1]{HM} with $h=\mathcal{B}^{-1}$ and $\mu_{\mathcal{B},\sigma}=\frac{\mu^{(1)}(h)}{h^{2}}$) or Fournais-Helffer (see \cite[Section~9.2.1]{FH1}). They proved for any $x_0\in \partial \Omega$ the existence of $\mathcal B_0$ such that for $\mathcal{B}\geq \mathcal B_0$  one can construct a trial function $\widehat{u}$ such that,
$$
\frac{\int_{\Omega}|(\nabla-i\mathcal{B}\Fb)\widehat u(x)|^{2}\,dx}{\int_{\Omega}|\widehat u(x)|^{2}\,dx}\leq \mathcal{B}\,\Theta_{0}\,B_{0}(x_{0})+  \mathcal{O}(\mathcal{B}^{\frac{1}{2}})\,,\qquad{\rm as}~\mathcal{B}\to+\infty \,.
$$
The estimates of the second term in the right hand side are just as in \eqref{upp:a-cst} and this achieves the proof of the proposition.
\end{proof}

\begin{rem}\label{rem:second term}
$\partial \Omega$ being compact, we can choose $ x_0\in\partial\Omega$, such that,
$$
\sigma\,\Theta_{0}\,B_{0}(x_0)-a(x_0)=\inf_{x\in\partial\Omega}\left\{\sigma\,\Theta_{0}\,B_{0}(x)-a(x)\right\}\,,
$$
and we apply Proposition~\ref{prop:up-bnd-cst} with
$$
\mathcal{B}=\kappa H\quad{\rm and}\quad \sigma=\frac{H}{\kappa}\,,
$$
which implies under Assumption~\ref{cond:H-cst} that,
\begin{equation}\label{eq:up-mu1-cst2}
\mu_{1}(\kappa,H)\leq \kappa^{2}\,\inf_{ x\in\partial\Omega}\left\{\frac{H}{\kappa}\,\Theta_{0}\,B_{0}(x)-a(x)\right\}+\mathcal{O}(\kappa)\,,\qquad{\rm as}\,\kappa\to+\infty\,.
\end{equation}
\end{rem}

Remarks~\ref{rem:first term} and ~\ref{rem:second term} lead to the conclusion in:
\begin{theorem}\label{thm:mu1-cst-upp}
Let $\Omega\subset\R^2$ is an open bounded set with a smooth boundary and $\Gamma=\varnothing$. Suppose that \eqref{cond:H-cst} hold and $a\in C^1(\overline{\Omega})$, we have
$$
\mu_{1}(\kappa,H)\leq \kappa^{2}\,\Lambda_{1}\left(B_{0},a,\frac H \kappa\right)+  \mathcal{O}(\kappa)\,,\qquad{\rm as}\,\kappa\to+\infty\,.
$$
Here,  $\Lambda_{1}$ is introduced in \eqref{Lambda11}.
\end{theorem}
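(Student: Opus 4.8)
The plan is to obtain the upper bound by assembling the two local upper bounds already established, namely Proposition~\ref{prop:up-blk-cst} (a Gaussian quasi-mode localized at an interior point) and Proposition~\ref{prop:up-bnd-cst} (a boundary quasi-mode in the spirit of Helffer--Morame), together with a short case distinction dictated by which of the two quantities defining $\Lambda_{1}\left(B_{0},a,\tfrac{H}{\kappa}\right)$ realizes the minimum in \eqref{Lambda11}.

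First I would set $\mathcal{B}=\kappa H$ and $\sigma=\tfrac{H}{\kappa}$, so that $\mu_{1}(\kappa,H)=\mu_{\mathcal{B},\sigma}$ in the notation of \eqref{def:mu1-cst}, and observe that under \eqref{cond:H-cst} one has $\sigma\in(0,\lambda_{\max}]$ and $\mathcal{B}\le\lambda_{\max}\kappa^{2}$; in particular $\mathcal{B}\to+\infty$ and $\mathcal{B}^{1/2}=\mathcal{O}(\kappa)$, so every error term of the form $\mathcal{O}(\mathcal{B}^{1/2})$ appearing in Propositions~\ref{prop:up-blk-cst}--\ref{prop:up-bnd-cst} is $\mathcal{O}(\kappa)$. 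Note also that $\tfrac{\mathcal{B}}{\sigma}=\kappa^{2}$.

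Then I would distinguish two cases. If $\inf_{x\in\Omega}\{\sigma B_{0}(x)-a(x)\}\le\inf_{x\in\partial\Omega}\{\Theta_{0}\sigma B_{0}(x)-a(x)\}$, then, as explained in Remark~\ref{rem:first term} (using $\Theta_{0}<1$ to rule out that this infimum be attained on $\partial\Omega$), there exists an interior point $x_{0}\in\Omega$ with $\sigma B_{0}(x_{0})-a(x_{0})=\Lambda_{1}(B_{0},a,\sigma)$; applying Proposition~\ref{prop:up-blk-cst} at $x_{0}$ gives $\mu_{\mathcal{B},\sigma}\le\tfrac{\mathcal{B}}{\sigma}\Lambda_{1}(B_{0},a,\sigma)+C\mathcal{B}^{1/2}=\kappa^{2}\Lambda_{1}\left(B_{0},a,\tfrac{H}{\kappa}\right)+\mathcal{O}(\kappa)$. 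In the complementary case, $\partial\Omega$ being compact one selects $x_{0}\in\partial\Omega$ with $\Theta_{0}\sigma B_{0}(x_{0})-a(x_{0})=\Lambda_{1}(B_{0},a,\sigma)$, and Proposition~\ref{prop:up-bnd-cst} at $x_{0}$ yields the same bound (cf. Remark~\ref{rem:second term}). In both cases we conclude $\mu_{1}(\kappa,H)\le\kappa^{2}\Lambda_{1}\left(B_{0},a,\tfrac{H}{\kappa}\right)+\mathcal{O}(\kappa)$ as $\kappa\to+\infty$.

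I do not expect a genuine obstacle here: the substantive work is entirely contained in Propositions~\ref{prop:up-blk-cst} and \ref{prop:up-bnd-cst}, and the only mildly delicate point is the elementary observation of Remark~\ref{rem:first term}, ensuring that when the bulk quantity attains the minimum it does so at an interior point, so that an admissible ($H^{1}$, not necessarily vanishing on $\partial\Omega$) test function concentrated there is available; after that the proof is just bookkeeping of the error terms and the identity $\tfrac{\mathcal{B}}{\sigma}=\kappa^{2}$.
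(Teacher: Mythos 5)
Your proposal is correct and is essentially the paper's own argument: the paper deduces the theorem directly from Remarks~\ref{rem:first term} and~\ref{rem:second term}, which contain exactly your case distinction (interior point via Proposition~\ref{prop:up-blk-cst} when the bulk quantity realizes the minimum, boundary point via Proposition~\ref{prop:up-bnd-cst} otherwise), with $\mathcal{B}=\kappa H$, $\sigma=H/\kappa$, $\mathcal{B}/\sigma=\kappa^{2}$ and $\mathcal{B}^{1/2}=\mathcal{O}(\kappa)$. The bookkeeping of error terms and the use of $\Theta_{0}<1$ to place the bulk minimizer in the interior are exactly as in the paper, so there is nothing to add.
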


Notice that the conclusion in Theorem~\ref{thm:mu1-cst-upp} is valid under the assumption $ \kappa H\geq \mathcal B_0$ with  $\mathcal B_0 >0$
sufficiently large. Lemma~\ref{lem-H=kappa} below  takes care of the
regime where $\kappa H=\mathcal O(1)$.

\begin{lem}\label{lem-H=kappa}
Let $C_{\max}>0$. Suppose that $\{a>0\}\not=\emptyset$. There exists a constant $\kappa_0>0$ such that, if
$$\kappa\geq \kappa_0\quad{\rm and}\quad 0\leq H\leq C_{\max}\kappa^{-1}\,,$$
then
$$\mu_1(\kappa,H)<0\,.$$
\end{lem}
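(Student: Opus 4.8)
The plan is to exhibit a single test function $\phi\in H^1(\Omega)$ for which the Rayleigh quotient $\mathcal{Q}_{\kappa H\Fb,-\kappa^2 a}^{\Omega}(\phi)/\|\phi\|_{L^2(\Omega)}^2$ is strictly negative for all large $\kappa$, which by \eqref{def:mu1} forces $\mu_1(\kappa,H)<0$. Since $\{a>0\}\neq\varnothing$ and $a\in C^1(\overline{\Omega})$, we may fix a point $x_1\in\Omega$ and radii $0<r_0$ with $\overline{D(x_1,2r_0)}\subset\Omega\cap\{a>a_1\}$ for some constant $a_1>0$. The test function I would take is simply a fixed smooth bump: let $\phi\in C_c^\infty(D(x_1,2r_0))$ with $\phi\equiv 1$ on $D(x_1,r_0)$ and $0\leq\phi\leq 1$, rescaled once and for all independently of $\kappa$ and $H$ (no concentration is needed here because $H$ is tiny).

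First I would estimate the magnetic gradient term. We have
\begin{align*}
\int_\Omega |(\nabla-i\kappa H\Fb)\phi|^2\,dx \leq 2\int_\Omega |\nabla\phi|^2\,dx + 2(\kappa H)^2\int_\Omega |\Fb|^2|\phi|^2\,dx\,.
\end{align*}
Since $\Fb\in H^1_{\Div}(\Omega)\subset L^\infty(\Omega)$ is fixed and $\phi$ is fixed, the first term is a constant $C_\phi$ and the second is bounded by $C_\phi'(\kappa H)^2 \leq C_\phi' C_{\max}^2$, using the hypothesis $H\leq C_{\max}\kappa^{-1}$. Hence the gradient term is bounded by a constant $C_2$ independent of $\kappa$ and $H$. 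On the other hand, the potential term satisfies
\begin{align*}
-\kappa^2\int_\Omega a(x,\kappa)|\phi|^2\,dx \leq -\kappa^2 a_1\int_{D(x_1,r_0)}|\phi|^2\,dx = -\kappa^2 a_1\, c_0\,,
\end{align*}
where $c_0 = |D(x_1,r_0)|>0$. Note that $a(x,\kappa)\geq a_1$ on $D(x_1,2r_0)$ is being used, which is true since here $a$ is $\kappa$-independent by the standing assumption of this section (the statement of the lemma, and of Theorem~\ref{thm:HC3}, is in the regime $a(x,\kappa)=a(x)$). Combining,
\begin{align*}
\mathcal{Q}_{\kappa H\Fb,-\kappa^2 a}^{\Omega}(\phi) \leq C_2 - a_1 c_0\,\kappa^2\,,
\end{align*}
and since $\|\phi\|_{L^2(\Omega)}^2>0$ is a fixed positive number, the Rayleigh quotient is negative as soon as $\kappa^2 > C_2/(a_1 c_0)$, i.e. for $\kappa\geq\kappa_0$ with $\kappa_0$ depending only on $C_{\max}$, $a$, and $\Omega$. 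This gives $\mu_1(\kappa,H)<0$, as claimed.

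The argument is essentially elementary and there is no genuine obstacle; the only point requiring a little care is making sure the magnetic term stays $O(1)$, which is exactly where the hypothesis $H=O(\kappa^{-1})$ (rather than $H$ of order $\kappa$ as in \eqref{cond-H}) is used — it kills the would-be $(\kappa H)^2$ growth and leaves the large negative $-\kappa^2 a$ contribution unopposed. One should also observe that the same choice of $\phi$ works uniformly for $H\in[0,C_{\max}\kappa^{-1}]$, so a single $\kappa_0$ suffices across that whole range of $H$.
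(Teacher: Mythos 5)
Your proof is correct and follows essentially the same strategy as the paper's: plug a test function localized near an interior point where $a>0$ into the Rayleigh quotient \eqref{def:mu1} and use $\kappa H\leq C_{\max}$ to keep the magnetic term bounded while the potential contributes $-c\,\kappa^2$. The one variation is that you use a fixed bump and bound $a$ from below by a constant $a_1>0$ on its support, whereas the paper uses a cut-off on a ball of radius $\ell=\kappa^{-1/2}$ and Taylor's formula for $a$ at the center; your version is slightly cleaner since it avoids having to shrink the support to beat the Taylor remainder. One justification in your write-up is wrong, though harmlessly so: in two dimensions $H^1(\Omega)$ does \emph{not} embed into $L^\infty(\Omega)$, so the inclusion $H^1_{\Div}(\Omega)\subset L^\infty(\Omega)$ you invoke is false in general. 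What saves the step is that the specific $\Fb$ here is smooth on $\overline{\Omega}$ (by elliptic regularity for the $\curl$--$\Div$ system with $B_0\in C^\infty$, a fact the paper itself uses), or more simply that $\int_\Omega|\Fb|^2|\phi|^2\,dx\leq\|\phi\|_\infty^2\|\Fb\|_{L^2(\Omega)}^2<\infty$ already suffices for your bound; you should replace the embedding claim by one of these observations.
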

\begin{rem}\label{rem:h=0}
The conclusion in Lemma~\ref{lem-H=kappa} is valid in both cases
where $\Gamma=\emptyset$ and $\Gamma\not=\emptyset$.
\end{rem}
\begin{proof}[Proof of Lemma~\ref{lem-H=kappa}]~\\
Let $\ell>0$. Choose $x_0\in\Omega$ such that $a(x_0)>0$. We
introduce a cut-off function $\chi_{\ell}\in C_{c}^{\infty}(\R^{2})$
satisfying:
\begin{equation}\label{def:chil}
0\leq \chi_{\ell} \leq 1~{\rm in}~\R^{2}\,,\quad {\rm supp}\chi_{\ell}\subset B(x_{0},\ell)\,,\quad \chi_{\ell}=1~{\rm in}~B\left(x_{0},\ell/2\right)\quad{\rm and}\quad |\nabla\chi_{\ell}|\leq \frac{C}{\ell}\,.
\end{equation}
The min-max principle yields,
$$
\mu^{(1)}(\kappa,H)\|\chi_{\ell}\|_{L^{2}(\Omega)}^{2}\leq \int_{\Omega}|(\nabla-i\kappa H\Fb)\chi_{\ell}|^{2}\,dx-\kappa^{2}\int_{\Omega}a(x)|\chi_{\ell}(x)|^{2}\,dx\,.
$$
Using the assumptions on $\chi_{\ell}$ and the fact that $\Fb\in C^{\infty}(\overline{\Omega})$, a trivial estimate is,
\begin{align}\label{up:F-varphi3}
\int_{\Omega}|(\nabla-i\kappa H\Fb)\chi_{\ell}|^{2}\,dx&=\int_{B(x_{0},\ell)}|\nabla\chi_{\ell}(x)|^{2}\,dx+\kappa^{2}H^{2}\int_{B(x_{0},\ell)}|\Fb\,\chi_{\ell}(x)|^{2}\,dx\nonumber\\
&\leq C\,(1+(\kappa\,H\,\ell)^{2})\,.
\end{align}
We write by Taylor's formula applied to the function $a$ near $x_0$,
\begin{equation}\label{up:a-varphi3}
-\kappa^{2}\int_{\Omega}a(x)|\chi_{\ell}(x)|^{2}\,dx\leq -a(x_0)\,\kappa^{2}\,\ell^{2}+C\,\kappa^{2}\,\ell^{3}\,.
\end{equation}
Collecting \eqref{up:F-varphi3} and \eqref{up:a-varphi3}, we obtain,
$$
\mu^{(1)}(\kappa,H)\|\chi_{\ell}\|_{L^{2}(\Omega)}^{2}\leq-a(x_0)\,\kappa^{2}\,\ell^{2}+C(\kappa^{2}\,\ell^{3}+1+(\kappa\,H\,\ell)^{2})\,.
$$
We select $\ell=\kappa^{-\frac{1}{2}}$ and note that $\kappa H<C_{\max}$.  We find that,
$$
\mu^{(1)}(\kappa,H)\|\chi_{\ell}\|_{L^{2}(\Omega)}^{2}\leq-a(x_0)\,\kappa+C\Big(\kappa^{\frac{1}{2}}+1+C_{\max}^{2}\kappa^{-1}\Big)\,.
$$
Since $\chi_\ell\not=0$ and $a(x_0)>0$, we deduce that, for $\kappa$
sufficiently large,
$$
\mu^{(1)}(\kappa,H)<0\,.
$$

\end{proof}

\section{Proof of Theorem~\ref{thm:HC3}}\label{10}
\subsection{Analysis of $\underline{H}_{C_3}^{loc}$ and $\overline{H}_{C_3}^{loc}$.}~\\
 In this subsection we give a lower bound of the critical field $\underline{H}_{C_3}^{loc}$ (see \eqref{def:HC3-u}) and we give an upper bound of the critical field $\overline{H}_{C_3}^{loc}$ in the case when the magnetic field $B_0$ is constant with a pining term.
\begin{prop}\label{prop:mu1<0}
Suppose that $\{a>0\}\neq\varnothing$ and $\Gamma=\varnothing$.  There exist constants $C>0$ and $\kappa_{0}\geq 0$ such that if
\begin{equation}\label{cond:HC3}
\kappa\geq \kappa_{0}\,,\qquad H\leq \kappa\,\max\left(\sup_{x\in\Omega}\frac{a(x)}{B_{0}(x)},\sup_{x\in\partial\Omega}\frac{a(x)}{\Theta_{0}\,B_{0}(x)}\right)-C\,\kappa^{\frac{1}{2}}\,,
\end{equation}
then,
$$
\mu_{1}(\kappa,H)<0\,.
$$
Moreover,
$$
 \kappa\,\max\left(\sup_{x\in\Omega}\frac{a(x)}{B_{0}(x)},\sup_{x\in\partial\Omega}\frac{a(x)}{\Theta_{0}\,B_{0}(x)}\right)-C\,\kappa^{\frac{1}{2}}\leq \underline{H}_{C_{3}}^{loc}\,.
$$
\end{prop}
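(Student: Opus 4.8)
The plan is to establish the eigenvalue inequality $\mu_{1}(\kappa,H)<0$ under the hypothesis \eqref{cond:HC3} by combining the upper bound for $\mu_1(\kappa,H)$ proved above (Theorem~\ref{thm:mu1-cst-upp}) in the regime $\kappa H\geq \mathcal B_0$ with Lemma~\ref{lem-H=kappa} in the complementary regime $\kappa H=\mathcal O(1)$; the second assertion then follows immediately from the definition of $\underline{H}_{C_3}^{loc}$ in \eqref{def:HC3-u} and \eqref{def:Nloc}. First I would fix $C>0$ (to be adjusted at the end) and let $H$ satisfy \eqref{cond:HC3}. If $\kappa H \leq \mathcal B_0$ for the constant $\mathcal B_0$ appearing in Theorem~\ref{thm:mu1-cst-upp}, then $0\leq H\leq \mathcal B_0\kappa^{-1}$ and Lemma~\ref{lem-H=kappa} (applicable since $\{a>0\}\neq\varnothing$) gives $\mu_1(\kappa,H)<0$ directly for $\kappa$ large. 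So the substantive case is $\kappa H\geq \mathcal B_0$, where Theorem~\ref{thm:mu1-cst-upp} applies and yields
\begin{equation}\label{eq:upper-used}
\mu_{1}(\kappa,H)\leq \kappa^{2}\,\Lambda_{1}\left(B_{0},a,\tfrac{H}{\kappa}\right)+\mathcal{O}(\kappa)\,,\qquad\text{as }\kappa\to+\infty\,,
\end{equation}
with $\Lambda_1$ as in \eqref{Lambda11}.

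Next I would translate the hypothesis on $H$ into a statement about the sign of $\Lambda_1\!\left(B_0,a,\tfrac H\kappa\right)$. Writing $\sigma=H/\kappa$, the condition in \eqref{cond:HC3} reads
$$
\sigma \leq \max\left(\sup_{x\in\Omega}\frac{a(x)}{B_{0}(x)},\ \sup_{x\in\partial\Omega}\frac{a(x)}{\Theta_{0}\,B_{0}(x)}\right)-C\,\kappa^{-\frac{1}{2}}\,.
$$
Observe that $\sigma \leq \sup_{x\in\Omega} a(x)/B_0(x)$ is equivalent to $\inf_{x\in\Omega}\{\sigma B_0(x)-a(x)\}\leq 0$, and similarly $\sigma\leq\sup_{x\in\partial\Omega} a(x)/(\Theta_0 B_0(x))$ is equivalent to $\inf_{x\in\partial\Omega}\{\Theta_0\sigma B_0(x)-a(x)\}\leq 0$; hence the max on the right is the largest $\sigma$ for which $\Lambda_1(B_0,a,\sigma)\leq 0$. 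Since $B_0>0$ on $\overline\Omega$ is bounded below by a positive constant and $a,B_0\in C^1(\overline\Omega)$, the function $\sigma\mapsto\Lambda_1(B_0,a,\sigma)$ is Lipschitz (indeed piecewise affine, being a min of finitely many affine functions after the infima are attained on the compact sets $\overline\Omega$, $\partial\Omega$) with a strictly positive lower bound on its slope near the threshold; therefore there is a constant $c_0>0$, depending only on $B_0$ and $\Omega$, such that the gap of $\kappa^{-1/2}$ in \eqref{cond:HC3} forces
$$
\Lambda_{1}\!\left(B_{0},a,\tfrac H\kappa\right)\leq -c_0\,C\,\kappa^{-\frac{1}{2}}\,.
$$

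Plugging this into \eqref{eq:upper-used} gives $\mu_1(\kappa,H)\leq -c_0 C\,\kappa^{3/2}+\mathcal{O}(\kappa)$, which is strictly negative once $C$ is chosen large enough (absorbing the $\mathcal O(\kappa)$ error and the implicit constant) and $\kappa\geq\kappa_0$. This proves $\mu_1(\kappa,H)<0$ in all cases. Finally, by definition $\mathcal N^{\mathrm{loc}}(\kappa)=\{H>0:\mu_1(\kappa,H)<0\}$, so the set of $H$ satisfying \eqref{cond:HC3} is contained in $\mathcal N^{\mathrm{loc}}(\kappa)$; since $\underline{H}_{C_3}^{\mathrm{loc}}=\inf(\R_+\setminus\mathcal N^{\mathrm{loc}}(\kappa))$ and (by the boundedness noted after \eqref{def:N}, together with monotonicity considerations) $\mathcal N^{\mathrm{loc}}(\kappa)$ contains the interval $(0,\kappa\max(\cdots)-C\kappa^{1/2}]$, we conclude
$$
\kappa\,\max\left(\sup_{x\in\Omega}\frac{a(x)}{B_{0}(x)},\ \sup_{x\in\partial\Omega}\frac{a(x)}{\Theta_{0}\,B_{0}(x)}\right)-C\,\kappa^{\frac{1}{2}}\leq \underline{H}_{C_{3}}^{loc}\,.
$$
The main obstacle is the middle step: making precise that the $\kappa^{-1/2}$-sized retreat in $H$ produces a $\kappa^{-1/2}$-sized (hence, after multiplication by $\kappa^2$, a $\kappa^{3/2}$-sized) negative value of $\Lambda_1$, which requires a quantitative non-degeneracy of the piecewise-affine function $\sigma\mapsto\Lambda_1(B_0,a,\sigma)$ near its zero — this is where one uses $\inf_{\overline\Omega}B_0>0$ in an essential way, and one must also be slightly careful about which of the two suprema realizes the maximum and whether the relevant infimum in $\Lambda_1$ is attained in the interior or on the boundary (cf.\ Remark~\ref{rem:first term}).
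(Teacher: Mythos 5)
Your proposal is correct and follows essentially the same route as the paper: the regime $\kappa H=\mathcal O(1)$ is handled by Lemma~\ref{lem-H=kappa}, the regime $\kappa H\geq\mathcal B_0$ by the upper bound $\mu_1\leq\kappa^2\Lambda_1(B_0,a,H/\kappa)+\mathcal O(\kappa)$, and the hypothesis \eqref{cond:HC3} is converted into $\Lambda_1\leq -c_0C\kappa^{-1/2}$ so that the error is absorbed for $C$ large; the paper realizes this last step through a two-case analysis according to which supremum attains the max, evaluating at the maximizing point. The one imprecision is your justification of the key quantitative step: $\sigma\mapsto\Lambda_1(B_0,a,\sigma)$ is an infimum over a \emph{continuum} of affine functions of $\sigma$, hence concave and Lipschitz but not ``piecewise affine, being a min of finitely many affine functions''; fortunately no such structure is needed, since evaluating the relevant infimum at (a point arbitrarily close to) the maximizer of $a/B_0$ (resp.\ $a/(\Theta_0 B_0)$) gives directly $\Lambda_1(\sigma)\leq B_0(x_0)(\sigma-\sigma^*)\leq -\big(\min(b_0,\Theta_0 b_0')\big)\,C\,\kappa^{-1/2}$, using $\inf_{\overline\Omega}B_0>0$ — which is exactly the computation the paper performs.
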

\begin{proof}
To apply the previous results, we  take
$$\lambda_{max} = \max\left(\sup_{x\in\Omega}\frac{a(x)}{B_{0}(x)},\sup_{x\in\partial\Omega}\frac{a(x)}{\Theta_{0}\,B_{0}(x)}\right) +1\,.
$$
We have two cases:\\
\textbf{Case 1.} If
$$\sup_{x\in\Omega}\frac{a(x)}{B_{0}(x)}>\sup_{x\in\partial\Omega}\frac{a(x)}{\Theta_{0}\,B_{0}(x)}\,.$$
then, there exists $x_0\in\Omega$ (the supremum of $\frac{a(x)}{B_{0}(x)}$ can not be attained on the boundary, since $\frac{a(x)}{\Theta_{0}\,B_{0}(x)}>\frac{a(x)}{B_{0}(x)}$), such that,
$$
\sup_{x\in\Omega}\frac{a(x)}{B_{0}(x)} = \frac{a(x_0)}{B_{0}(x_0)} \,.
$$
If \eqref{cond:HC3} is satisfied for some $C>0$, then,
$$
\frac{H}{\kappa}\leq \frac{a(x_0)}{B_{0}(x_0)}-C\,\kappa^{-\frac{1}{2}}\,,
$$
that we can write in the form,
$$
\kappa^{2}\left(\frac{H}{\kappa}B_{0}(x_0)-a(x_0)\right)\leq -C\,M \,\kappa^{\frac{3}{2}}\,,
$$
where $M>0$ is a constant independent of $C$.\\
 Suppose that $\kappa H\geq \mathcal B_0$ where $\mathcal B_0$ is selected sufficiently large such that we can apply Remark~\ref{rem:first term}. (Thanks to Lemma~\ref{lem-H=kappa}, $\mu_1(\kappa,H)<0$ when $\kappa H<\mathcal B_0$).\\
Remark~\ref{rem:first term} tells us that there exist positive
constants $C_{1}$ and $\kappa_{0}$ such that, for $\kappa\geq
\kappa_{0}$,
\begin{align}
\mu_{1}(\kappa,H)&\leq \kappa^{2}\inf_{x\in\Omega} \left(\frac{H}{\kappa}B_{0}(x)-a(x)\right)+C_{1}\,\kappa\nonumber\\
&\leq  \kappa^{2}\left(\frac{H}{\kappa}B_{0}(x_0)-a(x_0)\right)+C_{1}\,\kappa^{\frac{3}{2}}\\
&\leq (C_1-C\,M) \,\kappa^\frac 32 \,.
\end{align}
By choosing $C$ such that $C\,M> C_{1}$, we get,
$$
\mu_{1}(\kappa,H)<0\,.
$$
\textbf{Case 2.} Here, we suppose that
$$\sup_{x\in\partial\Omega}\frac{a(x)}{\Theta_{0}\,B_{0}(x)}\geq \sup_{x\in\Omega}\frac{a(x)}{B_{0}(x)}\,.$$
By compactness, there exists $x'_0\in\partial\Omega$, such that,
$$\sup_{x\in\partial\Omega}\frac{a(x)}{\Theta_{0}\,B_{0}(x)}\ = \frac{a(x'_0)}{\Theta_{0}\,B_{0}(x'_0)}
$$
If \eqref{cond:HC3} is satisfied for some $C>0$, then,
$$
\kappa^{2}\left(\frac{H}{\kappa}\Theta_{0}\,B_{0}(x'_0)-a(x'_0)\right)\leq -C\,M'\,\kappa^{\frac{3}{2}}\,.
$$
Thanks to Remark~\ref{rem:second term}, we get the existence of positive constants $C_{2}$ and $\kappa_{0}$ such that, for $\kappa\geq \kappa_0$,
\begin{align}
\mu_{1}(\kappa,H)&\leq \kappa^{2}\inf_{x\in\partial\Omega} \left(\frac{H}{\kappa}\,\Theta_{0}\,B_{0}(x)-a(x)\right)+C_{2}\,\kappa\nonumber\\
&\leq  \kappa^{2}\left(\frac{H}{\kappa}\,\Theta_{0}\,B_{0}(x'_0)-a(x'_0)\right)+C_{2}\,\kappa^{\frac{3}{2}}\\
&\leq (C_2-C\,M')\,\kappa^\frac 32\,.
\end{align}
By choosing $C$ such that $C\,M'> C_{2}$, we get,
$$
\mu_{1}(\kappa,H)<0\,.
$$
This finishes the proof of the proposition.
\end{proof}
\begin{prop}\label{prop:mu>0}
Suppose that $\{a>0\}\neq\varnothing$, $\lambda_{\max} >0$ and $\Gamma=\varnothing$. There exist constants $C>0$ and $\kappa_{0} > 0$ such that if
\begin{equation}\label{cond:HC3-2}
\kappa\geq \kappa_{0}\,,\qquad \lambda_{\max}\,\kappa\geq H>\kappa\,\max\left(\sup_{x\in\Omega}\frac{a(x)}{B_{0}(x)},\sup_{x\in\partial\Omega}\frac{a(x)}{\Theta_{0}\,B_{0}(x)}\right)+C\,\kappa^{\frac{1}{2}}\,,
\end{equation}
then,
$$
\mu_{1}(\kappa,H)>0\,.
$$
 Moreover,
$$
\overline{H}_{C_{3}}^{loc}\leq\kappa\,\max\left(\sup_{x\in\Omega}\frac{a(x)}{B_{0}(x)},\sup_{x\in\partial\Omega}\frac{a(x)}{\Theta_{0}\,B_{0}(x)}\right)+C\,\kappa^{\frac{1}{2}}\,.
$$
\end{prop}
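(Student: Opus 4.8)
The plan is to obtain $\mu_{1}(\kappa,H)>0$ as a direct consequence of the lower bound of Theorem~\ref{thm:mu1-cst}, supplemented by the Giorgi--Phillips type statement of Theorem~\ref{thm:GP2} to dispose of very large fields. Throughout I write $\sigma=H/\kappa$ and $m=\max\bigl(\sup_{x\in\Omega}a(x)/B_{0}(x),\,\sup_{x\in\partial\Omega}a(x)/(\Theta_{0}B_{0}(x))\bigr)$, which is positive since $\{a>0\}\neq\varnothing$ and $B_{0}>0$; dividing by $\kappa$, condition \eqref{cond:HC3-2} becomes $m+C\kappa^{-1/2}<\sigma\le\lambda_{\max}$.

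First I would bound $\Lambda_{1}(B_{0},a,\sigma)$ from \eqref{Lambda11} below. Since $\sigma>m\ge\sup_{\overline{\Omega}}(a/B_{0})$, for every $x\in\overline{\Omega}$ we have $\sigma B_{0}(x)-a(x)=\bigl(\sigma-a(x)/B_{0}(x)\bigr)B_{0}(x)\ge(\sigma-m)\,b_{0}$, and, for $x\in\partial\Omega$, using $\sigma>m\ge\sup_{\partial\Omega}(a/(\Theta_{0}B_{0}))$, $\Theta_{0}\sigma B_{0}(x)-a(x)=\Theta_{0}B_{0}(x)\bigl(\sigma-a(x)/(\Theta_{0}B_{0}(x))\bigr)\ge\Theta_{0}b_{0}'(\sigma-m)$, with $b_{0},b_{0}'$ as in \eqref{inf:B0}. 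Hence $\Lambda_{1}(B_{0},a,\sigma)\ge\gamma(\sigma-m)>\gamma\,C\,\kappa^{-1/2}$, where $\gamma=\min(b_{0},\Theta_{0}b_{0}')>0$, uniformly in the admissible range of $\sigma$. Feeding this into Theorem~\ref{thm:mu1-cst} (applicable since $\sigma$ stays in the fixed compact interval $[m,\lambda_{\max}]$ and $\kappa H=\sigma\kappa^{2}\to+\infty$) produces a constant $C_{3}>0$, \emph{independent of} $C$, with $\mu_{1}(\kappa,H)\ge\kappa^{2}\Lambda_{1}(B_{0},a,\sigma)-C_{3}\kappa^{3/2}\ge(\gamma C-C_{3})\kappa^{3/2}$. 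Fixing $C>C_{3}/\gamma$ at the outset gives $\mu_{1}(\kappa,H)>0$ for all $\kappa\ge\kappa_{0}$ and all $H$ obeying \eqref{cond:HC3-2}, so no such $H$ lies in $\mathcal{N}^{\rm loc}(\kappa)$ (see \eqref{def:Nloc}).

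To reach the bound on $\overline{H}_{C_{3}}^{\rm loc}$ I must still exclude $H>\lambda_{\max}\kappa$. If $\mu_{1}(\kappa,H)<0$, then testing $\mathcal{E}_{\kappa,H,a,B_{0}}$ on $(\varepsilon\phi,\Fb)$ with $\phi$ a ground state of the quadratic form in \eqref{Quad} yields $\mathcal{E}_{\kappa,H,a,B_{0}}(\varepsilon\phi,\Fb)=\mathcal{E}_{\kappa,H,a,B_{0}}(0,\Fb)+\varepsilon^{2}\mu_{1}(\kappa,H)+\mathcal{O}(\varepsilon^{4})<\mathcal{E}_{\kappa,H,a,B_{0}}(0,\Fb)$ for small $\varepsilon$; since minimizers exist this forces a non-normal minimizer, hence $H\in\mathcal{N}^{\rm cp}(\kappa)$ (see \eqref{def:Ncp}), which by Theorem~\ref{thm:GP2} is impossible once $H\ge C''\kappa$, $\kappa\ge\kappa_{0}$. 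Because Theorem~\ref{thm:mu1-cst} is valid for $\sigma$ in \emph{any} fixed compact subinterval of $(0,+\infty)$, the argument of the preceding paragraph applies verbatim with $\lambda_{\max}$ replaced by $\max(\lambda_{\max},C'')$; combining the two regimes gives $\mathcal{N}^{\rm loc}(\kappa)\cap(\kappa m+C\kappa^{1/2},+\infty)=\varnothing$, that is $\overline{H}_{C_{3}}^{\rm loc}(\kappa)\le\kappa\,m+C\,\kappa^{1/2}$ by \eqref{def:HC3-u}.

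I expect the only delicate point to be the bookkeeping of constants rather than any analytic difficulty: one must be sure that $C_{3}$ (coming from Theorem~\ref{thm:mu1-cst}) and $\gamma$ do not depend on the free constant $C$, so that $C$ can be chosen large enough to make $\gamma C-C_{3}>0$, and one must make the two field ranges $\{H\le\lambda_{\max}\kappa\}$ and $\{H\ge C''\kappa\}$ overlap. All the substantive inputs --- the spectral lower bound and the Giorgi--Phillips type non-existence result --- are already available.
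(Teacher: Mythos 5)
Your proof of the core statement is correct and follows essentially the same route as the paper: under \eqref{cond:HC3-2} you bound $\Lambda_{1}(B_{0},a,H/\kappa)$ from below by a multiple of $C\kappa^{-1/2}$ (your version, keeping the factor $\gamma=\min(b_{0},\Theta_{0}b_{0}')$ explicit, is in fact slightly more careful than the paper's, which silently absorbs $B_{0}(x)$ into the constant), then invoke Theorem~\ref{thm:mu1-cst} and choose $C$ large relative to the remainder constant. The one place you go beyond the paper is the second paragraph: the paper deduces $\overline{H}_{C_{3}}^{\rm loc}\leq\kappa m+C\kappa^{1/2}$ directly from the positivity of $\mu_{1}$ under \eqref{cond:HC3-2}, leaving the regime $H>\lambda_{\max}\kappa$ implicit, whereas you dispose of it explicitly by showing that $\mu_{1}(\kappa,H)<0$ would force a non-normal global minimizer (hence a non-normal critical point), contradicting Theorem~\ref{thm:GP2} for $H\geq C''\kappa$. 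This supplement is legitimate and arguably needed for the stated bound on $\overline{H}_{C_{3}}^{\rm loc}$ to be complete; the expansion $\mathcal E_{\kappa,H,a,B_{0}}(\varepsilon\phi,\Fb)=\mathcal E_{\kappa,H,a,B_{0}}(0,\Fb)+\varepsilon^{2}\mu_{1}(\kappa,H)+\mathcal O(\varepsilon^{4})$ you use there is exactly the computation the paper performs in Proposition~\ref{prop:cp}, so no new machinery is required.
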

\begin{proof}
To apply the previous results, we  take
$$\lambda_{min} = \frac 12 \max\left(\sup_{x\in\Omega}\frac{a(x)}{B_{0}(x)},\sup_{x\in\partial\Omega}\frac{a(x)}{\Theta_{0}\,B_{0}(x)}\right) \,.$$
If \eqref{cond:HC3-2} holds for some $C>0$, then, for any  $x\in\Omega$, we have,
\begin{equation}\label{eq:bulk}
\frac{H}{\kappa}B_{0}(x)-a(x)\geq C\,\kappa^{-\frac{1}{2}}\,,
\end{equation}
and, for any $x'\in\partial\Omega$, we have,
\begin{equation}\label{eq:bnd}
\frac{H}{\kappa}\Theta_{0}B_{0}(x')-a(x')\geq C\,\kappa^{-\frac{1}{2}}\,.
\end{equation}
 Having in mind the definition of $\Lambda_1$ in \eqref{Lambda11}, the estimates in \eqref{eq:bulk} and in \eqref{eq:bnd} give us that for $\kappa$ large enough,
$$
\Lambda_{1}\left(B_0,a,\frac{H}{\kappa}\right)\geq C\,\kappa^{-\frac{1}{2}}\,.
$$
Thanks to Theorem~\ref{thm:mu1-cst}, we get the existence of positive constants $C'$ and $\kappa_0$ such that, for $\kappa\geq\kappa_0$,
\begin{align}
\mu_{1}(\kappa,H)&\geq \kappa^{2}\,\Lambda_{1}\left(B_{0},a,\frac H \kappa\right)-C'\,\kappa^{\frac{3}{2}}\nonumber\\
&\geq (C-C')\,\kappa^{\frac{3}{2}}\,.
\end{align}
To finish this proof, we choose $C>C'$.
\end{proof}
As a consequence, we have proved Theorem~\ref{thm:HC3} for $\underline{H}_{C_3}^{loc}$ and $\overline{H}_{C_3}^{loc}$
\subsection{Analysis of $\underline{H}_{C_3}^{cp}$ and $\overline{H}_{C_3}^{cp}$.}~\\
 In this subsection we give a lower bound of the critical field $\underline{H}_{C_3}^{cp}$ (see \eqref{def:HC3-o}) and we give an upper bound of the critical field $\overline{H}_{C_3}^{cp}$ in the case when the magnetic field $B_0$ is constant with a pining term. We start with a proposition which measures the effect of the localization at the boundary when $H$ is sufficiently large.
\begin{prop}\label{lem:psi-2<4}
Suppose that $\Gamma=\varnothing$ and \eqref{cond:HC3-2} holds. There exists a positive constant $C$, such that if $(\psi,\Ab)$ is a solution of \eqref{eq-2D-GLeq}, then the following estimate holds:
\begin{equation}
\|\psi\|^{2}_{L^{2}(\Omega)}\leq C\,\kappa^{-\frac{3}{8}}\|\psi\|^{2}_{L^{4}(\Omega)}\,.
\end{equation}
\end{prop}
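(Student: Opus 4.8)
The plan is to follow the scheme used by Fournais--Helffer for a constant pinning term (see \cite{FH1}), adapted to the variable pinning here. First, if $\overline a\le 0$ then Proposition~\ref{prop-psi<a} forces $\psi\equiv 0$ and the estimate is trivial, so assume $\overline a>0$; then $0\le \overline a-|\psi|^{2}\le\overline a$ on $\overline\Omega$ by \eqref{eq-psi<a}. Multiplying $\eqref{eq-2D-GLeq}_{a}$ by $\overline\psi$, integrating over $\Omega$ and using $\eqref{eq-2D-GLeq}_{c}$ as in \eqref{eq:A-psi}, one gets the identity
\[
\int_{\Omega}|(\nabla-i\kappa H\Ab)\psi|^{2}\,dx+\kappa^{2}\int_{\Omega}|\psi|^{4}\,dx=\kappa^{2}\int_{\Omega}a(x)|\psi|^{2}\,dx .
\]
The idea is to replace $\Ab$ by $\Fb$ in the magnetic term and invoke a spectral lower bound for $P^{\Omega}_{\kappa H\Fb,0}$ that separates the bulk ($\Theta_{0}$ absent) from a thin boundary layer ($\Theta_{0}$ present), thereby pushing the $L^{2}$--mass of $\psi$ into that layer.

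For $\epsilon\in(0,1)$, writing $(\nabla-i\kappa H\Ab)\psi=(\nabla-i\kappa H\Fb)\psi-i\kappa H(\Ab-\Fb)\psi$ and using the a priori estimate \eqref{5d-<} of Theorem~\ref{thm-2D-apriori}, one obtains
\[
\int_{\Omega}|(\nabla-i\kappa H\Fb)\psi|^{2}\,dx\le(1+\epsilon)\int_{\Omega}|(\nabla-i\kappa H\Ab)\psi|^{2}\,dx+(1+\epsilon^{-1})\,C\kappa^{2}\|\psi\|_{L^{2}(\Omega)}^{2}\|\psi\|_{L^{4}(\Omega)}^{4}.
\]
On the other hand, \cite[Proposition~9.2.1]{FH1} with $\mathcal B=\kappa H$, applied exactly as in the proof of Proposition~\ref{prop:mu1-cst}, gives
\[
\int_{\Omega}|(\nabla-i\kappa H\Fb)\psi|^{2}\,dx\ge\int_{\Omega}\big(U(x)-\bar C(\kappa H)^{3/4}\big)|\psi|^{2}\,dx ,
\]
where $U(x)=\kappa H\,B_{0}(x)$ if $\dist(x,\partial\Omega)\ge(\kappa H)^{-3/8}$ and $U(x)=\Theta_{0}\kappa H\,B_{0}(x)$ otherwise. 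Inserting the energy identity into the first display, combining with the second, subtracting $\kappa^{2}\int_{\Omega}a|\psi|^{2}$, using $a\le\overline a$ and discarding the favourable term $-(1+\epsilon)\kappa^{2}\|\psi\|_{L^{4}(\Omega)}^{4}$, I arrive at the key inequality
\[
\int_{\Omega}\big(U(x)-\kappa^{2}a(x)-\bar C(\kappa H)^{3/4}\big)|\psi|^{2}\,dx\le\epsilon\,\overline a\,\kappa^{2}\|\psi\|_{L^{2}(\Omega)}^{2}+(1+\epsilon^{-1})\,C\kappa^{2}\|\psi\|_{L^{2}(\Omega)}^{2}\|\psi\|_{L^{4}(\Omega)}^{4}.
\]

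Now I exploit \eqref{cond:HC3-2}, where the threshold constant is taken large (as in Proposition~\ref{prop:mu>0}). Since $\kappa H\approx\kappa^{2}$, and by continuity of $a$ and $B_{0}$, the coefficient $U(x)-\kappa^{2}a(x)-\bar C(\kappa H)^{3/4}$ is bounded below by $c\,\kappa^{3/2}$ for a positive $c$ that can be made large, uniformly on $\Omega$; moreover on the bulk $\mathcal D_{\rm bk}=\{\dist(\cdot,\partial\Omega)\ge(\kappa H)^{-3/8}\}$ the potential equals the full Landau level $\kappa H B_{0}$ (no factor $\Theta_{0}$), so --- possibly after localizing $\eqref{eq-2D-GLeq}_{a}$ to $\mathcal D_{\rm bk}$ by a cut-off $\chi$, for which $\int|(\nabla-i\kappa H\Fb)(\chi\psi)|^{2}\ge\kappa H\int B_{0}|\chi\psi|^{2}$ holds since $\chi\psi$ is compactly supported in $\Omega$ --- one shows that $\int_{\mathcal D_{\rm bk}}|\psi|^{2}\,dx=\textit{o}\big(\|\psi\|_{L^{2}(\Omega)}^{2}\big)$, the parameter $\epsilon=\epsilon(\kappa)$ being chosen of order $\kappa^{-1/2}$ to absorb the term $\epsilon\,\overline a\,\kappa^{2}\|\psi\|_{L^{2}(\Omega)}^{2}$. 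Consequently $\|\psi\|_{L^{2}(\Omega)}^{2}\le(1+\textit{o}(1))\int_{\mathcal D_{\rm layer}}|\psi|^{2}\,dx$ with $\mathcal D_{\rm layer}=\Omega\setminus\mathcal D_{\rm bk}$, and since $|\mathcal D_{\rm layer}|\le C(\kappa H)^{-3/8}\le C'\kappa^{-3/4}$, the Cauchy--Schwarz inequality on $\mathcal D_{\rm layer}$ yields
\[
\|\psi\|_{L^{2}(\Omega)}^{2}\le(1+\textit{o}(1))\,|\mathcal D_{\rm layer}|^{1/2}\Big(\int_{\mathcal D_{\rm layer}}|\psi|^{4}\,dx\Big)^{1/2}\le C''\kappa^{-3/8}\|\psi\|_{L^{4}(\Omega)}^{2}.
\]

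The main obstacle is precisely the concentration step, i.e.\ showing that the bulk mass $\int_{\mathcal D_{\rm bk}}|\psi|^{2}$ is negligible. The difficulty is quantitative: in the window \eqref{cond:HC3-2} the spectral gap $\kappa^{2}\Lambda_{1}(B_{0},a,H/\kappa)$ is only of order $\kappa^{3/2}$ --- the same order as the error $\bar C(\kappa H)^{3/4}$ coming from \cite[Proposition~9.2.1]{FH1} and comparable to the perturbation error $(1+\epsilon^{-1})C\kappa^{2}\|\psi\|_{L^{2}}^{2}\|\psi\|_{L^{4}}^{4}$ for the admissible choices of $\epsilon$ --- so the bulk contribution cannot simply be absorbed and one must use the improved bound afforded by the full Landau level on $\mathcal D_{\rm bk}$ together with a careful bookkeeping of the error terms; this is exactly why the threshold constant in \eqref{cond:HC3-2} has to be taken large, consistently with Proposition~\ref{prop:mu>0}. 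The remaining points --- the geometric estimate $|\mathcal D_{\rm layer}|\le C(\kappa H)^{-3/8}$, the control of the gradient error $\mathcal{O}((\kappa H)^{3/4})\int_{\mathcal D_{\rm layer}}|\psi|^{2}$ produced by the cut-off, and the elementary Cauchy--Schwarz step --- are routine.
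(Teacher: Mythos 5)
Your overall strategy (push the $L^2$--mass of $\psi$ into a boundary layer of width $\approx\kappa^{-3/4}$, then apply Cauchy--Schwarz there) is the right one and is the one the paper uses, but two steps of your argument do not go through as written, and the second is the one you yourself flag as the ``main obstacle'' without resolving it.

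First, the replacement of $\Ab$ by $\Fb$ via the $(1+\epsilon)$--splitting and \eqref{5d-<} produces the error $(1+\epsilon^{-1})\,C\kappa^{2}\|\psi\|_{L^{2}}^{2}\|\psi\|_{L^{4}}^{4}$, which is \emph{quadratic} in $\Ab-\Fb$ and hence quartic in $\psi$. To close the argument you must absorb it either into the spectral gap $c\,\kappa^{3/2}\|\psi\|_{L^{2}}^{2}$ (which requires $\|\psi\|_{L^{4}}^{4}\lesssim\epsilon\,\kappa^{-1/2}$) or into the discarded favourable term $\kappa^{2}\|\psi\|_{L^{4}}^{4}$ (which requires $\|\psi\|_{L^{2}}^{2}\lesssim\epsilon$); neither smallness is available a priori --- indeed it is essentially what this proposition and Theorem~\ref{thm:lb-H} are designed to produce, so the argument is circular. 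The paper avoids this entirely by \emph{not} passing to $\Fb$ in the kinetic term: for the cut-off function $\chi_{\kappa}\psi$, compactly supported in $\Omega$, one has the exact inequality $\int|(\nabla-i\kappa H\Ab)\chi_{\kappa}\psi|^{2}\geq\kappa H\int(\curl\Ab)|\chi_{\kappa}\psi|^{2}$, so the field error enters only \emph{linearly} through $\curl(\Ab-\Fb)$; by \eqref{3d-<} and Young's inequality it splits as $c^{2}\|\psi\|_{L^{2}}^{2}+\kappa^{2}\|\chi_{\kappa}\psi\|_{L^{4}}^{4}$, the quartic part being exactly cancelled by $-\kappa^{2}\int\chi_{\kappa}^{2}|\psi|^{4}$ from the equation and the quadratic part being of order $\kappa^{0}\|\psi\|_{L^{2}}^{2}\ll\kappa^{3/2}\|\psi\|_{L^{2}}^{2}$. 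This also makes \cite[Prop.~9.2.1]{FH1} and the $\Theta_{0}$--layer superfluous.

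Second, the concentration claim $\int_{\mathcal D_{\rm bk}}|\psi|^{2}=\textit{o}(\|\psi\|_{L^{2}}^{2})$ is not attainable under \eqref{cond:HC3-2} and is not needed. The condition places $H/\kappa$ only $C\kappa^{-1/2}$ above the critical ratio, so the bulk coefficient $\kappa H B_{0}-\kappa^{2}a$ is in general only of order $C\kappa^{3/2}$ --- the \emph{same} order as the boundary-layer coefficient, as the error $\bar C(\kappa H)^{3/4}$, and as the cut-off cost $|\nabla\chi|^{2}\sim\lambda^{-2}=\kappa^{3/2}$. All one can extract (and all one needs) is $\int_{\mathcal D_{\rm bk}}|\psi|^{2}\leq C'\int_{\mathcal D_{\rm layer}}|\psi|^{2}$ with a fixed constant, obtained by taking the constant $C$ in \eqref{cond:HC3-2} large compared with the fixed constants $c^{2}$ and $\|\chi'\|_{\infty}^{2}$; the final estimate then holds with a constant in place of $1+\textit{o}(1)$, which is all the proposition asserts. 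As it stands, your proof defers exactly this balancing of same-order terms to a ``careful bookkeeping'' that is never carried out, and the bookkeeping cannot succeed along your route because of the quartic error identified above.
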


\begin{proof}~\\
The techniques that will be used in this proof are similar with the ones in \cite[Lemma~2.6]{FK2}. If $H$ satisfies \eqref{cond:HC3-2} for some $C>0$, then, for any  $x\in\Omega$, we have.
\begin{equation}\label{bulk2}
\kappa\,H\,B_{0}(x)-\kappa^{2}\,a(x)\geq C\,\kappa^{\frac{3}{2}}\,.
\end{equation}
First, we let $\chi\in C^{\infty}(\R)$ be a standard cut-off function such that
\begin{equation}\label{def:chi-cst}
\chi=1\quad{\rm in}~[1,\infty]\qquad{\rm and }\qquad\chi=0\quad{\rm in}~]-\infty,1/2]\,.
\end{equation}
Next, we define $\lambda=\kappa^{-\frac{3}{4}}$, and $\chi_{\kappa}$ as follows:
\begin{equation}
\chi_{\kappa}(x)=\chi\left(\frac{\dist(x,\partial\Omega)}{\lambda}\right)\,,\qquad \forall x\in\Omega\,.
\end{equation}
Referring to \eqref{eq:1}, we have
\begin{equation}\label{eq:2}
\int_{\Omega} \left(|(\nabla-i\kappa H \Ab)\chi_{\kappa}\psi|^{2}-|\nabla\chi_{\kappa}|^{2}|\psi|^{2}\right)\,dx=\kappa^{2}\int_{\Omega}|\chi_{\kappa}|^{2}(a(x)-|\psi|^{2})|\psi|^{2}\,dx\,.
\end{equation}
We estimate $\int_\Omega|(\nabla-i\kappa H\Ab)\chi_{\kappa}\psi|^2\,dx$ from below. As in \cite[Proposition~6.2]{HK}, we can prove that,
$$
\int_{\Omega}|(\nabla-i\kappa H\Ab)\chi_{\kappa}\psi|^2\,dx\geq \kappa\,H\int_\Omega \curl\Fb \,|\chi_{\kappa}\psi|^2\,dx-\kappa\,H\|\curl(\Ab-\Fb)\|_{L^{2}(\Omega)}\|\chi_{\kappa}\psi\|_{L^{4}(\Omega)}^{2}\,.
$$
 Noticing that $\displaystyle \curl\Fb=B_{0}(x)$ and  $\displaystyle\|\curl(\Ab-\Fb)\|_{L^{2}(\Omega)}\leq \frac{c}{H}\|\psi\|_{L^{2}(\Omega)}$,  we have,
$$
\int_{\Omega}|(\nabla-i\kappa H\Ab)\chi_{\kappa}\psi|^2\,dx\geq \kappa\,H\int_\Omega\,B_{0}(x)\,|\chi_{\kappa}\psi|^2\,dx-c\,\kappa\,\|\psi\|_{L^{2}(\Omega)}\|\chi_{\kappa}\psi\|_{L^{4}(\Omega)}^{2}\,.
$$
Implementing a Cauchy-Schwarz inequality, we get
\begin{equation}\label{eq:grd-eng}
\int_{\Omega}|(\nabla-i\kappa H\Ab)\chi_{\kappa}\psi|^2\,dx\geq \kappa\,H\int_\Omega\,B_{0}(x)\,|\chi_{\kappa}\psi|^2\,dx-c^{2}\,\|\psi\|_{L^{2}(\Omega)}^{2}-\kappa^{2}\|\chi_{\kappa}\psi\|_{L^{4}(\Omega)}^{4}\,.
\end{equation}
Inserting \eqref{eq:grd-eng} into \eqref{eq:2}, we obtain,
$$
\int_\Omega\,\left(\kappa\,H\,B_{0}(x)-\kappa^{2}\,a(x)\right)\,|\chi_{\kappa}\psi|^2\,dx\leq c^{2}\int_{\Omega}|\psi|^{2}\,dx+\int_{\Omega}|\nabla\chi_{\kappa}|^{2}|\psi|^{2}\,dx-\kappa^{2}\int_{\Omega}\left(\chi_{\kappa}^{2}-\chi_{\kappa}^{4}\right)|\psi|^{4}\,dx\,.
$$
As a consequence of \eqref{bulk2}, the inequality above becomes,
$$
C\,\kappa^{\frac{3}{2}}\int|\chi_{\kappa}\psi(x)|^{2}\,dx\leq c^{2}\int_{\Omega}|\psi|^{2}\,dx+\int_{\Omega}|\nabla\chi_{\kappa}|^{2}|\psi|^{2}\,dx-\kappa^{2}\int_{\Omega}\left(\chi_{\kappa}^{2}-\chi_{\kappa}^{4}\right)|\psi|^{4}\,dx\,.
$$
Notice that $-\kappa^{2}\int_{\Omega}\left(\chi_{\kappa}^{2}-\chi_{\kappa}^{4}\right)|\psi|^{4}\,dx\leq 0\,$.\\
Decomposing the integral $\displaystyle\int_{\Omega}|\psi|^{2}\,dx=\int_{\Omega}|\chi_{\kappa}\psi|^{2}\,dx+\int_{\Omega}(1-\chi_{\kappa}^{2})|\psi|^{2}\,dx$, using \eqref{bulk2} and choosing $C$ such that $C\geq 2 c^{2}$, we get,
$$
\frac{1}{2}C\,\kappa^{\frac{3}{2}}\int|\chi_{\kappa}\psi(x)|^{2}\,dx\leq \left(c^{2}+\|\chi'\|_{L^{\infty}(\R)}^{2}\,\lambda^{-2}\right)\int_{\left\{x\in\Omega:\,\dist(x,\Gamma)\leq\lambda\right\}}|\psi|^{2}\,dx\,.
$$
Recall that $\lambda=\kappa^{-\frac{3}{4}}$, we observe that,
$$
\int|\chi_{\kappa}\psi(x)|^{2}\,dx\leq 4\|\chi'\|_{L^{\infty}(\R)}^{2} \int_{\left\{x\in\Omega:\,\dist(x,\Gamma)\leq\lambda\right\}}|\psi|^{2}\,dx\,,
$$
and consequently, we get,
$$
\int|\psi(x)|^{2}\,dx\leq\left(4\|\chi'\|_{L^{\infty}(\R)}^{2}+1\right) \int_{\left\{x\in\Omega:\,\dist(x,\Gamma)\leq\lambda\right\}}|\psi|^{2}\,dx\,.
$$
By choosing $C=\max \left(2\,c^{2},4\|\chi'\|_{L^{\infty}(\R)}^{2}+1\right)$, we obtain,
$$
\|\psi\|^{2}_{L^{2}(\Omega)}\leq C\,\kappa^{-\frac{3}{8}}\|\psi\|^{2}_{L^{4}(\Omega)}\,.
$$
\end{proof}

\begin{theorem}\label{thm:lb-H}
Supose that $\Gamma=\varnothing$ and $\{a>0\}\neq\varnothing$. There exists $C>0$ and $\kappa_0$ such that, if $H$ satisfies
\begin{equation}\label{cond:HC3-2w}
 H>\kappa\,\max\left(\sup_{x\in\Omega}\frac{a(x)}{B_{0}(x)},\sup_{x\in\partial\Omega}\frac{a(x)}{\Theta_{0}\,B_{0}(x)}\right)+C\,\kappa^{\frac{1}{2}}\,,
\end{equation} then $(0,\Fb)$ is the unique solution to \eqref{eq-2D-GLeq}.\\
 Moreover,
$$
\overline{H}_{C_3}^{cp}\leq\kappa\,\max\left(\sup_{x\in\Omega}\frac{a(x)}{B_{0}(x)},\sup_{x\in\partial\Omega}\frac{a(x)}{\Theta_{0}\,B_{0}(x)}\right)+C\,\kappa^{\frac{1}{2}}\,.
$$
\end{theorem}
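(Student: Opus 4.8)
The plan is to argue by contradiction in the spirit of the Giorgi--Phillips type arguments of Section~\ref{GP}, but using the sharper a priori information available when $H$ is of size $\approx\kappa$. Suppose that there is a non-normal critical point $(\psi,\Ab)$, i.e. a solution of \eqref{eq-2D-GLeq} with $\|\psi\|_{L^2(\Omega)}>0$. I would first record the basic energy identity $\int_\Omega|(\nabla-i\kappa H\Ab)\psi|^2\,dx=\kappa^2\int_\Omega(a-|\psi|^2)|\psi|^2\,dx$ together with the a priori bounds from Theorem~\ref{thm-2D-apriori} and Proposition~\ref{pr-est}, in particular $\|\curl(\Ab-\Fb)\|_{L^2(\Omega)}\le \frac{c}{H}\|\psi\|_{L^2(\Omega)}$ and $|\psi|^2\le\overline a$ pointwise by \eqref{eq-psi<a}. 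The key new ingredient is Proposition~\ref{lem:psi-2<4}: under \eqref{cond:HC3-2w} (which implies \eqref{cond:HC3-2} for suitable $\lambda_{\max}$) we have $\|\psi\|_{L^2(\Omega)}^2\le C\kappa^{-3/8}\|\psi\|_{L^4(\Omega)}^2$, so $\psi$ is concentrated in a $\kappa^{-3/4}$-neighborhood of $\partial\Omega$ and is small in an averaged sense.

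Next I would multiply $\eqref{eq-2D-GLeq}_a$ by $\overline\psi$ and use $\eqref{eq-2D-GLeq}_c$ to get, after inserting the replacement of $\Ab$ by $\Fb$ (controlling the error via $\|\Ab-\Fb\|_{C^{0,\beta}}\le C/H$ and $\|\psi\|_{L^2}\le C\kappa^{-3/8}\|\psi\|_{L^4}$, together with $\|\psi\|_{L^4}^4\le\overline a\,\|\psi\|_{L^2}^2$), the inequality
\begin{equation}
\mathcal{Q}_{\kappa H\Fb,-\kappa^2 a}^{\Omega}(\psi)\le \kappa^2\int_\Omega(a-|\psi|^2)|\psi|^2\,dx-\kappa^2\int_\Omega a|\psi|^2\,dx+\text{(small errors)}=-\kappa^2\int_\Omega|\psi|^4\,dx+o(\kappa^2\|\psi\|_{L^2}^2)\le o(\kappa^2\|\psi\|_{L^2}^2).
\end{equation}
Hence $\mu_1(\kappa,H)\|\psi\|_{L^2(\Omega)}^2\le \mathcal{Q}_{\kappa H\Fb,-\kappa^2 a}^{\Omega}(\psi)\le o(\kappa^2)\|\psi\|_{L^2(\Omega)}^2$, which forces $\mu_1(\kappa,H)\le o(\kappa^2)$, i.e. essentially $\mu_1(\kappa,H)$ cannot be positive of order $\kappa^{3/2}$. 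But Proposition~\ref{prop:mu>0} asserts exactly that $\mu_1(\kappa,H)>0$, and more precisely $\mu_1(\kappa,H)\ge(C-C')\kappa^{3/2}$, under \eqref{cond:HC3-2} whenever $C$ is chosen large enough. Choosing $C$ in \eqref{cond:HC3-2w} larger than the threshold from Proposition~\ref{prop:mu>0} and larger than the constant produced in the error estimates above yields a contradiction, so no non-normal critical point exists; therefore $(0,\Fb)$ is the unique solution, and $\overline H_{C_3}^{cp}(\kappa)\le \kappa\,\max\bigl(\sup_\Omega\frac{a}{B_0},\sup_{\partial\Omega}\frac{a}{\Theta_0 B_0}\bigr)+C\kappa^{1/2}$ by definition of $\overline H_{C_3}^{cp}$.

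The main obstacle I anticipate is making the substitution $\Ab\rightsquigarrow\Fb$ inside $\int_\Omega|(\nabla-i\kappa H\Ab)\psi|^2$ with an error that is genuinely $o(\kappa^2)\|\psi\|_{L^2}^2$ rather than merely $O(\kappa^2)\|\psi\|_{L^2}^2$: the naive bound $(\kappa H)^2\|\Ab-\Fb\|_\infty^2\|\psi\|_{L^2}^2\approx\kappa^2\|\psi\|_{L^2}^2$ is exactly critical, so one must instead expand $|(\nabla-i\kappa H\Ab)\psi|^2=|(\nabla-i\kappa H\Fb)\psi|^2+2\kappa H\,\RE\langle (\Ab-\Fb)\psi,\overline{(\nabla-i\kappa H\Fb)\psi}\rangle+(\kappa H)^2|\Ab-\Fb|^2|\psi|^2$ and exploit $\|\Ab-\Fb\|_{L^4}\le \frac{C}{H}\|\psi\|_{L^2}\|\psi\|_{L^4}$ from \eqref{sb-emb-L4}--\eqref{3d-<} together with the $L^2$/$L^4$ comparison of Proposition~\ref{lem:psi-2<4} to absorb the cross term into $\kappa^2\int|\psi|^4$ via Cauchy--Schwarz. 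This is precisely the computation already carried out in the proof of Proposition~\ref{lem:psi-2<4}, so it can be quoted; the only care needed is bookkeeping the powers of $\kappa$ to confirm the remainder is $o(\kappa^{3/2}\|\psi\|_{L^2}^2)$, which is what defeats the lower bound $\mu_1\gtrsim\kappa^{3/2}$.
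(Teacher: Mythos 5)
Your proposal is correct and follows essentially the same route as the paper: reduce to the regime $H\leq\lambda_{\max}\kappa$ via the Giorgi--Phillips Theorem~\ref{thm:GP2}, test the quadratic form with $\psi$ using the a priori bounds \eqref{8d-<} and \eqref{5d-<}, absorb the $\Ab\rightsquigarrow\Fb$ errors into $\kappa^2\int_\Omega|\psi|^4$ with the help of Proposition~\ref{lem:psi-2<4}, and contradict the positivity of $\mu_1(\kappa,H)$ from Proposition~\ref{prop:mu>0}. The paper implements the same absorption by introducing $\top=\kappa^2\|\psi\|_{L^4}^4$ and a weighted Cauchy--Schwarz with weight $\sqrt{\top}\,\kappa^{-1}$, which is exactly the bookkeeping you anticipate.
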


\begin{proof}
We first observe that it results from Giorgi-Phillips like Theorem  \ref{thm:GP2} that it remains only  to prove the theorem under the stronger Assumption \eqref{cond:HC3-2}.
Suppose now that  $(\psi,\Ab)$ is a solution of \eqref{eq-2D-GLeq} with $\psi\neq 0$, we observe that,
\begin{equation}\label{l5est}
0<\kappa^{2}\|\psi\|_{L^{4}(\Omega)}^{4}=-\int_{\Omega}\left(|(\nabla-i\kappa H\Ab)\psi|^2-\kappa^{2}a(x)|\psi|^{2}\right)\,dx:=\top\,.
\end{equation}
We can write,
 \begin{align}\label{main-eq}
-\top&\geq (1-\sqrt{\top}\,\kappa^{-1})\int_{\Omega}|(\nabla-i\kappa H\Fb)\psi|^{2}\,dx-\kappa^{2}\,\int_{\Omega}\,a(x)|\psi|^{2}\,dx-\frac{(\kappa H)^{2}}{\sqrt{\top}\kappa^{-1}}\int_{\Omega}|(\Ab-\Fb)\psi|^{2}\,dx\nonumber\\
&\geq \mu_{1}(\kappa,H)\,\|\psi\|_{L^{2}(\Omega)}^{2}-\sqrt{\top}\,\kappa^{-1}\|(\nabla-i\kappa H\Fb)\psi\|^{2}_{L^{2}(\Omega)}-\frac{(\kappa H)^{2}}{\sqrt{\top}\kappa^{-1}}\|(\Ab-\Fb)\psi\|_{L^{2}(\Omega)}^{2}\,.
\end{align}
We reffer to \eqref{8d-<} and \eqref{5d-<}, we have,
\begin{equation}\label{est:top1}
-\top\geq \mu_{1}(\kappa,H)\,\|\psi\|_{L^{2}(\Omega)}^{2}-C\,\sqrt{\top}\,\kappa\,\|\psi\|_{L^{2}(\Omega)}^{2}\,.
\end{equation}
Thanks to Proposition~\ref{lem:psi-2<4}, using \eqref{l5est}, we get,
\begin{equation}\label{est:psi1}
\|\psi\|^{2}_{L^{2}(\Omega)}\leq C\,\kappa^{-\frac{11}{8}}\,\sqrt{\top}\,.
\end{equation}
As a consequence of \eqref{est:psi1}, \eqref{est:top1} becomes,
\begin{equation}\label{est:top2}
-\top\geq \mu_{1}(\kappa,H)\,\|\psi\|_{L^{2}(\Omega)}^{2}-C'\,\kappa^{-\frac{3}{8}}\,\top\,.
\end{equation}
Having in mind that $\psi\neq 0$ and $\top>0$ (see \eqref{l5est}), we deduce for $\kappa$ sufficiently large $\mu_{1}(\kappa,H)<0$, which is in contradiction with Proposition~\ref{prop:mu>0}. Therefore, we conclude that $\psi=0$, which is what we needed to prove.
\end{proof}

\begin{prop}\label{prop:cp}
Supose that $\Gamma=\varnothing$ and $\{a>0\}\neq\varnothing$. There exists $C>0$ and $\kappa_0$ such that, if $H$ satisfies
\begin{equation}\label{cond:HC3-2w2}
 H\leq \kappa\,\max\left(\sup_{x\in\Omega}\frac{a(x)}{B_{0}(x)},\sup_{x\in\partial\Omega}\frac{a(x)}{\Theta_{0}\,B_{0}(x)}\right)-C\,\kappa^{\frac{1}{2}}\,,
\end{equation}
then there exists a solution $(\psi,\Ab)$ of \eqref{eq-2D-GLeq} with $\|\psi\|_{L^{2}(\Omega)}\neq 0$.\\
 Moreover,
$$
\kappa\,\max\left(\sup_{x\in\Omega}\frac{a(x)}{B_{0}(x)},\sup_{x\in\partial\Omega}\frac{a(x)}{\Theta_{0}\,B_{0}(x)}\right)-C\,\kappa^{\frac{1}{2}}\leq \underline{H}_{C_3}^{cp}\,.
$$
\end{prop}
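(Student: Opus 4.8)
The plan is to produce, under hypothesis~\eqref{cond:HC3-2w2}, a test configuration whose Ginzburg--Landau energy is strictly below that of the normal state $(0,\Fb)$. Since a global minimizer of $\mathcal E_{\kappa,H,a,B_{0}}$ exists (by the direct method, as recalled in the Introduction) and is automatically a weak solution of \eqref{eq-2D-GLeq}, this forces any such minimizer to have a nonzero order parameter, which both gives the first assertion and places $H$ in $\mathcal N^{\mathrm{cp}}(\kappa)$.

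First, after possibly enlarging $C$ and $\kappa_{0}$, the hypothesis~\eqref{cond:HC3-2w2} implies the hypothesis~\eqref{cond:HC3} of Proposition~\ref{prop:mu1<0}, so $\mu_{1}(\kappa,H)<0$ for all $\kappa\geq\kappa_{0}$; note that the regime $\kappa H=\mathcal O(1)$ is already subsumed there via Lemma~\ref{lem-H=kappa}, so no separate treatment of small $H$ is needed. Let $\phi\in H^{1}(\Omega)$ be an $L^{2}$-normalized ground state of $P_{\kappa H\Fb,-\kappa^{2}a}^{\Omega}$, so that $\mathcal{Q}_{\kappa H\Fb,-\kappa^{2}a}^{\Omega}(\phi)=\mu_{1}(\kappa,H)<0$, and $\|\phi\|_{L^{4}(\Omega)}<\infty$ by the two-dimensional embedding $H^{1}(\Omega)\hookrightarrow L^{4}(\Omega)$. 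For $t>0$ consider $(t\phi,\Fb)\in H^{1}(\Omega;\C)\times H^{1}_{\Div}(\Omega)$. Since $\curl\Fb=B_{0}$, expanding $\big(a-t^{2}|\phi|^{2}\big)^{2}$ gives
\[
\mathcal E_{\kappa,H,a,B_{0}}(t\phi,\Fb)=\frac{\kappa^{2}}{2}\int_{\Omega}a(x)^{2}\,dx+t^{2}\,\mathcal{Q}_{\kappa H\Fb,-\kappa^{2}a}^{\Omega}(\phi)+\frac{\kappa^{2}t^{4}}{2}\int_{\Omega}|\phi|^{4}\,dx\,.
\]
With $\kappa$ and $H$ fixed, the $t^{2}$-coefficient equals $\mu_{1}(\kappa,H)<0$ while the $t^{4}$-coefficient is a fixed positive number, so for $t>0$ small enough the right-hand side is strictly less than $\tfrac{\kappa^{2}}{2}\int_{\Omega}a(x)^{2}\,dx=\mathcal E_{\kappa,H,a,B_{0}}(0,\Fb)$. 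Hence $\E0(\kappa,H,a,B_{0})<\mathcal E_{\kappa,H,a,B_{0}}(0,\Fb)$.

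Now let $(\psi,\Ab)$ be a global minimizer of $\mathcal E_{\kappa,H,a,B_{0}}$; it solves \eqref{eq-2D-GLeq}. Every normal configuration $(0,\Ab')$ satisfies $\mathcal E_{\kappa,H,a,B_{0}}(0,\Ab')\geq\tfrac{\kappa^{2}}{2}\int_{\Omega}a(x)^{2}\,dx=\mathcal E_{\kappa,H,a,B_{0}}(0,\Fb)$, so the strict inequality above forces $\|\psi\|_{L^{2}(\Omega)}\neq 0$; this is the first assertion, and it shows $H\in\mathcal N^{\mathrm{cp}}(\kappa)$. Since this holds for every $H$ obeying \eqref{cond:HC3-2w2}, and writing $H^{*}(\kappa)=\kappa\max\!\big(\sup_{x\in\Omega}\tfrac{a(x)}{B_{0}(x)},\sup_{x\in\partial\Omega}\tfrac{a(x)}{\Theta_{0}B_{0}(x)}\big)-C\kappa^{1/2}$ (which is $>0$ for $\kappa\geq\kappa_{0}$ because $\{a>0\}\neq\varnothing$ and, without loss of generality, $B_{0}>0$ on $\overline{\Omega}$), we have $(0,H^{*}(\kappa)]\subset\mathcal N^{\mathrm{cp}}(\kappa)$. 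Consequently $\R_{+}\setminus\mathcal N^{\mathrm{cp}}(\kappa)\subset(H^{*}(\kappa),+\infty)$, whence $\underline{H}_{C_{3}}^{\mathrm{cp}}(\kappa)=\inf\big(\R_{+}\setminus\mathcal N^{\mathrm{cp}}(\kappa)\big)\geq H^{*}(\kappa)$, which is the ``moreover'' part.

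The argument is essentially soft, so there is no substantial obstacle; the only points requiring attention are the bookkeeping of constants (choosing $C,\kappa_{0}$ compatible with Proposition~\ref{prop:mu1<0}) and the observation that the smallness of $t$ is allowed to depend on $\kappa$ and $H$, since we only need the existence of a single non-normal critical point for each admissible pair $(\kappa,H)$. The finiteness of $\|\phi\|_{L^{4}(\Omega)}$ is automatic in dimension two, so no elliptic regularity input beyond membership in the form domain of $P_{\kappa H\Fb,-\kappa^{2}a}^{\Omega}$ is needed.
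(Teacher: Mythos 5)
Your proposal is correct and follows essentially the same route as the paper: both use Proposition~\ref{prop:mu1<0} to get $\mu_{1}(\kappa,H)<0$, test the functional with $(t\psi_{\ast},\Fb)$ for $\psi_{\ast}$ a ground state of $P_{\kappa H\Fb,-\kappa^{2}a}^{\Omega}$ and $t$ small, and conclude that the global minimizer cannot be normal. Your explicit final step passing from $(0,H^{*}(\kappa)]\subset\mathcal N^{\mathrm{cp}}(\kappa)$ to the bound on $\underline{H}_{C_{3}}^{\mathrm{cp}}$ is a harmless elaboration of what the paper leaves implicit.
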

\begin{proof}
We use $(t\psi_{\ast},\Fb)$, with $t$ sufficiently small and $\psi_{\ast}$ an eigenfunction associated with $\mu_{1}(\kappa,H)$, as a test configuration for the functional \eqref{eq-2D-GLf}, i.e.
 $$
\int_{\Omega}\left(|(\nabla-i\kappa H\Fb)\psi_{\ast}|^{2}-\kappa^{2}\,a(x)|\psi_{\ast}|^{2}\right)\,dx=\mu_{1}(\kappa,H)\|\psi_{\ast}\|^{2}_{L^{2}(\Omega)}\,.
$$
Proposition~\ref{prop:mu1<0} tells us that there exists a constant $C$ such that, under Assumption \eqref{cond:HC3-2w2}, $\mu_{1}(\kappa,H)<0\,$.\\
Therefore,
$$
C_{1}(\kappa,H):=\int_{\Omega}\left(|(\nabla-i\kappa H\Fb)\psi_{\ast}|^{2}-\kappa^{2}\,a(x)|\psi_{\ast}|^{2}\right)\,dx<0\,.
$$
We can write,
 \begin{align*}
\mathcal E_{\kappa,H,a,B_{0}}(t\psi_{\ast},\Fb)&=t^{2}\int_{\Omega}\left(|(\nabla-i\kappa H\Fb)\psi_{\ast}|^{2}-\kappa^{2}\,a(x)|\psi_{\ast}|^{2}\right)\,dx+t^{4}\,\frac{\kappa^2}{2}\int_{\Omega}|\psi_{\ast}|^{4}\,dx+\frac{\kappa^2}{2}\int_{\Omega}a(x)\,dx\\
&=t^2\left(C_{1}(\kappa,H)+t^{2}\,\frac{\kappa^2}{2}\int_{\Omega}|\psi_{\ast}|^{4}\,dx\right)+ \mathcal E_{\kappa,H,a,B_{0}}(0,\Fb)\,.
\end{align*}
We choose $t$ such that,
$$
C_{1}(\kappa,H)+t^{2}\,\frac{\kappa^2}{2}\int_{\Omega}|\psi_{\ast}|^{4}\,dx<0\,.
$$
Thus, we get
$$
\mathcal E_{\kappa,H,a,B_{0}}(t\psi_{\ast},\Fb)<\mathcal E_{\kappa,H,a,B_{0}}(0,\Fb)\,.
$$
Hence a minimizer, which is a solution of \eqref{eq-2D-GLeq}, will be non-trivial.
\end{proof}
\subsection{End of the proof of Theorem~\ref{thm:HC3}}
First, we will prove the following inclusion,
$$
\mathcal{N}^{\rm loc}(\kappa)\subset \mathcal{N}(\kappa)\,.
$$
We see that if $H\notin  \mathcal{N}(\kappa)$, then $(0,\Fb)$ is a local minimizer of $\mathcal E_{\kappa,H,a,B_{0}}$. Thus, the Hessian of the functional $\mathcal E_{\kappa,H,a,B_{0}}$ at the normal state $(0,\Fb)$ should be positive.\\
For every $(\widetilde{\phi},\widetilde{\Ab})\in H^1(\Omega)\times H^1_{\rm div}(\Omega)$   we have,
$$
\mathcal E_{\kappa,H,a,B_{0}}(t\widetilde{\phi},\Fb+t\widetilde{\Ab})=t^{2}\left[\mathcal{Q}_{\kappa H\Fb,-\kappa^{2}a}^{\Omega}(\widetilde{\phi})+(\kappa H)^{2}\int_{\Omega}|\curl \widetilde{\Ab}|^{2}\,dx\right]+\mathcal{O}(t^3)\,.
$$
This implies that the Hessian of the functional $\mathcal E_{\kappa,H,a,B_{0}}$ at the normal state $(0,\Fb)$ can be written as follows:
$$
Hess_{(0,\Fb)}[\widetilde{\phi},\widetilde{\Ab}]=\mathcal{Q}_{\kappa H\Fb,-\kappa^{2}a}^{\Omega}(\widetilde{\phi})+(\kappa H)^{2}\int_{\Omega}|\curl \widetilde{\Ab}|^{2}\,dx\,.
$$
Since $Hess_{(0,\Fb)}[\widetilde{\phi},\widetilde{\Ab}]\geq 0$, we get that $\mu_{1}(\kappa H)\geq 0\,,$ and consequently $H\notin \mathcal{N}^{\rm loc}(\kappa)$.
Hence we obtain the above inclusion.\\
On the other hand, if $(\psi,\Ab)$ is a minimizer of the functional in \eqref{eq-2D-GLf} with $\psi\neq 0$, then $(\psi,\Ab)$ is a solution of \eqref{eq-2D-GLeq}, and we have the following inclusion,
$$
\mathcal{N}(\kappa)\subset \mathcal{N}^{\rm cp}(\kappa)\,,
$$
and consequently,
\begin{equation}\label{inclusion-N}
\mathcal{N}^{\rm loc}(\kappa)\subset \mathcal{N}(\kappa)\subset \mathcal{N}^{\rm cp}(\kappa)\,.
\end{equation}
Having in mind the definition of all the critical fields in \eqref{def:HC3-o}, \eqref{def:HC3} and \eqref{def:HC3-u}, we deduce that,
\begin{equation}\label{eq:ov}
\overline{H}_{C_3}^{loc}(\kappa)\leq\overline{H}_{C_3}(\kappa)\leq\overline{H}_{C_3}^{cp}(\kappa)\,,
\end{equation}
Using \eqref{inclusion-N}, we observe that,
$$
\R^{+}\setminus\mathcal{N}^{\rm cp}(\kappa)\subset\R^{+}\setminus\mathcal{N}(\kappa)\subset\R^{+}\setminus\mathcal{N}^{\rm loc}(\kappa)\,.
$$
From the definition of all the critical fields, we conclude that,
\begin{equation}\label{eq:un}
\underline{H}_{C_3}^{loc}(\kappa)\leq\underline{H}_{C_3}(\kappa)\leq\underline{H}_{C_3}^{cp}(\kappa)\,.
\end{equation}
We note that $\underline{H}_{C_{3}}^{loc}\leq\overline{H}_{C_{3}}^{loc}$ and $\underline{H}_{C_{3}}^{cp}\leq\overline{H}_{C_{3}}^{cp}$. Therefore, all the critical fields are contained in the interval $[\underline{H}_{C_{3}}^{loc},\overline{H}_{C_{3}}^{cp}]$.\\
By Proposition~\ref{prop:mu1<0} and Theorem~\ref{thm:lb-H}, we get the existence of positive constants $C$ and $\kappa_0$, such that for $\kappa\geq\kappa_0$,
\begin{multline}
\kappa\,\max\left(\sup_{x\in\Omega}\frac{a(x)}{B_{0}(x)},\sup_{x\in\partial\Omega}\frac{a(x)}{\Theta_{0}\,B_{0}(x)}\right)-C\,\kappa^{\frac{1}{2}}\leq \underline{H}_{C_{3}}^{loc}\leq\overline{H}_{C_{3}}^{cp}\\
\leq\kappa\,\max\left(\sup_{x\in\Omega}\frac{a(x)}{B_{0}(x)},\sup_{x\in\partial\Omega}\frac{a(x)}{\Theta_{0}\,B_{0}(x)}\right)+C\,\kappa^{\frac{1}{2}}\,.
\end{multline}
As a consequence, we have proved Theorem~\ref{thm:HC3} for the six critical fields.
\begin{rem}
As in \cite{FH1}, it would be interesting to show that all the critical fields coincide when $\kappa$ is large enough.
\end{rem}
\section{Asymptotics of $\mu_{1}(\kappa,H)$: the case with  vanishing magnetic field}\label{Section:Asympt-m1-vanish}
 In this section we give an estimate for the lowest eigenvalue
$\mu_{1}(\kappa,H)$ of the operator $P_{\kappa
H\Fb,-\kappa^{2}a}^{\Omega}$ (see \eqref{def:mu1}) in the case when
$\Gamma=\varnothing$ with a $\kappa$-independent pinning, i.e.
$a(\kappa,x)=a(x)$. The results in this section are valid under the
assumption $\Gamma\not=\emptyset$, where the set $\Gamma$ is introduced in
\eqref{gamma}. Let
\begin{equation}
\mathcal{B}=\kappa H\qquad{\rm and}\qquad \widehat{\sigma}=\frac{H}{\kappa^2}\,.
\end{equation}
We observe that,
$$
P_{\kappa H\Fb,-\kappa^{2}a}^{\Omega}=P_{\mathcal{B}\Fb,-\left(\frac{\mathcal{B}}{\widehat{\sigma}}\right)^{\frac{2}{3}}\,a}^{\Omega}\,.
$$
We will give an estimate for the lowest eigenvalue
$\mu_{\mathcal{B},\widehat{\sigma}}$ of
$P_{\mathcal{B}\Fb,-\left(\frac{\mathcal{B}}{\widehat{\sigma}}\right)^{\frac{2}{3}}\,a}^{\Omega}$.
After a change of notation, we deduce an estimate for
$\mu_{1}(\kappa,H)$.

\subsection{Lower bound}
 In the
absence of a pinning term, that is when $a=1$, Pan and Kwek
\cite{XB-KH} gave the lower bound for the lowest eigenvalue
$\mu(\mathcal{B}\Fb)$ of $P_{\mathcal{B}\Fb,0}^{\Omega}$ when
$\mathcal{B}\to+\infty$.
In this subsection, we determine a lower bound for $\mu_{1}$ when $\kappa\to+\infty$ and the pinning term is present.\\
We first recall the definition of $\lambda_{0}$ in \eqref{lambda0}, the definition of $\Gamma$ in \eqref{gamma} and for any $\theta\in(0,\pi)$ we recall that $\lambda(\R_{+}^{2},\theta)$ is the bottom of the spectrum of the operator
$ P_{\Ab_{\rm app,\theta},0}^{\R^{2}_{+}}$, with
$$\Ab_{\rm app,\theta}=-\left(\frac{x^{2}_{2}}{2}\cos\,\theta,\frac{x^{2}_{1}}{2}\sin\,\theta \right)\,.$$
We then define for any $\widehat{\sigma}>0$,
\begin{multline}\label{Lambda1}
\widehat\Lambda_{1}(B_{0},a,\widehat{\sigma})=\min\left\{\inf_{x\in \Gamma\cap\Omega}\left\{\lambda_{0}\,\Big(\widehat{\sigma}\,|\nabla B_{0}(x)|\Big)^{\frac{2}{3}}-a(x)\right\},\right.\\
\left.\inf_{x\in \Gamma\cap\partial\Omega}\left\{\lambda(\R^{2}_{+},\theta(x))\,\Big(\widehat{\sigma}\,|\nabla B_{0}(x)|\Big)^{\frac{2}{3}}-a(x)\right\}\right\}\,.
\end{multline}
Here, for $x\in\Gamma\cap\partial\Omega$, $\theta(x)$ denotes the angle between $\nabla B_{0}(x)$ and the inward normal vector $-\nu(x)$.

We start with a proposition that states a lower bound of $\mu_{1}(\kappa,H)$
 in the case when $\Gamma\neq\varnothing$.
\begin{prop}\label{prop:mu1-variable}
Let $I$ be a closed interval in $(0,\infty)$. There exist two
positive constants $\mathcal{B}_{0}>0$ and $C>0$ such that if
$\mathcal{B}\geq\mathcal{B}_{0}$, $\widehat\sigma\in I$, $\psi\in
H^{1}(\Omega)\setminus \{0\} $ and $a\in C^{1}(\overline{\Omega})$,
then,
\begin{equation}\label{eq:lower-bound-variable}
\frac{\mathcal{Q}_{\mathcal{B}\Fb,-\left(\frac{\mathcal{B}}{\widehat{\sigma}}\right)^{\frac{2}{3}}\,a}^{\Omega}(\psi)}{\|\psi\|^{2}_{L^{2}(\Omega)}}\geq
\left(\frac{\mathcal{B}}{\widehat{\sigma}}\right)^{\frac{2}{3}}\Big(\widehat\Lambda_{1}(B_{0},a,\widehat{\sigma})
-C\mathcal{B}^{-\frac{1}{18}}\Big)\,.
\end{equation}
\end{prop}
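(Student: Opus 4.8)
The plan is to mimic the proof of Proposition~\ref{prop:mu1-cst}, replacing the semiclassical lower bound \cite[Prop.~9.2.1]{FH1} (valid only for $\Gamma=\varnothing$, cf. \eqref{eq:U}) by the corresponding effective-potential lower bound for the purely magnetic form in the vanishing case, which is contained in the analysis of Pan and Kwek \cite{XB-KH} underlying Theorem~\ref{thm:BK}. Concretely, the first step is to record that there exist $\bar C>0$, $\bar{\mathcal B}_0>0$, an exponent $\rho\ge\tfrac1{18}$ and an effective potential $V_{\mathcal B}$ on $\overline\Omega$ such that for all $\mathcal B\ge\bar{\mathcal B}_0$ and all $\psi\in H^1(\Omega)$,
$$\int_\Omega |(\nabla-i\mathcal B\Fb)\psi|^2\,dx \ \ge\ \int_\Omega\bigl(V_{\mathcal B}(x)-\bar C\,\mathcal B^{11/18}\bigr)|\psi|^2\,dx\,,$$
where $V_{\mathcal B}(x)\ge c_0\,\mathcal B$ on $\{\dist(x,\Gamma)\ge\mathcal B^{-\rho}\}$ (a factor $\Theta_0$ being allowed in a $\mathcal B^{-\rho}$-tube around $\partial\Omega$), while on the tube $\{\dist(x,\Gamma)<\mathcal B^{-\rho}\}$ one has $V_{\mathcal B}(x)\ge \lambda_0(\mathcal B|\nabla B_0(x)|)^{2/3}$ if $\dist(x,\partial\Omega)\ge\mathcal B^{-\rho}$ and $V_{\mathcal B}(x)\ge \lambda(\R^2_+,\theta(x))(\mathcal B|\nabla B_0(x)|)^{2/3}$ if $\dist(x,\partial\Omega)<\mathcal B^{-\rho}$, with $\theta(x)$ the angle between $\nabla B_0(x)$ and $-\nu$. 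This localized lower bound is extracted from the proof of Theorem~\ref{thm:BK} in \cite{XB-KH}: one uses a partition of unity, a rescaling by $\mathcal B^{1/3}$ inside the tube, freezing of $\nabla B_0$ and of the geometry at the nearest point of $\Gamma$, and comparison with the model operators $M(\tau)$ of \eqref{defM} and $P_{\Ab_{\rm app,\theta},0}^{\R^2_+}$ of \eqref{def:lambda-theta}; the width $\mathcal B^{-\rho}$ is chosen (as in \cite{XB-KH}) so that the total remainder is $\mathcal{O}(\mathcal B^{11/18})$.

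Granting this, the remaining step is purely pointwise. Writing $t=\bigl(\tfrac{\mathcal B}{\widehat\sigma}\bigr)^{2/3}=\mathcal B^{2/3}\widehat\sigma^{-2/3}$, it suffices to prove
$$V_{\mathcal B}(x)-t\,a(x)\ \ge\ t\,\bigl(\widehat\Lambda_1(B_0,a,\widehat\sigma)-C\,\mathcal B^{-1/18}\bigr)\qquad\text{on }\overline\Omega\,,$$
since then, integrating against $|\psi|^2$ and using $\bar C\mathcal B^{11/18}=\bar C\,\widehat\sigma^{2/3}\,\mathcal B^{-1/18}\,t\le \bar C\bigl(\sup_I\widehat\sigma^{2/3}\bigr)\mathcal B^{-1/18}\,t$ (here $\widehat\sigma$ ranges over the compact set $I$), we obtain \eqref{eq:lower-bound-variable}. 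On $\{\dist(x,\Gamma)\ge\mathcal B^{-\rho}\}$ the left-hand side is $\ge c_0\mathcal B - t\sup_{\overline\Omega}|a|$, which dominates $t\,\widehat\Lambda_1$ for $\mathcal B$ large because $t=\mathcal{O}(\mathcal B^{2/3})$ and $\widehat\Lambda_1(B_0,a,\widehat\sigma)$ is bounded uniformly for $\widehat\sigma\in I$ (by \eqref{a2} and the transversality in \eqref{B(x)}, which keeps $|\nabla B_0|$ bounded below on $\Gamma$ and $\theta(x)\in(0,\pi)$ on $\Gamma\cap\partial\Omega$). On the tube, let $x_\Gamma\in\Gamma\cap\overline\Omega$ realise $\dist(x,\Gamma)$; since $a\in C^1(\overline\Omega)$, the map $x\mapsto|\nabla B_0(x)|$ is Lipschitz and $x\mapsto\theta(x)$, $\theta\mapsto\lambda(\R^2_+,\theta)$ are continuous, so
$$V_{\mathcal B}(x)\ \ge\ \mathcal B^{2/3}\,\lambda_\ast(x_\Gamma)\,|\nabla B_0(x_\Gamma)|^{2/3}-C\,\mathcal B^{2/3-\rho},\qquad a(x)\ \le\ a(x_\Gamma)+C\,\mathcal B^{-\rho},$$
with $\lambda_\ast=\lambda_0$ if $x_\Gamma\in\Gamma\cap\Omega$ and $\lambda_\ast=\lambda(\R^2_+,\theta(x_\Gamma))$ if $x_\Gamma\in\Gamma\cap\partial\Omega$. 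Subtracting and recalling the definition \eqref{Lambda1} of $\widehat\Lambda_1$ as the infimum over $\Gamma\cap\Omega$ and $\Gamma\cap\partial\Omega$ of $\lambda_\ast(\widehat\sigma|\nabla B_0|)^{2/3}-a$, we find $V_{\mathcal B}(x)-t\,a(x)\ge t\,\widehat\Lambda_1-C'\,t\,\mathcal B^{-\rho}$, which is the claimed inequality since $\rho\ge\tfrac1{18}$.

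The genuinely delicate point is the first step: establishing the effective-potential lower bound with remainder $\mathcal B^{11/18}$ uniformly in $x\in\Gamma$ and, near $\Gamma\cap\partial\Omega$, uniformly in $\theta(x)$, where the magnetic field degenerates simultaneously at $\Gamma$ and along $\partial\Omega$. Unlike the constant-field case — where the model $\mathfrak h^{N,\xi}$ has a transparent spectral picture — here one must control curvature terms, the error in freezing $\nabla B_0$, and the dependence of $\lambda(\R^2_+,\cdot)$ on the incidence angle, while balancing the IMS-localization error against the coefficient-freezing error to fix $\rho$. Everything else — the pointwise comparison above and the substitution $\mathcal B=\kappa H$, $\widehat\sigma=H/\kappa^2$ to pass back to $\mu_1(\kappa,H)$ — is routine, exactly as in Theorem~\ref{thm:mu1-cst}.
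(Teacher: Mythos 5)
Your proposal follows essentially the same route as the paper: a partition of unity isolating a tube of width $\mathcal B^{-\rho}$ around $\Gamma$, the model-operator lower bounds $\lambda_0(\mathcal B|\nabla B_0|)^{2/3}$ (interior) and $\lambda(\R^2_+,\theta)(\mathcal B|\nabla B_0|)^{2/3}$ (boundary) combined with a Taylor expansion of $a$ inside the tube, and domination of the $\mathcal O(\mathcal B^{2/3})$ potential term by the magnetic energy outside the tube; the paper takes $\ell=\mathcal B^{-7/29}$ and cites \cite{JPM} for the local magnetic bound near $\Gamma$, but your packaging as an effective potential followed by a pointwise comparison is equivalent. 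One small correction: outside the tube you assert $V_{\mathcal B}\ge c_0\mathcal B$, whereas $B_0$ vanishes on $\Gamma$ so one only has $|B_0|\gtrsim\mathcal B^{-\rho}$ there, hence $V_{\mathcal B}\gtrsim\mathcal B^{1-\rho}$ (the paper obtains $\tfrac{M\Theta_0}{2}\mathcal B\ell=\tfrac{M\Theta_0}{2}\mathcal B^{22/29}$); this still dominates $(\mathcal B/\widehat\sigma)^{2/3}\sup_{\overline\Omega}|a|$ as long as $\rho<\tfrac13$, so the conclusion is unaffected.
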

\begin{proof}
Let $\ell=B^{-7/29}$. We define the following sets,
\begin{align*}
&D_{1}=\{x\in\Omega: \dist(x,\Gamma)<2\,\ell\}\,,
&D_{2}=\{x\in\Omega: \dist(x,\Gamma)>\ell\}\,.
\end{align*}
Let $h_{j}$ be a partition of unity satisfying
$$
\sum_{j=1}^{2} h_{j}^{2}=1\,,\qquad \sum_{j=1}^{2}|\nabla h_{j}|^{2}\leq C\,\ell^{-2}=C\mathcal B^{14/29}\qquad{\rm and}\qquad \supp h_{j}\subset D_{j}\quad(j\in\{1,2\})\,.
$$
There holds the following decomposition,
\begin{multline}\label{eq:pu}
\mathcal{Q}_{\mathcal{B}\Fb,-\left(\frac{\mathcal{B}}{\widehat{\sigma}}\right)^{\frac{2}{3}} a}^{\Omega}(\psi)
=\mathcal{Q}_{\mathcal{B}\Fb,-\left(\frac{\mathcal{B}}{\widehat{\sigma}}\right)^{\frac{2}{3}}a}^{D_{1}}(h_{1}\psi)+\mathcal{Q}_{\mathcal{B}\Fb,-\left(\frac{\mathcal{B}}{\widehat{\sigma}}\right)^{\frac{2}{3}}a}^{D_{2}}(h_{2}\psi)-\sum_{j=1}^{2}\int_{\Omega}|\nabla h_{j}|^{2}|\psi|^{2}\,dx\\
\geq \mathcal{Q}_{\mathcal{B}\Fb,-\left(\frac{\mathcal{B}}{\widehat{\sigma}}\right)^{\frac{2}{3}}a}^{D_{1}}(h_{1}\psi)+\mathcal{Q}_{\mathcal{B}\Fb,-\left(\frac{\mathcal{B}}{\widehat{\sigma}}\right)^{\frac{2}{3}}a}^{D_{2}}(h_{2}\psi)
-C\mathcal B^{14/29}\int_{\Omega}|\psi|^{2}\,dx\,.
\end{multline}
We cover the curve $\Gamma$ by a family of disks
$$D(\omega_{j},\ell)\subset\{x\in\R^{2}:\dist(x,\Gamma)\leq 2\ell\}\qquad{\rm and}\qquad D_{1}\subset\bigcup_{j} D(\omega_{j},\ell) \qquad\left(\omega_{j}\in \Gamma\right)\,.$$
Consider a partition of unity satisfying
$$
\sum_{j} \chi_{j}^{2}=1\,,\qquad \sum_{j} |\nabla \chi_{j}|^{2}\leq C\,\ell^{-2}\qquad{\rm and}\qquad \supp \chi_{j}\subset D(\omega_{j},\ell)\,.
$$
Moreover, we can add the property that:
$$
{\rm either~supp}\chi_{j}\cap\Gamma\cap\partial\Omega=\varnothing\,,\quad{\rm either}~\omega_{j}\in\Gamma\cap\partial\Omega\,.
$$
We may write,
\begin{equation}\label{eq:main-bulk+bnd}
\mathcal{Q}_{\mathcal{B}\Fb,-\left(\frac{\mathcal{B}}{\widehat{\sigma}}\right)^{\frac{2}{3}} a}^{D_{1}}(h_{1}\psi)=\sum_{int}\mathcal{Q}_{\mathcal{B}\Fb,-\left(\frac{\mathcal{B}}{\widehat{\sigma}}\right)^{\frac{2}{3}}a}^{D_{1}}(\chi_{j}h_{1}\psi)+\sum_{bnd}\mathcal{Q}_{\mathcal{B}\Fb,-\left(\frac{\mathcal{B}}{\widehat{\sigma}}\right)^{\frac{2}{3}}a}^{D_{1}}(\chi_{j}h_{1}\psi)-\sum_{j}\int_{D_{1}}|\nabla\chi_{j}|^{2}|h_{1}\psi|^{2}\,dx\,,
\end{equation}
where `int' is in reference to the $j$'s such that $\omega_{j}\in\Gamma\cap\Omega$ and `bnd' is in reference to the $j$'s such that $\omega_{j}\in\Gamma\cap\partial\Omega$.\\
For the last term on the right side of \eqref{eq:main-bulk+bnd}, we get using the assumption on $\chi_{j}$:
\begin{equation}\label{eq:est-error}
\int_{D_{1}}|\nabla\chi_{j}|^{2}|h_{1}\psi|^{2}\,dx\leq C\,\ell^{-2}\,\int_{D_{1}}|h_{1}\psi|^{2}\,dx
= C\,\mathcal B^{14/29}\,\int_{D_{1}}|h_{1}\psi|^{2}\,dx\,.
\end{equation}
We have to find a lower bound for
$\mathcal{Q}_{\mathcal{B}\Fb,-\left(\frac{\mathcal{B}}{\widehat{\sigma}}\right)^{\frac{2}{3}}a}^{D_{1}}(h_{1}\psi)$
for each $j$ such that $\omega_{j}\in\Gamma\cap\Omega$ and for each
$j$ such that $\omega_{j}\in\Gamma\cap\partial\Omega$. Thanks to
\cite{JPM},  we have,
$$\int_\Omega|(\nabla-i\mathcal B\Fb)\chi_jh_1\psi|^2\,dx\geq \mathcal
B^{\frac23}\int_\Omega\Big((\lambda_{0}\,|\nabla B_{0}(\omega_{j})|\Big)^{\frac{2}{3}}-CB^{-1/18}\Big)|\chi_jh_1\psi|^2\,dx\,.
$$
Using Taylor's formula, we can write in every disk $D(w_j,\ell)$,
\begin{equation}\label{eq:aT}
|a(x)-a(w_j)|\leq C\ell=C\mathcal B^{-7/29}\leq C\mathcal B^{-1/18}\,.
\end{equation}
In that way, we get,
\begin{align}\label{eq:lb-bulk-vr}
&\sum_{int}\mathcal{Q}_{\mathcal{B}\Fb,-\left(\frac{\mathcal{B}}{\widehat{\sigma}}\right)^{\frac{2}{3}}a}^{D_{1}}(\chi_{j}h_{1}\psi)\nonumber\\
&\quad\geq\sum_{int}\left(\frac{\mathcal{B}}{\widehat{\sigma}}\right)^{\frac{2}{3}} \left(\lambda_{0}\,\Big(\widehat{\sigma}\,|\nabla B_{0}(\omega_{j})|\Big)^{\frac{2}{3}}-a(\omega_{j})-C\mathcal B^{-1/18}
\right) \int|\chi_{j}h_{1}\psi|^{2}\,dx\nonumber\\
&\quad\geq\left(\frac{\mathcal{B}}{\widehat{\sigma}}\right)^{\frac{2}{3}}\left(
\inf_{x\in \Gamma\cap\Omega}\left\{\lambda_{0}\,\Big(\widehat{\sigma}\,|\nabla B_{0}(x)|\Big)^{\frac{2}{3}}-a(x)\right\}-C\mathcal B^{-1/18}\right)
\sum_{int} \int|\chi_{j}h_{1}\psi|^{2}\,dx\,.
\end{align}
In a similar fashion, the analysis in \cite{JPM} and \eqref{eq:aT}
yields,
\begin{align}\label{eq:lb-bnd-vr}
&\sum_{bnd}
\mathcal{Q}_{\mathcal{B}\Fb,-\left(\frac{\mathcal{B}}{\widehat{\sigma}}\right)^{\frac{2}{3}}a}^{D_{1}}(\chi_{j}h_{1}\psi)\nonumber\\
&\quad\geq\sum_{bnd}
\left(\frac{\mathcal{B}}{\widehat{\sigma}}\right)^{\frac{2}{3}}
\left(\lambda(\R^{2}_{+},\theta(\omega_{j}))\,\Big(\widehat{\sigma}\,|\nabla B_{0}(\omega_{j})|\Big)^{\frac{2}{3}}-a(\omega_{j})-C\mathcal B^{-1/18}\right)
\int|\chi_{j}h_{1}\psi|^{2}\,dx\nonumber\\
&\quad\geq\left(\frac{\mathcal{B}}{\widehat{\sigma}}\right)^{\frac{2}{3}}
\left(\inf_{x\in \Gamma\cap\partial\Omega}\left\{\lambda(\R^{2}_{+},\theta(x))\,\Big(\widehat{\sigma}\,|\nabla B_{0}(x)|\Big)^{\frac{2}{3}}-a(x)\right\}-C\mathcal B^{-1/18}\right)
\sum_{bnd} \int|\chi_{j}h_{1}\psi|^{2}\,dx\,.
\end{align}
We insert \eqref{eq:lb-bulk-vr}, \eqref{eq:lb-bnd-vr}
 and \eqref{eq:est-error}
into \eqref{eq:main-bulk+bnd} to obtain,
\begin{equation}\label{eq:main-bulk+bnd-D2}
\mathcal{Q}_{\mathcal{B}\Fb,-\left(\frac{\mathcal{B}}{\widehat{\sigma}}\right)^{\frac{2}{3}} a}^{D_{1}}(h_{1}\psi)\geq \left(\frac{\mathcal{B}}{\widehat{\sigma}}\right)^{\frac{2}{3}}\left(\widehat\Lambda_{1}(B_{0},a,\widehat{\sigma})
-C\mathcal B^{-1/18}\right)\,\int|h_{1}\psi|^{2}\,dx\,.
\end{equation}

Now, we will bound $\int_\Omega |(\nabla-i\mathcal B\Fb)h_{2}\psi|^2\,dx$ from below. Let $\ell_{1}<\ell$, we cover $D_{2}$ by a family of disks
$$D(\omega_{j}',\ell_{1})\subset\{x\in\R^{2}:\dist(x,\Gamma)\geq \ell_{1}\}\qquad\left(\omega_{j}'\in \overline \Omega\right)\,.$$
Consider a partition of unity satisfying
$$
\sum_{j} \chi_{j}^{2}=1\,,\qquad \sum_{j} |\nabla \chi_{j}|^{2}\leq C\,\ell_{1}^{-2}\qquad{\rm and}\qquad \supp \chi_{j}\subset D(\omega_{j}',\ell_{1})\,.$$
There holds the decomposition formula,
\begin{align}\label{eq:int-bnd}
\int_\Omega |(\nabla-i\mathcal B\Fb)h_{2}\psi|^2\,dx&=\sum_{j}\int_\Omega |(\nabla-i\mathcal B\Fb)\chi_{j}\,h_{2}\psi|^2\,dx-\sum_{j}\int_{\Omega}|\nabla \chi_{j}|^{2} |h_{2}\psi|^{2}\,dx\nonumber\\
&\geq \sum_{j}\int_\Omega |(\nabla-i\mathcal B\Fb)\chi_{j}\,h_{2}\psi|^2\,dx-C \ell_{1}^{-2}\int_{\Omega} |h_{2}\psi|^{2}\,dx\,,
\end{align} 
We observe that there exists a gauge function $\varphi_{j}$ satisfying (see \cite[Equation~(A.3)]{KA}),
$$
\left|\Fb(x)-(B_{0}(\omega_{j}')\Ab_{0}(x-\omega_{j}')+\nabla\varphi_{j})\right|\leq C\,\ell_{1}^{2} \quad{\rm in}~D(\omega_{j}',\ell_{1}')\,.
$$
Using Cauchy-Schwarz inequality, we may write,
\begin{multline*}
\int_\Omega |(\nabla-i\mathcal B\Fb)\chi_{j}\,h_{2}\psi|^2\,dx\geq \frac{1}{2}\int_\Omega |(\nabla-i\mathcal{B}\,B_{0}(\omega_{j}')\Ab_{0}(x-\omega_{j}'))e^{-i\mathcal{B}\varphi_{j}}\chi_{j}\,h_{2}\psi|^2\,dx\\
-C\,\mathcal{B}^{2}\,\ell_{1}^{4}\int_{\Omega}|\chi_{j}\,h_{2}\psi|^2\,dx\,.
\end{multline*}
We are reduced to the analysis of the Neumann realization of the Schr\"odinger operator with a constant magnetic field equal to $\mathcal{B}\,B_{0}(\omega_{j}')$ in our case.\\
Notice that by the assumption on  $h_{2}$, there exist constants $M>0$ and $\mathcal B_0>0$ such that, for all $j$,  $|B_{0}(\omega_{j}')|\geq M\,\ell$ in the support of $h_{2}$. Thus,
$$
\forall j,\quad\mathcal{B}|B_{0}(\omega_{j}')|\geq M\,\mathcal{B}\,\ell=M\mathcal B^{22/29}\gg 1\,.
$$
Moreover, the magnetic potentials $\Ab_{0}(x)$ and $\Ab_{0}(x-\omega_{j}')$ are gauge equivalent since
$$
\Ab_{0}(x-\omega_{j}')=\Ab_{0}(x)-\Ab_{0}(\omega_{j}')=\Ab_{0}(x)-\nabla(\Ab_{0}(\omega_{j}')\cdot x)\,.
$$

Thanks to Theorem~\ref{thm:FH}, there exists a constant $\mathcal{B}_{0}$ such that, for any $\mathcal{B}\geq \mathcal{B}_{0}$, we write by the min-max principle,
\begin{align}\label{eq:first-term}
\sum_{j}\int_\Omega |(\nabla-i\mathcal B\Fb)\chi_{j}\,h_{2}\psi|^2\,dx&\geq \frac{\Theta_0\mathcal{B}\,|B_{0}(\omega_{j}')|}{2}\sum_{int}\int_{\Omega}|\chi_{j}\, h_{2} \psi|^{2}\,dx-C\,\mathcal{B}^{2}\,\ell_{1}^{4}\sum_{int}\int_{\Omega}|\chi_{j}\,h_{2}\psi|^2\,dx\nonumber\\
&\geq \left(\frac{M\Theta_0}2\mathcal{B}\,\ell-C\mathcal{B}^{2}\,\ell_{1}^{4}\right)\sum_{j}\int_{\Omega}|\chi_{j}\,h_{2}\psi|^2\,dx\nonumber\\
&=\left(\frac{M\Theta_0}2\mathcal{B}\,\ell-C\mathcal{B}^{2}\,\ell_{1}^{4}\right)\int_{\Omega}|h_{2}\psi|^2\,dx\,.
\end{align}
Putting \eqref{eq:first-term}  into \eqref{eq:int-bnd}, we obtain
\begin{align}\label{eq:D2}
\mathcal{Q}_{\mathcal{B}\Fb,-\left(\frac{\mathcal{B}}{\widehat{\sigma}}\right)^{\frac{2}{3}}a}^{D_{2}}(h_{2}\psi)&=\int_\Omega |(\nabla-i\mathcal B\Fb)h_{2}\psi|^2\,dx-\left(\frac{\mathcal B}{\widehat\sigma}\right)^{2/3}\int_{\Omega} a(x)|h_{2}\psi|^2\,dx\nonumber\\
&\geq \left(\frac{M\Theta_0}2\mathcal{B}\,\ell-C\mathcal{B}^{2}\,\ell_{1}^{4}-C\ell_{1}^{-2}\right)\int_{\Omega}|h_{2}\psi|^2\,dx-\left(\frac{\mathcal B}{\widehat\sigma}\right)^{2/3}\int_{\Omega} a(x)|h_{2}\psi|^2\,dx\,.
\end{align}
We choose $\ell_1=B^{-\rho}$ and $\frac{9}{22}<\rho<\frac{11}{29}$. We observe that,
$$\mathcal{B}^{2}\,\ell_{1}^{4}=\mathcal{B}^{2-4\rho}\ll \mathcal B^{22/29}= \mathcal{B}\,\ell\,,\quad
\ell_{1}^{-2}=B^{2\rho}\ll\mathcal B\,\ell\,,\quad \mathcal{B}^{2/3}\ll \mathcal B^{22/29}=\mathcal B\,\ell\,.$$
In this way, we infer from \eqref{eq:D2}, that there exists a constant $c>0$ such that, for $\mathcal{B}$ sufficiently large,
\begin{equation}\label{eq:D2'}
\mathcal{Q}_{\mathcal{B}\Fb,-\left(\frac{\mathcal{B}}{\widehat{\sigma}}\right)^{\frac{2}{3}}a}^{D_{2}}(h_{2}\psi)\geq c\mathcal B^{22/9}\int_{\Omega} |h_{2}\psi|^2\,dx
\geq \left(\frac{\mathcal{B}}{\widehat{\sigma}}\right)^{\frac{2}{3}}\widehat\Lambda_{1}(B_{0},a,\widehat{\sigma})\int_{\Omega} |h_{2}\psi|^2\,dx\,.
\end{equation}
Collecting \eqref{eq:pu}, \eqref{eq:main-bulk+bnd-D2} and \eqref{eq:D2'}, we finish the proof of Proposition~\ref{prop:mu1-variable}.

\end{proof}

Theorem~\ref{thm:mu1-up-vr} is valid under the assumption that,
\begin{equation}\label{cond:sigma-hat}
\widehat{\lambda}_{\min}\leq \frac{H}{\kappa^2}\leq \widehat{\lambda}_{\max}\,,
\end{equation}
where $0<\widehat{\lambda}_{\min}<\widehat{\lambda}_{\max}<\infty$ are constants independent of $\kappa$ and $H$.

\begin{theorem}\label{thm:mu1-up-vr}
Let $\Omega\subset\R^2$ is an open bounded set with a smooth boundary and $\Gamma\neq\varnothing$. Suppose that \eqref{cond:sigma-hat} hold and $a\in C^1(\overline{\Omega})$, we have
$$
\mu_{1}(\kappa,H)\geq \kappa^{2}\,\widehat\Lambda_{1}\left(B_{0},a,\frac {H}{\kappa^{2}}\right)+  \mathcal{O}(\kappa^{\frac{11}{6}})\,,\qquad{\rm as}\,\kappa\to+\infty\,.
$$
Here,  $\widehat\Lambda_{1}$ is introduced in \eqref{Lambda1}.
\end{theorem}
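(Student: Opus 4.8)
The plan is to convert the estimate for the rescaled eigenvalue $\mu_{\mathcal{B},\widehat{\sigma}}$ obtained in Proposition~\ref{prop:mu1-variable} back to the original parameters $\kappa$ and $H$. Recall that in this section we set $\mathcal{B}=\kappa H$ and $\widehat{\sigma}=H/\kappa^{2}$, and we have observed the operator identity
\[
P_{\kappa H\Fb,-\kappa^{2}a}^{\Omega}=P_{\mathcal{B}\Fb,-\left(\frac{\mathcal{B}}{\widehat{\sigma}}\right)^{\frac{2}{3}}a}^{\Omega}\,,
\]
which uses the algebraic fact $\left(\mathcal{B}/\widehat{\sigma}\right)^{2/3}=\left(\kappa H\cdot\kappa^{2}/H\right)^{2/3}=\kappa^{2}$. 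Consequently $\mu_{1}(\kappa,H)=\mu_{\mathcal{B},\widehat{\sigma}}$ with this choice of parameters.

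First I would check that the hypotheses of Proposition~\ref{prop:mu1-variable} are met. Under \eqref{cond:sigma-hat}, $\widehat{\sigma}=H/\kappa^{2}$ lies in the fixed closed interval $I=[\widehat{\lambda}_{\min},\widehat{\lambda}_{\max}]\subset(0,\infty)$; moreover $\mathcal{B}=\kappa H=\widehat{\sigma}\,\kappa^{3}\to+\infty$ as $\kappa\to+\infty$, so $\mathcal{B}\geq\mathcal{B}_{0}$ for $\kappa$ large. Then Proposition~\ref{prop:mu1-variable} gives
\[
\mu_{1}(\kappa,H)=\mu_{\mathcal{B},\widehat{\sigma}}\geq\left(\frac{\mathcal{B}}{\widehat{\sigma}}\right)^{\frac{2}{3}}\left(\widehat\Lambda_{1}(B_{0},a,\widehat{\sigma})-C\mathcal{B}^{-\frac{1}{18}}\right)=\kappa^{2}\,\widehat\Lambda_{1}\!\left(B_{0},a,\frac{H}{\kappa^{2}}\right)-C\,\kappa^{2}\,\mathcal{B}^{-\frac{1}{18}}\,.
\]

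It remains only to estimate the error term. Since $\mathcal{B}=\kappa H$ and, by \eqref{cond:sigma-hat}, $H\geq\widehat{\lambda}_{\min}\kappa^{2}$, we have $\mathcal{B}\geq\widehat{\lambda}_{\min}\kappa^{3}$, hence $\mathcal{B}^{-1/18}\leq\widehat{\lambda}_{\min}^{-1/18}\kappa^{-3/18}=\widehat{\lambda}_{\min}^{-1/18}\kappa^{-1/6}$. Therefore $\kappa^{2}\mathcal{B}^{-1/18}=\mathcal{O}(\kappa^{2-1/6})=\mathcal{O}(\kappa^{11/6})$, which yields the claimed bound $\mu_{1}(\kappa,H)\geq\kappa^{2}\widehat\Lambda_{1}(B_{0},a,H/\kappa^{2})+\mathcal{O}(\kappa^{11/6})$. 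I do not anticipate a genuine obstacle here — the statement is essentially a bookkeeping corollary of Proposition~\ref{prop:mu1-variable}; the only point requiring a little care is tracking the exponent $2-1/18\cdot 3=11/6$ through the change of variables and confirming that the interval constraint \eqref{cond:sigma-hat} (rather than \eqref{cond-H}) is exactly what makes $\widehat{\sigma}$ bounded and $\mathcal{B}\to\infty$ simultaneously.
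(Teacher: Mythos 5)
Your proposal is correct and follows exactly the paper's own route: apply Proposition~\ref{prop:mu1-variable} with $\mathcal{B}=\kappa H$, $\widehat{\sigma}=H/\kappa^{2}$, $I=[\widehat{\lambda}_{\min},\widehat{\lambda}_{\max}]$, note that $(\mathcal{B}/\widehat{\sigma})^{2/3}=\kappa^{2}$ and $\mathcal{B}=\widehat{\sigma}\kappa^{3}\to+\infty$, and track the error $\kappa^{2}\mathcal{B}^{-1/18}=\mathcal{O}(\kappa^{11/6})$. Your write-up is in fact slightly more explicit than the paper's about the exponent bookkeeping, but the argument is the same.
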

\begin{proof}
We apply Proposition~\ref{prop:mu1-variable} with
$$
\mathcal{B}=\kappa H\,,\quad \widehat{\sigma}=\frac{H}{\kappa^2}\quad{\rm and}\quad I=[\widehat{\lambda}_{\min},\widehat{\lambda}_{\max}]\,.
$$
Let us verify that the conditions of the proposition are satisfied for this choice.
Thanks to \eqref{cond:sigma-hat}, $\widehat{\sigma}\in I$. Now, as $\kappa\to+\infty$, we have,
$$
\mathcal{B}=\widehat{\sigma}\,\kappa^{3}\to+\infty\,.
$$
This implies that, as $\kappa\to+\infty$,
$$
\mu_{1}(\kappa,H)\geq \kappa^{2}\,\widehat\Lambda_{1}\left(B_{0},a,\frac{H}{\kappa^2}\right)+\mathcal{O}(\kappa^{\frac{11}{6}})\,.
$$
This finish the proof of the theorem.
\end{proof}
\subsection{Upper bound}~\\

The next theorem is a generalization of the results in \cite{XB-KH}
and \cite{JPM} valid when the pinning term $a(\kappa,x)=a(x)$ is
independent of $\kappa$ and non-constant.

We denote by $\mu_{\mathcal B,\widehat{\sigma}}$ the lowest eigenvalue of the operator $P_{\mathcal{B}\Fb,-\left(\frac{\mathcal{B}}{\widehat{\sigma}}\right)^{\frac{2}{3}}\,a}^{\Omega}$ i.e.
$$\mu_{\mathcal B,\widehat{\sigma}}=\inf_{\psi\in H^{1}(\Omega)\setminus\{0\}}\frac{\mathcal{Q}_{\mathcal{B}\Fb,-\left(\frac{\mathcal{B}}{\widehat{\sigma}}\right)^{\frac{2}{3}}\,a}^{\Omega}(\psi)}{\|\psi\|^{2}_{L^{2}(\Omega)}}\,.$$

\begin{prop}\label{prop:up-var}
Suppose that $\Gamma\neq\varnothing$ and $\widehat\lambda_{\max}>0$. There exist positive constants $C$ and $B_{0}$ such that, for $\widehat\sigma\in (0,\widehat\lambda_{\max}]$, $a\in C^{1}(\overline{\Omega})$ and $\mathcal{B}\geq\mathcal{B}_0$, we have,
\begin{equation}\label{eq:upper-bound-variable}
\mu_{\mathcal{B},\widehat{\sigma}}\leq
\left(\frac{\mathcal{B}}{\widehat{\sigma}}\right)^{\frac{2}{3}}\Big(\widehat\Lambda_{1}(B_{0},a,\widehat{\sigma})
-C\mathcal{B}^{-\frac{1}{18}}\Big)\,.
\end{equation}
\end{prop}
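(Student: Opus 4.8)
The plan is to bound $\mu_{\mathcal B,\widehat\sigma}$ from above by testing the quadratic form $\mathcal{Q}^{\Omega}_{\mathcal B\Fb,-(\mathcal B/\widehat\sigma)^{2/3}a}$ against one well-localized trial state. Under \eqref{B(x)}, $\Gamma$ is a finite union of smooth arcs with $|\nabla B_0|\ge c_0>0$ on $\Gamma$, and the transversality condition makes $x\mapsto\theta(x)$ continuous on $\Gamma\cap\partial\Omega$; since $a\in C^1(\overline\Omega)$ and $\tau\mapsto\lambda(\tau)$, $\theta\mapsto\lambda(\R^2_+,\theta)$ are continuous, the maps $x\mapsto\lambda_0(\widehat\sigma|\nabla B_0(x)|)^{2/3}-a(x)$ on $\Gamma\cap\overline\Omega$ and $x\mapsto\lambda(\R^2_+,\theta(x))(\widehat\sigma|\nabla B_0(x)|)^{2/3}-a(x)$ on $\Gamma\cap\partial\Omega$ are continuous on compact sets, so the infimum defining $\widehat\Lambda_1(B_0,a,\widehat\sigma)$ in \eqref{Lambda1} is attained at some $x_0\in\Gamma\cap\overline\Omega$. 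I will distinguish the bulk case $x_0\in\Gamma\cap\Omega$ and the boundary case $x_0\in\Gamma\cap\partial\Omega$, and in both use the length scale $\ell=\mathcal B^{-7/29}$ from the proof of Proposition~\ref{prop:mu1-variable}; note that $\ell(\mathcal B|\nabla B_0(x_0)|)^{1/3}\approx\mathcal B^{8/87}\to\infty$ uniformly in $x_0$, so all truncation errors will be superpolynomially small.

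In the bulk case I work in a disk $D(x_0,\ell)\subset\Omega$ (admissible for $\mathcal B$ large, uniformly in $x_0$ since $|\nabla B_0|$ is bounded below on $\Gamma$). Using the gauge function of \cite[Appendix~A]{KA} I replace $\Fb$ by the vector potential of the affine magnetic field $y\mapsto\nabla B_0(x_0)\cdot(y-x_0)$ (recall $B_0(x_0)=0$), with approximation error $\mathcal{O}(\ell^3)$ because $B_0\in C^\infty$ and its gradient is matched at $x_0$. I then transplant the ground state of the Montgomery operator $M(\tau)$ of \eqref{defM}, rescaled to the natural length $(\mathcal B|\nabla B_0(x_0)|)^{-1/3}$ and cut off smoothly at radius $\ell$, to get $v\in H^1(\Omega)\setminus\{0\}$ supported in $D(x_0,\ell)$; the local model analysis of \cite{JPM} (and \cite{XB-KH}) yields
\begin{equation*}
\frac{\int_\Omega|(\nabla-i\mathcal B\Fb)v|^2\,dx}{\int_\Omega|v|^2\,dx}\;\le\;\mathcal B^{2/3}\big(\lambda_0\,|\nabla B_0(x_0)|^{2/3}+C\,\mathcal B^{-1/18}\big)\,.
\end{equation*}
For the potential term, Taylor's formula for $a$ on $D(x_0,\ell)$ (as in \eqref{eq:aT}) shows that the normalized value of $-(\mathcal B/\widehat\sigma)^{2/3}\int_\Omega a|v|^2$ equals $-(\mathcal B/\widehat\sigma)^{2/3}a(x_0)$ up to $(\mathcal B/\widehat\sigma)^{2/3}\,\textit{o}(\mathcal B^{-1/18})$. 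Writing $\mathcal B^{2/3}=(\mathcal B/\widehat\sigma)^{2/3}\widehat\sigma^{2/3}$ and absorbing $\widehat\sigma^{2/3}\le\widehat\lambda_{\max}^{2/3}$ into the constant — only the upper bound $\widehat\sigma\le\widehat\lambda_{\max}$ is needed here — the Rayleigh quotient of $v$ is at most $(\mathcal B/\widehat\sigma)^{2/3}\big(\lambda_0(\widehat\sigma|\nabla B_0(x_0)|)^{2/3}-a(x_0)\big)$ up to an error of order $(\mathcal B/\widehat\sigma)^{2/3}\mathcal B^{-1/18}$. Since $x_0$ realizes $\widehat\Lambda_1$, the main term is $(\mathcal B/\widehat\sigma)^{2/3}\widehat\Lambda_1(B_0,a,\widehat\sigma)$, and re-choosing $C$ gives \eqref{eq:upper-bound-variable}.

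The boundary case $x_0\in\Gamma\cap\partial\Omega$ is handled the same way in boundary normal coordinates near $x_0$, where the relevant model is $P_{\Ab_{\rm app,\theta(x_0)},0}^{\R^2_+}$ of \eqref{def:lambda-theta}, with $\theta(x_0)$ the angle between $\nabla B_0(x_0)$ and $-\nu(x_0)$. I transplant a quasimode of this operator — available from \cite{XB-KH} and the analysis of \cite{JPM} — along the flattened boundary, gauge away $\Fb$ up to $\mathcal{O}(\ell^3)$ as before, and cut off at radius $\ell$; the extra error from the curvature of $\partial\Omega$ and from the coordinate change is $\mathcal{O}(\mathcal B^{2/3}\ell)=\textit{o}(\mathcal B^{2/3-1/18})$, hence negligible. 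Together with the Taylor estimate for $a$ this bounds the Rayleigh quotient by $(\mathcal B/\widehat\sigma)^{2/3}\big(\lambda(\R^2_+,\theta(x_0))(\widehat\sigma|\nabla B_0(x_0)|)^{2/3}-a(x_0)\big)$ up to $(\mathcal B/\widehat\sigma)^{2/3}\mathcal B^{-1/18}$, which by the choice of $x_0$ is again \eqref{eq:upper-bound-variable}. Combined with the lower bound of Proposition~\ref{prop:mu1-variable}, the substitution $\mathcal B=\kappa H$, $\widehat\sigma=H/\kappa^2$ then produces the upper bound for $\mu_1(\kappa,H)$ with remainder $\mathcal{O}(\kappa^{11/6})$ that complements Theorem~\ref{thm:mu1-up-vr}.

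The main obstacle is the boundary case: one must construct the trial function so that it is genuinely supported in $\Omega$ near the curved $\partial\Omega$ while still coinciding, up to controlled errors, with the exact half-plane quasimode of \cite{XB-KH,JPM}, and then verify that the superposition of the curvature/coordinate error, the $\mathcal{O}(\ell^3)$ gauge error, the $\mathcal{O}(\ell)$ Taylor error for $a$, and the superpolynomially small truncation error all stay within $C(\mathcal B/\widehat\sigma)^{2/3}\mathcal B^{-1/18}$, uniformly for $\widehat\sigma\in(0,\widehat\lambda_{\max}]$ and for $x_0$ ranging over the compact set $\Gamma\cap\overline\Omega$ — this uniformity is precisely what the choice $\ell=\mathcal B^{-7/29}$ is tuned for, matching the lower bound. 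A subsidiary point is the attainment of the infimum in \eqref{Lambda1} and the treatment of the borderline case where the bulk infimum is approached along a sequence converging to $\Gamma\cap\partial\Omega$, settled using $\lambda_0\le\lambda(\R^2_+,\theta)$ in the spirit of Remark~\ref{rem:first term}.
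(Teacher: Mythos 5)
Your proposal follows essentially the same route as the paper: the paper simply invokes the quasimodes constructed in \cite{XB-KH,JPM}, supported in $\overline\Omega\cap B(x_0,\mathcal B^{-1/18})$ and valid uniformly for $x_0\in\Gamma$ (interior Montgomery model, boundary half-plane model with angle $\theta(x_0)$), combines them with the Taylor estimate $|a(x)-a(x_0)|\leq C\mathcal B^{-1/18}$ on the support, writes $\mathcal B^{2/3}=(\mathcal B/\widehat\sigma)^{2/3}\widehat\sigma^{2/3}$ using only $\widehat\sigma\leq\widehat\lambda_{\max}$, and applies the min-max principle; your construction with cut-off scale $\ell=\mathcal B^{-7/29}$ reproduces exactly this, and your error bookkeeping is consistent. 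Two small remarks. First, the attainment of the infimum in \eqref{Lambda1} is not needed: since the quasimode bound holds with constants uniform in $x_0\in\Gamma$, one simply takes the infimum over $x_0$ of the resulting upper bounds, which is how the paper concludes, so your borderline discussion is dispensable. Second, in that subsidiary discussion your comparison inequality is reversed: by Theorem~\ref{thm:PK-R2+} one has $\lambda(\R^2_+,\theta)\leq\lambda_0$ (with equality only at $\theta=0$), not $\lambda_0\leq\lambda(\R^2_+,\theta)$; with the correct direction the reduction to a boundary minimizer does go through. Finally, like the paper's own proof, your argument yields the bound with $+\,C\mathcal B^{-1/18}$ on the right-hand side; the minus sign in \eqref{eq:upper-bound-variable} appears to be a typo in the statement rather than a defect of your proof.
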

\begin{proof}
Let $x_0\in\Gamma$. In \cite{XB-KH, JPM}, a quasi-mode $u(\mathcal
B,x_0;x)$ is constructed such that, ${\rm supp}\,u(\mathcal
B,x_0;\cdot)\subset \overline{\Omega}\cap B(0,\mathcal B^{-1/18})$
and,
$$\forall~\mathcal B\geq \mathcal B_0\,,\quad\frac{\displaystyle\int_\Omega|(\nabla-i\mathcal B\Fb)u(\mathcal
B,x_0;x)|^2\,dx}{\displaystyle\int_\Omega|u(\mathcal
B,x_0;x)|^2\,dx}\leq \mathcal B^{\frac{2}{3}}\Big(\Lambda(x_0)+C\mathcal
B^{-1/18}\Big)\,,$$ where $\mathcal B_0$ and $C$ are constants independent of the point $x_0$ and the parameter $\mathcal B$, and
$$\Lambda(x_0)=
\left\{
\begin{array}{ll}
\lambda_{0}\,|\nabla B_{0}(x_0)|^{\frac{2}{3}}&{\rm if~}x_0\in\Gamma\cap\Omega\,,\\
\lambda(\R^{2}_{+},\theta(x_0))\,|\nabla
B_{0}(x_0)|^{\frac{2}{3}}&{\rm if~}x_0\in\Gamma\cap\partial\Omega\,.
\end{array}
\right.
$$
Using the smoothness of the function $a(\cdot)$, we get in the
support of $u(\mathcal B,x_0;\cdot)$,
$$|a(x)-a(x_0)|\leq C\mathcal B^{-1/18}\,.$$
Thus, we deduce that,
$$\frac{\mathcal Q^\Omega_{ \mathcal B\Fb,-\left(\frac{\mathcal B}{\widehat\sigma}\right)^{\frac{2}{3}}a }(u(\mathcal B,x_0;\cdot)}{\|u(\mathcal B,x_0;\cdot)\|^2_{L^2(\Omega)}}
\leq \left(\frac{\mathcal
B}{\widehat\sigma}\right)^{\frac{2}{3}}\Big(\widehat\sigma^{\frac{2}{3}}\Lambda(x_0)-a(x_0)+C\mathcal B^{-1/18}\Big)\,.$$
Thanks to the min-max principle, we deduce that,
$$\mu_{\mathcal B,\widehat\sigma}\leq
\left(\frac{\mathcal
B}{\widehat\sigma}\right)^{\frac{2}{3}}\Big(\widehat\sigma^{\frac{2}{3}}\Lambda(x_0)-a(x_0)+C\mathcal B^{-1/18}\Big)\,.$$
 Since this is true
for all $x_0\in\Gamma$, we deduce that,
$$\mu_{\mathcal B,\widehat\sigma}\leq
\left(\frac{\mathcal
B}{\widehat\sigma}\right)^{\frac{2}{3}}\Big(\widehat\Lambda_1(B_0,a,\widehat\sigma)+C\mathcal
B^{-1/18}\Big)\,,$$ where $\widehat\Lambda_1(B_0,a,\widehat\sigma)$ is introduced in \eqref{Lambda1}.
\end{proof}
Proposition~\ref{prop:up-var} permits to obtain:
\begin{theorem}\label{thm:mu1-upp-var}
Let $\widehat{\lambda}_{\max}>0$. Suppose that $\Gamma\neq\varnothing$ and $a\in C^1(\overline{\Omega})$. There exist two constants $C_1>0$ and $\kappa_0>0$ such that,
if,
\begin{equation}\label{cond:H-m1<}
\kappa\geq\kappa_0\,,\quad  {\rm and}\quad \kappa_0\kappa^{-1}<H<\widehat{\lambda}_{\max}\kappa^2\,
\end{equation}
then
$$
\mu_{1}(\kappa,H)\leq \kappa^{2}\,\widehat\Lambda_{1}\left(B_{0},a,\frac{H}{\kappa^2}\right)+C_1\kappa^{\frac{11}{6}}\,,\qquad{\rm as}\,\kappa\to+\infty\,.
$$
\end{theorem}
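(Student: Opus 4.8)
The plan is to deduce Theorem~\ref{thm:mu1-upp-var} directly from Proposition~\ref{prop:up-var} by a straightforward change of parameters, exactly mirroring the structure of the lower-bound Theorem~\ref{thm:mu1-up-vr} and of Remarks~\ref{rem:first term}--\ref{rem:second term} in the non-vanishing case. First I would recall the identity noted at the start of Section~\ref{Section:Asympt-m1-vanish}, namely
$$
P_{\kappa H\Fb,-\kappa^{2}a}^{\Omega}=P_{\mathcal{B}\Fb,-\left(\frac{\mathcal{B}}{\widehat{\sigma}}\right)^{\frac{2}{3}}\,a}^{\Omega}\,,\qquad \mathcal{B}=\kappa H\,,\quad\widehat{\sigma}=\frac{H}{\kappa^{2}}\,,
$$
so that $\mu_{1}(\kappa,H)=\mu_{\mathcal{B},\widehat{\sigma}}$ with this choice of $(\mathcal{B},\widehat{\sigma})$. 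Note also the algebraic relation $\left(\frac{\mathcal{B}}{\widehat{\sigma}}\right)^{2/3}=\kappa^{2}$.

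Next I would verify that the hypotheses of Proposition~\ref{prop:up-var} are met. Under \eqref{cond:H-m1<} we have $\widehat{\sigma}=H/\kappa^{2}\in(0,\widehat{\lambda}_{\max}]$ for $\kappa\geq\kappa_{0}$ (after possibly enlarging $\kappa_{0}$), and $\mathcal{B}=\kappa H>\kappa\cdot\kappa_{0}\kappa^{-1}=\kappa_{0}$, which can be taken larger than the threshold $\mathcal{B}_{0}$ of Proposition~\ref{prop:up-var} by choosing $\kappa_{0}$ large. Applying the proposition then gives
$$
\mu_{1}(\kappa,H)=\mu_{\mathcal{B},\widehat{\sigma}}\leq\left(\frac{\mathcal{B}}{\widehat{\sigma}}\right)^{2/3}\Big(\widehat{\Lambda}_{1}(B_{0},a,\widehat{\sigma})+C\,\mathcal{B}^{-1/18}\Big)=\kappa^{2}\,\widehat{\Lambda}_{1}\!\left(B_{0},a,\tfrac{H}{\kappa^{2}}\right)+C\,\kappa^{2}\,\mathcal{B}^{-1/18}\,.
$$
Here I should note that the sign in the error term of \eqref{eq:upper-bound-variable} as written is $-C\mathcal{B}^{-1/18}$, which looks like a typo for $+C\mathcal{B}^{-1/18}$ in an upper bound; in any case the remainder is $O(\kappa^{2}\mathcal{B}^{-1/18})$ in absolute value, which is all that is needed.

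Finally I would estimate $\mathcal{B}^{-1/18}$ using \eqref{cond:H-m1<}: since $H<\widehat{\lambda}_{\max}\kappa^{2}$, we get $\mathcal{B}=\kappa H<\widehat{\lambda}_{\max}\kappa^{3}$, hence $\mathcal{B}^{-1/18}$ is bounded (for $\mathcal{B}\geq 1$) and, using instead the lower bound on $H$ to get $\mathcal{B}>\kappa_{0}$, we actually need an upper bound on the remainder: $\kappa^{2}\mathcal{B}^{-1/18}\leq\kappa^{2}$, which is too crude. To land the stated $\kappa^{11/6}$ I would use $\mathcal{B}=\kappa H\gtrsim\kappa^{2}$ only when $H\gtrsim\kappa$; more honestly, one observes $\kappa^{2}\mathcal{B}^{-1/18}$ is largest when $\mathcal{B}$ is smallest, and the regime where $\mathcal{B}$ is small (i.e. $\kappa H=O(1)$) is handled separately by Lemma~\ref{lem-H=kappa} which already gives $\mu_{1}<0$, a far stronger conclusion; for $H$ in the intermediate range one uses $\widehat\sigma\geq\widehat\lambda_{\min}$ to get $\mathcal B\gtrsim\kappa^{3}$ and $\kappa^{2}\mathcal{B}^{-1/18}\lesssim\kappa^{2-1/6}=\kappa^{11/6}$. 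The only mild obstacle is bookkeeping this remainder uniformly across the stated $H$-range and reconciling it with the $\widehat{\sigma}\in I$ hypothesis used in the lower bound; combining with Lemma~\ref{lem-H=kappa} for the small-$\mathcal{B}$ regime closes the argument, so I would state the theorem as is with $C_{1}\kappa^{11/6}$, the remainder being genuinely of that order precisely when $\widehat\sigma$ stays bounded below.
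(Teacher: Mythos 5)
Your proposal follows exactly the paper's route: the paper's entire proof of Theorem~\ref{thm:mu1-upp-var} is the observation that one applies Proposition~\ref{prop:up-var} with $\mathcal{B}=\kappa H$ and $\widehat\sigma=H/\kappa^{2}$, so that $\left(\mathcal{B}/\widehat\sigma\right)^{2/3}=\kappa^{2}$. You are also right that the $-C\mathcal{B}^{-1/18}$ in \eqref{eq:upper-bound-variable} is a sign slip; the displayed conclusion of the proof of Proposition~\ref{prop:up-var} carries $+C\mathcal{B}^{-1/18}$, which is what an upper bound needs.

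The remainder bookkeeping you flag is a genuine imprecision that the paper passes over in silence: $\kappa^{2}\mathcal{B}^{-1/18}=\kappa^{2}(\kappa H)^{-1/18}$ is $O(\kappa^{11/6})$ only when $\kappa H\gtrsim\kappa^{3}$, i.e. when $\widehat\sigma$ is bounded below, whereas \eqref{cond:H-m1<} allows $H$ as small as $\kappa_{0}\kappa^{-1}$ (for instance $H=\kappa$ gives a remainder of order $\kappa^{17/9}$, which exceeds $\kappa^{11/6}$). Be aware, though, that your patch does not fully close this: Lemma~\ref{lem-H=kappa} covers only $\kappa H=\mathcal{O}(1)$ and delivers $\mu_{1}<0$, which does \emph{not} imply $\mu_{1}\leq\kappa^{2}\widehat\Lambda_{1}+C_{1}\kappa^{11/6}$ when $\widehat\Lambda_{1}<0$ (as happens for small $\widehat\sigma$ whenever $a>0$ somewhere on $\Gamma$), and the intermediate regime $1\ll\kappa H\ll\kappa^{3}$ is reached by neither device. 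In practice this is harmless because the theorem is only ever invoked (in Proposition~\ref{prop:mu1<0-var}) with $H$ of order $\kappa^{2}$, so $\widehat\sigma$ stays bounded below there; your closing sentence correctly identifies that the $\kappa^{11/6}$ rate is genuine precisely in that regime, which is the honest reading of the statement.
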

\begin{proof}
To apply the results of Proposition~\ref{prop:up-var}, we take $\mathcal{B}=\kappa H$ and $\widehat{\sigma}=\frac{H}{\kappa^2}$. We see for $\kappa$ sufficiently large that $\widehat{\sigma}\in (0,\widehat{\lambda}_{\max})$ and $\mathcal{B}$ large.
\end{proof}
Theorem~\ref{thm:mu1-upp-var} is valid when $\kappa H\geq \kappa_0$ and $\kappa_0$ is sufficiently large.

\section{Proof of Theorem~\ref{thm:HC3-vr}}\label{12}
\subsection{Analysis of $\underline{H}_{C_3}^{loc}$ and $\overline{H}_{C_3}^{loc}$.}~\\
In this subsection we will prove Theorem~\ref{thm:HC3-vr} for $\underline{H}_{C_3}^{loc}$ and $\overline{H}_{C_3}^{loc}$.
We first recall some useful results from \cite{XB-KH} about the relation between the eigenvalues $\lambda_0$ and $ \lambda(\R_{+}^{2},\theta)$, introduced in \eqref{lambda0} and in \eqref{def:lambda-theta}.
\begin{thm}\label{thm:PK-R2+}~
\begin{enumerate}
\item[(i)] $\lambda(\R^{2}_{+},0)=\lambda_{0}$\,.
\item[(ii)] If $0<\theta<\pi$, then  $\lambda(\R^{2}_{+},\theta)<\lambda_{0}$.
\end{enumerate}
\end{thm}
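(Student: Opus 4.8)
The statement to be proved is Theorem~\ref{thm:PK-R2+}, comparing $\lambda_0 = \inf_\tau \lambda(\tau)$ for the model operator $M(\tau) = -\frac{d^2}{dt^2} + \frac14(t^2+2\tau)^2$ on $L^2(\R)$ with the half-plane quantities $\lambda(\R^2_+,\theta)$ for $\theta\in(0,\pi)$. I would first carry out part (i), the identity $\lambda(\R^2_+,0)=\lambda_0$, which is essentially a separation-of-variables computation. For $\theta=0$ the approximating potential is $\Ab_{\rm app,0}=-(x_2^2/2,0)$, which generates the magnetic field $\curl\Ab_{\rm app,0}= -x_2$ (up to sign conventions), a field vanishing linearly along the boundary $\{x_2=0\}$ of $\R^2_+$. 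Since $\Ab_{\rm app,0}$ has no $x_1$-component and depends only on $x_2$, the operator $P^{\R^2_+}_{\Ab_{\rm app,0},0} = (-i\partial_{x_1} + x_2^2/2)^2 - \partial_{x_2}^2$ commutes with translations in $x_1$; taking a partial Fourier transform in $x_1$ with dual variable $\tau$ reduces it to the family of one-dimensional operators $-\partial_{x_2}^2 + (\tau + x_2^2/2)^2$ on $L^2(\R_+)$ with Neumann condition at $x_2=0$. The key point is then that the Neumann realization on the half-line of $-\partial_{t}^2 + (\tau + t^2/2)^2$ has the same bottom-of-spectrum infimum over $\tau$ as the whole-line operator $M(\tau) = -\partial_t^2 + \frac14(t^2+2\tau)^2$: one extends a half-line ground state evenly (the even extension is automatically in the domain of the whole-line operator and the potential $\frac14(t^2+2\tau)^2 = (t^2/2+\tau)^2$ is even), and conversely restricts an even whole-line ground state; this gives $\lambda(\R^2_+,0)=\inf_\tau\lambda(\tau)=\lambda_0$. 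I should be careful to record precisely which normalization of $\Ab_0$, the field, and $M(\tau)$ matches the definitions in \eqref{defM} and \eqref{def:lambda-theta}, since the factor conventions ($x^2/2$ versus $x^2$, and the $\frac14$ in $M$) must be tracked through the Fourier reduction.

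For part (ii), the strict inequality $\lambda(\R^2_+,\theta)<\lambda_0$ for $0<\theta<\pi$, I would argue by producing, for each such $\theta$, a trial function in the form domain of $P^{\R^2_+}_{\Ab_{\rm app,\theta},0}$ whose Rayleigh quotient is strictly below $\lambda_0$. The natural candidate is built from a normalized ground state $\phi_0$ of the $\theta=0$ problem (realizing $\lambda_0$) multiplied by a gauge factor $e^{i\Phi_\theta}$ and possibly a cutoff, exploiting that $\Ab_{\rm app,\theta}$ differs from $\Ab_{\rm app,0}$ by the extra term $-(0,\frac{x_1^2}{2}\sin\theta)$ which, for $\theta\in(0,\pi)$, has $\sin\theta>0$. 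One computes the energy of the trial state and shows that the leading correction is strictly negative; the sign $\sin\theta>0$ is what makes this work, and it is exactly why the inequality fails (becomes equality) at $\theta=0$ and $\theta=\pi$. Alternatively, since this theorem is attributed to \cite{XB-KH}, I expect the cleanest route is to cite the relevant lemma there for (ii) and only reprove (i) in detail; but if a self-contained argument is wanted, the variational construction above is the way.

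\textbf{Main obstacle.} The delicate step is part (ii): constructing an explicit competitor and controlling its Rayleigh quotient sharply enough to get a \emph{strict} inequality rather than just $\le$. This typically requires either a second-order perturbation argument around $\theta=0$ (showing the derivative of $\theta\mapsto\lambda(\R^2_+,\theta)$ is negative, or that the relevant quadratic form decreases) or an ad hoc clever choice of trial function adapted to the tilted potential. The subtlety is that the half-plane operators have essential spectrum issues and the ground state need not be an eigenvalue for all $\theta$, so one must phrase everything at the level of infima of quadratic forms and be careful that the trial function genuinely lies in the form domain with finite energy. For the purposes of this paper, since Theorem~\ref{thm:PK-R2+} is only used as an input, I would present part (i) with the Fourier/even-reflection argument spelled out and invoke \cite{XB-KH} for part (ii), noting that it follows from the monotonicity of $\theta\mapsto\lambda(\R^2_+,\theta)$ established there.
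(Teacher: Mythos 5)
The paper offers no proof of Theorem~\ref{thm:PK-R2+}: it is explicitly introduced as a result \emph{recalled} from Pan--Kwek \cite{XB-KH}, so your bottom line (spell out (i) if desired, cite \cite{XB-KH} for (ii)) coincides with what the paper actually does. Your argument for (i) is correct and standard: $\Ab_{\rm app,0}=-(x_2^2/2,0)$ is independent of $x_1$, so a partial Fourier transform in $x_1$ fibers the Neumann realization on $\{x_2>0\}$ into the half-line operators $-\partial_t^2+(\tau+t^2/2)^2=-\partial_t^2+\tfrac14(t^2+2\tau)^2$ with Neumann condition at $t=0$; these have compact resolvent, their ground states extend evenly to eigenfunctions of $M(\tau)$ on $\R$ (and conversely the whole-line ground state is even, hence satisfies the Neumann condition), so the fiberwise infima agree with $\lambda(\tau)$ and $\lambda(\R^2_+,0)=\inf_\tau\lambda(\tau)=\lambda_0$.

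One caution on (ii): the specific trial-function scheme you sketch would not work as described. For $\theta\in(0,\pi)$ the potentials $\Ab_{\rm app,\theta}$ and $\Ab_{\rm app,0}$ generate \emph{different} magnetic fields ($-x_1\sin\theta+x_2\cos\theta$ versus $x_2$), so no gauge factor $e^{i\Phi_\theta}$ can relate the two quadratic forms; and $\lambda_0$ is the bottom of the essential spectrum of the $\theta=0$ operator rather than an eigenvalue, so transplanting a (necessarily cut off, quasi-mode) ground state of the $\theta=0$ problem can only give $\lambda(\R^2_+,\theta)\le\lambda_0+o(1)$, never a strict inequality. Obtaining strictness genuinely requires the two-dimensional analysis of \cite{XB-KH}. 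Since you, like the paper, ultimately defer to that reference for (ii), this does not undermine your proposal, but the variational sketch should not be presented as a workable self-contained alternative.
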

The next proposition gives the region where $\mu_{1}(\kappa,H)<0$ that allows us to obtain an information about $\underline{H}_{C_3}^{loc}$ (see \eqref{def:HC3-u}) in the case when the magnetic field $B_0$ is constant with a pining term.
\begin{prop}\label{prop:mu1<0-var}
Suppose that $\{a>0\}\neq\varnothing$ and $\Gamma\neq\varnothing$.  There exist constants $C>0$ and $\kappa_{0}\geq 0$ such that if
\begin{equation}\label{cond:HC3-var}
\kappa\geq \kappa_{0}\,,\qquad H\leq \max\left(\sup_{x\in\Gamma\cap\Omega}\frac{a(x)^{\frac{3}{2}}}{\lambda_{0}^{\frac{3}{2}}|\nabla B_{0}(x)|},\sup_{x\in\Gamma\cap\partial\Omega}\frac{a(x)^{\frac{3}{2}}}{\lambda(\R^{2}_{+},\theta(x))^{\frac{3}{2}}|\nabla B_{0}(x)|}\right)\,\kappa^{2}-C\,\kappa^{\frac{11}{6}}\,,
\end{equation}
then,
$$
\mu_{1}(\kappa,H)<0\,.
$$
Moreover,
$$
\max\left(\sup_{x\in\Gamma\cap\Omega}\frac{a(x)^{\frac{3}{2}}}{\lambda_{0}^{\frac{3}{2}}|\nabla B_{0}(x)|},\sup_{x\in\Gamma\cap\partial\Omega}\frac{a(x)^{\frac{3}{2}}}{\lambda(\R^{2}_{+},\theta(x))^{\frac{3}{2}}|\nabla B_{0}(x)|}\right)\,\kappa^{2}- C\,\kappa^{\frac{11}{6}}\leq \underline{H}_{C_{3}}^{loc}\,.
$$
\end{prop}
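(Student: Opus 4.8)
The plan is to mirror the structure of the proof of Proposition~\ref{prop:mu1<0} (the $\Gamma=\varnothing$ case), but now using the vanishing-field upper bound Theorem~\ref{thm:mu1-upp-var} in place of Remarks~\ref{rem:first term}--\ref{rem:second term}, and the scaling $\mathcal{B}=\kappa H$, $\widehat{\sigma}=H/\kappa^{2}$. First I would set
$$
\widehat{\lambda}_{\max}=\max\left(\sup_{x\in\Gamma\cap\Omega}\frac{a(x)^{\frac{3}{2}}}{\lambda_{0}^{\frac{3}{2}}|\nabla B_{0}(x)|},\sup_{x\in\Gamma\cap\partial\Omega}\frac{a(x)^{\frac{3}{2}}}{\lambda(\R^{2}_{+},\theta(x))^{\frac{3}{2}}|\nabla B_{0}(x)|}\right)+1\,,
$$
so that the hypothesis \eqref{cond:HC3-var} forces $H/\kappa^{2}<\widehat{\lambda}_{\max}$, and also forces $\kappa H\geq\mathcal{B}_{0}$ for $\kappa$ large (otherwise $\kappa H=\mathcal O(1)$ and Lemma~\ref{lem-H=kappa} together with Remark~\ref{rem:h=0} directly gives $\mu_{1}(\kappa,H)<0$, since $\{a>0\}\neq\varnothing$). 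Thus Theorem~\ref{thm:mu1-upp-var} applies and yields $\mu_{1}(\kappa,H)\leq\kappa^{2}\,\widehat\Lambda_{1}(B_{0},a,H/\kappa^{2})+C_{1}\kappa^{11/6}$.

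The core of the argument is then to show that, under \eqref{cond:HC3-var} with $C$ chosen large, one has $\kappa^{2}\,\widehat\Lambda_{1}(B_{0},a,H/\kappa^{2})\leq -2C_{1}\kappa^{11/6}$, say. I would split into two cases according to which term realizes the max in $\widehat{\lambda}_{\max}$. In the interior case there is $x_{0}\in\Gamma\cap\Omega$ with $a(x_{0})^{3/2}/(\lambda_{0}^{3/2}|\nabla B_{0}(x_{0})|)$ equal to the first supremum; condition \eqref{cond:HC3-var} then gives $H\leq \big(a(x_{0})/\lambda_{0}\big)^{3/2}|\nabla B_{0}(x_{0})|^{-1}\kappa^{2}-C\kappa^{11/6}$, which after raising to the power $2/3$ and multiplying by $\lambda_{0}(H/\kappa^{2})^{-2/3}$-type algebra (a routine computation, using that $t\mapsto t^{2/3}$ is Lipschitz on compact subsets of $(0,\infty)$ and that $\widehat\sigma$ stays in a fixed compact interval) produces
$$
\lambda_{0}\Big(\tfrac{H}{\kappa^{2}}|\nabla B_{0}(x_{0})|\Big)^{2/3}-a(x_{0})\leq -C\,M\,\kappa^{-1/6}
$$
for a constant $M>0$ independent of $C$. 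Since $\widehat\Lambda_{1}(B_{0},a,H/\kappa^{2})$ is an infimum that is $\leq$ the left-hand side evaluated at $x_{0}$, we get $\kappa^{2}\widehat\Lambda_{1}\leq -CM\kappa^{11/6}$. Choosing $C$ with $CM>C_{1}$ gives $\mu_{1}(\kappa,H)\leq(C_{1}-CM)\kappa^{11/6}<0$. The boundary case is identical with $\lambda_{0}$ replaced by $\lambda(\R^{2}_{+},\theta(x_{0}'))$ for a suitable $x_{0}'\in\Gamma\cap\partial\Omega$, using Theorem~\ref{thm:PK-R2+}(ii) exactly as Remark~\ref{rem:first term} uses $\Theta_{0}<1$ to ensure the relevant supremum is genuinely attained at the claimed boundary point rather than forcing a contradiction.

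Finally, the bound $\underline{H}_{C_{3}}^{loc}\geq\widehat{\lambda}_{\max}$-type quantity times $\kappa^{2}$ minus $C\kappa^{11/6}$ follows immediately from the definition \eqref{def:HC3-u}: every $H$ satisfying \eqref{cond:HC3-var} has $\mu_{1}(\kappa,H)<0$, hence lies in $\mathcal{N}^{\rm loc}(\kappa)$, so $\inf(\R_{+}\setminus\mathcal{N}^{\rm loc}(\kappa))$ is at least the upper endpoint of the interval in \eqref{cond:HC3-var}. I expect the only delicate point to be the elementary-but-fiddly conversion of the affine constraint on $H$ into the lower bound on $a(x_{0})-\lambda_{0}(\widehat\sigma|\nabla B_{0}(x_{0})|)^{2/3}$ with the correct power $\kappa^{11/6}$ (matching the error term in Theorem~\ref{thm:mu1-upp-var}); this is where one must be careful that the Lipschitz constant of $t\mapsto t^{2/3}$ is taken on a compact interval bounded away from $0$, which is legitimate because $\widehat\sigma=H/\kappa^{2}$ ranges in $[\widehat{\lambda}_{\min},\widehat{\lambda}_{\max}]$ with $\widehat{\lambda}_{\min}>0$ once $\kappa H\geq\mathcal{B}_{0}$ and \eqref{cond:HC3-var} holds.
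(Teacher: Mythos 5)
Your proposal follows essentially the same route as the paper's proof: the case split on which supremum dominates, Theorem~\ref{thm:PK-R2+} to ensure the interior supremum is attained at some $x_0\in\Gamma\cap\Omega$, Lemma~\ref{lem-H=kappa} for the regime $\kappa H=\mathcal O(1)$, and Theorem~\ref{thm:mu1-upp-var} combined with the conversion of \eqref{cond:HC3-var} into $\kappa^{2}\widehat\Lambda_{1}\leq -C\,M\,\kappa^{11/6}$. The only inaccuracy is your claim that $\widehat\sigma=H/\kappa^{2}$ stays bounded below by a positive constant once $\kappa H\geq\mathcal B_{0}$ (it need not), but this is harmless: the required inequality $\widehat\sigma^{2/3}\leq A^{2/3}-\tfrac{2}{3}A^{-1/3}C\kappa^{-1/6}$ (with $A$ the value of the relevant ratio at $x_0$) follows from monotonicity and concavity of $t\mapsto t^{2/3}$ with no lower bound on $\widehat\sigma$, which is exactly the step the paper records as \eqref{eq:appendix}.
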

\begin{proof}
We have two cases:\\
\textbf{Case 1.} Here, we suppose that,
$$\sup_{x\in\Gamma\cap\overline{\Omega}}\frac{a(x)^{\frac{3}{2}}}{\lambda_{0}^{\frac{3}{2}}|\nabla B_{0}(x)|}>\sup_{x\in\Gamma\cap\partial\Omega}\frac{a(x)^{\frac{3}{2}}}{\lambda(\R^{2}_{+},\theta(x))^{\frac{3}{2}}|\nabla B_{0}(x)|}\,.$$
Thanks to the assumption in \eqref{B(x)}, we have, for all $x\in\Gamma\cap\partial\Omega$, $0<\theta(x)<\pi$.  Theorem~\ref{thm:PK-R2+} then tells us that,
$$
\forall~x\in\Gamma\cap\partial\Omega\,,\quad \frac{a(x)^{\frac{3}{2}}}{\lambda(\R^{2}_{+},\theta(x))^{\frac{3}{2}}|\nabla B_{0}(x)|}>\frac{a(x)^{\frac{3}{2}}}{\lambda_{0}^{\frac{3}{2}}|\nabla B_{0}(x)|}\,.
$$
Thus, there exists $x_0\in\Omega\cap\Gamma$ such that (the supremum of $\frac{a(x)^{\frac{3}{2}}}{\lambda_{0}^{\frac{3}{2}}|\nabla B_{0}(x)|}$ in $\Gamma\cap\overline{\Omega}$ can not be attained on the boundary),
$$
\sup_{x\in\Gamma\cap\overline\Omega}\frac{a(x)^{\frac{3}{2}}}{\lambda_{0}^{\frac{3}{2}}|\nabla B_{0}(x)|}= \frac{a(x_0)^{\frac{3}{2}}}{\lambda_{0}^{\frac{3}{2}}|\nabla B_{0}(x_0)|} \,.
$$
If \eqref{cond:HC3-var} is satisfied for some $C>0$, then,
$$
\frac{H}{\kappa^{2}}\leq\frac{a(x_0)^{\frac{3}{2}}}{\lambda_{0}^{\frac{3}{2}}|\nabla B_{0}(x_0)|} -C\,\kappa^{-\frac{1}{6}}\,,
$$
that we can write in the form,
\begin{equation}\label{eq:appendix}
\kappa^{2}\left(\lambda_0\left(\frac{H}{\kappa^2}|\nabla B_{0}(x_0)|\right)^{\frac{2}{3}}-a(x_0)\right)\leq -C \,M\,\kappa^{\frac{11}{6}}\,,
\end{equation}
where $M>0$ is a constant independent of $C$.

Suppose that $\kappa H\geq \mathcal B_0$ where $\mathcal B_0$ is selected sufficiently large such that we can apply Theorem~\ref{thm:mu1-upp-var}. (Thanks to Lemma~\ref{lem-H=kappa}, $\mu_1(\kappa,H)<0$ when $\kappa H<\mathcal B_0$).

By Theorem~\ref{thm:mu1-upp-var}, there exist positive constants $C_{1}$ and $\kappa_{0}$ such that, for $\kappa\geq \kappa_{0}$,
\begin{align}
\mu_{1}(\kappa,H)&\leq \kappa^{2}\inf_{x\in\Gamma\cap\overline\Omega} \left(\lambda_0\left(\frac{H}{\kappa^2}|\nabla B_{0}(x)|\right)^{\frac{2}{3}}-a(x)\right)+C_{1}\,\kappa^{\frac{11}{6}}\nonumber\\
&\leq  \kappa^{2}\left(\lambda_0\left(\frac{H}{\kappa^2}|\nabla B_{0}(x_0)|\right)^{\frac{2}{3}}-a(x_0)\right)+C_{1}\,\kappa^{\frac{11}{6}}\nonumber\\
&\leq (C_1-C\,M) \,\kappa^\frac {11}{6} \,.
\end{align}
By choosing $C$ such that $C\,M> C_{1}$, we get,
$$
\mu_{1}(\kappa,H)<0\,.
$$
\textbf{Case 2.} Here, we suppose that
$$\sup_{x\in\Gamma\cap\partial\Omega}\frac{a(x)^{\frac{3}{2}}}{\lambda(\R^{2}_{+},\theta(x))^{\frac{3}{2}}|\nabla B_{0}(x)|}\geq\sup_{x\in\Gamma\cap\overline{\Omega}}\frac{a(x)^{\frac{3}{2}}}{\lambda_{0}^{\frac{3}{2}}|\nabla B_{0}(x)|}\,.$$
The assumption in  \eqref{cond:HC3-var} and the upper bound in Theorem~\ref{thm:mu1-upp-var} give us, for all $\kappa\geq \kappa_0$, $\kappa H\geq \mathcal B_0$ and $\mathcal B_0$ a sufficiently large constant,
$$
\mu_{1}(\kappa,H)\leq (C_1-C\,\widetilde M)\,\kappa^\frac{11}{6}\,.
$$
where $\widetilde M>0$ is a constant independent of $C$.
By choosing $C$ such that $C\,\widetilde M> C_{1}$, we get,
$$
\mu_{1}(\kappa,H)<0\,.
$$
This finishes the proof of the proposition.
\end{proof}

The next proposition gives us a lower bound of $\overline{H}_{C_3}^{loc}$ (see \eqref{def:HC3-u}). This is obtained by localizing the region where $\mu_{1}(\kappa,H)>0$ holds.

\begin{prop}\label{prop:mu>0-var}
Suppose that $\{a>0\}\neq\varnothing$, $\widehat\lambda_{\max} >0$ and $\Gamma=\varnothing$. There exist constants $C>0$ and $\kappa_{0} > 0$ such that if
\begin{equation}\label{cond:HC3-2-var}
\begin{aligned}
\kappa\geq \kappa_{0}\,,\qquad \widehat\lambda_{\max}\,\kappa&\geq H\\
&>\max\left(\sup_{x\in\Gamma\cap\overline{\Omega}}\frac{a(x)^{\frac{3}{2}}}{\lambda_{0}^{\frac{3}{2}}|\nabla B_{0}(x)|},\sup_{x\in\Gamma\cap\partial\Omega}\frac{a(x)^{\frac{3}{2}}}{\lambda(\R^{2}_{+},\theta(x))^{\frac{3}{2}}|\nabla B_{0}(x)|}\right)\,\kappa^{2}+C\,\kappa^{\frac{11}{6}}\,,
\end{aligned}
\end{equation}
then,
$$
\mu_{1}(\kappa,H)>0\,.
$$
 Moreover,
$$
\overline{H}_{C_{3}}^{loc}\leq\max\left(\sup_{x\in\Gamma\cap\overline{\Omega}}\frac{a(x)^{\frac{3}{2}}}{\lambda_{0}^{\frac{3}{2}}|\nabla B_{0}(x)|},\sup_{x\in\Gamma\cap\partial\Omega}\frac{a(x)^{\frac{3}{2}}}{\lambda(\R^{2}_{+},\theta(x))^{\frac{3}{2}}|\nabla B_{0}(x)|}\right)\,\kappa^{2}+C\,\kappa^{\frac{11}{6}}\,.
$$
\end{prop}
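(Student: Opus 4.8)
The plan is to follow the pattern of Proposition~\ref{prop:mu>0}, with the constant-field lower bound (Theorem~\ref{thm:mu1-cst}) replaced by its vanishing-field analogue, Theorem~\ref{thm:mu1-up-vr}, which asserts that under \eqref{cond:sigma-hat},
$$
\mu_{1}(\kappa,H)\geq \kappa^{2}\,\widehat\Lambda_{1}\!\left(B_{0},a,\tfrac{H}{\kappa^{2}}\right)+\mathcal{O}(\kappa^{\frac{11}{6}})\qquad(\kappa\to+\infty)\,,
$$
where $\widehat\Lambda_{1}$ is introduced in \eqref{Lambda1}. First I would set $\widehat\sigma=H/\kappa^{2}$ and fix $\widehat\lambda_{\min}$ to be, say, half of the displayed maximum in \eqref{cond:HC3-2-var}; then for $\kappa$ large the hypothesis \eqref{cond:HC3-2-var} forces $\widehat\sigma\in[\widehat\lambda_{\min},\widehat\lambda_{\max}]$, so Theorem~\ref{thm:mu1-up-vr} applies. (Note that here $H$ is bounded below by a constant times $\kappa^{2}$, so $\kappa H\to+\infty$ automatically and no separate small-$\kappa H$ regime needs to be treated.)

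The second step is to translate \eqref{cond:HC3-2-var} into a lower bound for $\widehat\Lambda_{1}$. Dividing the inequality on $H$ by $\kappa^{2}$, it says $\widehat\sigma> M_{0}+C\kappa^{-\frac16}$, where $M_{0}$ denotes the maximum of the two suprema. Hence for every $x\in\Gamma\cap\overline\Omega$ one has $\widehat\sigma> a(x)^{3/2}/(\lambda_{0}^{3/2}|\nabla B_{0}(x)|)+C\kappa^{-\frac16}$. Using the assumption \eqref{B(x)}, which makes $|\nabla B_{0}|$ bounded away from $0$ on $\Gamma$, together with the boundedness of $a$ and $B_{0}$ on the compact set $\overline\Omega$, the function $t\mapsto t^{2/3}$ is uniformly Lipschitz on the compact range of arguments involved, so one obtains
$$
\lambda_{0}\big(\widehat\sigma\,|\nabla B_{0}(x)|\big)^{\frac23}-a(x)\geq c\,C\,\kappa^{-\frac16}
$$
with $c>0$ independent of $C$ and of $x\in\Gamma\cap\overline\Omega$; the same argument with $\lambda(\R^{2}_{+},\theta(x))$ in place of $\lambda_{0}$ (using continuity of $\theta(\cdot)$ and of $\theta\mapsto\lambda(\R^{2}_{+},\theta)$ along $\Gamma\cap\partial\Omega$) handles the boundary points. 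Taking the infima over $\Gamma\cap\Omega$ and $\Gamma\cap\partial\Omega$ and recalling \eqref{Lambda1} yields $\widehat\Lambda_{1}(B_{0},a,\widehat\sigma)\geq c\,C\,\kappa^{-\frac16}$ for $\kappa$ large.

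The third step is to combine: inserting this into Theorem~\ref{thm:mu1-up-vr} gives, for $\kappa\geq\kappa_{0}$,
$$
\mu_{1}(\kappa,H)\geq \kappa^{2}\cdot c\,C\,\kappa^{-\frac16}-C_{1}\kappa^{\frac{11}{6}}=(c\,C-C_{1})\,\kappa^{\frac{11}{6}}\,,
$$
so choosing the constant $C$ in \eqref{cond:HC3-2-var} large enough that $c\,C>C_{1}$ forces $\mu_{1}(\kappa,H)>0$. Finally, the ``Moreover'' assertion follows at once: if $H$ exceeds the stated threshold then $\mu_{1}(\kappa,H)>0$, so $H\notin\mathcal{N}^{\rm loc}(\kappa)$ by \eqref{def:Nloc}, whence $\overline{H}_{C_{3}}^{loc}=\sup\mathcal{N}^{\rm loc}(\kappa)$ (see \eqref{def:HC3-u}) is bounded by that threshold.

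The main obstacle I anticipate is precisely the uniform passage from the hypothesis on $\widehat\sigma$ to the bound $\widehat\Lambda_{1}\gtrsim\kappa^{-1/6}$: one must control the $2/3$-power and the suprema/infima over the (possibly disconnected) curve $\Gamma$ uniformly in $x$, which is where the non-degeneracy $|\nabla B_{0}|+|B_{0}|>0$ and $\nabla B_{0}\times\vec n\neq0$ on $\Gamma\cap\partial\Omega$ from \eqref{B(x)} — guaranteeing that $\theta(x)\in(0,\pi)$ and hence, by Theorem~\ref{thm:PK-R2+}, that $\lambda(\R^{2}_{+},\theta(x))$ stays in a compact subinterval of $(0,\lambda_{0}]$ — are essential. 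Everything else is bookkeeping of error exponents, entirely parallel to Proposition~\ref{prop:mu>0}.
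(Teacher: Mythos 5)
Your proposal is correct and follows essentially the same route as the paper: deduce from the hypothesis that $\widehat\Lambda_{1}\bigl(B_{0},a,\tfrac{H}{\kappa^{2}}\bigr)\geq c\,C\,\kappa^{-\frac{1}{6}}$, insert this into the lower bound of Theorem~\ref{thm:mu1-up-vr}, and choose $C$ large enough to dominate the $\mathcal{O}(\kappa^{\frac{11}{6}})$ error. You actually supply more detail than the paper does on the uniform passage from the hypothesis on $H/\kappa^{2}$ to the bound on $\widehat\Lambda_{1}$, which the paper simply asserts.
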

\begin{proof}
 Having in mind the definition of $\widehat\Lambda_1$ in \eqref{Lambda1}, under the assumption in \eqref{cond:HC3-2-var}, we get for $\kappa$ large enough,
 \begin{equation}\label{eq:appendix2}
\widehat\Lambda_{1}\left(B_0,a,\frac{H}{\kappa^2}\right)\geq C\,M\,\kappa^{-\frac{1}{6}}\,,
 \end{equation}
where $M>0$ is a constant independent of the constant $C$.

Thanks to Theorem~\ref{thm:mu1-up-vr}, we get the existence of positive constants $C'$ and $\kappa_0$ such that, for $\kappa\geq\kappa_0$,
$$
\mu_{1}(\kappa,H)\geq (C\,M-C')\,\kappa^{\frac{11}{6}}
$$
To finish the proof, we choose $C$ sufficiently large such that $C\,M>C'$.
\end{proof}

\subsection{Analysis of $\underline{H}_{C_3}^{cp}$ and $\overline{H}_{C_3}^{cp}$.}~\\

 Proposition~\ref{prop:est-psi-var} below is an adaptation of an analogous result obtained in \cite{HK} for the functional in \eqref{eq-2D-GLf} with a constant pinning term.  Proposition~\ref{prop:est-psi-var} is valid when $\Gamma\not=\emptyset$.
 Proposition~\ref{prop:est-psi-var} says that, if $(\psi,\Ab)$ is a critical point of the functional  in \eqref{eq-2D-GLf} and  $H$ is of order $\kappa^2$, then $|\psi|$ is concentrated near the set $\Gamma$.

 \begin{prop}\label{prop:est-psi-var}
Let $\varepsilon>0$. There exist two positive constants $C$ and $\kappa_0$ such that, if
\begin{equation}\label{cond:H>kappa}
\kappa\geq\kappa_0\,,\quad H\geq\varepsilon\,\kappa^{2}\,,
\end{equation}
and $(\psi,\Ab)$ is a solution of \eqref{eq-2D-GLeq}, then
 \begin{equation}\label{l3est}
\|\psi\|^{2}_{L^2(\Omega)}\leq C\,\kappa^{-\frac{1}{4}}\|\psi\|^{2}_{L^4(\Omega)}\,.
\end{equation}
\end{prop}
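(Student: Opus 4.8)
The plan is to transcribe the argument of Proposition~\ref{lem:psi-2<4} to the present situation, the collar around $\partial\Omega$ used there being replaced by a thin tube around the zero set $\Gamma$, on whose complement the magnetic field is bounded below. We may assume $\sup_{\overline\Omega}a>0$, since otherwise $\psi\equiv0$ by \eqref{eq-psi<a} and there is nothing to prove. Fix $\lambda=\kappa^{-1/2}$, choose $\chi\in C^\infty(\R)$ with $\chi\equiv0$ on $]-\infty,1/2]$ and $\chi\equiv1$ on $[1,+\infty[$, and set $\chi_\kappa(x)=\chi\big(\dist(x,\Gamma)/\lambda\big)$, so that $\supp\chi_\kappa\subset\{\dist(\cdot,\Gamma)\ge\lambda/2\}$, $\chi_\kappa\equiv1$ on $\{\dist(\cdot,\Gamma)\ge\lambda\}$, $\nabla\chi_\kappa$ is supported in $\{\lambda/2<\dist(\cdot,\Gamma)<\lambda\}$ and $|\nabla\chi_\kappa|\le C\lambda^{-1}$. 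Multiplying $\eqref{eq-2D-GLeq}_a$ by $\overline{\chi_\kappa^2\psi}$ and integrating by parts, exactly as in the derivation of \eqref{eq:1}, gives
$$
\int_\Omega\Big(|(\nabla-i\kappa H\Ab)\chi_\kappa\psi|^2-|\nabla\chi_\kappa|^2|\psi|^2\Big)\,dx=\kappa^2\int_\Omega\chi_\kappa^2\big(a(x)-|\psi|^2\big)|\psi|^2\,dx .
$$

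The crucial new ingredient is a lower bound for the magnetic energy on $\supp\chi_\kappa$. From $|B_0|+|\nabla B_0|>0$ on $\overline\Omega$ and $B_0=0$ on $\Gamma$, a compactness argument produces $c_0>0$ with $|B_0(x)|\ge c_0\dist(x,\Gamma)$ for every $x\in\overline\Omega$, whence $|B_0|\ge\tfrac{c_0}{2}\lambda$ on $\supp\chi_\kappa$. Since $B_0$ changes sign across $\Gamma$, write $\chi_\kappa=\chi_\kappa^++\chi_\kappa^-$ with $\chi_\kappa^\pm:=\chi_\kappa\,\mathbf 1_{\{\pm B_0>0\}}$; these are $C^\infty$ (because $\chi_\kappa$ vanishes near $\Gamma$, while $\{B_0>0\}$ and $\{B_0<0\}$ are open away from $\Gamma$), and near each point at most one of them is nonzero, so $|(\nabla-i\kappa H\Ab)\chi_\kappa\psi|^2=|(\nabla-i\kappa H\Ab)\chi_\kappa^+\psi|^2+|(\nabla-i\kappa H\Ab)\chi_\kappa^-\psi|^2$ pointwise. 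Applying \cite[Proposition~6.2]{HK} to $\chi_\kappa^+\psi$ (where $\curl\Fb=B_0>0$) and its complex-conjugate version to $\chi_\kappa^-\psi$, together with the bound $\|\curl(\Ab-\Fb)\|_{L^2(\Omega)}\le\tfrac cH\|\psi\|_{L^2(\Omega)}$ coming from \eqref{3d-<} and the $L^\infty$-bound \eqref{eq-psi<a}, and then a Young inequality together with $\chi_\kappa^4\le\chi_\kappa^2$, one obtains
$$
\int_\Omega|(\nabla-i\kappa H\Ab)\chi_\kappa\psi|^2\,dx\ \ge\ \Theta_0\,\kappa H\int_\Omega|B_0|\,\chi_\kappa^2|\psi|^2\,dx-\frac{c^2}{2}\|\psi\|^2_{L^2(\Omega)}-\frac{\kappa^2}{2}\int_\Omega\chi_\kappa^2|\psi|^4\,dx ,
$$
the harmless factor $\Theta_0>0$ absorbing the loss on the part of $\supp\chi_\kappa$ meeting $\partial\Omega$ (handled by the Neumann bound of \cite[Prop.~9.2.1]{FH1}). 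Inserting this into the identity above, the $|\psi|^4$-terms cancel, and using $|B_0|\ge\tfrac{c_0}{2}\lambda$ on $\supp\chi_\kappa$, $\kappa H\ge\varepsilon\kappa^3$ and $\kappa^3\lambda=\kappa^{5/2}\gg\kappa^2\sup_{\overline\Omega}a$, one gets for $\kappa$ large, after the decomposition $\|\psi\|^2_{L^2(\Omega)}=\int_\Omega\chi_\kappa^2|\psi|^2+\int_\Omega(1-\chi_\kappa^2)|\psi|^2$ (the last integrand supported in $\{\dist(\cdot,\Gamma)<\lambda\}$) and absorption of $\tfrac{c^2}{2}\int_\Omega\chi_\kappa^2|\psi|^2$ on the left,
$$
\tfrac18\Theta_0\varepsilon c_0\,\kappa^3\lambda\int_\Omega\chi_\kappa^2|\psi|^2\,dx\ \le\ \Big(\tfrac{c^2}{2}+\|\chi'\|_{L^\infty(\R)}^2\,\lambda^{-2}\Big)\int_{\{x\in\Omega:\dist(x,\Gamma)\le\lambda\}}|\psi|^2\,dx .
$$
Since $\kappa^3\lambda^3=\kappa^{3/2}\to\infty$, this yields $\int_\Omega\chi_\kappa^2|\psi|^2\le\int_{\{\dist(x,\Gamma)\le\lambda\}}|\psi|^2$ for $\kappa$ large, hence $\|\psi\|^2_{L^2(\Omega)}\le2\int_{\{x\in\Omega:\dist(x,\Gamma)\le\lambda\}}|\psi|^2\,dx$. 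As $\Gamma$ is a finite union of $C^1$ curves of finite total length, $\big|\{x\in\Omega:\dist(x,\Gamma)\le\lambda\}\big|\le C\lambda$, and Cauchy--Schwarz gives $\|\psi\|^2_{L^2(\Omega)}\le2(C\lambda)^{1/2}\|\psi\|^2_{L^4(\Omega)}=C'\kappa^{-1/4}\|\psi\|^2_{L^4(\Omega)}$, which is \eqref{l3est}.

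The only genuinely new difficulty, compared with Proposition~\ref{lem:psi-2<4}, is the magnetic lower bound above: one must simultaneously use the linear vanishing $|B_0(x)|\gtrsim\dist(x,\Gamma)$ of the field near $\Gamma$ and the largeness $H\gtrsim\kappa^2$ of the applied field to dominate the $\kappa^2a$ term on $\supp\chi_\kappa$, while coping with the sign change of $B_0$ across $\Gamma$ --- here via the $\chi_\kappa^\pm$ splitting, or alternatively via an IMS partition of unity over balls of radius $\ll\lambda$ on which $B_0$ is almost constant and of fixed sign, as in the lower-bound part of the proof of Proposition~\ref{prop:mu1-variable}. Everything else is a routine rerun of the computations in the proof of Proposition~\ref{lem:psi-2<4}.
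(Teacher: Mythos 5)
Your argument is correct and is essentially the paper's own proof: same cutoff scale $\lambda=\kappa^{-1/2}$, same energy identity, same use of $|B_0|\gtrsim\dist(\cdot,\Gamma)$ together with $H\geq\varepsilon\kappa^2$ to dominate $\kappa^2\overline{a}$ on the support of the cutoff, and the same final Cauchy--Schwarz on a set of measure $\mathcal{O}(\lambda)$. The one deviation is at the boundary: the paper's cutoff $h$ is compactly supported in $\Omega$ (it also excises a $\lambda$-collar of $\partial\Omega$, which costs nothing since that collar has measure $\mathcal{O}(\lambda)$ and is simply added to the exceptional set), so the commutator lower bound applies directly with no $\Theta_0$ and no appeal to a Neumann-type estimate; your version, which keeps $\chi_\kappa\equiv1$ up to $\partial\Omega$ and inserts the factor $\Theta_0$ via \cite[Prop.~9.2.1]{FH1}, also works but carries a small extra burden (uniformity of that bound where $|B_0|$ is only of size $\lambda$, and the $\mathcal{B}^{3/4}$ error, which is $\mathcal{O}(\kappa^{9/4})\ll\kappa^{5/2}$) that the paper's choice of cutoff avoids.
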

\begin{proof}
Let $\lambda=\kappa^{-\frac 12}$ and $\Omega_{\lambda}=\{x\in\Omega:\dist(x,\partial\Omega)>\lambda~\&~\dist(x,\Gamma)>\lambda\}$. We introduce a function $h\in C^{\infty}_{c}(\Omega)$ satisfying
$$
0\leq h \leq 1~{\rm in}~\Omega\,,\quad h=1~{\rm in}~\Omega_{\lambda}\,,\quad {\rm supp}\,h\subset\Omega_{\lambda/2}\,,
$$
and
$$
|\nabla h |\leq \frac{C}{\lambda}\quad{\rm in}~\Omega\,,
$$
where $C$ is a positive constant.\\
Using \eqref{3d-<}, we can prove that (see the detailed proof in
\cite[Eq.~(6.6)]{HK} when $a$ is constant),
$$
 \kappa\,H\int_\Omega|B_{0}(x)|\,|h\psi|^2\,dx-c\,\kappa\,\|\psi\|_{L^{2}(\Omega)}\|h\psi\|_{L^{4}(\Omega)}^{2}\leq \int_{\Omega}|(\nabla-i\kappa H\Ab)h\psi|^2\,dx\,.
$$
 Now, the Cauchy-Schwarz inequality yields,
$$
c\,\kappa\,\|\psi\|_{L^{2}(\Omega)}\|h\psi\|_{L^{4}(\Omega)}^{2}\leq c^{2}\|\psi\|_{L^{2}(\Omega)}^{2}+\kappa^2\|h\psi\|_{L^{4}(\Omega)}^{4}\,,
$$
which implies that
\begin{multline*}
\int_\Omega\,\left(\kappa\,H\,|B_{0}(x)|-\kappa^{2}\,a(x)\right)\,|h\psi|^2\,dx\leq\int_{\Omega}|(\nabla-i\kappa H\Ab)h\psi|^2\,dx-\kappa^{2}\int_{\Omega}\,a(x)\,|h\psi|^2\,dx\\
+c^{2}\|\psi\|_{L^{2}(\Omega)}^{2}+\kappa^{2}\|h\psi\|_{L^{4}(\Omega)}^{4}\,.
\end{multline*}
We may use a  localization formula as the one in  \eqref{eq:2} (but with $\chi_\kappa=h$) to write,
\begin{align*}
\int_\Omega\,\left(\kappa\,H\,|B_{0}(x)|-\kappa^{2}\,a(x)\right)\,|h\psi|^2\,dx&\leq c^{2}\int_{\Omega}|\psi|^{2}\,dx+\int_{\Omega}|\nabla h|^{2}|\psi|^{2}\,dx+\kappa^{2}\int_{\Omega}(h^{4}-h^{2})|\psi|^{4}\,dx\\
&\leq c^{2}\int_{\Omega}|\psi|^{2}\,dx+\int_{\Omega}|\nabla h|^{2}|\psi|^{2}\,dx\,.
\end{align*}
Here, we have used the fact that $h^{4}\leq h^{2}$ since $0\leq h\leq 1$.

By assumption \eqref{B(x)}, $|\nabla B_{0}|$ does not vanish on $\Gamma$, hence
\begin{equation}
|B_0(x)|\geq \frac{1}{M}\,\kappa^{-\frac 12}\qquad{\rm in}\quad\{\dist(x,\Gamma)\geq \lambda\}\,,
\end{equation}
for some constant $M>0$.\\
Thus, by using \eqref{def:sup-a} and \eqref{cond:H>kappa}, we get,
$$
\left(\frac{\varepsilon}{M}\,\kappa^{\frac{5}{2}}-\kappa^{2}\,\overline{a}\right)\int_\Omega|h\psi|^2\,dx\leq c^{2}\int_{\Omega}|\psi|^{2}\,dx+\int_{\Omega}|\nabla h|^{2}|\psi|^{2}\,dx\,.
$$
Writing $\displaystyle\int_{\Omega}|\psi|^{2}\,dx=\int_{\Omega}|h\psi|^{2}\,dx+\int_{\Omega}(1-h^{2})|\psi|^{2}\,dx$ and using the assumption on $h$, we have,
$$
\left(\frac{\varepsilon}{M}\,\kappa^{\frac{5}{2}}-\kappa^{2}\,\overline{a}-c^{2}\right)\int_{\Omega}|h\psi(x)|^{2}\,dx\leq (c^{2}+C\,\kappa)\int_{\Omega\setminus\Omega_{\lambda}}|\psi|^{2}\,dx\,.
$$
For $\kappa$ large enough, $\frac{\varepsilon}{M}\,\kappa^{\frac{5}{2}}-\kappa^{2}\,\overline{a}-c^{2}\geq \frac{\varepsilon}{2M}\,\kappa^{\frac{5}{2}}$ and
$$
\int_\Omega| h\psi(x)|^{2}\,dx\leq 2\frac{M}{\varepsilon}C\,\kappa^{-\frac{3}{2}} \int_{\Omega\setminus\Omega_{\lambda}}|\psi|^{2}\,dx\,.
$$
Thanks to the assumption on the support of $h$, we get further,
$$
\int_\Omega|\psi(x)|^{2}\,dx\leq \left(2\frac{M}{\varepsilon}C\,\kappa^{-\frac{3}{2}}+1\right)\int_{\Omega\setminus\Omega_{\lambda}}|\psi|^{2}\,dx\,.
$$
Recall that $\lambda=\kappa^{-\frac{1}{2}}$. The Cauchy Schwarz inequality yields,
$$
\int_{\Omega\setminus\Omega_\lambda}|\psi(x)|^{2}\,dx\leq |\Omega\setminus\Omega_\lambda|^{1/2} \left(\int_{\Omega\setminus\Omega_\lambda} |\psi|^{4}\,dx\right)^{\frac{1}{2}}
\leq C\,\kappa^{-\frac{1}{4}}\left(\int_\Omega |\psi|^{4}\,dx\right)^{\frac{1}{2}}\,.
$$
This finishes the proof of the proposition.
\end{proof}

Now, we can  give an upper bound of the critical field $\overline{H}_{C_3}^{cp}$ in the case when $\Gamma\neq\varnothing$ and with a pining term.
 \begin{theorem}\label{thm:lb-H-var}
Supose that $\Gamma\neq\varnothing$ and $\{a>0\}\neq\varnothing$. There exists $C>0$ and $\kappa_0$ such that, if $H$ satisfies
\begin{equation}\label{cond:HC3-2w-var}
 H>\max\left(\sup_{x\in\Gamma\cap\overline{\Omega}}\frac{a(x)^{\frac{3}{2}}}{\lambda_{0}^{\frac{3}{2}}|\nabla B_{0}(x)|},\sup_{x\in\Gamma\cap\partial\Omega}\frac{a(x)^{\frac{3}{2}}}{\lambda(\R^{2}_{+},\theta(x))^{\frac{3}{2}}|\nabla B_{0}(x)|}\right)\,\kappa^{2}+C\,\kappa^{\frac{11}{6}}\,,
\end{equation} then $(0,\Fb)$ is the unique solution to \eqref{eq-2D-GLeq}.\\
 Moreover,
$$
\overline{H}_{C_3}^{cp}\leq\max\left(\sup_{x\in\Gamma\cap\overline{\Omega}}\frac{a(x)^{\frac{3}{2}}}{\lambda_{0}^{\frac{3}{2}}|\nabla B_{0}(x)|},\sup_{x\in\Gamma\cap\partial\Omega}\frac{a(x)^{\frac{3}{2}}}{\lambda(\R^{2}_{+},\theta(x))^{\frac{3}{2}}|\nabla B_{0}(x)|}\right)\,\kappa^{2}+C\,\kappa^{\frac{11}{6}}\,.
$$
\end{theorem}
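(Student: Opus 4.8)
The plan is to reproduce, essentially line by line, the structure of the proof of Theorem~\ref{thm:lb-H} for the non‑vanishing case, substituting the ingredients adapted to $\Gamma\neq\varnothing$: the concentration estimate Proposition~\ref{prop:est-psi-var} (which localizes $|\psi|$ near $\Gamma$ when $H\gtrsim\kappa^2$) in place of the boundary concentration Proposition~\ref{lem:psi-2<4}, and the spectral lower bound Proposition~\ref{prop:mu>0-var} in place of Proposition~\ref{prop:mu>0}. As a first reduction I would invoke the Giorgi--Phillips type Theorem~\ref{thm:GP}: there is a constant $C_0>0$ such that $(0,\Fb)$ is the unique solution of \eqref{eq-2D-GLeq} whenever $H\ge C_0\kappa^2$ and $\kappa$ is large. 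Hence it remains to treat the range in which, in addition to \eqref{cond:HC3-2w-var}, one has $H\le C_0\kappa^2$; in view of the lower bound on $H$ in \eqref{cond:HC3-2w-var} this means $H$ is comparable to $\kappa^2$, which is exactly the regime where Proposition~\ref{prop:est-psi-var} is applicable (and where Proposition~\ref{prop:mu>0-var} gives $\mu_1>0$, after identifying its hypothesis \eqref{cond:HC3-2-var} with the present one via $\widehat\lambda_{\max}=C_0$).

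Next I would argue by contradiction. Assume $(\psi,\Ab)$ solves \eqref{eq-2D-GLeq} with $\psi\not\equiv0$ and set, as in \eqref{l5est}, $\top:=\kappa^{2}\|\psi\|_{L^{4}(\Omega)}^{4}=-\int_{\Omega}(|(\nabla-i\kappa H\Ab)\psi|^{2}-\kappa^{2}a|\psi|^{2})\,dx>0$. Following the computation \eqref{main-eq}--\eqref{est:top1} verbatim — expand the magnetic energy about $\Fb$, apply Young's inequality with weight $\sqrt{\top}\,\kappa^{-1}$, and use the a priori bounds \eqref{8d-<} and \eqref{5d-<} together with $\|\psi\|_{L^4(\Omega)}^4=\top/\kappa^2$ — one reaches $-\top\ge\mu_1(\kappa,H)\|\psi\|_{L^2(\Omega)}^2-C\sqrt{\top}\,\kappa\,\|\psi\|_{L^2(\Omega)}^2$. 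I would then insert the estimate $\|\psi\|_{L^2(\Omega)}^2\le C\kappa^{-1/4}\|\psi\|_{L^4(\Omega)}^2=C\kappa^{-5/4}\sqrt{\top}$ coming from Proposition~\ref{prop:est-psi-var} (and $\|\psi\|_{L^4(\Omega)}^2=\sqrt{\top}/\kappa$); this turns the error term into $C'\kappa^{-1/4}\top$, so that $-\top\ge\mu_1(\kappa,H)\|\psi\|_{L^2(\Omega)}^2-C'\kappa^{-1/4}\top$. Since $\top>0$ and $\|\psi\|_{L^2(\Omega)}^2>0$, for $\kappa$ large this forces $\mu_1(\kappa,H)<0$, contradicting Proposition~\ref{prop:mu>0-var}. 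Therefore $\psi\equiv0$, which proves that $(0,\Fb)$ is the unique solution of \eqref{eq-2D-GLeq} under \eqref{cond:HC3-2w-var}, and hence, by definition of $\overline{H}_{C_3}^{cp}$ in \eqref{def:HC3-o}, the stated upper bound on $\overline{H}_{C_3}^{cp}$.

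The step that genuinely does the work is the concentration estimate Proposition~\ref{prop:est-psi-var}: the a priori bounds \eqref{8d-<}--\eqref{5d-<} alone only control the error term by $C\kappa\sqrt{\top}\,\|\psi\|_{L^2(\Omega)}^2$, which is of the same size as $\top$ and thus useless, and it is the extra smallness $\|\psi\|_{L^2(\Omega)}^2\lesssim\kappa^{-5/4}\sqrt{\top}$ — i.e.\ the net gain of a factor $\kappa^{-1/4}\ll 1$ — that closes the contradiction. The only bookkeeping point I anticipate needing care is ensuring that the regime of $H$ left after the Giorgi--Phillips reduction still satisfies $H\gtrsim\kappa^2$, so that Proposition~\ref{prop:est-psi-var} applies; this is precisely why the reduction to $H\le C_0\kappa^2$ is carried out first. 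Everything else is a matter of tracking exponents, exactly as in the proof of Theorem~\ref{thm:lb-H}.
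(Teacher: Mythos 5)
Your proposal is correct and follows essentially the same route as the paper: a Giorgi--Phillips reduction to $H\lesssim\kappa^2$, the identity $\top=\kappa^2\|\psi\|_{L^4}^4$ and the inequality $-\top\geq\mu_1\|\psi\|_{L^2}^2-C\kappa\sqrt{\top}\,\|\psi\|_{L^2}^2$ as in Theorem~\ref{thm:lb-H}, the concentration estimate of Proposition~\ref{prop:est-psi-var} to upgrade the error to $C'\kappa^{-1/4}\top$, and the resulting contradiction with the positivity of $\mu_1$ from Proposition~\ref{prop:mu>0-var}. Your observation about the role of the concentration estimate and the choice of $\varepsilon$ matches the paper's choice $\varepsilon=\tfrac12\max(\cdots)$ exactly.
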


\begin{proof}
In light of the result in   Theorem~\ref{thm:GP}, we may assume the extra condition that $H\leq \lambda_{\max}\kappa^2$ for a sufficiently large constant $\lambda_{\max}$.

We take the constant $C$ in \eqref{cond:HC3-2w-var} as in Proposition~\ref{prop:mu>0-var}. In that way, under the assumption in \eqref{cond:HC3-2w-var}, we have
\begin{equation}\label{eq:prop:mu>0-var''}
\mu_1(\kappa,H)<0\,.
\end{equation}
Suppose now that  $(\psi,\Ab)$ is a solution of \eqref{eq-2D-GLeq} with $\psi\neq 0$. Similarly, as in the proof of Theorem~\ref{thm:lb-H}, we have,
\begin{equation}\label{est:top1-var}
-\top\geq \mu_{1}(\kappa,H)\,\|\psi\|_{L^{2}(\Omega)}^{2}-C\,\sqrt{\top}\,\kappa\,\|\psi\|_{L^{2}(\Omega)}^{2}\,,
\end{equation}
where $\top=\kappa^2\|\psi\|^4_{L^4(\Omega)}$ is introduced in \eqref{l5est}.

To apply the result of Proposition~\ref{prop:est-psi-var}, we take
$$\varepsilon=\frac{1}{2}\max\left(\sup_{x\in\Gamma\cap\overline{\Omega}}\frac{a(x)^{\frac{3}{2}}}{\lambda_{0}^{\frac{3}{2}}|\nabla B_{0}(x)|},\sup_{x\in\Gamma\cap\partial\Omega}\frac{a(x)^{\frac{3}{2}}}{\lambda(\R^{2}_{+},\theta(x))^{\frac{3}{2}}|\nabla B_{0}(x)|}\right)\,,
$$
and get,
\begin{equation}\label{est:psi1-var}
\|\psi\|^{2}_{L^{2}(\Omega)}\leq C\,\kappa^{-\frac14}\|\psi\|^2_{L^4(\Omega)}= C\kappa^{-\frac{5}{4}}\,\sqrt{\top}\,.
\end{equation}
Putting \eqref{est:psi1-var} into \eqref{est:top1-var}, we obtain,
$$
-\top\geq \mu_{1}(\kappa,H)\,\|\psi\|_{L^{2}(\Omega)}^{2}-C'\,\kappa^{-\frac{1}{4}}\,\top\,.
$$
We conclude that, for $\kappa\geq\kappa_0$ and $\kappa_0$ a sufficiently large constant, $\mu_{1}(\kappa,H)<0$, which is in contradiction with \eqref{eq:prop:mu>0-var''}. Therefore, we conclude that $\psi=0$.
\end{proof}

Following the argument given in Proposition~\ref{prop:cp}, we get:

\begin{prop}\label{prop:cp-var}
Supose that $\Gamma\neq\varnothing$ and $\{a>0\}\neq\varnothing$. There exists $C>0$ and $\kappa_0$ such that, if $\kappa\geq\kappa_0$ and $H$ satisfies
\begin{equation}\label{cond:HC3-2w2-var}
 H\leq \max\left(\sup_{x\in\Gamma\cap\overline{\Omega}}\frac{a(x)^{\frac{3}{2}}}{\lambda_{0}^{\frac{3}{2}}|\nabla B_{0}(x)|},\sup_{x\in\Gamma\cap\partial\Omega}\frac{a(x)^{\frac{3}{2}}}{\lambda(\R^{2}_{+},\theta(x))^{\frac{3}{2}}|\nabla B_{0}(x)|}\right)\,\kappa^{2}-C\,\kappa^{\frac{11}{6}}\,,
\end{equation}
then there exists a solution $(\psi,\Ab)$ of \eqref{eq-2D-GLeq} with $\|\psi\|_{L^{2}(\Omega)}\neq 0$.\\
 Moreover,
$$
\max\left(\sup_{x\in\Gamma\cap\overline{\Omega}}\frac{a(x)^{\frac{3}{2}}}{\lambda_{0}^{\frac{3}{2}}|\nabla B_{0}(x)|},\sup_{x\in\Gamma\cap\partial\Omega}\frac{a(x)^{\frac{3}{2}}}{\lambda(\R^{2}_{+},\theta(x))^{\frac{3}{2}}|\nabla B_{0}(x)|}\right)\,\kappa^{2}-C\,\kappa^{\frac{11}{6}}\leq \underline{H}_{C_3}^{cp}\,.
$$
\end{prop}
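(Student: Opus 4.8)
The plan is to follow verbatim the strategy of Proposition~\ref{prop:cp}, feeding in this time the vanishing-field negativity result Proposition~\ref{prop:mu1<0-var} in place of Proposition~\ref{prop:mu1<0}. Fix the constant $C$ in \eqref{cond:HC3-2w2-var} to be the one produced by Proposition~\ref{prop:mu1<0-var}; then, for $\kappa$ large, \eqref{cond:HC3-2w2-var} forces $\mu_1(\kappa,H)<0$. Let $\psi_\ast\in H^1(\Omega)$ be an eigenfunction of $P_{\kappa H\Fb,-\kappa^2 a}^{\Omega}$ associated with $\mu_1(\kappa,H)$, so that $C_1(\kappa,H):=\mathcal{Q}_{\kappa H\Fb,-\kappa^2 a}^{\Omega}(\psi_\ast)=\mu_1(\kappa,H)\|\psi_\ast\|_{L^2(\Omega)}^2<0$.

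Next I would test the functional \eqref{eq-2D-GLf} on the configuration $(t\psi_\ast,\Fb)$ with $t>0$. Since $\curl\Fb=B_0$, the magnetic term drops out, and expanding the quartic term gives
\[
\mathcal E_{\kappa,H,a,B_{0}}(t\psi_\ast,\Fb)=t^2\Bigl(C_1(\kappa,H)+t^2\,\tfrac{\kappa^2}{2}\int_{\Omega}|\psi_\ast|^4\,dx\Bigr)+\mathcal E_{\kappa,H,a,B_{0}}(0,\Fb)\,.
\]
Choosing $t$ small enough that $C_1(\kappa,H)+t^2\tfrac{\kappa^2}{2}\int_\Omega|\psi_\ast|^4\,dx<0$ — possible because $C_1(\kappa,H)<0$ — yields $\mathcal E_{\kappa,H,a,B_{0}}(t\psi_\ast,\Fb)<\mathcal E_{\kappa,H,a,B_{0}}(0,\Fb)$. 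Hence the normal state $(0,\Fb)$ is not a minimizer, and a global minimizer $(\psi,\Ab)$ of \eqref{eq-2D-GLf} — which exists and solves \eqref{eq-2D-GLeq} — satisfies $\|\psi\|_{L^2(\Omega)}\neq 0$. In particular $H\in\mathcal N(\kappa)\subset\mathcal N^{\rm cp}(\kappa)$, and taking the supremum over all $H$ obeying \eqref{cond:HC3-2w2-var} gives the claimed lower bound for $\underline{H}_{C_3}^{cp}$.

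I do not expect any genuine obstacle: the analytic content has already been absorbed into Proposition~\ref{prop:mu1<0-var}, whose proof in turn rests on the upper bound Theorem~\ref{thm:mu1-upp-var} for $\mu_1(\kappa,H)$ in the regime $\kappa H\geq\mathcal B_0$ with $\Gamma\neq\varnothing$, together with Lemma~\ref{lem-H=kappa} for the regime $\kappa H=\mathcal O(1)$. The only point deserving an explicit remark is that both Proposition~\ref{prop:mu1<0-var} and Lemma~\ref{lem-H=kappa} are available in the variable-field setting (cf.\ Remark~\ref{rem:h=0}), so the dichotomy $\kappa H<\mathcal B_0$ versus $\kappa H\geq\mathcal B_0$ is handled exactly as in the constant-field case; everything beyond that is the routine computation already displayed in the proof of Proposition~\ref{prop:cp}.
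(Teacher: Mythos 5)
Your proposal is correct and coincides with the paper's own proof: the paper simply states that Proposition~\ref{prop:cp-var} follows ``by the argument given in Proposition~\ref{prop:cp}'', which is exactly the test-configuration computation with $(t\psi_\ast,\Fb)$ that you carry out, with Proposition~\ref{prop:mu1<0-var} (together with Lemma~\ref{lem-H=kappa} for the regime $\kappa H=\mathcal O(1)$) supplying the negativity of $\mu_1(\kappa,H)$.
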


 \subsection*{End of the proof of Theorem~\ref{thm:HC3-vr}}
 All the critical fields are contained in the interval $[\underline{H}_{C_{3}}^{loc},\overline{H}_{C_{3}}^{cp}]$ (the proof of this statement  is exactly as the one given for \eqref{eq:ov} and \eqref{eq:un}).\\
By Proposition~\ref{prop:mu1<0-var} and Theorem~\ref{thm:lb-H-var}, we get the existence of positive constants $C$ and $\kappa_0$, such that for $\kappa\geq\kappa_0$,
\begin{multline}
\max\left(\sup_{x\in\Gamma\cap\overline{\Omega}}\frac{a(x)^{\frac{3}{2}}}{\lambda_{0}^{\frac{3}{2}}|\nabla B_{0}(x)|},\sup_{x\in\Gamma\cap\partial\Omega}\frac{a(x)^{\frac{3}{2}}}{\lambda(\R^{2}_{+},\theta(x))^{\frac{3}{2}}|\nabla B_{0}(x)|}\right)\,\kappa^{2}-C\,\kappa^{\frac{11}{6}}\leq \underline{H}_{C_{3}}^{loc}\leq\overline{H}_{C_{3}}^{cp}\\
\leq\max\left(\sup_{x\in\Gamma\cap\overline{\Omega}}\frac{a(x)^{\frac{3}{2}}}{\lambda_{0}^{\frac{3}{2}}|\nabla B_{0}(x)|},\sup_{x\in\Gamma\cap\partial\Omega}\frac{a(x)^{\frac{3}{2}}}{\lambda(\R^{2}_{+},\theta(x))^{\frac{3}{2}}|\nabla B_{0}(x)|}\right)\,\kappa^{2}+C\,\kappa^{\frac{11}{6}}\,.
\end{multline}
As a consequence, we have proved that the asymptotics in Theorem~\ref{thm:HC3-vr} is valid for  for the six critical fields in \eqref{def:HC3-o}, \eqref{def:HC3} and \eqref{def:HC3-u}.

%
%
%

\section*{Acknowledgements}
This work is partially supported by a grant from Lebanese University
and a grant of Universit\'e Paris-Sud. I would like to thank my
supervisors \textit{B.Helffer} and \textit{A.Kachmar} for their
support, and \textit{J.P. Miqueu} for the communication of the
preprint \cite{JPM}.

\end{document}